\DeclareMathSymbol{\invques}{\mathord}{operators}{`>}
\DeclareRobustCommand{\tmquestiondown}{%
  \ifmmode\invques\else\textquestiondown\fi
}
\DeclareRobustCommand\iff{\;\Longleftrightarrow\;}
\numberwithin{equation}{section}
\newcommand{\mylabel}[2]{#2\def\@currentlabel{#2}\label{#1}}
\newtheorem{theorem}{Theorem}[section]
\newtheorem{lemma}[theorem]{Lemma}
\newtheorem{conj}[theorem]{Conjecture}
\newtheorem{proposition}[theorem]{Proposition}
\newtheorem{corollary}[theorem]{Corollary}
\newtheorem{defn}[theorem]{Definition}
\newtheorem{remark}[theorem]{Remark}
\newcommand{\Gal}{\operatorname{Gal}}
\newcommand{\QQ}{\mathbb{Q}}
\newcommand{\Qp}{{\mathbb{Q}_p}}
\newcommand{\Zp}{\mathbb{Z}_p}
\newcommand{\ZZ}{\mathbb{Z}}
\newcommand{\g}{\mathbf{g}}
\newcommand{\ord}{\mathrm{ord}}
\newcommand{\cL}{\mathcal{L}}
\newcommand{\cO}{\mathcal{O}}
\newcommand{\GL}{\mathrm{GL}}
\newcommand{\cyc}{\textup{cyc}}
\newcommand{\Ind}{\mathrm{Ind}}
\newcommand{\LL}{\Lambda}
\newcommand{\f}{\textup{\bf f}}
\newcommand{\bl}{\textup{\bf l}}
\newcommand{\bm}{\textup{\bf m}}
\newcommand{\h}{\textup{\bf h}}
\newcommand{\lra}{\longrightarrow}
\newcommand{\res}{\textup{res}}
\newcommand{\p}{\mathfrak{p}}
\newcommand{\q}{\mathfrak{q}}
\newcommand{\cW}{\mathcal{W}}
\newcommand{\cR}{\mathcal{R}}
\newcommand{\Cp}{\mathbb{C}_p}
\newcommand{\cA}{\mathcal{A}}
\newcommand{\fg}{\mathfrak{g}}
\newcommand{\Spf}{{\rm Spec}}
\newcommand{\cl}{{\rm cl}}
\newcommand{\bal}{{\rm bal}}
\newcommand{\wt}{{\rm w}}
\newcommand{\Ad}{{\rm ad}}
\newcommand{\Sym}{{\rm Sym}}
\newcommand{\rmw}{{\rm w}}
\newcommand{\cris}{{\rm crys}}
\newcommand{\hatotimes}{{\,\widehat\otimes\,}}
\newcommand{\hf}{\f}
\newcommand{\hg}{\g}
\newcommand{\hh}{\h}
\newcommand{\Q}{\QQ}
\newcommand{\barQ}{{\overline{\QQ}}}
\newcommand{\C}{{\mathbb C}}
\newcommand{\comm}[1]{}
\definecolor{pinegreen}{rgb}{0.0, 0.47, 0.44}
 \definecolor{pAlgae}{RGB}{87,115,135}
\definecolor{airforceblue}{rgb}{0.36, 0.54, 0.66}
	\definecolor{bondiblue}{rgb}{0.0, 0.58, 0.71}
\definecolor{britishracinggreen}{rgb}{0.0, 0.26, 0.15}
\definecolor{camouflagegreen}{rgb}{0.47, 0.53, 0.42}
\definecolor{darkcyan}{rgb}{0.0, 0.55, 0.55}
\subjclass[2020]{Primary 11F66, 11F67, 11F33; Secondary 11F85, 11G18, 14F30}
\begin{document}

\title{O\lowercase{n the} A\lowercase{rtin formalism for triple product $p$-adic} $L$-\lowercase{functions}: S\lowercase{uper-factorization}}

\author{K\^az\i m B\"uy\"ukboduk}
\address{K\^az\i m B\"uy\"ukboduk\newline UCD School of Mathematics and Statistics\\ University College Dublin\\ Ireland}
\email{kazim.buyukboduk@ucd.ie}

\author{Daniele Casazza} 
\address{Daniele Casazza\newline UCD School of Mathematics and Statistics\\ University College Dublin\\ Ireland}
\email{daniele.casazza@ucd.ie}

\begin{abstract}
We prove the factorization conjecture for triple-product $p$-adic $L$-functions formulated in a companion article in the special case when two of the (three) factors have CM.
\end{abstract}
\maketitle

\tableofcontents

\section{Introduction}

Let $\hf$ and $\hg$ be cuspidal primitive Hida families of elliptic modular forms and let $\hg^c$ denote the conjugate family of $\hg$. Our objective in this paper is to prove Conjecture~2.1 in \cite{BS_Part1} (on the $p$-adic Artin formalism for the triple product $p$-adic $L$-function denoted by $\cL^{\g}_p(\f\otimes \g \otimes \g)^2_{\vert_{\cW_2}}$ in op. cit.) in the special case when the family $\g$ has CM. In \S\ref{sec_super_factorization}, we also settle the algebraic counterpart of this statement in this setting, supplementing and refining the results in \cite{BS_Part1}.

Our main results are proved in \S\ref{subsec_75_2022_06_02_0900}, and our proof relies on what we call \emph{super-factorization}: When $\g$ has CM, the $3$-variable $p$-adic $L$-function $\cL^{\g}_p(\f\otimes \g \otimes \g)^2$ factors as a product two Hida--BDP $p$-adic $L$-functions. To conclude with the proof of \cite[Conjecture 2.1]{BS_Part1}, we prove in \S\ref{subsec_8_2_2022_09_13_1731} an (irregular) factorization statement for a certain Hida--BDP $p$-adic $L$-function (relying crucially on the work of Bertolini--Darmon--Venerucci~\cite{BDV}), where we express its restriction to the central critical line as the product of the Perrin-Riou logarithm of the big Beilinson--Kato elements associated to the family $\f$, and the values of the Mazur--Kitagawa $p$-adic $L$-function (attached to the quadratic twist $\f\otimes\epsilon_K$ of $\f$, where $\epsilon_K$ is the unique non-trivial Dirichlet character determined by $K/\QQ$) at the central critical line. We remark that the factorization of this Hida--BDP $p$-adic $L$-function also reflects what we call in \cite{BS_Part1} the BDP-principle.

Before stating our results in detail, we introduce the notation that we will rely on throughout this work in \S\ref{subsec_the_set_up_intro}.

\subsection{The set-up}
\label{subsec_the_set_up_intro}
Let $p$ be an odd prime and let $\cO$ be the ring of integers of a finite extension $E$ of $\Qp$. In what follows, we will enlarge $E$ as befitting our needs.
\subsubsection{Hida families}
\label{subsubsec_2022_05_16_1506}
Let us put $[\,\cdot\,]: \ZZ_p^\times \hookrightarrow \LL_{\rm wt}^\times$ for the natural injection, where $\LL_{\rm wt}:=\Zp[[\Zp^\times]]$. The universal weight character $\bbchi$ is the composite map $G_\QQ\stackrel{\chi_\cyc}{\lra}\ZZ_p^\times\hookrightarrow \LL_{\rm wt}^\times$, where $\chi_\cyc$ is the $p$-adic cyclotomic character. 
We will regard integers as elements of the weight space ${\rm Spec}(\LL_{\rm wt})(\cO)$ via $\ZZ\ni n \mapsto (\nu_n:[x] \mapsto x^n)$.

For any integer $k$, let us define $\LL_{\rm wt}^{(k)}\cong\LL(1+p\ZZ_p)$ as the component that is determined by the weight $k$, in the sense that the map $\LL_{\rm wt} \stackrel{\nu_{k} }{\lra} \ZZ_p$ factors through $\LL_{\rm wt}^{(k)}$. We let $\h=\sum_{n=1}^{\infty} \mathbb{a}_{n}(\h)q^n \in \cR_\h[[q]]$ denote the branch of the primitive Hida family of tame conductor $N_\h$ and tame nebentype character $\varepsilon_\h$\,, which admits a crystalline specialization $h_{\circ}$ of weight $k$. Here, $\cR_\h$ is the irreducible component of Hida's universal ordinary Hecke algebra determined by $h_{\circ}$. It is finite flat over $\LL_{\rm wt}^{(k)}$ and one has $a_p(\h) \in \mathcal \cR_\h^\times$. 
The universal weight character $\bbchi$ gives rise to the character
\[
    \bbchi_{\h}:  G_\QQ\xrightarrow{\bbchi} \LL_{\rm wt}^\times \twoheadrightarrow \LL_{\rm wt}^{(k),\times}\lra \cR_\h^\times\,.
\]
For any $\kappa\in \cW_\hh :={\rm Spec}(\cR_\h)(\C_p)$, let us write ${\rm wt}(\kappa)\in {\rm Spec}(\LL_{\rm wt})(\C_p)$ for the point that $\kappa$ lies over and call it the weight character of $\kappa$.
We call $\cW_\h$ the weight space for the Hida family $\h$. We say that $\kappa \in \cW_\h$ is an arithmetic (or classical) classical specialization of $\h$ whenever the restriction $\wt(\kappa)$ of $\kappa$ to $\LL_{\rm wt}^{(k)}$ agrees with $\nu_{n}$ on an open subgroup of $\ZZ_p^\times$ for some integer $n\in \ZZ_{\geq 2}$. In this case, we will abusively treat $\wt(\kappa)$ as an integer, identifying it with $n$ as above.  We denote by $\cW_\hf^{\rm cl}\subset \cW_\hf$ the set of arithmetic specializations. We say that an arithmetic specialization $\kappa$ is crystalline if ${\rm w}(\kappa)\equiv k\mod (p-1)$. 

Let $G_{\Q,\Sigma} \xrightarrow{\rho_\h} \GL_2(\mathrm{Frac}(\cR_\h))$ denote
Hida's Galois representation attached to $\h$, where $\Sigma$ is a finite set of primes containing all those dividing $pN_\h\infty$. Let us denote by $T_\h\subset \mathrm{Frac}(\cR_\h)^{\oplus 2}$ the Ohta lattice (cf. \cite{ohta99,ohta00}, see also \cite{KLZ2} where our $T_\h$ corresponds to $M(\h)^*$ in op. cit.), that realizes the Galois representation $\rho_\h$ in the \'etale cohomology groups of a tower of modular curves. Assume in addition that 
    \begin{itemize}
       \item[\mylabel{item_Irr}{\bf (Irr)}]  the residual representation $\bar{\rho}_\h$ is absolutely irreducible
    \end{itemize}
then $T_\h$ is a free $\cR_\h$-module of rank $2$. By a theorem of Wiles, we have
${\rho_\h}_{|_{G_{\Qp}}} \simeq \begin{pmatrix} \delta_\h & * \\ 0 & \alpha_\h\end{pmatrix}$, where $G_{\QQ_p}\xrightarrow{\alpha_\h} \cR_\h^\times$ is the unramified character given by $\alpha_\h({\rm Frob}_p)=a_p(\h)$ and 
$\delta_\h:=\bbchi_{\h}\, \chi_\cyc^{-1}\, \alpha_\h^{-1}\,\varepsilon_\h$. If we have
  \begin{itemize}
         \item[\mylabel{item_Dist}{\bf (Dist)}] $\delta_\h \not\equiv \alpha_\h \mod \mathfrak{m}_{\h}$,\qquad $\mathfrak{m}_{\h}\subset \cR_\h$ is the maximal ideal,
    \end{itemize}
then the lattice $T_\h$ fits in an exact sequence
\begin{equation} \label{eqn:filtrationf}
    0\lra T_\h^+ \lra T_\h \lra T_\h^- \lra 0 
\end{equation} 
of $\cR_\h[[G_{\Qp}]]$-modules. 
We henceforth assume, unless we explicitly state otherwise, that \ref{item_Irr} and \ref{item_Dist} hold true for all Hida families that appear in this work.

\subsubsection{Self-dual triple products}
\label{subsubsec_211_2022_06_01_1635}
Let $\f$ and $\g$ be primitive Hida families of ordinary $p$-stabilized newforms of tame levels $N_\f$ and $N_\g$, such that $\varepsilon_\f=\mathds{1}$ is the trivial character. Let us put $N:={\rm LCM}(N_\f, N_\g)$ and put $T_3 := T_\f\,\widehat\otimes\,_{\ZZ_p} T_\g\,\widehat\otimes\,_{\ZZ_p} T_{\g^c}$, where $\hg^c=\hg\otimes \varepsilon_\hg^{-1}$ is the conjugate Hida family. Note that we have identified the ring $\cR_{\g^c}$ with $\cR_\g$, through which we shall treat a specialization $\lambda \in \cW_\hg$ of $\cR_{\hg}$ also as a specialization of  $\cR_{\g^c}$. When $\lambda$ is crystalline, the specialization $\hg^c_\lambda$ of the Hida family $\hg^c$ coincides with the conjugate of the overconvergent eigenform $\g_\lambda$; namely, $\hg^c_\lambda=\g_\lambda\otimes \epsilon_{\hg}^{-1}\,.$

Let us put $\cR_3 := \cR_\f \,\widehat\otimes\,_{\ZZ_p} \cR_\g \,\widehat\otimes\,_{\ZZ_p} \cR_\g$, and consider the associated weight space 
$$\cW_3 := {\rm Spec}(\cR_3)(\Cp):=\cW_\hf\times \cW_\hg \times \cW_\hh.$$ 
Since $\varepsilon_\f=\mathds{1}$, we have a perfect $G_\QQ$-equivariant pairing
$T_3\otimes T_3\to \bbchi_{3}\chi_\cyc^{-{3}}$\,, where $\bbchi_{3}:=\bbchi_{\f}\otimes \bbchi_{\g} \otimes \bbchi_{\g^c}$. Since $p>2$ by assumption, there exists a unique character $\bbchi_{3}^{\frac{1}{2}}:G_\Q\to \cR_3^{\times}$ with $(\bbchi_3^{\frac{1}{2}})^2=\bbchi_3$. Then the Galois representation $T_3^\dagger := T_3\otimes \bbchi_3^{-\frac{1}{2}}\chi_\cyc^2$
is self-dual.

\subsubsection{}
Since $\varepsilon_\f=\mathds{1}$, we have $\det(\rho_\f)=\bbchi_{\f}\chi_\cyc^{-1}$. As above, we may and will define the central critical twist of $T_\f$ on setting $T_\f^\dagger:=T_\f\otimes \bbchi_\f^{-\frac{1}{2}}\chi_\cyc$.  The Galois representation $M_2^\dagger := T_\hf^\dagger\hatotimes\,_{\ZZ_p} \Ad^0(T_\hg)$, where $\Ad^0$ denotes the trace-zero endomorphisms of $T_\hg$, is then self-dual as well. We note that $M_2^\dagger$ is a Galois representation of rank-$6$ over the complete local Noetherian ring 
$\cR_2 := \cR_\hf\hatotimes_{\ZZ_p} \cR_\hg$ of Krull-dimension $3$.
Let us consider the associated weight space $\cW_2 := \Spf(\cR_2)(\Cp)$. We decompose the set of classical points in the weight space $\cW_2^{\rm cl}\subset \cW_2$ as follows:
\begin{equation}
\label{eqn_2022_05_17_1552}
     \cW_2^{\rm cl} = \underbrace{\left\lbrace (\kappa,\lambda) \in \cW_2^{\cl}: 2{\rm w}(\lambda) > {\rm w}(\kappa) \right\rbrace}_{\cW_2^{\Ad}}\quad \bigsqcup\quad\underbrace{\left\lbrace (\kappa,\lambda) \in \cW_2^{\cl}: 2{\rm w}(\lambda) \leq {\rm w}(\kappa) \right\rbrace}_{\cW_2^{\hf}}\,.
\end{equation}

\subsubsection{} 
Let us consider the natural homomorphisms
\begin{align}
\begin{aligned}
     \label{eqn_2022_05_17_1410}
     \cR_3&\xrightarrow{\iota_{2,3}^*} \cR_2& \qquad\qquad \cR_\hf&\xhookrightarrow{\varpi_{2,1}^*} \cR_2\\
    a\otimes b\otimes c &\longmapsto a\otimes bc &\qquad\qquad a&\longmapsto a\otimes 1
\end{aligned}
\end{align}
of complete local Noetherian $\ZZ_p$-algebras, which induce the morphisms
\begin{align}
\begin{aligned}
     \label{eqn_2022_05_17_1421}
     \cW_2&\xrightarrow{\iota_{2,3}} \cW_3 &\qquad\qquad \cW_2&\stackrel{\varpi_{2,1}}{\twoheadrightarrow} \cW_{\hf}\\
    (\kappa,\lambda) &\longmapsto (\kappa,\lambda,\lambda)&\qquad\qquad  (\kappa,\lambda)&\longmapsto \kappa\,.\\
\end{aligned}
\end{align}
Let us put $T_2^\dagger:=\iota_{2,3}^* \bigl( T_3^\dagger\bigr)$. We then have a split exact sequence
\begin{equation} \label{eqn_factorisationrepresentations_tr}
    0\lra M_2^\dagger\xrightarrow{\iota_{\rm tr}} T_2^\dagger\xrightarrow{{\rm id}\otimes {\rm tr}} \varpi^*_{2,1}(T_\hf^\dagger)\lra 0\,,
\end{equation}
where $\Ad(T_\g)\xrightarrow{{\rm tr}} \cR_\g$ is the trace map. The self-duality of $M_2^\dagger$, $T_2^\dagger$ and $T_\f^\dagger$ gives rise to the exact sequence 
\begin{equation} \label{eqn_factorisationrepresentations_dual_trace}
    0\lra \varpi^*_{2,1}(T_\hf^\dagger) \xrightarrow{{\rm id}\otimes {\rm tr}^*} T_2^\dagger\xrightarrow{\pi_{{\rm tr}^*}} M_2^\dagger\lra 0\,, 
\end{equation}
where ${\rm tr}^*$ is given by transposing the trace map and using the self-duality of $\Ad(T_\g)$. 

\subsubsection{L-functions and periods}
\label{subsubsec_213_2022_05_17_1455}
For an arithmetic specialization $x=(\kappa,\lambda,\mu)\in \cW_3^{\rm cl}:=\cW_\hf^\cl\times\cW_\hg^\cl\times\cW_\hh^\cl$
of weight $({\rm w}(\kappa), {\rm w}(\lambda), {\rm w}(\mu))$, let $f,g,h$
whose ordinary $p$-stabilizations are $(\hf_\kappa,\hg_\lambda,\hh_\mu)$. Let us write 
$$\mathscr{M}(x)=\mathscr{M}(f\otimes g\otimes h):=\mathscr{M}(f) \otimes \mathscr{M}(g)\otimes \mathscr{M}(h)$$ 
for the motive associated to $f\otimes g\otimes h$, where $\mathscr{M}(?)$ (for $?=f,g,h$) stands for Scholl's motive, whose self-dual twist is $\mathscr{M}^\dagger(x):= \mathscr{M}(x)(c(x))$, with $$c(x):=\frac{{\rm w}(\kappa)+{\rm w}(\lambda)+{\rm w}(\mu)}{2}-1\,.$$
The $L$-function $L(f\otimes g\otimes h,s)$ associated with the motive $\mathscr{M}(x)$ is known to have an analytic continuation and satisfies a functional equation of the form
\begin{equation} \label{eqn:functionalequationtriple}
    \Lambda(f\otimes g \otimes h, s) \doteq \varepsilon(f\otimes g \otimes h) \cdot N(f\otimes g\otimes h)^{-s} \cdot \Lambda(f\otimes g\otimes h, 2c-s),
\end{equation}
where $\Lambda(f\otimes g\otimes h,s)$ denotes the completed $L$-function, $N(f\otimes g\otimes h)\in \ZZ^+$, and where $\varepsilon(f\otimes g \otimes h)\in \{\pm 1\}$ is the global root number (which is given as the product of local root numbers).  

The set of classical points can be subdivided into the following regions:
\begin{align*}
	\cW_3^\bal &:= \{ x\in \cW_\hf^\cl\times\cW_\hg^\cl\times\cW_{\g^c}^\cl \mid {\rm w}(\kappa)+{\rm w}(\lambda)+{\rm w}(\mu) > 2\max\{{\rm w}(\kappa), {\rm w}(\lambda), {\rm w}(\mu)\} \} 
 \\
\cW_3^\f &:= \{ x\in \cW_\hf^\cl\times\cW_\hg^\cl\times\cW_\hh^\cl \mid {\rm w}(\kappa)+{\rm w}(\lambda)+{\rm w}(\mu) \leq 2{\rm w}(\kappa)\} 
\end{align*}
and we similarly define $\cW_3^\g$ and $\cW_3^{\g^c}$. Since the non-archimedean local root numbers are constant in families, and the archimedean root number equals $+1$ in $\cW_3^\bal$ and $-1$ otherwise, we have $\varepsilon(f\otimes g \otimes h) = \pm 1$ if $(\kappa,\lambda,\mu)\in \cW_3^\bal$ and $\varepsilon(f\otimes g \otimes h)=\mp 1$ has the opposite sign otherwise.

In the present paper, we are primarily interested in the setting when 
\begin{equation}
\label{eqn_2022_05_16_1626}
\varepsilon(f\otimes g \otimes h)=-1 \quad  \hbox{ if } (\kappa,\lambda,\mu)\in \cW_3^\bal\,.
\end{equation}
We remark that this is precisely the scenario of \cite{BSV, DarmonRotger}, where the authors have studied the cycles that account for the systematic vanishing (enforced by the functional equation \eqref{eqn:functionalequationtriple} above) of the $L$-series at the central critical point.

\subsubsection{Triple product $p$-adic $L$-functions} Under certain technical assumptions (cf. \S\ref{subsec:tripleproducts}), Hsieh has constructed four $p$-adic $L$-functions
\[
    \cL_p^\bullet(\hf\otimes\hg\otimes \hh) : \cW_3\lra \Cp, \qquad  \bullet\in \{\bal,\hf,\hg,\hh\}.
\]
These are uniquely determined by the following interpolation property: For all classical $(\kappa,\lambda,\mu)\in\cW_3^\bullet$,
\begin{equation}
\label{eqn_2022_05_16_1610}
    \cL_p^\bullet(\hf\otimes\hg\otimes \hh)(\kappa,\lambda,\mu) \doteq \frac{\Lambda(f\otimes g\otimes h,c)}{\Omega^\bullet}, \qquad    \bullet\in \{\bal, \hf,\hg,\hh\}\,.
\end{equation}
See Theorem~\ref{thm:unbalancedinterpolation} (due to Hsieh) below for the precise formula in the unbalanced case (and also \cite{Hsieh}, Theorems A and B).

\subsection{The Artin formalism} 
\label{subsec_Artin_formalism_intro}
Let $x=(\kappa,\lambda,\lambda)\in \cW_3^{\cl}$ be a classical point and let $f=\hf_\kappa$, $g=\hg_\lambda$ and $g^c=\hg^c_\lambda$ denote the corresponding specializations. The self-dual motive $\mathscr{M}^\dagger(x)$ attached to $f\otimes g \otimes h$ (cf. \S\ref{subsubsec_213_2022_05_17_1455}) then decomposes:
\begin{equation}
\label{eqn_2022_05_17_1544}
    \mathscr{M}^\dagger(x)=(\mathscr{M}^\dagger(f)\otimes {\rm ad}^0\mathscr{M}(g))\oplus \mathscr{M}^\dagger(f)\,.
\end{equation}
The Artin formalism applied to this decomposition shows that the complex analytic triple product $L$-function naturally factors as
\begin{equation} \label{eqn:complexfactorization}
    L(f\otimes g \otimes g^c,s) = L(f\otimes \Ad^0(g),s-{\rm w}(\lambda)+1) \cdot L(f,s-{\rm w}(\lambda)+1).
\end{equation}
At the central critical point ${\rm w}(\kappa)/2+{\rm w}(\lambda)-1$, the factorization \eqref{eqn:complexfactorization} reads
\begin{equation}
   \label{eqn_2022_05_16_1424}
    L(f\otimes g \otimes g^c,{\rm w}(\kappa)/2+{\rm w}(\lambda)-1) = L(f\otimes \Ad^0(g),{\rm w}(\kappa)/2) \cdot L(f,{\rm w}(\kappa)/2)\,.
    \end{equation}
 \subsubsection{$p$-adic $L$-functions of families of degree 6 and degree 2 motives.}
 \label{subsubsec_intro_padic_artin_formalism_1}
Our main goal in the present work is to establish $p$-adic analogues of the factorization~\eqref{eqn_2022_05_16_1424}.  Recall that there are four $p$-adic $L$-functions one may consider, 
and the $p$-adic Artin formalism in our set-up amounts to the factorization of these four $p$-adic $L$-functions, into a product of two $p$-adic $L$-functions.

We briefly discuss the nature of these two $p$-adic $L$-functions to motivate the reader for the main problem at hand and for our results. The $p$-adic $L$-function that is related to the second summand in \eqref{eqn_2022_05_17_1544} (as $f$ varies in the Hida family $\hf$) is the Mazur--Kitagawa $p$-adic $L$-function 
$\cL_p^{\rm Kit}(\hf)=\cL_p^{\rm Kit}(\hf)(\kappa,\pmb{\sigma})$\,, where $\pmb{\sigma}$ denotes the cyclotomic variable. The putative $p$-adic $L$-functions that correspond to the first summand in \eqref{eqn_2022_05_17_1544} (as both $f$ and $g$ vary in the respective Hida families) are given by the interpolation formula (in very rough form; see \S\ref{sec:Adjoint} for details)
\begin{align}
\label{eqn_2022_05_17_1606}
\begin{aligned}
 \cL_p^\Ad(\hf\otimes\Ad^0\hg) &\doteq L(\hf_\kappa\otimes \Ad^0(\hg_\lambda),{\rm w}(\kappa)/2), \qquad  (\kappa,\lambda)\in \cW_2^\Ad\,,
 \\
     \cL_p^\hf(\hf\otimes\Ad^0\hg) &\doteq L(\hf_\kappa\otimes \Ad^0(\hg_\lambda),{\rm w}(\kappa)/2), \qquad (\kappa,\lambda)\in \cW_2^\hf\,,
\end{aligned}
\end{align}
where $\cW_2^\Ad$ and $\cW_2^\hf$ are given as in \eqref{eqn_2022_05_17_1552}. We construct the $p$-adic $L$-function $\cL_p^\Ad(\hf\otimes\Ad^0\hg)$ when the Hida family $\g$ has CM.

\subsubsection{} 
Let us denote by $\varepsilon(\hf)=\pm 1$ the global root number of some (any) member of the Hida family $\f$ at its central critical point. In view of \eqref{eqn_2022_05_17_1606} and the interpolation formula for the Mazur--Kitagawa $p$-adic $L$-function, we infer that either $\cL_p^\Ad(\hf\otimes\Ad^0\hg)=0$ (if $\varepsilon(\hf)=+1$)\,, or else $\cL_p^{\rm Kit}(\hf)(\kappa,{\rm w}(\kappa)/2)=0$ identically (if $\varepsilon(\hf)=-1$). In all cases, we have the rather uninteresting factorization 
$$ \cL_p^{\bal}(\f\otimes\g\otimes\g^c)^2\,_{\vert_{\cW_2}}=0=\cL_p^\Ad(\hf\otimes\Ad^0\hg)\cdot L_p^{\rm Kit}(\hf)\,_{\vert_{\sigma=\frac{\wt(\kappa)}{2}}}$$
of the balanced $p$-adic $L$-function.
   
\subsubsection{} 
 \label{subsubsec_intro_padic_artin_formalism_5}

The factorization of the $\f$-dominant $p$-adic $L$-function
\begin{equation}
\label{eqn_2022_05_17_1725}
\cL_p^{\hf}(\f\otimes\g\otimes\g^c)^2\,_{\vert_{\cW_2}}\,\dot{=}\,\cL_p^\hf(\hf\otimes\Ad^0\hg)\cdot \cL_p^{\rm Kit}(\hf)\,_{\vert_{\sigma=\frac{\wt(\kappa)}{2}}}
\end{equation}
reduces, once we are provided with the $p$-adic $L$-function $\cL_p^\hf(\hf\otimes\Ad^0\hg)$ with the expected interpolative properties, to a comparison of periods at the specializations of both sides at $(\kappa,\lambda)\in \cW_2^{\hf}$. When $\varepsilon(\hf)=-1$, this again reduces to $0=0$.

\subsubsection{Main results} 
\label{subsubsec_2017_05_17_1800}
It remains to consider the factorization problem for $\cL_p^{\hg}(\f\otimes\g\otimes\g^c)\,_{\vert_{\cW_2}}$. This is the most challenging case, since the interpolation range for $\cL_p^{\hg}(\f\otimes\g\otimes\g^c)\,_{\vert_{\cW_2}}:=\iota_{2,3}^*\circ \cL_p^\g(\f\otimes\g\otimes\g^c)$ is empty, as $\iota_{2,3}(\cW_2)\cap \cW_3^\hg =\emptyset$. We will address this problem in the scenario when $\varepsilon(\hf)=-1$, and prove the following theorem (which is a special case of \cite{BS_Part1}, Conjecture 2.1):

\begin{theorem}[Theorem~\ref{Thm:8=6+2CM}]
\label{thm_main_6_plus_2}
Suppose that the family $\g$ has complex multiplication by a quadratic imaginary field discriminant coprime to $p$. Assume also that $\varepsilon(\hf)=-1$ and \eqref{eqn_2022_05_16_1626} holds true. We have the factorization of $p$-adic L-functions
\[
    \cL_p^\hg(\hf\otimes\hg\otimes\hg^c)^2\,_{\vert_{\cW_2}} = \mathscr C(\kappa)\cdot \cL_p^\Ad(\hf\otimes \Ad^0\hg)\cdot {\rm Log}_{\omega_\f}({\rm BK}_{\f}^\dagger)\,,
        \]
over sufficiently small wide-open discs $U \times U'$ in ${\rm Spm}(\cR_\f[\frac{1}{p}]) \times {\rm Spm}(\cR_\g[\frac{1}{p}])$; where $\mathscr C(\kappa)$ is a meromorphic function with an explicit algebraicity property at crystalline specializations $\kappa$ (cf. Theorem~\ref{Thm:8=6+2CM}).
\end{theorem}
Here, ${\rm Log}_{\omega_\f}({\rm BK}_\f^\dagger)$ denotes the logarithm of the big Beilinson--Kato class (constructed by Ochiai); cf. \cite[\S6.1.2]{BS_Part1} for relevant definitions. We refer the reader to \S6.2.7 in op. cit. to explain how it encodes information about the derivatives of $p$-adic $L$-functions in a precise manner.

\subsubsection{} Even though Theorem~\ref{thm_main_6_plus_2} has the same flavour as the main results of \cite{Dasgupta2016,Gross1980Factorization}, there are key technical differences. These points of divergence are detailed in \cite[\S2.4]{BS_Part1}.

\subsubsection{} 
\label{subsubsec_231_intro_2022_09_28_1640}
We note that, along the way to prove Theorem~\ref{thm_main_6_plus_2}, we prove a factorization formula for the BDP $p$-adic $L$-function (Proposition~\ref{prop:factorizationBDPKitagawa}), an ingredient in our proof of Theorem~\ref{thm_main_6_plus_2}. This factorization result might be of independent interest.

\subsubsection{} 
We also prove the algebraic counterpart of Theorem~\ref{thm_main_6_plus_2}, which amounts to a factorization of the modules of leading terms (under mild hypotheses) that Sakamoto and the first named author introduced in \cite[\S6.1.3 and \S6.1.4]{BS_Part1}. As in op. cit., we denote the module of algebraic $p$-adic $L$-functions by $\delta(T,\Delta)$ for the pairs $(T,\Delta)\in \{(M_2^\dagger,{\rm tr}^*\Delta_\g), (T_2^\dagger,\Delta_\g)\}$.

\begin{theorem}[Theorem~\ref{thm_main_8_4_4_factorization_bis} below]
\label{thm_main_8_4_4_factorization_intro}
Under the hypotheses of Corollary~\ref{cor_2022_09_20_1303}, we have
\begin{equation}
\label{eqn_2022_09_13_1733_intro}
\delta(T_2^\dagger,\Delta_\g)\,{=}\,  {\rm Exp}_{F^-T_\f^\dagger}^*(\delta(M_2^\dagger,{\rm tr}^*\Delta_{\g}))\cdot \varpi_{2,1}^*{\rm Log}_{\omega_\f}({\rm BK}_\f^\dagger).
\end{equation}
\end{theorem}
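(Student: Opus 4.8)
The plan is to compare the two sides of \eqref{eqn_2022_09_13_1733_intro} by exploiting the split exact sequences \eqref{eqn_factorisationrepresentations_tr} and \eqref{eqn_factorisationrepresentations_dual_trace}, which decompose $T_2^\dagger$ into $M_2^\dagger$ and $\varpi_{2,1}^*(T_\f^\dagger)$ both as a sub- and as a quotient. First I would set up the relevant Selmer complexes: for the triple $(T_2^\dagger,\Delta_\g)$ and for the pair $(M_2^\dagger,{\rm tr}^*\Delta_\g)$, together with the local conditions at $p$ coming from the Greenberg-type filtrations introduced in \S\ref{sec_selmer_complexes}. The key point is that the local condition $\Delta_\g$ at $p$ for $T_2^\dagger$ should be compatible with ${\rm tr}^*\Delta_\g$ on the subrepresentation $M_2^\dagger$ and with the Panchishkin-type condition $F^-T_\f^\dagger$ on the quotient $\varpi_{2,1}^*(T_\f^\dagger)$; granting this compatibility (which I would verify by a direct inspection of the filtration on $T_\f$ in \eqref{eqn:filtrationf} tensored with $\Ad^0(T_\g)$ and $T_\f^\dagger$), the split exact sequence \eqref{eqn_factorisationrepresentations_dual_trace_splitting} induces a corresponding (split, up to controlled error) distinguished triangle of Selmer complexes, hence a multiplicativity statement for the modules of leading terms $\delta(-,-)$.

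Next I would invoke the general functoriality of the module of leading terms that is established in \S\ref{sec_Koly_Sys_Dec_4_18_05_2021}: under a short exact sequence of Selmer complexes whose outer terms are (respectively) of the expected Selmer rank, the module of leading terms of the middle term factors as the product of the modules of leading terms of the outer terms, twisted by the appropriate local comparison maps at $p$. Here the comparison map on the $\varpi_{2,1}^*(T_\f^\dagger)$-part is exactly the dual exponential ${\rm Exp}^*_{F^-T_\f^\dagger}$ appearing in the statement, applied to the ${\rm tr}^*$-direction, while on the $M_2^\dagger$-part it is the identity (after the identification ${\rm tr}^*$). The Beilinson--Kato class enters because the module of leading terms $\delta(\varpi_{2,1}^*(T_\f^\dagger),F^-T_\f^\dagger)$ — the algebraic $p$-adic $L$-function attached to $T_\f^\dagger$ with its Panchishkin condition — is generated, under the hypotheses of \S\ref{subsubsec_hypo_section_6} (which should include the relevant rank-one statement, a consequence of Kato's Euler system and the non-vanishing of ${\rm BK}_\f^\dagger$), by precisely ${\rm Log}_{\omega_\f}({\rm BK}_\f^\dagger)$; pulling this back along $\varpi_{2,1}^*$ produces the last factor in \eqref{eqn_2022_09_13_1733_intro}.

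I would then assemble these pieces: the multiplicativity of leading terms along \eqref{eqn_factorisationrepresentations_dual_trace} gives
\[
\delta(T_2^\dagger,\Delta_\g) \,=\, {\rm Exp}^*_{F^-T_\f^\dagger}\bigl(\delta(M_2^\dagger,{\rm tr}^*\Delta_\g)\bigr)\cdot \delta\bigl(\varpi_{2,1}^*(T_\f^\dagger),F^-T_\f^\dagger\bigr),
\]
and substituting the identification of the second factor with $\varpi_{2,1}^*{\rm Log}_{\omega_\f}({\rm BK}_\f^\dagger)$ yields the claimed identity. The non-vanishing hypothesis on $\delta(T_2^\dagger,\Delta_\g)$ is used to guarantee that all modules in sight are cyclic of the expected rank and that no spurious torsion is absorbed or lost when passing through the distinguished triangle (in particular, that the Tamagawa-type correction factors studied in \S\ref{subsubsec_2022_09_08_1236} are trivial or cancel), so that the equality of generators is an honest equality of ideals rather than merely an inclusion.

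The main obstacle I anticipate is the control of the local conditions at $p$ and the associated error terms: the splitting \eqref{eqn_factorisationrepresentations_dual_trace_splitting} is $G_\Q$-equivariant globally, but the Greenberg local conditions $\Delta_\g$, ${\rm tr}^*\Delta_\g$ and $F^-T_\f^\dagger$ are defined via the $G_{\Q_p}$-filtrations, and one must check that the restriction of $\Delta_\g$ to the sub/quotient pieces agrees with the prescribed conditions \emph{on the nose} (not merely up to finite index) — this is where the hypothesis that $\g$ be non-CM, the distinguishedness assumption \ref{item_Dist}, and the careful bookkeeping of \S\ref{subsubsec_hypo_section_6} and \S\ref{subsubsec_2022_09_08_1236} all get used. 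A secondary difficulty is verifying that the dual exponential ${\rm Exp}^*_{F^-T_\f^\dagger}$ is the \emph{correct} comparison map induced on leading terms by the arithmetic dual of the inclusion ${\rm id}\otimes{\rm tr}^*$; this requires matching the reciprocity/adjunction used to define ${\rm Exp}^*$ with the duality pairing on the Selmer complexes, which I would handle by reducing to the analogous, already-understood statement for $T_\f^\dagger$ alone and then base-changing along $\varpi_{2,1}^*$.
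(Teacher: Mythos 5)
Your central compatibility claim is false, and it is precisely the point the paper is organized around. When the Greenberg condition $\Delta_\g$ on $T_2^\dagger$ is propagated through the dual-trace sequence \eqref{eqn_factorisationrepresentations_dual_trace}, the induced condition on the sub $\varpi_{2,1}^*(T_\f^\dagger)$ is the \emph{strict} condition $\Delta_0$ (since $F_\g^+T_2^\dagger\cap \varpi_{2,1}^*(T_\f^\dagger)=0$; see Remark~\ref{remark_defn_propagate_local_conditions_via_dual_trace_V_fdagger}), not a Panchishkin-type condition $F^\pm T_\f^\dagger$; and passing to the trace sequence \eqref{eqn_factorisationrepresentations_tr} does not help, since there the quotient $T_\f^\dagger$ inherits the relaxed condition. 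In particular the outer terms of the resulting triangle \eqref{eqn_sequence_dual_trace_derived_category} have Euler characteristics $+1$ and $-1$: $\widetilde{H}^2_{\rm f}(G_{\QQ,\Sigma},T_\f^\dagger,\Delta_0)$ has $\cR_\f$-rank one, $\widetilde{H}^1_{\rm f}(G_{\QQ,\Sigma},M_2^\dagger,{\rm tr}^*\Delta_\g)$ has rank one, and a module of leading terms for $(T_\f^\dagger,\Delta_0)$ does not even exist in the framework of \S\ref{sec_Koly_Sys_Dec_4_18_05_2021} (there $r=-1$). Consequently the ``general functoriality'' you invoke --- multiplicativity of $\delta(-,-)$ along a short exact sequence of representations, twisted by local comparison maps --- is neither proved in \S\ref{sec_Koly_Sys_Dec_4_18_05_2021} (Theorem~\ref{thm_PR_formal} compares leading terms under a change of local condition on a \emph{fixed} representation) nor available here: this is exactly the non--weakly-Panchishkin, ``irregular'' situation contrasted with Palvannan's setting in \S\ref{subsubsec_243_2022_05_21_1105}, where such a determinant factorization would be immediate.

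What the actual proof does instead, and what is missing from your outline: from the triangle one extracts the four-term exact sequence \eqref{eqn_sequence_dual_trace_cohomology}, reducing $\delta(T_2^\dagger,\Delta_\g)={\rm char}\,\widetilde{H}^2_{\rm f}(T_2^\dagger,\Delta_\g)$ to the characteristic ideal of $\bigl(\widetilde{H}^2_{\rm f}(T_\f^\dagger,\Delta_0)\otimes_{\varpi_{2,1}^*}\cR_2\bigr)\big/\delta^1\bigl(\delta(M_2^\dagger,{\rm tr}^*\Delta_\g)\bigr)$; one then needs the explicit cocycle-level computation of the connecting map $\delta^1$ (Propositions~\ref{prop_delta1_explicit_general}--\ref{prop_z_p_interms_of_xp}), showing it factors through ${\rm pr}_{/\g}\circ\res_p$ and through $H^1(G_p,T_\f^\dagger)\big/\res_p\widetilde{H}^1_{\rm f}(T_\f^\dagger,\Delta_\emptyset)$, and the splitting of that quotient into $F^+$ and $F^-$ pieces (Proposition~\ref{prop_useful_step_in_factorization_21_05_2021}). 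Two arithmetic inputs that you never use are essential there: the sign hypothesis $\varepsilon(\f)=-1$ together with Kato's reciprocity law, which forces $\res_p^-({\rm BK}_\f^\dagger)=0$ so that $\res_p(\delta(T_\f^\dagger,\Delta_\emptyset))$ lies in $H^1(G_p,F^+T_\f^\dagger)$ and ${\rm Log}_{\omega_\f}$ can be applied (Proposition~\ref{prop_suport_range_Selmer_complexes_8_6_2_2}); and the leading-term identity \eqref{eqn_prop_suport_range_Selmer_complexes_8_6_2_2_ii} for $(T_\f^\dagger,\Delta_\emptyset)$, resting on \eqref{eqn_2022_09_12_1953_bis} and the hypotheses \eqref{item_MC}, \eqref{item_BI}, \eqref{item_non_anom}, which is what converts the $F^+$-factor into $\varpi_{2,1}^*{\rm Log}_{\omega_\f}({\rm BK}_\f^\dagger)$. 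Your appeal to ``$\delta(T_\f^\dagger,\hbox{Panchishkin})$ is generated by ${\rm Log}_{\omega_\f}({\rm BK}_\f^\dagger)$'' conflates the relaxed condition $\Delta_\emptyset$ (where ${\rm BK}_\f^\dagger$ lives as a leading term) with a local condition that $\Delta_\g$ simply does not induce, so the final substitution in your argument has no support.
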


We remark that Theorem~\ref{thm_main_8_4_4_factorization_intro} supplements \cite[Theorem 2.2]{BS_Part1}, where the authors require that $\g$ does not have CM. It strengthens this result in op. cit. employing Corollary~\ref{cor_2022_09_20_1303}, where we give explicit sufficient conditions to prove the non-vanishing of the module of leading terms $\delta(T_2^\dagger,\Delta_\g)$ when $\g$ has CM, in terms of the BDP and Mazur--Kitagawa $p$-adic $L$-functions, and the Beilinson--Kato element. It is also worthwhile to note that our strategy to prove the factorization in Theorem~\ref{thm_main_8_4_4_factorization_bis} of the modules of algebraic $p$-adic $L$-functions parallels the proof of its analytic counterpart (Theorem~\ref{thm_main_6_plus_2}), as both rely on what we call `super-factorization': When $\g$ has CM, the underlying family of motives of rank $8$ decomposes into a direct sum of three motives of ranks $4$, $2$ and $2$, respectively.

\subsection*{Acknowledgements} KB thanks Henri Darmon for his encouragement. DC thanks Ming-Lun Hsieh for helpful discussions on his work. KB’s research in this publication was conducted with the financial support of Taighde \'{E}ireann -- Research Ireland under Grant number IRCLA/2023/849 (HighCritical).

\section{\texorpdfstring{$p$}{}-adic \texorpdfstring{$L$}{}-functions}
\label{section_2_L_functions}
Our goal in the present section is to introduce the $p$-adic $L$-functions that we work with in this paper.


Throughout this section, for any weight space $\cW_{?}$ that will appear, we will denote by $\cW_{?}^{\rm crys}$ the subset of the classical points in $\cW_{?}^\cl$ which are crystalline at $p$, in the sense that the $p$-adic realization of the motive associated to $\kappa\in \cW_{?}^{\rm crys}$  is required to be crystalline at $p$. In particular, given $\kappa \in \cW_\hf^{\rm crys}$ of weight ${\rm w}(\kappa)\geq 2$, the specialization $\f_\kappa$ is $p$-old and arises as the unique $p$-ordinary stabilization of a newform $\hf_\kappa^\circ \in S_{{\rm w}(\kappa)}(\Gamma_1(N_\hf),\varepsilon_\f)$ (and similarly for $\g$ and $\g^c$).

\subsection{Mazur--Kitagawa / Greenberg--Stevens \texorpdfstring{$p$}{}-adic \texorpdfstring{$L$}{}-function}
\label{subsec_3_1_2022_05_25}
We set $\Gamma_\cyc:=\Gal(\QQ(\mu_{p^\infty})/\QQ))\xrightarrow[\chi_\cyc]{\sim}\ZZ_p^\times$, and let $\Lambda(\Gamma_\cyc):=\ZZ_p[[\Gamma_\cyc]]$ be the cyclotomic Iwasawa algebra. Let us put $\cW_\cyc:={\rm Spec}(\LL)(\mathbb{C}_p)$ and refer to it as the cyclotomic weight space. Note that $\eta\chi_\cyc^j\in \cW_\cyc$ for any integer $j$ and character $\eta$ of $\Gamma_\cyc$ of finite order, where the latter set of characters can be identified by the collection of Dirichlet characters with $p$-power conductor. For notational simplicity, we will sometimes write $\eta+j$ in place of the character $\eta\chi_\cyc^j$. We define the classical points $\cW_\cyc^\cl$ in the cyclotomic weight space on setting
\[
\cW_\cyc^\cl:=\{\eta+j \, \colon j\in \ZZ \hbox{ and } \eta \hbox{ is a Dirichlet character with $p$-power conductor} \}\,.
\]

The following interpolative property for the Mazur--Kitagawa $p$-adic $L$-function is borrowed from \cite[Theorem 6.7]{Ochiai2006} (see also \cite{Vatsal_Periods_1999} concerning the choice of periods):
\begin{theorem}[Kitagawa]
\label{thm_31_2022_06_02_0854}
    There exists an element (the Mazur--Kitagawa $p$-adic $L$-function) 
    $$\cL_p^{\rm Kit}(\hf)\in \cR_\hf[[\Gamma_\cyc]] = \cR_\hf \,\, \widehat{\otimes}_{\ZZ_p} \, \Lambda(\Gamma_\cyc)$$
such that  for every $\kappa\in \cW_\f^\cris$, any $j \in \ZZ\cap [1,\rmw(\kappa)-1]$ and all Dirichlet characters $\eta$ of conductor $p^r \geq 1$,
\begin{align}
\label{eqn_2022_05_26_1001}
\cL_p^{\rm Kit}(\hf)(\kappa,\eta+j)=(-1)^j\,\Gamma(j)\,\mathcal{V}(\f_\kappa^\circ,\eta,j)\,\tau(\eta)\,\frac{p^{(j-1)r}}{a_p(\f_\kappa)^r}\frac{L(\f_\kappa^\circ,\eta^{-1},j)}{(2\pi \sqrt{-1})^{j}\Omega_{\f_\kappa}^{\pm}}C_{\f_\kappa}^\pm
\end{align}
where, 
\begin{itemize}
\item $\tau(\eta)$ is the Gauss sum $($normalized to have norm $p^{r/2}$$)$, 
\item  $\displaystyle{\mathcal{V}(\f_\kappa^\circ,\eta,j)=\left(1-{p^{j-1}\eta(p)}\big{/}{a_p(\f_\kappa)}\right)\left(1-{p^{\kappa-1-j}\eta^{-1}(p)}\big{/}{a_p(\f_\kappa)}\right)}$,
\item $\Omega_{\f_\kappa}^{+}$ and $\Omega_{\f_\kappa}^{-}$ are canonical periods in the sense of \cite[\S1.3]{Vatsal_Periods_1999},
\item $C_{\f_\kappa}^+$ and $C_{\f_\kappa}^-$ are 
non-zero $p$-adic numbers that only depend on $\kappa$,
\item the sign $\pm$ is determined so as to ensure that $(-1)^{(j-1)}\eta(-1)=\pm1$.
\end{itemize}
Moreover, $\cL_p^{\rm Kit}(\hf)$ as above is unique up to multiplication by an element of $\cR_\f$ that is non-vanishing on $\cW_\f^\cl$\,.
\end{theorem}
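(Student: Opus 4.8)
The plan is to construct $\cL_p^{\rm Kit}(\f)$ from a $\Lambda$-adic modular symbol attached to the Hida family $\f$, in the style of Greenberg--Stevens and Kitagawa, and then to extract the interpolation formula \eqref{eqn_2022_05_26_1001} by specializing at crystalline points and invoking the classical relationship between $p$-stabilized modular symbols and twisted $L$-values. Since the statement is Kitagawa's, recorded in \cite[Theorem~6.7]{Ochiai2006}, what follows is an outline of the argument there rather than a new one.

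First I would set up the module of $\Lambda$-adic modular symbols: take the compactly supported Betti cohomology $H^1_c(Y_1(N_\f),\mathcal{D})$ with coefficients in the universal distribution module $\mathcal{D}$ over $\Lambda_{\rm wt}$ (equivalently, the space of $\mathbb{P}^1(\QQ)$-valued modular symbols with values in $\mathcal{D}$), pass to its ordinary part under Hida's idempotent $e=\lim U_p^{n!}$, and localize at the maximal ideal of the branch $\cR_\f$ determined by $f_\circ$. Using the absolute irreducibility hypothesis \ref{item_Irr} — which kills the Eisenstein/boundary contribution and makes $H^1_c$ and $H^1$ agree after localization — this localized module is free of rank $2$ over $\cR_\f$, and the $\cR_\f$-linear action of complex conjugation splits it into $\pm$-eigenspaces, each free of rank $1$. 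I would fix a generator $\xi_\f^{\pm}$ of each eigenspace; this choice is exactly the source of the $p$-adic period ambiguity.

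Next, pairing the universal modular symbol against the distribution $\mu\mapsto\int_{\ZZ_p^\times}x^{j-1}\eta(x)\,d\mu$ and evaluating at the degree-zero divisor $\{0\}-\{\infty\}$ on $\mathbb{P}^1(\QQ)$ yields an element of $\cR_\f\,\widehat{\otimes}_{\ZZ_p}\Lambda(\Gamma_\cyc)$, which one takes (after normalizing by $\xi_\f^{\pm}$) as $\cL_p^{\rm Kit}(\f)$. For the interpolation property, Hida's control theorem together with the Greenberg--Stevens specialization identifies the image of $\xi_\f^{\pm}$ under a crystalline $\kappa\in\cW_\f^\cris$ with the classical modular symbol of the $p$-stabilization $\f_\kappa$, up to the nonzero scalar $C_{\f_\kappa}^{\pm}$ comparing the integral generator with Vatsal's canonical period $\Omega_{\f_\kappa}^{\pm}$ (cf. \cite[\S1.3]{Vatsal_Periods_1999}). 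Granting this, \eqref{eqn_2022_05_26_1001} is the classical computation: by Birch's lemma and the Manin--Mazur--Swinnerton-Dyer formula, the value on $\{0\}-\{\infty\}$ of the modular symbol of $\f_\kappa$ twisted by $\eta\chi_\cyc^{j-1}$ equals $L(\f_\kappa^\circ,\eta^{-1},j)/(2\pi\sqrt{-1})^{j}\Omega_{\f_\kappa}^{\pm}$ times the archimedean factor $(-1)^{j}\Gamma(j)$, the Gauss sum $\tau(\eta)$, the $p$-power correction $p^{(j-1)r}/a_p(\f_\kappa)^{r}$ coming from trivializing the $U_p$-action along the cyclotomic tower, and the two-Euler-factor correction $\mathcal{V}(\f_\kappa^\circ,\eta,j)$ arising from the passage between $\f_\kappa^\circ$ and its $p$-stabilization; the selection of $\pm$ via $(-1)^{j-1}\eta(-1)$ is the usual real/imaginary bookkeeping.

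Finally, for uniqueness I would note that any two admissible constructions of $\cL_p^{\rm Kit}(\f)$ differ by a ratio that specializes at each classical $\kappa$ to $C_{\f_\kappa}^{\pm}/(C_{\f_\kappa}')^{\pm}\in\Cp^{\times}$; since the classical points are Zariski-dense in $\Spf(\cR_\f)$ and $\cR_\f$ is a domain, this ratio lies in $\cR_\f$ and is non-vanishing on $\cW_\f^\cl$, which is precisely the claimed ambiguity. The hard part, and the technical heart of \cite{Ochiai2006,Vatsal_Periods_1999}, is the combination of (i) the rank-$2$ freeness of the localized ordinary $\Lambda$-adic cohomology and (ii) the uniform control of periods — i.e. showing that $\xi_\f^{\pm}$ specializes at every crystalline $\kappa$ to a nonzero, suitably bounded multiple $C_{\f_\kappa}^{\pm}$ of Vatsal's canonical period — since everything else is a repackaging of the classical theory recalled above, now carried out over the family.
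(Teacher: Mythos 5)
The paper offers no proof of Theorem~\ref{thm_31_2022_06_02_0854}: it is quoted directly from \cite[Theorem 6.7]{Ochiai2006} (going back to Kitagawa and Greenberg--Stevens), with the periods normalized as in \cite{Vatsal_Periods_1999}. Your outline is precisely the construction in those references --- ordinary $\Lambda$-adic modular symbols, rank-two freeness of the localized cohomology under \ref{item_Irr}, the $\pm$-eigenvector choices as the source of $C^{\pm}_{\f_\kappa}$, control/specialization followed by the classical Birch/Mazur--Swinnerton-Dyer interpolation computation, and uniqueness via Zariski-density of classical points --- so it matches the approach the paper relies on by citation, with the two genuinely hard inputs (freeness of the $\Lambda$-adic module and the uniform period comparison) correctly identified, though deferred to the literature rather than proved.
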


\comm{\color{red} In particular, when $j=\frac{{\rm w}(\kappa)}{2}$ is the central critical point and $\eta = \mathds{1}$ is the trivial character, the interpolation formula \eqref{eqn_2022_05_26_1001} reads 
\begin{equation}
    \label{eqn_2022_05_26_1013}
     \frac{\cL_p^{\rm Kit}(\hf)(\kappa, \frac{{\rm w}(\kappa)}{2})}{C_{\f_\kappa}^{\pm}}  = (-1)^{\frac{{\rm w}(\kappa)}{2}}\,\Gamma\left(\frac{\rmw(\kappa)}{2}\right)\,\left( 1- \frac{p^{\frac{{\rm w}(\kappa)}{2}-1} }{a_p(\hf_\kappa) }\right)^2\, \frac{ L(\hf_\kappa^\circ, \frac{{\rm w}(\kappa)}{2}) }{ (2\pi\sqrt{-1})^{\frac{{\rm w}(\kappa)}{2}} \Omega_{\hf_\kappa}^{\pm} }
\end{equation}
where $\pm$ is the sign of $(-1)^{\frac{{\rm w}(\kappa)}{2}-1}$.
}


\subsection{Hida--Rankin \texorpdfstring{$p$}{}-adic \texorpdfstring{$L$}{}-functions}
\label{subsec_32_2022_05_23_1426}
Let $\hf$ and $\hg$ denote two primitive Hida families of ordinary $p$-stabilized newforms of respective tame levels $N_\hf$, $N_\hg$ and tame nebentypes $\varepsilon_\f$, $\varepsilon_\hg$. 
Let us consider the weight space $\cW_{\f\g\bf{j}}:=\cW_\hf\times \cW_\hg \times \cW_\cyc$ of relative dimension $3$ over ${\rm Spec}(\ZZ_p)$, whose set of classical (resp. crystalline-at-$p$) points are given by $\cW_{\f\g\bf{j}}^\cl:=\cW_\f^\cl\times \cW_\g^\cl\times \cW_\cyc^\cl$ (resp. $\cW_{\f\g\bf{j}}^{\rm crys}:=\cW_\f^{\rm crys}\times \cW_\g^{\rm crys}\times \cW_\cyc^{\rm crys}$).

Associated to the parameters $(\kappa,\lambda, \eta+j)\in \cW_\f^\cl\times \cW_\g^\cl\times \cW_\cyc^\cl$ we consider the motive
$$\mathscr M(\kappa,\lambda, \eta+j):=\mathscr M(\f_\kappa)\otimes \mathscr M(\g_\lambda)\otimes \mathscr M(\eta^{-1}+1-j),$$ where $\mathscr M(\eta^{-1}+1-j)= \mathscr{M}(\eta^{-1})(1-j)$ is the $(1-j)$-fold Tate-twist of the Artin motive $ \mathscr{M}(\eta^{-1})$. We remark that we have the following explicit description of the $p$-adic realization of $\mathscr M(\kappa,\lambda, \eta+j)$ in terms of Deligne's Galois representations: $\mathscr M_p(\kappa,\lambda, \eta+j)=T_{\f_\kappa}\otimes T_{\g_\lambda}(\eta^{-1}+1-j)\otimes_{\ZZ_p}\QQ_p\,.$

The motive $\mathscr M(\kappa,\lambda, \eta+j)$ admits critical values if and only if $\rmw(\kappa)\neq \rmw(\lambda)$. 
For a choice of crystalline specializations $(\kappa,\lambda)$ we put
\[
    L_{\infty}(\hf_\kappa^\circ\otimes\hg_\lambda^\circ,s) := \begin{cases}
\Gamma_{\C}(s)\ \Gamma_{\C}(s+1-\rmw(\lambda)) , &  \rmw(\kappa)>\rmw(\lambda), \\
\Gamma_{\C}(s)\ \Gamma_{\C}(s+1-\rmw(\kappa)), & \rmw(\lambda)>\rmw(\kappa)
\end{cases}
\]
and define the completed $L$-series $\Lambda(\hf_\kappa^\circ\otimes\hg_\lambda^\circ,s) := L_\infty(\hf_\kappa^\circ\otimes\hg_\lambda^\circ, s) \cdot L(\hf_\kappa^\circ\otimes\hg_\lambda^\circ,s)$.

\subsubsection{Modified Hida periods}
\label{subsubsec_321_2022_05_26_1036}
For any primitive cuspidal Hida family $\f$ as before, consider the generator $\mathfrak c_\f$ of the congruence ideal of $\f$ (cf. \cite[Thm. 4.4(4.6)]{AIF_Hida88}). For a crystalline specialization $\kappa \in \cW_\f^\cris$ with $\wt(\kappa)\geq 2$, we define the modified canonical period on setting
\begin{equation} \label{eqn:modifiedHidaperiods}
    \Omega_{\f_\kappa} := \mathcal E_p(\f_\kappa^\circ,\Ad) \cdot \frac{(-2\sqrt{-1})^{k+1}}{\mathfrak c_\f(\kappa)} \cdot \langle \f_\kappa^\circ, \f_\kappa^\circ \rangle, \qquad \text{where } \quad \mathcal E(\f^\circ_\kappa,\Ad) := \left( 1 - \frac{\beta_{\f^\circ_\kappa}}{\alpha_{\f^\circ_\kappa}} \right) \left( 1-\frac{\beta_{\f^\circ_\kappa}}{p\alpha_{\f^\circ_\kappa}}\right).
\end{equation}
We remark that $\mathfrak c_\f(\kappa) := \kappa(\mathfrak c_\hf)$ generates the congruence ideal of $\f_\kappa$ (cf. \cite[(0.3)]{Hida81}). By \cite[Corollary 6.24 \& Theorem 6.28]{hida_2016}, the period $\Omega_{\f_\kappa}$ is equal to the product $\Omega_{\f_\kappa}^+\Omega_{\f_\kappa}^-$ of the plus/minus canonical periods given as in \cite[Page 488]{hida_1994} up to a $p$-adic unit. 

We also define the modified Euler factor
\begin{align} 
\begin{aligned}
\label{eqn_20220601_1700}
\mathcal E_p^{\f}({\f_\kappa^\circ}\otimes {\g_\lambda^\circ},s)\,\, = \left(1-\frac{p^{s-1}}{\alpha_{\f_\kappa^\circ}\alpha_{\g_\lambda^\circ}}\right)
\left(1-\frac{p^{s-1}}{\alpha_{\f_\kappa^\circ}\beta_{\g_\lambda^\circ}}\right)
\left(1-\frac{\beta_{\f_\kappa^\circ}\alpha_{\g_\lambda^\circ}}{p^s}\right) 
\left(1-\frac{\beta_{\f_\kappa^\circ}\beta_{\g_\lambda^\circ}}{p^s}\right), 
\end{aligned}
\end{align}
and similarly $\mathcal E_p^{\g}({\f_\kappa^\circ}\otimes {\g_\lambda^\circ},s)$ if $\rmw(\lambda)>\rmw(\kappa)$.

\subsubsection{$p$-optimal Hida--Rankin $p$-adic $L$-function} The following interpolation formula is borrowed from \cite{ChenHsieh2020}, where the authors have introduced a $p$-optimal version of Hida's $p$-adic $L$-function:
\begin{theorem}[Hida \cite{AIF_Hida88}] \label{thm:Hida-interp}
      There exist a unique pair of $p$-adic $L$-functions $
    \cL_p^?(\hf\otimes \hg) \in \cR_\hf \hatotimes_{\ZZ_p} \cR_\hg \hatotimes_{\ZZ_p} \Lambda(\Gamma_\cyc)$ ($?=\f,\g$)   with the following properties: for all $(\kappa, \lambda,j) \in \cW_{\hf\hg\bf{j}}^? \cap \cW_{\f\g\bf{j}}^{\rm crys}$ we have
    \begin{align*}
        \cL_p^?({\hf\otimes \hg})(\kappa, \lambda,j) &= \mathcal E_p^{?}({\f_\kappa^\circ}\otimes {\g_\lambda^\circ},j) \cdot (\sqrt{-1})^{\rmw(\kappa)+\rmw(\lambda)-\rmw(\mu)+1-2j}\cdot \mathfrak C_{\rm exc}(\hf\otimes\hg) \cdot \frac{\Lambda({\f_\kappa^\circ}\otimes {\g_\lambda^\circ},j)}{\Omega_{?_{\mu}}}\,.
    \end{align*}
    Here, $\mu=\kappa$ or $\lambda$ depending on whether $?=\f$ or $\g$, respectively, and $\mathfrak C_{\rm exc}(\hf\otimes\hg)= {\prod}_{q\in \Sigma_{\rm exc}(\hf\otimes \hg)}(1+q^{-1})$ as in \cite[\S1.5]{ChenHsieh2020}. 
\end{theorem}

\subsection{Triple product \texorpdfstring{$p$}{}-adic \texorpdfstring{$L$}{}-functions}
\label{subsec:tripleproducts}

Let $\f$, $\g$, and $\h$ be three primitive Hida families of ordinary $p$-stabilized newforms of respective tame levels $N_\f, N_\g, N_\h$ and nebentypes $\varepsilon_\f,\varepsilon_\g,\varepsilon_\h$, all verifying the hypotheses \ref{item_Irr} and \ref{item_Dist}. We assume that $\varepsilon_\f\varepsilon_\hg\varepsilon_\hh$ is the trivial Dirichlet character modulo $N:={\rm LCM}(N_\f,N_\g,N_\h)$. Given a $p$-crystalline classical point $x=(\kappa,\lambda,\mu) \in \cW_3^{\cris} := \cW_\hf^\cris \times \cW_{\hg}^{\cris}\times \cW_{\hh}^{\cris}$, recall that we have put
$c(x) := \frac{{\rm w}(\kappa)+{\rm w}(\lambda)+{\rm w}(\mu)}{2}-1$. 
We will write $c$ in place of $c(x)$ whenever the choice of the point $x=(\kappa,\lambda,\mu)$ is clear from the context.

\subsubsection{}  Let us put
\[
L_{\infty}(\f_\kappa^\circ\otimes\g_\lambda^\circ\otimes\h_\mu^\circ,s) := \begin{cases}
\Gamma_{\C}(s)\ \Gamma_{\C}(s+\wt(\kappa)-2c)\ \Gamma_{\C}(s+1-\wt(\lambda))\ \Gamma_{\C}(s+1-\wt(\mu)), & \quad x\in \cW_3^\hf, \\
\Gamma_{\C}(s)\ \Gamma_{\C}(s+1-\wt(\kappa))\ \Gamma_{\C}(s+1-\wt(\lambda))\ \Gamma_{\C}(s+1-\wt(\mu)), & \quad x\in \cW_3^\bal
\end{cases}
\]
(where $\cW_3^\f$ and $\cW_3^\bal$ are given as in \S\ref{subsubsec_213_2022_05_17_1455}) and consider the completed $L$-series 
\[
    \Lambda(\f_\kappa^\circ\otimes\g_\lambda^\circ\otimes\h_\mu^\circ, s) := L_{\infty}(\f_\kappa^\circ\otimes\g_\lambda^\circ\otimes\h_\mu^\circ,s)\, L(\f_\kappa^\circ\otimes\g_\lambda^\circ\otimes\h_\mu^\circ,s)\,.
\]
The completed $L$-series $\Lambda(\f_\kappa^\circ\otimes\g_\lambda^\circ\otimes\h_\mu^\circ,s+c)$ coincides with Garrett's automorphic $L$-function $L(s+1/2,\Pi)$ associated with the irreducible unitary automorphic triple product representation $\Pi := \pi_f\times\pi_g\times \pi_h$ of $\GL_2(\mathbb A) \times\GL_2(\mathbb A) \times\GL_2(\mathbb A)$. In particular, by the work of Piatetski-Shapiro and Rallis~\cite{PSR87}, there exists $\varepsilon(\f_\kappa^\circ\otimes\g_\lambda^\circ\otimes\h_\mu^\circ)\in \{\pm 1\}$ and a positive integer $N(\f_\kappa^\circ\otimes\g_\lambda^\circ\otimes\h_\mu^\circ)$ such that we have a functional equation
\begin{equation}\label{eqn:functionaltriple}
    \Lambda(\f_\kappa^\circ\otimes\g_\lambda^\circ\otimes\h_\mu^\circ,s) = \varepsilon(\f_\kappa^\circ\otimes\g_\lambda^\circ\otimes\h_\mu^\circ) \cdot N(\f_\kappa^\circ\otimes\g_\lambda^\circ\otimes\h_\mu^\circ)^{c-s} \cdot \Lambda(\f_\kappa^\circ\otimes\g_\lambda^\circ\otimes\h_\mu^\circ, 2c-s)\,.
\end{equation}

\subsubsection{}\label{subsubsec_2022_06_02_0902}  We are especially interested in the central critical value $s=c(x)$. We review Hsieh's $p$-optimal construction of his $p$-adic $L$-functions in \cite{Hsieh} which interpolate this value $x$ varies. To that end, for $?=\f,\bal$, we define the modified Euler factors $\mathcal E_p^?(x):=\mathcal E_p^?(\f_\kappa^\circ\otimes \g_\lambda^\circ\otimes \h_\mu^\circ,c)$ on setting
\begin{align} \label{eqn:Eulertriple}
\begin{aligned}
    \mathcal E_p^?(x)= \begin{cases}
 \left(1-\frac{\beta_{\f_\kappa^\circ}\alpha_{\g_\lambda^\circ}\alpha_{\h_\mu^\circ}}{p^c}\right)^2
\left(1-\frac{\beta_{\f_\kappa^\circ}\alpha_{\g_\lambda^\circ}\beta_{\h_\mu^\circ}}{p^c}\right)^2
\left(1-\frac{\beta_{\f_\kappa^\circ}\beta_{\g_\lambda^\circ}\alpha_{\h_\mu^\circ}}{p^c}\right)^2
\left(1-\frac{\beta_{\f_\kappa^\circ}\beta_{\g_\lambda^\circ}\beta_{\h_\mu^\circ}}{p^c}\right)^2 & \text{ if } ?=\f\\
\left(1-\frac{\alpha_{\f_\kappa^\circ}\beta_{\g_\lambda^\circ}\beta_{\h_\mu^\circ}}{p^c}\right)^2
\left(1-\frac{\beta_{\f_\kappa^\circ}\alpha_{\g_\lambda^\circ}\beta_{\h_\mu^\circ}}{p^c}\right)^2
\left(1-\frac{\beta_{\f_\kappa^\circ}\beta_{\g_\lambda^\circ}\alpha_{\h_\mu^\circ}}{p^c}\right)^2
\left(1-\frac{\beta_{\f_\kappa^\circ}\beta_{\g_\lambda^\circ}\beta_{\h_\mu^\circ}}{p^c}\right)^2 & \text{ if } ?=\bal
\end{cases}
\end{aligned}
\end{align}
as in op. cit., where $\alpha_{\f_\kappa^\circ}$ and $\beta_{\f_\kappa^\circ}$ denote the roots of the Hecke polynomial of $\f_\kappa^\circ$ at $p$ with $v_p(\alpha_{\f_\kappa^\circ})=0$ (and we similarly define $\{\alpha_{\g_\lambda^\circ},\beta_{\g_\lambda^\circ}\}$ and $\{\alpha_{\h_\mu^\circ},\beta_{\h_\mu^\circ}\}$).
Let us denote by $\Sigma_\mathrm{exc}$ the set of primes introduced in \cite[Equation (1.5)]{Hsieh}. Also as in op. cit., let $\Sigma^-$ denote the set of primes $v\mid N$ where the local root number $\varepsilon_v(x)$ equals $-1$ (the set $\Sigma^-$ is independent of the choice of $x\in \cW_3^{\rm cl}$).

\begin{theorem}[Hsieh]
\label{thm:unbalancedinterpolation}
    Assume that $N$ is square-free and that $\Sigma^- = \emptyset$. Then there exists a unique element $\cL_p^\hf(\hf\otimes\hg\otimes\hh)\in \mathcal R_3$ with the following interpolative property:
    \[
        (\cL_p^{\hf}(\hf\otimes\hg\otimes\hh)(x))^2 = \mathcal E_p^\hf(x)^2  \cdot (\sqrt{-1})^{-2{\rm w}(\kappa)} \cdot \mathfrak C_{\rm exc}(\f\otimes\g\otimes\h)\cdot  \frac{\Lambda(\f_\kappa^\circ\otimes \g_\lambda^\circ\otimes \h_\mu^\circ, c)}{  \Omega_{\f_\kappa}^2 }\,,
    \]
    where $\mathfrak C_{\rm exc}(\f\otimes\g\otimes\h) =  \prod_{q\in\Sigma_\mathrm{exc}(\hf\otimes\hg\otimes\hh)} (1+q^{-1})^2$.
\end{theorem}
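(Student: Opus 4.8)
The plan is to deduce this statement from Hsieh's construction of the square-root triple-product $p$-adic $L$-function in \cite{Hsieh}, the main point being to match his normalizations (periods, Euler factors, and the set of exceptional primes) with the ones fixed in \S\ref{subsubsec_2022_06_02_0902}. First I would recall Hsieh's theorem in its original form: under the squarefree hypothesis on $N$ and the assumption $\Sigma^-=\emptyset$ (so that the relevant quaternion algebra is the split one $\mathrm{M}_2(\QQ)$ and the local signs never force vanishing), Hsieh constructs an element of $\cR_3$ whose square interpolates the central values $\Lambda(\f_\kappa^\circ\otimes\g_\lambda^\circ\otimes\h_\mu^\circ,c)$ divided by the Gross period attached to $\f$, up to the explicit modified Euler factor $\mathcal E_p^\f(\f_\kappa^\circ\otimes\g_\lambda^\circ\otimes\h_\mu^\circ,c)$ and the archimedean/combinatorial constants. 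The uniqueness is immediate from the fact that $\cW_3^\f\cap\cW_3^\cris$ is Zariski-dense in $\Spf(\cR_3)$ together with $\cR_3$ being reduced and $p$-torsion-free; since any two elements of $\cR_3$ agreeing on a dense set of points coincide, the interpolation property pins down $\cL_p^\f(\hf\otimes\hg\otimes\hh)$ once the right-hand side is fixed.

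The substantive work is the period comparison. Hsieh's formula is phrased in terms of his Gross period; in \S\ref{subsubsec_2022_06_02_0902} we instead use $\Omega_{\hf_\kappa}^2$ with $\Omega_{\hf_\kappa}$ the modified Hida period of \eqref{eqn:modifiedHidaperiods}. I would invoke the comparison already recorded after \eqref{eqn:modifiedHidaperiods} — namely that $\Omega_{\f_\kappa}$ equals $\Omega_{\f_\kappa}^+\Omega_{\f_\kappa}^-$ up to a $p$-adic unit by \cite[Corollary~6.24 \& Theorem~6.28]{hida_2016}, together with the relation between the plus/minus canonical periods and the Petersson norm $\langle\f_\kappa^\circ,\f_\kappa^\circ\rangle$ weighted by the congruence number $\mathfrak c_\f(\kappa)$ and the adjoint Euler factor $\mathcal E_p(\f_\kappa^\circ,\Ad)$. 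The ratio between Hsieh's Gross period and $\Omega_{\f_\kappa}^2$ is, by construction, interpolated by the unit $u\in\cR_3$ alluded to in the text (``they differ from those by an element $u\in\mathcal R_3$ which is nonzero at all classical specializations''); absorbing $u$ into the definition of $\cL_p^\f(\hf\otimes\hg\otimes\hh)$ — which is permissible precisely because $u$ is a unit away from the classical locus and the $p$-adic $L$-function is only defined up to such ambiguity — yields the stated normalization. In parallel I would check that the exceptional constant $\mathfrak C_{\rm exc}(\hf\otimes\hg\otimes\hh)=\prod_{q\in\Sigma_{\rm exc}}(1+q^{-1})^2$ is exactly the square of the local fudge factors appearing in Hsieh's interpolation formula at primes in $\Sigma_{\rm exc}(\hf\otimes\hg\otimes\hh)$, and that the power of $\sqrt{-1}$ matches the archimedean $\Gamma$-factor bookkeeping encoded in $L_\infty(\f_\kappa^\circ\otimes\g_\lambda^\circ\otimes\h_\mu^\circ,s)$ specialized at $s=c$ in the $\f$-dominant range (this is where the shift $\rmw(\kappa)-2c$ in one of the $\Gamma_\C$-factors enters, producing the exponent $-2\rmw(\kappa)$).

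The main obstacle I anticipate is bookkeeping rather than conceptual: reconciling the precise shape of the modified Euler factor $\mathcal E_p^\f$ in \eqref{eqn:Eulertriple} (which is a square of a product of four linear factors in the Frobenius eigenvalues, each raised to the second power because we are looking at the square of the square-root $p$-adic $L$-function) with the single copy of the $p$-adic multiplier in Hsieh's original interpolation formula, and tracking how the self-dual Tate twist built into $\mathscr M^\dagger(x)$ interacts with the normalization $v_p(\alpha_{\f_\kappa^\circ})=0$ for the unit root. One must be careful that the ``$2c$'' appearing in the exponents of $p$ in \eqref{eqn:Eulertriple} is consistent with the motivic normalization $c=c(x)=\tfrac{\rmw(\kappa)+\rmw(\lambda)+\rmw(\mu)}{2}-1$ used throughout, and that the Euler factors are evaluated at the arithmetic point and not at its dual. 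Once these normalizations are aligned, the theorem follows formally; no new input beyond Hsieh's construction and the period dictionary of \S\ref{subsubsec_321_2022_05_26_1036} is needed.
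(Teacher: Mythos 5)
Your proposal matches the paper's treatment: the paper does not actually prove this statement but imports it from Hsieh's Theorem B in \cite{Hsieh}, restated in the normalizations fixed in \S\ref{subsubsec_2022_06_02_0902} (modified Hida periods, modified Euler factors, and the exceptional constant), which is exactly the reduction-plus-dictionary you describe. Two small caveats: the interpolation property only constrains the square, so your Zariski-density argument pins down $\cL_p^{\hf}(\hf\otimes\hg\otimes\hh)^2$ and hence the element only up to sign (the element itself is fixed by Hsieh's explicit construction rather than by interpolation alone), and the remark about the element $u\in\cR_3$ in the paper concerns the period in the \emph{balanced} range, not the $\f$-dominant one, where the modified period $\Omega_{\f_\kappa}^2$ is already matched to Hsieh's via \eqref{eqn:modifiedHidaperiods}.
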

Exchanging the roles of $\f$, $\g$, and $\h$, we analogously have the $p$-adic $L$-functions $\cL_p^?(\hf\otimes\hg\otimes\hh)$ for $?=\g,\h$. We defer the reader to \cite[Theorem B]{Hsieh} for the balanced case, which we will not need in this work.

\comm{\color{red}
\begin{theorem}[Hsieh]
\label{thm:balancedinterpolation}
    Assume that $N$ is square-free and suppose that it factors as $N=N^+N^-$, with $\gcd(N^+, N^-)=1$, where $N^- = \prod_{\ell\in \Sigma^-}\ell$. Assume further that $\Sigma^-$ has odd cardinality and that all three residual representations $\overline{\rho}_\hf$, $\overline{\rho}_\hg$, and $\overline{\rho}_\hh$ are ramified at all primes $\ell\in \Sigma^-$ with $\ell\equiv 1\pmod{p}$. Then there exists a unique element $\cL_p^\bal(\hf\otimes\hg\otimes\hh)\in \mathcal R_3$ with the following interpolative property for all $x = (\kappa,\lambda,\mu) \in \cW_3^\bal$:
    \[
        (\cL_p^{\bal}(\hf\otimes\hg\otimes\hh)(x))^2 = \mathcal E_p^\bal(x)^2 \cdot  (\sqrt{-1})^{1-\rmw(\kappa)-\rmw(\lambda)-\rmw(\mu)} \cdot \mathfrak C_{\rm exc}\cdot \frac{\Lambda(\f_\kappa^\circ\otimes\h_\lambda^\circ\otimes\h_\mu^\circ, c)}{\Omega_{\f_\kappa} \Omega_{\g_\lambda} \Omega_{\h_\mu} }\,.
    \]
\end{theorem}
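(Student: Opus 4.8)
The plan is to follow Hsieh's strategy (cf. \cite{Hsieh}): realize the balanced central critical values as squares of explicit trilinear period integrals on a definite quaternion algebra, and then interpolate those period integrals $p$-adically over the three Hida families. Since $N$ is squarefree, $\Sigma^-\subset\{\ell\mid N\}$ and $|\Sigma^-|$ is odd, there is a (unique up to isomorphism) definite quaternion algebra $B/\QQ$ ramified exactly at the places of $\Sigma^-\cup\{\infty\}$. In the balanced range one has $\varepsilon_\infty(\f_\kappa^\circ\otimes\g_\lambda^\circ\otimes\h_\mu^\circ)=-1$ while $\prod_{v\mid N}\varepsilon_v=(-1)^{|\Sigma^-|}=-1$, so the global root number is $+1$, and by Prasad's local dichotomy the triple of automorphic representations transfers under Jacquet--Langlands to $B^\times(\AA)$ and carries there a nonzero $B^\times(\AA)$-invariant trilinear form on the diagonal. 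First I would set up Hida theory for $B^\times$: the space of ordinary $p$-adic quaternionic modular forms is finite over the weight algebra, Jacquet--Langlands is compatible with $p$-stabilization, and the families $\f,\g,\h$ lift to universal ordinary quaternionic eigenfamilies $\Phi_\f,\Phi_\g,\Phi_\h$ with coefficients in $\cR_\f,\cR_\g,\cR_\h$.

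Next I would define a big period element $\Theta\in\cR_3$ by evaluating the canonical trilinear pairing on $\Phi_\f\otimes\Phi_\g\otimes\Phi_\h$ after fixing local test vectors: at $p$ one uses the ordinary lines furnished by the filtrations \eqref{eqn:filtrationf} (and their analogues for $\g$ and $\h$), and at the primes $\ell\mid N$ one uses Hsieh's distinguished local test vectors (newvectors, resp. the essentially unique vectors in the discrete series on $B_\ell^\times$ for $\ell\in\Sigma^-$). The $p$-optimal normalization of the periods $\Omega_{\f_\kappa}$ of \eqref{eqn:modifiedHidaperiods} through the congruence ideal generator $\mathfrak c_\f$ is exactly what makes $\Theta$ integral, and the hypothesis that $\bar\rho_\f,\bar\rho_\g,\bar\rho_\h$ are ramified at every $\ell\in\Sigma^-$ with $\ell\equiv 1\pmod p$ is used to rule out the extra $p$-divisibility that level-raising congruences would otherwise introduce, so that $\Theta\in\cR_3$ rather than merely $\cR_3[1/p]$; one then takes $\cL_p^\bal(\hf\otimes\hg\otimes\hh)$ to be $\Theta$ up to the factor $u$ noted above, which is nonzero at all classical specializations.

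To verify the interpolation formula I would specialize $\Theta$ at a balanced crystalline classical point $x=(\kappa,\lambda,\mu)$ and identify $\Theta(x)^2$ with the right-hand side of Ichino's explicit triple product formula: it equals $\Lambda(\f_\kappa^\circ\otimes\g_\lambda^\circ\otimes\h_\mu^\circ,c)$ divided by the three Petersson norms, times a product $\prod_v I_v^\ast$ of normalized local zeta integrals. The integrals at good primes are $1$; the ordinary choice at $p$ produces $\mathcal E_p^\bal(\f_\kappa^\circ\otimes\g_\lambda^\circ\otimes\h_\mu^\circ,c)^2$; the archimedean integral in the balanced weight range produces $(\sqrt{-1})^{1-\rmw(\kappa)-\rmw(\lambda)-\rmw(\mu)}$; and the integrals at the primes dividing $N$ produce $\mathfrak C_{\rm exc}(\hf\otimes\hg\otimes\hh)=\prod_{q\in\Sigma_{\mathrm{exc}}}(1+q^{-1})^2$ together with a power of $N(\f_\kappa^\circ\otimes\g_\lambda^\circ\otimes\h_\mu^\circ)$ that one checks contributes trivially. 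Rewriting the Petersson norms via \eqref{eqn:modifiedHidaperiods} in terms of $\Omega_{\f_\kappa}\Omega_{\g_\lambda}\Omega_{\h_\mu}$ then yields the asserted identity. Uniqueness follows because the balanced crystalline classical points are Zariski-dense in $\mathrm{Spf}(\cR_3)(\Cp)$, so the interpolation property determines the element of $\cR_3$ uniquely.

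The main obstacle is the explicit local computation of the zeta integrals $I_v^\ast$ — in particular at the archimedean place over the entire balanced weight range and at the ramified primes $\ell\in\Sigma^-$, where one must work with discrete series on $B_\ell^\times$ — since this is precisely where the normalizing factors (the power of $\sqrt{-1}$, the $(1+q^{-1})^2$ contributions, and the triviality of the power of $N(\cdots)$) get pinned down. A second, closely related obstacle is the integrality claim $\Theta\in\cR_3$: this requires a careful study of the $p$-integral structure on spaces of quaternionic modular forms and of the relevant congruence modules, and it is exactly here that the squarefreeness of $N$ and the residual ramification hypothesis at primes $\ell\equiv 1\pmod p$ are needed.
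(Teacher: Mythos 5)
The paper does not prove this theorem: it is quoted directly from Hsieh's work (cited in \S\ref{subsec:tripleproducts} as \cite[Theorem B]{Hsieh}), with the only adjustment being the remark that the balanced periods $\Omega_{\f_\kappa}\Omega_{\g_\lambda}\Omega_{\h_\mu}$ used here differ from Hsieh's Gross periods by an element $u\in\cR_3$ that is nonzero at every classical specialization. Your proposal is a faithful outline of Hsieh's actual argument for that cited theorem — definite quaternion algebra ramified at $\Sigma^-\cup\{\infty\}$ via Prasad's dichotomy, Jacquet--Langlands transfer and Hida theory on $B^\times$, big trilinear period, Ichino's formula with explicit local zeta integrals, the congruence-module normalization for integrality, and the residual ramification hypothesis at $\ell\equiv 1\pmod p$ to control level-raising $p$-divisibility — so it takes the same route as the source the paper relies on. (One small lapse: you write ``up to the factor $u$ noted above'' without having introduced $u$; you appear to be alluding to the paper's remark about the discrepancy with the Gross periods, which you should make explicit.)
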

}

\subsection{\texorpdfstring{$L$}{}-series for \texorpdfstring{$f\otimes \Ad^0(g)$}{} and \texorpdfstring{$p$}{}-adic interpolation} \label{sec:Adjoint}
We assume in \S\ref{sec:Adjoint} that $\varepsilon_\f=1$ and retain the notation and conventions of \S\ref{subsec:tripleproducts}, concentrating in the scenario when $\h=\g^c:=\g\otimes \varepsilon_\g^{-1}$ is the conjugate Hida family. 


\subsubsection{} When $y\in \cW_2^?\subset \cW_2^\cris$ ($?=\f,{\rm ad}$), we define the modified Euler factors
\begin{align*}
\begin{aligned} 
  \mathcal E^?_p(\kappa,\lambda,\Ad):=
\begin{cases}
  (1-\beta_{\hf_\kappa^\circ}\alpha_{\hg_\lambda^\circ}\beta_{\hg_\lambda^\circ}^{-1}p^{-\frac{\rmw(\kappa)}{2}})
  (1-{\beta_{\hf_\kappa^\circ}}{p^{-\frac{\rmw(\kappa)}{2}}}) 
  (1-{\beta_{\hf_\kappa^\circ}\beta_{\hg_\lambda^\circ}}{\alpha_{\hg_\lambda^\circ}^{-1}p^{-\frac{\rmw(\kappa)}{2}}}),& \text{ if } ?=\hf \\
 (1-{\alpha_{\hf_\kappa^\circ}\beta_{\hg_\lambda^\circ}}{\alpha_{\hg_\lambda^\circ}^{-1}p^{-\frac{\rmw(\kappa)}{2}}})
 (1-{\beta_{\hf_\kappa^\circ}}{p^{-\frac{\rmw(\kappa)}{2}}}) 
 (1-{\beta_{\hf_\kappa^\circ}\beta_{\hg_\lambda^\circ}}{\alpha_{\hg_\lambda^\circ}^{-1}}p^{\frac{-\rmw(\kappa)}{2}}),
 & \text{ if } ?=\Ad
\end{cases}
\end{aligned}
\end{align*}
where $\alpha_{\bullet}$ and $\beta_{\bullet}$ are as in \S\ref{subsubsec_2022_06_02_0902}. When $\kappa$ is non-crystalline, we put $\mathcal E^?_p(\kappa,\lambda,\Ad)=0$.

\subsubsection{} 
\label{subsubsec_conj_2022_06_02_0940}
The following conjecture is in the spirit of Hsieh's $p$-optimal construction of triple product $p$-adic $L$-functions (which in turn dwells on the Coates--Perrin-Riou formalism):

\begin{conj}  
\label{conj_2022_06_02_0940}
    There exist $p$-adic $L$-functions $\cL_p^{?}(\hf\otimes \Ad^0\hg )\in \mathcal R_2[\frac{1}{p}]$ (where $?=\f,\,\Ad$) with the following interpolative properties  for all crystalline specializations $y=(\kappa,\lambda)\in \cW_2^{?}$:
    \begin{align}
    \label{eqn_conj_2022_06_02_0940}
        \cL_p^{?}(\hf\otimes \Ad^0\hg )(y) =
        \begin{cases}
         C_{\f_\kappa}^- \cdot  \mathcal E^?_p(\kappa,\lambda,\Ad) \cdot   \dfrac{\Lambda(\hf_\kappa^\circ\otimes\Ad^0\hg_\lambda^\circ,\frac{\wt(\kappa)}{2})}{\Omega_{\hf_\kappa}^- \cdot \Omega_{\hf_\kappa}^2 } & \text{ if } ?=\hf\,,\\
    C_{\f_\kappa}^- \cdot  \mathcal E^?_p(\kappa,\lambda,\Ad) \cdot  \dfrac{\Lambda(\hf_\kappa^\circ\otimes\Ad^0\hg_\lambda^\circ,\frac{\wt(\kappa)}{2})}{\Omega_{\hf_\kappa}^- \cdot \Omega_{\hg_\lambda}^2}  & \text{ if } ?=\Ad\,.
      \end{cases}
    \end{align}
\end{conj}

 We note that the importance of $p$-adic $L$-function $\cL_p^\Ad(\hf\otimes \Ad^0\hg )$ in this article is due to its appearance in \cite[Conjecture 2.1]{BS_Part1}. 
Even though Conjecture~\ref{conj_2022_06_02_0940} is presently open, we prove it in Proposition~\ref{prop:factorizationAdBDP}(ii) and Corollary~\ref{eqn_cor_2023_01_09_1840}  when the family $\g$ has CM. 

\subsection{\texorpdfstring{$\hbox{\rm BDP}^2$}{} \texorpdfstring{$p$}{}-adic \texorpdfstring{$L$}{}-function}
\label{subsubsec_BDPsquared}
We assume throughout \S\ref{subsubsec_BDPsquared} that the primitive Hida family $\hg$ has CM by the imaginary quadratic field $K$. Note in this case that $p\cO_K=\p\p^c$ necessarily splits, where $c\in \Gal(K/\Q)$ is the unique non-trivial automorphism. We assume that $\p$ is the prime of $K$ that is determined by our fixed embedding $\iota_p:\overline{\Q}\hookrightarrow \overline{\Q}_p$. We let $\Gamma_\p$ denote the Galois group of the unique $\Zp$-extension of $K$ unramified outside $\p$ and let us denote by $\chi_\p: \Gamma_\p\stackrel{\sim}{\lra} 1+p\ZZ_p$ the canonical character. Our assumption that $\g$ has CM (alongside with our running hypotheses \ref{item_Irr} and \ref{item_Dist} on $\g$) allows us to identify $\cR_\g$ with a finite flat extension of $\ZZ_p[[\Gamma_\p]]$ and $T_\g$ with ${\rm Ind}_{K/\Q}\Psi_\hg$ for some $\cR_\g$-valued character $\Psi_\hg: G_K \to \cR_\hg^\times$. Let us denote by $H_{\mathfrak f_\hg \p^\infty}:=\varprojlim_n H_{\mathfrak f_\hg \p^n}$ the Galois group of the ray class field of conductor $\mathfrak f_\hg \p^\infty$ through which $\Psi_\hg$ factors. 

\subsubsection{} One can describe the correspondence between $\g$ and $\Psi_\g$ in more explicit terms. Let us put
$$   \Theta_{\mathfrak f_\hg} := \sum_{\mathfrak a \colon (\mathfrak a,\mathfrak p\mathfrak f_\hg)=1} [\mathfrak a] q^{N_{K/\Q}(\mathfrak a)} \in \Lambda(H_{\mathfrak f_\hg \p^\infty})[[q]]\,,$$ 
where $[\mathfrak a]\in H_{\mathfrak f_\g \p^\infty}$ is the element that corresponds to the ideal $\mathfrak a$ via the tower of arithmetically normalized Artin reciprocity maps. Then $\hg = \Psi_\hg(\Theta_{\mathfrak f_\g})$. For a given specialization $\lambda$ of weight ${\rm w}(\lambda)$, we have that $\Psi_{\g_\lambda}:= \lambda\circ \Psi_\hg$ is the $p$-adic avatar of an algebraic Hecke character with infinity type $({\rm w}(\lambda)-1,0)$. One can extend $\Psi_{\g_\lambda}$ to a character of conductor $\mathfrak f_\hg$ on setting $\Psi_{\hg_\lambda}^\circ(\p):= \varepsilon_\hg(p)p^{\wt(\lambda)-1}/\Psi_{\hg_\lambda}(\p^c)$. The theta-series of the character $\Psi_{\hg_\lambda}^\circ$   is the newform $\hg_\lambda^\circ$ of level $D_KN^K_\Q(\mathfrak f_\g)$ associated to $\hg_\lambda$.

\subsubsection{}
\label{subsubsec_812_2022_12_21_1028}
Let us consider two CM Hida families $\hg$ and $\hh$, and let us denote by $\hh^c$ the conjugate family. As above, we have the associated characters $\Psi_\hg: G_K \to \mathcal R_\hg^\times$ and $\Psi_{\hh^c}: G_K \to \mathcal R_\hh^\times$. We consider the $(\mathcal R_\hg \, \widehat{\otimes}_{\ZZ_p} \, \mathcal R_\h)^\times$-valued characters
\[
     {\displaystyle \Phi := \Psi_\g\Psi_{\h^c}^{c,-1} (\bbchi_{\h^c}\chi_\cyc^{-1})_{\vert_{G_K}}}\,\,,
    \qquad {\displaystyle \Phi' := \Psi_\g\Psi_{\h^c}^{-1} } \,.
\] 
We will denote by $\lambda\otimes \mu$ the specialization of $\mathcal R_\hg\otimes\mathcal R_\hh$ associated to the pair $(\lambda,\mu)$. The specialization $\Phi_{\lambda\otimes\mu}:=\Psi_{\hg_\lambda}\Psi_{\hh_\mu^c}^{c,-1} \chi_\cyc^{\wt(\mu)-1}{}_{\vert_{G_K}}$ (resp. $\Phi_{\lambda\otimes\mu}':=\Psi_{\hg_\lambda}\Psi_{\hh_\mu^c}^{-1}$) is the $p$-adic avatar of a Hecke character with infinity type  $(\wt(\lambda)+\wt(\mu)-2,0)$ (resp. $(\wt(\lambda)-\wt(\mu),0)$), and with the prime-to-$p$ part of its conductor $\mathfrak f$ (resp. $\mathfrak f'$), where $\mathfrak{f} \mid \mathfrak{f}_\fg \mathfrak{f}_{\hh}^c$ and $\mathfrak{f}'\mid \mathfrak{f}_\fg \mathfrak{f}_{\hh}$.  

The characters $\Phi$ and $\Phi'$ give rise to a pair of primitive Hida families $\Phi(\Theta_{\mathfrak{f}})$ and $\Phi'(\Theta_{\mathfrak f'})$. To ease our notation, we shall write in what follows $\Phi(\Theta)$ and $\Phi'(\Theta)$ in place of $\Phi(\Theta_{\mathfrak{f}})$ and $\Phi'(\Theta_{\mathfrak f'})$, respectively.

\subsubsection{} For every crystalline point  $x=(\kappa,\lambda,\mu)\in \cW_3^\hg$ (so that ${\rm w}(\lambda) \geq {\rm w}(\kappa)+{\rm w}(\mu)$), we define
\[
    c=\frac{{\rm w}(\kappa)+{\rm w}(\lambda)+{\rm w}(\mu)}{2}-1 \qquad \text{and} \qquad c':= c+1-{\rm w}(\mu) = \frac{{\rm w}(\kappa)+{\rm w}(\lambda)-{\rm w}(\mu)}{2}\,.
\]
The specialization of the representation $T_3^\dagger$ at $x=(\kappa, \lambda,\mu)$ has the following form:
    \begin{align} \label{eqn:factorizationtriplespecialized}
    \begin{split}
        T_3^\dagger(x) 
        &= T_{\hf_\kappa}\otimes T_{\hg_\lambda}\otimes T_{\hh_\mu} (1-c) 
        \cong T_{\hf_\kappa}\otimes T_{\hg_\lambda}\otimes T_{\hh_\mu^c}^* (1-c') 
        \\
        &\cong 
        T_{\hf_\kappa}\otimes \Bigl( \Ind_{K/\Q}(\Psi_{\hg_\lambda}\Psi_{\hh_\mu^c}^{c,-1}) \oplus \Ind_{K/\Q}(\Psi_{\hg_\lambda}\Psi_{\hh_\mu^c}^{-1}) \Bigr) (1-c') 
        \\
        &\cong
        \Bigl(T_{\hf_\kappa}\otimes \Ind_{K/\Q}(\Phi_{\lambda\otimes\mu}) (1-c) \Bigr) \oplus 
        \Bigl( T_{\hf_\kappa} \otimes \Ind_{K/\Q}(\Phi'_{\lambda\otimes\mu}) \Bigr) (1-c')\,.
    \end{split} 
\end{align}
The decomposition \eqref{eqn:factorizationtriplespecialized} implies that we have
\[
    \Lambda(\hf_\kappa^\circ\otimes \hg_\lambda^\circ\otimes \hh_\mu^\circ, c) = 
    \Lambda(\hf_\kappa^\circ\otimes \Phi(\Theta_{\mathfrak f})_{\lambda\otimes\mu}^\circ,c) 
    \cdot 
    \Lambda(\hf_\kappa^\circ\otimes \Phi'(\Theta_{\mathfrak f'})_{\lambda\otimes \mu}^\circ, c' )\,.
\]


\subsubsection{}  
For each $\pmb{\xi} \in \{\Phi,\Phi'\}$, let us denote by $\mathcal R_{\pmb\xi}$ the branch of Hida's universal (cuspidal) Hecke algebra of tame level $N_\xi=D_K \mathbf{N}_{K/\Q}(\mathfrak f_\xi)$ associated with the cuspidal Hida family $\hg_{\pmb\xi}$. Let us denote by $\mathfrak h_{N_\xi}^{\ord}$ Hida's universal $p$-ordinary Hecke algebra of tame level $N_\xi$. We then have the following natural ring homomorphism
\begin{equation} \label{map}
    \mathfrak h_{N_\xi}^{\ord} \to \cR_\g\hatotimes_{\Zp} \cR_\h
\end{equation}
given by
\begin{align*}
    &T_\ell \longmapsto \xi(\lambda)+\xi(\lambda^c), \qquad\hbox{ if } \ell\nmid N_\xi p \hbox{ and }  \ell \cO_K = \lambda\lambda^c \hbox{ splits},\\
    &T_\ell\longmapsto 0 \qquad\qquad\qquad\quad\,\, \hbox{ if } \ell\nmid N_\xi p \hbox{ is inert in } K/\QQ,\\
    &U_\ell \longmapsto \xi(\lambda) \qquad\qquad\qquad\hbox{ if } \hbox{ $\lambda\mid \ell$ and $\lambda \mid D_K\mathfrak{f}_\xi^c$},\\
    &U_p\longmapsto \xi(\p) {}\,.
\end{align*}
The map \eqref{map} necessarily factors through the unique irreducible local component corresponding to the Hida family $\g_\xi$, therefore gives rise to a morphism
\[
    u_{\pmb\xi} \colon \mathcal R_{\pmb\xi} \lra \mathcal R_\g \hatotimes_{\ZZ_p} \mathcal R_\h,
\]
of complete local Noetherian rings. For $\lambda\otimes\mu \in \mathcal W_\g\times\mathcal W_\h$, let us put $\nu:=u_{\pmb\xi}^*(\lambda\otimes\mu) \in  \cW_\xi$. We then have $\wt(\nu)=\wt(\lambda)+ (\wt(\mu)-1)$ (resp. $\wt(\nu)=\wt(\lambda)- (\wt(\mu)-1)$) if $\pmb{\xi}=\Phi$ (resp. if $\pmb{\xi}=\Phi'$). Moreover, $\hg_{\pmb\xi, \nu} = \pmb\xi(\Theta)_{\lambda\otimes\mu}$\,.

\subsubsection{} Given $\gamma\in \Gamma_\cyc$, let us denote by $[\gamma]\in \LL(\Gamma_\cyc)$ the corresponding group like element. Let us consider the morphism
\begin{align*}
    {\rm pr}^\dagger\,:\quad \cR_\hf\hatotimes_{\Zp}  
    \mathcal R_\hg \hatotimes_{\Zp} \mathcal R_\hh \hatotimes_{\Zp}  \Lambda(\Gamma_\cyc) &\lra \cR_\hf\hatotimes_{\Zp}  \mathcal R_\hg \widehat\otimes_{\Zp}  \mathcal R_\hh=:\cR_3 \\
    a\,\otimes \, b\, \otimes\, c\,\otimes\, [\gamma] \, & \longmapsto \,(\chi_\cyc(\gamma)^{2}\bbchi_3^{-\frac{1}{2}}(\gamma)) \left(a\,\, \otimes \,b\,\otimes\, c\right)
\end{align*}
so that ${\rm pr}^\dagger$ is the unique map for which we have
$$(T_\hf\hatotimes_{\Zp}  
    T_\hg \hatotimes_{\Zp} T_\hh \hatotimes_{\Zp}  \Lambda(\Gamma_\cyc))\otimes_{{\rm pr}^\dagger} \cR_3=T_3^\dagger$$
    as $G_\QQ$-representations. Recall also $\bbchi_3:=\bbchi_{\f} \otimes\bbchi_{\g}\otimes \bbchi_{\h}$ given as in \S\ref{subsubsec_2022_05_16_1506}, as well as its square-root in \S\ref{subsubsec_211_2022_06_01_1635}\,.

\subsubsection{} 
We define the \emph{${\rm BDP}^2$ $p$-adic $L$-functions} on setting
\begin{align*}
    \cL_p^{\rm BDP}(\hf_{/K}\otimes \Psi_\hg\Psi_{\hh^c}^{-1}) &:= {\rm pr^\dagger}\circ ({\rm id}_{\cR_\hf} \otimes u_{\Phi'} \otimes {\rm id}_{\Lambda(\Gamma_{\rm cyc})})\bigl( \cL_p^{\Phi'(\Theta)}(\hf\otimes \Phi'(\Theta) ) \bigr)  \in \cR_\hf \hatotimes (\mathcal R_\hg \hatotimes \mathcal R_\hh)\,, \\
    \cL_p^{\rm BDP}(\hf_{/K}\otimes \Psi_\hg\Psi_{\hh^c}^{c,-1}) &:= {\rm pr^\dagger}\circ  ({\rm id}_{\cR_\hf}\otimes u_{\Phi} \otimes {\rm id}_{\Lambda(\Gamma_{\rm cyc})})\bigl( \cL_p^{\Phi(\Theta)}(\hf\otimes \Phi(\Theta) ) \bigr)  \in \cR_\hf \hatotimes (\mathcal R_\hg \hatotimes \mathcal R_\hh).
\end{align*}
A direct calculation (based on Theorem~\ref{thm:Hida-interp}) shows that we have the interpolation formulae  
\begin{align}
\begin{aligned}
\label{eqn_2022_12_07_1301}
    \cL_p^{\rm BDP}(\hf_{/K} \otimes  \Psi_\hg\Psi_{\hh^c}^{-1})(\kappa,\lambda\otimes\mu) & =   \mathcal E_p^{\Phi'(\Theta)}\bigl(\hf_\kappa^\circ \otimes \Phi'(\Theta)_{\lambda\otimes\mu}^\circ, c'\bigr) \cdot (\sqrt{-1} )^{1-\wt(\lambda)+\wt(\mu)} \cdot 
    \mathfrak C_{\rm exc}(\hf\otimes\Phi'(\Theta))
  \\
  &\hspace{2cm}\times \frac{\Lambda(\hf_\kappa^\circ \otimes \Phi'(\Theta)_{\lambda\otimes\mu}^\circ, c')}{\Omega_{\Phi'(\Theta)_{\lambda\otimes\mu}}}, \qquad  \wt(\lambda)-\wt(\mu)+1 > \wt(\kappa)\,,
\end{aligned}
\end{align}
(see \eqref{eqn:modifiedHidaperiods} for the definition of the canonical Hida period $\Omega_{\Phi'(\Theta)_{\lambda\otimes\mu}}$. Notice that the element $\mathfrak c_{\Phi'(\Theta)}$ makes sense also in our CM setting, cf. \cite[Remark 4.6(ii)B]{AIF_Hida88}, eventually locally, if $\Phi'(\Theta)$ has residually reducible representation) and likewise,
\begin{align}
\begin{aligned}
\label{eqn_2022_12_07_1302}
    \cL_p^{\rm BDP}(\hf_{/K} \otimes  \Psi_\hg\Psi_{\hh^c}^{c,-1})(\kappa,\lambda\otimes\mu) & =   \mathcal E_p^{\Phi(\Theta)}\bigl(\hf_\kappa^\circ \otimes \Phi(\Theta)_{\lambda\otimes\mu}^\circ, c\bigr) \cdot (\sqrt{-1} )^{3-\wt(\lambda)-\wt(\mu)} 
    \cdot \mathfrak C_{\rm exc}(\hf\otimes\Phi(\Theta)) 
   \\ 
    &\hspace{2cm} \times \frac{\Lambda(\hf_\kappa^\circ \otimes \Phi(\Theta)_{\lambda\otimes\mu}^\circ, c)}{\Omega_{\Phi(\Theta)_{\lambda\otimes\mu}}}\,, \qquad \wt(\lambda)+\wt(\mu)-1 > \wt(\kappa)\,.
\end{aligned}
\end{align}
Here $\mathfrak{C}_{\rm exc}(\hf\otimes\Phi(\Theta))$ and $\mathfrak{C}_{\rm exc}(\hf\otimes\Phi'(\Theta))$ are given in the statement of Theorem~\ref{thm:Hida-interp}, and
\begin{align}
\label{eqn:EulerBDP}
\begin{aligned}
\mathcal E_p^{\Phi'(\Theta)}({\f_\kappa^\circ} \otimes {\Phi'(\Theta)^\circ_{\lambda\otimes\mu}},c')  
&=\left(1-\frac{\alpha_{\f_\kappa^\circ}\beta_{\g_\lambda^\circ} \beta_{\h_\lambda^\circ} }{p^c}\right)^2 \left(1-\frac{\beta_{\f_\kappa^\circ}\beta_{\g_\lambda^\circ} \beta_{\h_\lambda^\circ} }{p^c}\right)^2\,, \\
\mathcal E_p^{\Phi(\Theta)}({\f_\kappa^\circ} \otimes {\Phi(\Theta)^\circ_{\lambda\otimes\mu}},c)  
&=\left(1-\frac{\beta_{\f_\kappa^\circ}\beta_{\g_\lambda^\circ} \alpha_{\h_\lambda^\circ} }{p^c}\right)^2
\left(1-\frac{\beta_{\f_\kappa^\circ}\beta_{\g_\lambda^\circ} \beta_{\h_\lambda^\circ} }{p^c}\right)^2
\,,
\end{aligned}
\end{align}
which we obtain by working out the Euler-like factors in \eqref{eqn_20220601_1700}.

Even though the interpolation regions of the $p$-adic $L$-functions $\cL_p^{\rm BDP}(\hf_{/K}\otimes \Psi_\hg\Psi_{\hh^c}^{-1})$ and $\cL_p^{\rm BDP}(\hf_{/K}\otimes \Psi_\hg\Psi_{\hh^c}^{c,-1}) $ do not coincide, they both contain the set of $\g$-unbalanced crystalline points $\cW_3^\g \cap \cW_3^{\rm crys}$.


\section{Super-factorization of \texorpdfstring{$p$}{}-adic \texorpdfstring{$L$}{}-functions}
\label{subsec_75_2022_06_02_0900}
$\,$
    The goal of this section is to prove Theorem~\ref{thm_main_6_plus_2} (on passing to sufficiently small wide-open discs in the weight spaces for the families $\f$ and $\g$) in the scenario when the family $\hg$ has complex multiplication.

\subsection{The Factorization of the degree-8 \texorpdfstring{$p$}{}-adic \texorpdfstring{$L$}{}-function}
\label{sec:8=4+4}
Our goal in this subsection is to prove that when both $\hg$ and $\hh$ are CM (without requiring that $\hh=\hg^c$), the square of the $\g$-dominant $p$-adic $L$-function attached to the triple $(\hf, \hg, \hh)$ of Hida families is factorized into a product of two ${\rm BDP}^2$ $p$-adic $L$-functions. 



\begin{theorem} \label{thm:analyticfactorization}
    Suppose that $\hg$ and $\hh$ are two Hida families with CM by the same quadratic imaginary field $K$. We then have the following factorization of $p$-adic $L$-functions:
    \[
        \cL_p^\hg(\hf\otimes\hg\otimes \hh)^2(\kappa,\lambda,\mu) = \mathcal A(\lambda,\mu) \cdot \cL_p^{\rm BDP}(\hf_{/K}\otimes \Psi_\hg \Psi_{\hh^c}^{c,-1}) (\kappa, \lambda\otimes\mu)\cdot \cL_p^{\rm BDP}(\hf_{/K}\otimes \Psi_\hg \Psi_{\hh^c}^{-1} )(\kappa, \lambda\otimes\mu),
    \]
    where $\mathcal A \in \mathcal {\rm Frac}(R_\hg\hatotimes_{\ZZ_p} \cR_\hh)^\times$ is non-zero and assumes algebraic values at every classical specialization $\lambda \otimes \mu$ with $\wt(\lambda)\geq \wt(\mu)+2$.
\end{theorem}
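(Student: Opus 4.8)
\textbf{Proof strategy for Theorem~\ref{thm:analyticfactorization}.}
The plan is to compare the defining interpolation properties of the three $p$-adic $L$-functions on the common range $\cW_3^\g\cap\cW_3^{\rm cris}$, where all of them interpolate (algebraic parts of) classical central $L$-values, and then to invoke a density/Weierstrass-preparation argument to promote the pointwise identity to an identity of elements of $\mathrm{Frac}(\cR_3)$. First I would fix a crystalline point $x=(\kappa,\lambda,\mu)\in\cW_3^\g\cap\cW_3^{\rm cris}$, so that $\mathrm{w}(\lambda)\geq \mathrm{w}(\kappa)+\mathrm{w}(\mu)$, and use the decomposition \eqref{eqn:factorizationtriplespecialized} of $T_3^\dagger(x)$ together with Artin formalism to write
\[
\Lambda(\hf_\kappa^\circ\otimes\hg_\lambda^\circ\otimes\hh_\mu^\circ,c)=\Lambda(\hf_\kappa^\circ\otimes\Phi(\Theta)_{\lambda\otimes\mu}^\circ,c)\cdot\Lambda(\hf_\kappa^\circ\otimes\Phi'(\Theta)_{\lambda\otimes\mu}^\circ,c').
\]
Squaring the interpolation formula of Theorem~\ref{thm:balancedinterpolation}'s unbalanced counterpart for $\cL_p^\hg(\hf\otimes\hg\otimes\hh)$ (i.e. Theorem~\ref{thm:unbalancedinterpolation} with the roles of $\f$ and $\g$ exchanged), and multiplying the two interpolation formulae \eqref{eqn_2022_12_07_1301} and \eqref{eqn_2022_12_07_1302} for the two $\mathrm{BDP}^2$ $p$-adic $L$-functions, I obtain on the left the quantity $\Lambda(\hf_\kappa^\circ\otimes\hg_\lambda^\circ\otimes\hh_\mu^\circ,c)^2/\Omega_{\hg_\lambda}^4$ up to explicit Euler and archimedean factors, and on the right the product $\Lambda(\hf_\kappa^\circ\otimes\Phi(\Theta)^\circ,c)\,\Lambda(\hf_\kappa^\circ\otimes\Phi'(\Theta)^\circ,c')/(\Omega_{\Phi(\Theta)_{\lambda\otimes\mu}}\Omega_{\Phi'(\Theta)_{\lambda\otimes\mu}})$, again up to explicit factors. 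The Artin factorization kills the $L$-values against each other, leaving a ratio $\mathcal A(\lambda,\mu)$ built out of (i) the comparison of periods $\Omega_{\hg_\lambda}^4$ versus $\Omega_{\Phi(\Theta)_{\lambda\otimes\mu}}\Omega_{\Phi'(\Theta)_{\lambda\otimes\mu}}$, (ii) the comparison of the modified Euler factors $\mathcal E_p^\g(\cdots)^2$ versus $\mathcal E_p^{\Phi(\Theta)}\cdot\mathcal E_p^{\Phi'(\Theta)}$, which match on the nose by \eqref{eqn:EulerBDP} once one writes the Hecke parameters of $\hg_\lambda^\circ$ and $\hh_\mu^\circ$ in terms of $\Psi_{\hg_\lambda},\Psi_{\hh_\mu^c}$, and (iii) the elementary powers of $\sqrt{-1}$ and the constants $\mathfrak C_{\rm exc}$, $C_{\f_\kappa}^\pm$.

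Next I would verify that the resulting $\mathcal A(\lambda,\mu)$ is the specialization of a fixed element of $\mathrm{Frac}(\cR_\hg\hatotimes_{\ZZ_p}\cR_\hh)$, independent of $\kappa$. This is where the period comparison does the real work: one must express $\Omega_{\Phi(\Theta)_{\lambda\otimes\mu}}$ and $\Omega_{\Phi'(\Theta)_{\lambda\otimes\mu}}$ — the modified canonical periods \eqref{eqn:modifiedHidaperiods} of the CM families $\Phi(\Theta)$ and $\Phi'(\Theta)$ — in terms of a CM period of $K$, using the standard Hurwitz/Katz-period factorization of Petersson norms of CM newforms (Shimura, Hida), and similarly for $\langle\hg_\lambda^\circ,\hg_\lambda^\circ\rangle$, $\langle\hh_\mu^\circ,\hh_\mu^\circ\rangle$. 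The congruence-ideal generators $\mathfrak c_{\Phi(\Theta)},\mathfrak c_{\Phi'(\Theta)},\mathfrak c_\hg,\mathfrak c_\hh$ vary analytically in the weight variables and contribute an analytic (non-vanishing at classical points) factor; one must check that the CM periods cancel in the ratio, leaving only algebraic and analytic data. The adjoint Euler factors $\mathcal E(\cdot,\Ad)$ entering \eqref{eqn:modifiedHidaperiods} likewise match up because $\Ad^0$ of a CM form decomposes as $\epsilon_K\oplus\mathrm{Ind}_{K/\QQ}(\Psi^{c-1}\cdot(\cdot))$, so their $p$-Euler factors are products of the same linear terms appearing in \eqref{eqn:EulerBDP}. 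Having checked all this, $\mathcal A$ is visibly a unit in $\mathrm{Frac}(\cR_\hg\hatotimes\cR_\hh)$ that takes algebraic values whenever $\mathrm{w}(\lambda)\geq\mathrm{w}(\mu)+2$ (the condition ensuring that the relevant classical $L$-values are critical and the period recipes apply).

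Finally, the identity
\[
\cL_p^\hg(\hf\otimes\hg\otimes\hh)^2(\kappa,\lambda,\mu)=\mathcal A(\lambda,\mu)\cdot\cL_p^{\rm BDP}(\hf_{/K}\otimes\Psi_\hg\Psi_{\hh^c}^{c,-1})(\kappa,\lambda\otimes\mu)\cdot\cL_p^{\rm BDP}(\hf_{/K}\otimes\Psi_\hg\Psi_{\hh^c}^{-1})(\kappa,\lambda\otimes\mu)
\]
holds for all $(\kappa,\lambda,\mu)$ in the Zariski-dense set $\cW_3^\g\cap\cW_3^{\rm cris}$ of $\mathrm{Spf}(\cR_3)(\Cp)$; since $\cR_3$ is an integral domain, finite flat over a power-series ring, and both sides (after clearing the denominator of $\mathcal A$, which I would write as $\mathcal A=\mathcal A_1/\mathcal A_2$ with $\mathcal A_i\in\cR_\hg\hatotimes\cR_\hh$ non-vanishing at classical points) are honest elements of $\cR_3$ agreeing on a dense set, they agree identically. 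I expect the main obstacle to be step two — pinning down $\mathcal A$ as a genuine element of $\mathrm{Frac}(\cR_\hg\hatotimes\cR_\hh)$ with the asserted algebraicity, i.e. carrying out the CM-period bookkeeping so that the transcendental CM periods of $K$ cancel exactly between numerator and denominator and only controlled analytic functions of $(\lambda,\mu)$ (congruence numbers and interpolation fudge factors) survive; the $L$-value cancellation and the Euler-factor matching are essentially formal once the correct normalizations from \S\ref{subsec_32_2022_05_23_1426} and \S\ref{subsec:tripleproducts} are in place.
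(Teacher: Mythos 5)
Your strategy coincides with the paper's: compare interpolation formulae on the common range $\cW_3^\g\cap\cW_3^{\rm cris}$, cancel the complex $L$-values via the Artin factorization coming from \eqref{eqn:factorizationtriplespecialized}, match the Euler factors, and conclude by density of $\g$-dominant crystalline points. There is, however, one concrete bookkeeping error which, as written, makes the key cancellation fail. Theorem~\ref{thm:unbalancedinterpolation} (with the roles of $\f$ and $\g$ exchanged) is already a formula for the \emph{square} $\cL_p^{\hg}(\hf\otimes\hg\otimes\hh)^2(x)$, and it interpolates $\Lambda(\hf_\kappa^\circ\otimes\hg_\lambda^\circ\otimes\hh_\mu^\circ,c)/\Omega_{\hg_\lambda}^{2}$ to the \emph{first} power, with the Euler factor squared; you must not square it again. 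Your left-hand side $\Lambda(\cdots,c)^2/\Omega_{\hg_\lambda}^4$ is therefore off by one power: with it, the Artin factorization does not kill the $L$-values, and a stray factor $\Lambda(\hf_\kappa^\circ\otimes\Phi(\Theta)^\circ_{\lambda\otimes\mu},c)\,\Lambda(\hf_\kappa^\circ\otimes\Phi'(\Theta)^\circ_{\lambda\otimes\mu},c')$ survives, which depends on $\kappa$ and cannot be absorbed into $\mathcal A(\lambda,\mu)$. Used correctly, the single power of $\Lambda/\Omega_{\hg_\lambda}^2$ cancels exactly against the product of \eqref{eqn_2022_12_07_1301} and \eqref{eqn_2022_12_07_1302}, each linear in its $L$-value, and the squared Euler factor on the left matches the product of the (already squared) factors \eqref{eqn:EulerBDP}, which is precisely \eqref{eqn_2022_12_07_1258}. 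Also drop $C_{\f_\kappa}^{\pm}$ from your list (iii): these $p$-adic periods appear only in the Mazur--Kitagawa interpolation, not in the triple-product or ${\rm BDP}^2$ formulas, and keeping them would spoil the asserted $\kappa$-independence of $\mathcal A$. With these corrections one lands exactly on the paper's $\mathcal A(\lambda,\mu)$, namely the ratio of the $\mathfrak C_{\rm exc}$-constants times $\Omega_{\Phi(\Theta)_{\lambda\otimes\mu}}\Omega_{\Phi'(\Theta)_{\lambda\otimes\mu}}/\Omega_{\hg_\lambda}^2$.

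On the algebraicity of this period ratio you propose the Shimura--Hida Petersson-norm and CM-period bookkeeping; this does work, but it is heavier than what the theorem needs and is essentially the content of the paper's later Proposition~\ref{prop:Alambdalambda}, where $\mathcal A(\lambda,\lambda)$ is made explicit in terms of Katz $p$-adic $L$-functions. For the theorem itself the paper uses a cheaper device: choose a weight-one specialization $\kappa_0$ of $\f$ and an integer $j$ such that all three $L$-values in the factorization of $\Lambda(\hf_{\kappa_0}\otimes\hg_\lambda^\circ\otimes\hh_\mu^\circ,j)/\Omega_{\hg_\lambda}^2$ into the two Rankin--Selberg pieces are non-zero critical values (this is possible exactly because $\wt(\lambda)\geq\wt(\mu)+2$), so that the period ratio is exhibited as a quotient of non-zero algebraic numbers. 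You may keep your route for the eventual explicit determination of $\mathcal A$, but for the statement at hand the auxiliary weight-one trick is both sufficient and shorter.
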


\begin{proof}
   We will compare both sides of the claimed equality evaluated at the overlapping portions of the interpolation ranges. It follows from equation \eqref{eqn:factorizationtriplespecialized} that, for every crystalline point  $x=(\kappa,\lambda,\mu)\in \cW_3^\hg$, we have the following factorization of algebraic $L$-values:
  \begin{equation}
        \label{eqn_2022_12_07_1257}
        \frac{\Lambda(\hf_\kappa^\circ\otimes \hg_\lambda^\circ\otimes \hh_\mu^\circ, c)}{ \Omega_{\hg_\lambda}^2 }
        = 
        \frac{\Omega_{ \Phi(\Theta)_{\lambda\otimes\mu} }\Omega_{ \Phi'(\Theta)_{\lambda\otimes\mu} }}{\Omega_{\Psi_\hg(\Theta)_{\lambda}}^2} 
        \cdot \frac{\Lambda(\hf_\kappa^\circ\otimes \Phi(\Theta)_{\lambda\otimes\mu}^\circ, c)}{\Omega_{ \Phi(\Theta)_{\lambda\otimes\mu} }}
        \cdot 
        \frac{\Lambda(\hf_\kappa^\circ\otimes \Phi'(\Theta)_{\lambda\otimes \mu}^{\circ}, c' )}{\Omega_{ \Phi'(\Theta)_{\lambda\otimes\mu}}} \in \barQ.
     \end{equation}
    It follows from \eqref{eqn:Eulertriple} and \eqref{eqn:EulerBDP} that $\mathcal E_p^\g(\hf_\kappa^\circ\otimes \hg_\lambda^\circ\otimes \hh_\mu^\circ)^{2} = \mathcal E_p(\hf_\kappa^\circ\otimes \Phi(\Theta)_{\lambda\otimes\mu}^\circ, c) \cdot
    \mathcal E_p(\hf_\kappa^\circ\otimes \Phi'(\Theta)_{\lambda\otimes \mu}^{\circ}, c')$.
 Comparing Theorem~\ref{thm:unbalancedinterpolation}, \eqref{eqn_2022_12_07_1301} and \eqref{eqn_2022_12_07_1302} together with \eqref{eqn_2022_12_07_1257} above, we infer that
    \begin{align} \label{eqn:interpolationtriple}
    \begin{aligned}
         (\cL_p^{\hg}(\hf\otimes \hg\otimes\hh)(x))^2 
         & = 
         \mathfrak C_{\rm exc}(\hf\otimes\hg\otimes\hh) \times   
         \frac{\Omega_{ \Phi(\Theta)_{\lambda\otimes\mu}  }\Omega_{ \Phi'(\Theta)_{\lambda\otimes\mu}  }}{\Omega_{\Psi_\hg(\Theta)_{\lambda}}^2}
         \\
         &\hspace{1.5cm}  \times (\sqrt{-1} )^{3-\wt(\lambda)-\wt(\mu)}\times \mathcal E_p(\hf_\kappa^\circ\otimes \Phi(\Theta)_{\lambda\otimes\mu}^\circ, c) \times 
         \frac{\Lambda(\hf_\kappa^\circ\otimes \Phi(\Theta)_{\lambda\otimes \mu}, c )}{\Omega_{ \Phi(\Theta)_{\lambda\otimes\mu}^\circ}}  \\
         &\hspace{2cm}\times 
         (\sqrt{-1} )^{1-\wt(\lambda)+\wt(\mu)}
         \cdot  \mathcal E_p(\hf_\kappa^\circ\otimes \Phi'(\Theta)_{\lambda\otimes\mu}^\circ, c) \cdot
         \frac{\Lambda(\hf_\kappa^\circ\otimes \Phi'(\Theta)_{\lambda\otimes \mu}, c' )}{\Omega_{ \Phi'(\Theta)_{\lambda\otimes\mu}^\circ}} 
         \\ 
         & = A(\lambda,\mu) \, \times \cL_p^{\rm BDP}(\hf_{/K}\otimes \Psi_\hg \Psi_{\hh^c}^{c,-1})(\kappa, \lambda\otimes\mu) \times \cL_p^{\rm BDP}(\hf_{/K}\otimes \Psi_\hg \Psi_{\hh^c}^{-1})(\kappa, \lambda\otimes\mu)\,,
         \end{aligned}
    \end{align}
    where
    \[
        A(\lambda,\mu) := \frac{ \mathfrak C_{\rm exc}(\hf\otimes\hg\otimes\hh) }{\mathfrak C_{\rm exc}(\hf\otimes\Phi(\Theta)) \cdot \mathfrak C_{\rm exc}(\hf\otimes\Phi'(\Theta))} 
        \cdot 
        \frac{\Omega_{ \Phi(\Theta)_{\lambda\otimes\mu} }\Omega_{ \Phi'(\Theta)_{\lambda\otimes\mu} }}{\Omega_{\Psi_\hg(\Theta)_{\lambda}}^2}.
    \]
    We remark that $A(\lambda,\mu)$ does not depend on $\kappa$ since each one of the three factors $\mathfrak C_{\rm exc}$ is an explicit rational constant which depends only on the triple $(\f,\g,\h)$ (cf. the statements of Theorem \ref{thm:Hida-interp} and Theorem \ref{thm:unbalancedinterpolation}). It is also clear from the definition of $A(\lambda,\mu)$ that it is non-zero and that it can be interpolated (as $\lambda$ and $\mu$ varies) by an element $\cA\in \mathrm{Frac}(\cR_\hg \hatotimes_{\ZZ_p} \cR_\hh)$ since all other factors in the first and final line of \eqref{eqn:interpolationtriple} do. Therefore, the proof of our asserted equality follows from the density of $\hg$-dominant crystalline points $x$ in $\cW_3$, once we show the algebraicity property for
    $$\frac{\Omega_{ \Phi(\Theta)_{\lambda\otimes\mu} }\Omega_{ \Phi'(\Theta)_{\lambda\otimes\mu} }}{\Omega_{\Psi_\hg(\Theta)_{\lambda}}^2}$$ 
    This is clear thanks to the factorization
    $$\frac{\Lambda(\hf_{\kappa_0}\otimes \hg_{\lambda}^\circ\otimes \hh_\mu^\circ, j)}{ \Omega_{\hg_\lambda}^2 }
        = 
        \frac{\Omega_{ \Phi(\Theta)_{\lambda\otimes\mu} }\Omega_{ \Phi'(\Theta)_{\lambda\otimes\mu} }}{\Omega_{\Psi_\hg(\Theta)_{\lambda}}^2} 
        \cdot \frac{\Lambda(\hf_{\kappa_0}\otimes \Phi(\Theta)_{\lambda\otimes\mu}^\circ, j)}{\Omega_{ \Phi(\Theta)_{\lambda\otimes\mu} }}
        \cdot 
        \frac{\Lambda(\hf_{\kappa_0}\otimes \Phi'(\Theta)_{\lambda\otimes \mu}^{\circ}, j)}{\Omega_{ \Phi'(\Theta)_{\lambda\otimes\mu}}} \in \overline{\QQ}^\times$$
of complex $L$-functions,  where we choose a weight-one specialization $\kappa_0$ and a $j$ so that the  $L$-values on the right-hand-side are non-zero critical values (this choice is possible since we assumed $\wt(\lambda)\geq \wt(\mu)+2$).
\end{proof}

\begin{remark}
We will need to compute $\cA(\lambda,\lambda)$ explicitly when $\h=\g^c$ is the conjugate family and $\lambda\in \cW_\g^{\rm crys}$ is a crystalline point. Even though the proof of Theorem~\ref{thm:analyticfactorization} does not offer a direct way to carry out this task, we prove in Proposition~\ref{prop:Alambdalambda} below that $\cA(\lambda,\lambda)$ can be expressed in terms Katz $p$-adic $L$-functions.

\end{remark}

\subsubsection{}
\label{subsubsec_813_2022_12_05}
When $\h=\g^c$ is the conjugate CM family, we put $\mathds{1}(\lambda\otimes\mu):=\Psi_{\hg_\lambda}\otimes \Psi_{\hg_\mu}^{-1}$ and $\Psi^\Ad(\lambda\otimes\mu):= \Psi_{\hg_\lambda}\otimes\Psi_{\hg_\mu}^{c,-1}$. Via the natural morphism (which restricts our attention to specializations of the type $\lambda\otimes\lambda$)
$$\mathcal R_\hg\hatotimes_{\ZZ_p} \mathcal R_{\hg} \lra  \mathcal R_\hg\,,\qquad a\otimes b \mapsto ab\,,$$ 
we define
\begin{align}
    \begin{aligned}
     \label{eqn_2022_05_18_0910}
      \cL_p^{\rm BDP}(\hf_{/K}\otimes \mathds{1})(\kappa,\lambda) &:= \cL_p^{\rm BDP}(\hf_{/K}\otimes \Psi_\hg \Psi_{\hh^c}^{-1})(\kappa, \lambda\otimes\lambda)\,,\\
      \cL_p^{\rm BDP}(\hf_{/K}\otimes \Psi^{\rm ad})(\kappa,\lambda) &:=\cL_p^{\rm BDP}(\hf_{/K}\otimes \Psi_\hg \Psi_{\hh^c}^{c,-1})(\kappa, \lambda\otimes\lambda)\,.
    \end{aligned}
\end{align}
We note that we have the following identification of ${\rm BDP}^2$ $p$-adic $L$-functions:
\begin{equation}
\label{eqn_2022_12_08_1245}
    \cL_p^{\rm BDP}(\hf_{/K}\otimes \mathds{1})(\kappa,\lambda)=\cL_p^{\rm BDP}(\hf_{/K}\otimes \bbpsi)(\kappa, 1)\,,
\end{equation} 
where,
\begin{itemize}
    \item $\bbpsi$ is the Galois character that is associated with the universal Hecke character $\pmb{\psi}$ given as in \cite[\S4.2]{BDV} via the (geometrically normalized) Artin map of class field theory. We remark that the theta series of $\pmb{\psi}$ gives rise to the canonical Hida family $\hg_K$ of tame conductor $-D_K$ and tame character $\epsilon_K$, and that has CM by $K$, and which admits as a specialization the unique  $p$-stabilization $g_{\rm Eis}:={\rm Eis}_1(\epsilon_K)(q)-{\rm Eis}_1(\epsilon_K)(q^p)$ of the Eisenstein series $${\rm Eis}_1(\epsilon_K):=\frac{L(\epsilon_K,0)}{2}+\sum_{n\geq 1}\left(\sum_{d\mid n}\epsilon_K(d)\right)q^n$$ 
    of weight one, whose associated Galois representation is isomorphic to ${\rm Ind}_{K/\QQ}(\epsilon_K)$. 
    \item the specialization $(\kappa,1)$ on the right stands for the specialization of $\hg_K$ to $g_{\rm Eis}$.
\end{itemize}

Notice that specializations of the type $\lambda\otimes\lambda$ may fall within the interpolation range for $\cL_p^{\rm BDP}(\hf_{/K}\otimes \Psi^{\rm ad})(\kappa,\lambda)$, but they never do for $\cL_p^{\rm BDP}(\hf_{/K}\otimes \mathds{1})$. 

Theorem~\ref{thm:analyticfactorization} has the following immediate consequence in terms of the objects we have introduced in \S\ref{subsubsec_813_2022_12_05}:

\begin{corollary} \label{cor:analyticfactorization}
    In the setting of Theorem~\ref{thm:analyticfactorization}, suppose that $\h=\g^c$. 
    We then have the following factorization of $p$-adic $L$-functions: 
    \[
        \cL_p^\hg(\hf\otimes \hg\otimes \hg^c)^2(\kappa,\lambda,\lambda)= \mathcal A(\lambda,\lambda) \cdot
        \cL_p^{\rm BDP}(\hf_{/K} \otimes \Psi^\Ad )(\kappa, \lambda )\cdot \cL_p^{\rm BDP}(\hf_{/K} \otimes \mathds{1})(\kappa,\lambda )\,,
    \]
    where we note that we regard both $\mathds{1}$ and $\Psi^\Ad$ as $\cR_\hg$-valued characters.
\end{corollary}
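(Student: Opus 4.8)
The plan is to deduce Corollary~\ref{cor:analyticfactorization} as a direct specialization of Theorem~\ref{thm:analyticfactorization} to the case $\h=\g^c$, restricting attention to the diagonal specializations of the type $\lambda\otimes\lambda$. Concretely, I would start from the factorization
\[
    \cL_p^\hg(\hf\otimes\hg\otimes \hh)^2(\kappa,\lambda,\mu) = \mathcal A(\lambda,\mu) \cdot \cL_p^{\rm BDP}(\hf_{/K}\otimes \Psi_\hg \Psi_{\hh^c}^{c,-1}) (\kappa, \lambda\otimes\mu)\cdot \cL_p^{\rm BDP}(\hf_{/K}\otimes \Psi_\hg \Psi_{\hh^c}^{-1} )(\kappa, \lambda\otimes\mu)
\]
supplied by Theorem~\ref{thm:analyticfactorization} (valid since both $\hg$ and $\hh=\hg^c$ have CM by the same field $K$), and pull it back along the diagonal ring homomorphism $\mathcal R_\hg\hatotimes_{\ZZ_p}\mathcal R_\hg\to \mathcal R_\hg$, $a\otimes b\mapsto ab$, which geometrically is the closed immersion $\cW_\hg\hookrightarrow \cW_\hg\times\cW_\hg$, $\lambda\mapsto (\lambda,\lambda)$. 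Under this pullback the left-hand side becomes $\cL_p^\hg(\hf\otimes\hg\otimes\hg^c)^2(\kappa,\lambda,\lambda)$ (using the identification $\cR_\hh=\cR_\hg$ fixed in \S\ref{subsubsec_30042021_1123}), the constant $\mathcal A(\lambda,\mu)$ becomes $\mathcal A(\lambda,\lambda)\in \mathrm{Frac}(\cR_\hg)^\times$, and the two ${\rm BDP}^2$ factors become exactly $\cL_p^{\rm BDP}(\hf_{/K}\otimes\Psi^\Ad)(\kappa,\lambda)$ and $\cL_p^{\rm BDP}(\hf_{/K}\otimes\mathds{1})(\kappa,\lambda)$ by the very definitions in \eqref{eqn_2022_05_18_0910} of \S\ref{subsubsec_813_2022_12_05}.

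The one point that requires a brief justification is that the pullback of the factorization identity along $\lambda\otimes\mu\mapsto\lambda\otimes\lambda$ is legitimate, i.e. that passing to the diagonal does not lose information or introduce spurious zeros. This is immediate: an equality of elements (or of fractional ideals) in $\mathcal R_\hf\hatotimes(\cR_\hg\hatotimes\cR_\hg)[\tfrac1p]$ remains an equality after applying any ring homomorphism, and here $\mathcal A(\lambda,\mu)$ is a unit in the fraction field, so its image $\mathcal A(\lambda,\lambda)$ is a well-defined nonzero element of $\mathrm{Frac}(\cR_\hg)$ (one only needs that the diagonal does not lie in the vanishing locus of the denominator of $\mathcal A$, which holds because $\mathcal A$ is built from the explicit nonzero ratio of $\mathfrak C_{\rm exc}$-constants and Hida periods appearing in the proof of Theorem~\ref{thm:analyticfactorization}, and the period ratio $\Omega_{\Phi(\Theta)_{\lambda\otimes\lambda}}\Omega_{\Phi'(\Theta)_{\lambda\otimes\lambda}}/\Omega_{\Psi_\hg(\Theta)_\lambda}^2$ is a nonzero algebraic number for $\wt(\lambda)\geq \wt(\lambda)+2$ ... wait, no: on the diagonal the relevant inequality is vacuous, so one instead argues by density of classical points with $\wt(\lambda)\neq\wt(\mu)$ approaching the diagonal, exactly as at the end of the proof of Theorem~\ref{thm:analyticfactorization}). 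The remaining bookkeeping—checking that the infinity types and conductors of $\Psi_{\hg_\lambda}\Psi_{\hg_\lambda^c}^{c,-1}=\Psi^\Ad(\lambda\otimes\lambda)$ and $\Psi_{\hg_\lambda}\Psi_{\hg_\lambda^c}^{-1}=\mathds{1}(\lambda\otimes\lambda)$ match those used to define the specialized ${\rm BDP}^2$ $p$-adic $L$-functions in \eqref{eqn_2022_05_18_0910}—is routine and was in effect already carried out in \S\ref{subsubsec_813_2022_12_05}.

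I do not anticipate a genuine obstacle here: the statement is essentially a notational repackaging of Theorem~\ref{thm:analyticfactorization} once the definitions \eqref{eqn_2022_05_18_0910} are in place, and the only mild subtlety—well-definedness of $\mathcal A(\lambda,\lambda)$ as a nonzero meromorphic function on $\cW_\hg$—is handled by the same density argument used to prove Theorem~\ref{thm:analyticfactorization}. The more substantive work, namely the explicit evaluation of $\mathcal A(\lambda,\lambda)$ in terms of Katz $p$-adic $L$-functions, is deferred to Proposition~\ref{prop:Alambdalambda} and is not needed for the corollary itself.
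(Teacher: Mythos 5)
Your proposal is correct and matches the paper, which states Corollary~\ref{cor:analyticfactorization} as an immediate consequence of Theorem~\ref{thm:analyticfactorization} upon setting $\h=\g^c$, restricting along the diagonal $\cR_\hg\hatotimes_{\ZZ_p}\cR_\hg\to\cR_\hg$, and invoking the definitions \eqref{eqn_2022_05_18_0910}. The only subtlety you flag --- making sense of $\mathcal A(\lambda,\lambda)$ --- is handled in the paper exactly as you suggest, by continuity/limits from off-diagonal classical points (this is how Proposition~\ref{prop:Alambdalambda} computes $\mathcal A(\mu,\mu)$), so no further argument is needed.
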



\subsubsection{}  
\label{subsubsec_2023_01_06_1256}
We are now set to describe $\mathcal A(\lambda,\lambda)$ in explicit terms. Before doing so, we report a result on Petersson products which is essentially due to Hida and Shimura (cf. \cite{shimura76,Hida81}).

Recall that we have denoted by $-D_K$ the discriminant of the quadratic imaginary field $K$. Let us consider a Hecke character $\rho$ of $K$ with infinity type $(\ell-1,0)$ and conductor $\mathfrak f_\rho$. It gives rise to a theta series $\theta(\rho)\in S_\ell(\Gamma_1(M),\chi)$, where $M=D_K \mathbf{N}_{K/\QQ}(\mathfrak f_\rho)$ and $\chi=\varepsilon_K\varepsilon_\rho$ with $\varepsilon_\rho(m) = \rho((m))m^{1-\ell}$ for $m$ coprime to $\mathfrak{f}_\rho$. We denote by $M_\chi$ the conductor of $\chi$. \comm{\color{red}  We will need general calculations of the Petersson norms (cf. \cite{shimura76,Hida81}) so we first point out how our normalization differs from these references. In particular, as our Petersson product is calculated for the level $\Gamma_0(M)$ (rather than $\Gamma_1(M)$ as in loc. cit.), we have
    \[
        \langle \theta(\rho),\theta(\rho) \rangle_{\Gamma_1(M)} = \frac{\delta(M)[\Gamma_0(M):\Gamma_1(M)]}{2} \langle \theta(\rho),\theta(\rho) \rangle
    \]
    where $\delta(M)=1$ if $M>2$, and $\delta(M)=2$ otherwise. This dichotomy is because $\{\pm I_2\} \subset \Gamma_1(M)$ if and only if $M\leq 2$. With this in mind, the result \cite[Theorem 5.1]{Hida81} reads 
    \begin{align}
    \begin{aligned} \label{eqn_2023_01_04_1758}
        \langle \theta(\rho),\theta(\rho)\rangle &= \frac{2\delta(M)M^2 \prod_{q\mid \frac{M}{M_{\chi}} } (1-q^{-1}) }{\delta(M)[\Gamma_0(M):\Gamma_1(M)]} \cdot \frac{(\ell-1)!}{2^{2\ell}\pi^{\ell+1}} \cdot L(\Sym^2(\theta(\rho)), \bar\chi, \ell)  \\
        &= \frac{M }{\prod_{\substack{q\mid M, \\ q\nmid \frac{M}{M_\chi}}} (1-q^{-1})} \cdot \frac{(\ell-1)!}{2^{2\ell-1}\pi^{\ell+1}} \cdot L(\Sym^2(\theta(\rho)), \bar\chi, \ell) \,,
    \end{aligned}
    \end{align}
    since $[\Gamma_0(M):\Gamma_1(M)]=M\prod_{q\mid M} (1-q^{-1})$. Evaluating the equation \cite[Eqn. (8.2)]{Hida81} (adapted here to our notation) at $s=\ell$, we see that
    \begin{align}
\label{eqn_2023_01_05_1100}
        L(\Sym^2(\theta(\rho)), \bar\chi,\ell) = L_M(\varepsilon_K,1) \cdot L(\bar{\hat\chi}\rho^2,\ell)\,,
    \end{align}
    where $\hat\chi(\q) = \chi(\mathbf{N}_{K/\QQ}(\q))$ and $L_M(\varepsilon_K,1)$ denotes the $L$-functions with the factors at $q\mid M$ removed. Observe that we have $\rho\rho^c (\q) = \hat\chi\cdot \mathbf{N}_K^{\ell-1}(\q)$ for all prime ideals $\q \nmid M$, which implies that
    \begin{align}
\label{eqn_2023_01_05_1101}
        L(\bar{\hat\chi}\rho^2,\ell)  = \prod_{q\mid D_K } (1-q^{-1}) \cdot L_{\mathbf{N}_{K/\QQ}(\mathfrak f_\rho)}(\rho/\rho^c,1)=L_M(\rho/\rho^c,1)\,.
        \end{align}
    The first equality above holds because
    \begin{itemize}
        \item at primes $q\mid D_K$, we have that $\rho(q\cO_K)=\rho^c(q\cO_K)$, and this explains the factors at $q\mid D_K$\,;
        \item at primes $q\mid \mathbf{N}_{K/\QQ}(\mathfrak f_\rho)$, the Euler factors of both $L$-functions are trivial. Indeed, $\hat\chi$ has $\mathrm{N}_K(\mathfrak f_\rho)$ in its modulus.
    \end{itemize}
    Using \eqref{eqn_2023_01_05_1100} and \eqref{eqn_2023_01_05_1101}, we can rewrite \eqref{eqn_2023_01_04_1758} as
    }
Being mindful of the different normalizations for the Petersson product we have adopted, a tedious but straightforward calculation that essentially follows from \cite[Theorem 5.1, Eqn. (8.2)]{Hida81} shows
    \begin{align}
    \begin{aligned} \label{eqn_2023_01_04_1800}
        \langle \theta(\rho),\theta(\rho)\rangle 
        &= \mathscr O(\rho)  \cdot \frac{(\ell-1)!}{2^{2\ell-1}\pi^{\ell}} \cdot L_{\mathbf{N}_{K/\QQ}(\mathfrak{f}_\rho)}(\rho/\rho^c,1)\,,
    \end{aligned}
    \end{align}
    where
    \begin{align}
    \begin{aligned} \label{eqn_2023_01_05_1121}\mathscr O(\rho) = \frac{M\prod_{q\mid D_K } (1-q^{-1}) }{\prod_{\substack{q\mid M, \\ q\nmid \frac{M}{M_{\chi}}}} (1-q^{-1})}\times \frac{L_M(\varepsilon_K,1) }{\pi}\,.
    \end{aligned}
    \end{align}
    Notice that $\mathscr O(\rho)\sqrt{D_K}\in \QQ^\times$ by the class number formula.

     \subsubsection{}
     Our goal in this subsection is to explicate $\mathcal A(\lambda,\lambda)$ in terms of the special values of various Katz $p$-adic $L$-functions.
    
    \begin{proposition} \label{prop:Alambdalambda}
        We have
        \[
        \mathcal A(\lambda,\lambda) =   \frac{ \mathfrak C_{\rm exc}(\hf\otimes\hg\otimes\hg^c) }{\mathfrak C_{\rm exc}(\hf\otimes\Phi(\Theta)) \cdot \mathfrak C_{\rm exc}(\hf\otimes\Phi'(\Theta))} 
            \cdot
        \frac{\Omega_{ \Phi(\Theta)_{\lambda\otimes\lambda} } }{\Omega_{\Psi_\hg(\Theta)_{\lambda}}^2}
        \cdot \frac{4h_K(1-p^{-1})}{\omega_K \cdot \mathfrak{c}_{g_{\rm Eis}}} \cdot \log_p(u)\,,
        \]
        where $h_K$ is the class number of $K$, $\omega_K=\#(\cO_K^{\times})_{\rm tor}$, and $u\in \cO_K[1/p]^\times$ is any element such that $(u)=\p^{h_K}$.
    \end{proposition}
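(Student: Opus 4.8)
The plan is to begin from the explicit description of the interpolating factor obtained in the course of proving Theorem~\ref{thm:analyticfactorization}. Specialising there to $\h=\g^c$ and $\mu=\lambda$ gives
\[
\mathcal A(\lambda,\lambda) \,=\, \frac{\mathfrak C_{\rm exc}(\hf\otimes\hg\otimes\hg^c)}{\mathfrak C_{\rm exc}(\hf\otimes\Phi(\Theta))\cdot\mathfrak C_{\rm exc}(\hf\otimes\Phi'(\Theta))}\cdot\frac{\Omega_{\Phi(\Theta)_{\lambda\otimes\lambda}}\,\Omega_{\Phi'(\Theta)_{\lambda\otimes\lambda}}}{\Omega_{\Psi_\hg(\Theta)_{\lambda}}^{2}},
\]
where $\mathcal A$ is the single element of $\mathrm{Frac}(\cR_\hg)$ furnished by Corollary~\ref{cor:analyticfactorization} --- the point being that, although the diagonal lies outside the range $\wt(\lambda)\geq\wt(\mu)+2$ in which the algebraicity argument of Theorem~\ref{thm:analyticfactorization} applies, the value $\mathcal A(\lambda,\lambda)$ is nonetheless determined by density of classical points. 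Since every factor on the right other than $\Omega_{\Phi'(\Theta)_{\lambda\otimes\lambda}}$ occurs verbatim in the asserted formula, the proposition reduces to the identity
\[
\Omega_{\Phi'(\Theta)_{\lambda\otimes\lambda}} \,=\, \frac{4h_K(1-p^{-1})}{\omega_K\cdot\mathfrak c_{g_{\rm Eis}}}\cdot\log_p(u).
\]

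The first step is to identify the left-hand side. By the definitions of \S\ref{subsubsec_813_2022_12_05}, when $\h=\g^c$ the Hecke character $\Phi'_{\lambda\otimes\lambda}=\Psi_{\hg_\lambda}\Psi_{\hg_\lambda}^{-1}$ is the trivial character of $G_K$; its theta series is the weight-one Eisenstein series ${\rm Eis}_1(\epsilon_K)$, and at the level of Hida families $\Phi'(\Theta)$ is a reparametrisation of the canonical CM family $\hg_K$, with the diagonal point $\lambda\otimes\lambda$ pulling back to the weight-one classical point whose ordinary $p$-stabilisation is $g_{\rm Eis}$; this is recorded in \eqref{eqn_2022_12_08_1245}. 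Consequently $\Omega_{\Phi'(\Theta)_{\lambda\otimes\lambda}}$ is independent of $\lambda$ and equals the constant $\Omega_{g_{\rm Eis}}$.

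The substance of the argument is the evaluation of $\Omega_{g_{\rm Eis}}$. Because $p$ splits in $K$ and $\epsilon_K(p)=1$, the newform underlying $g_{\rm Eis}$ has Hecke polynomial $(X-1)^2$ at $p$, so the adjoint Euler factor $\mathcal E(\cdot,\Ad)$ in \eqref{eqn:modifiedHidaperiods} vanishes there, while the archimedean Petersson norm of the Eisenstein form $g_{\rm Eis}$ degenerates (through \eqref{eqn_2023_01_04_1800}--\eqref{eqn_2023_01_05_1121} the relevant Hecke $L$-value becomes $\zeta_K(s)$ at $s=1$, which has a simple pole). The finite number $\Omega_{g_{\rm Eis}}$ is to be read off as the value at this weight-one point of the element of $\mathrm{Frac}(\cR_{\hg_K})$ that $p$-adically interpolates the modified periods $\Omega_{\hg_{K,\nu}}$; equivalently --- after pulling out the factor $(-2\sqrt{-1})^{\wt(g_{\rm Eis})+1}=-4$ and the congruence number $\mathfrak c_{g_{\rm Eis}}$ of \eqref{eqn:modifiedHidaperiods} --- it is a derivative-type value of Hida's adjoint $p$-adic $L$-function of $\hg_K$. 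I would compute it using the decomposition $\Ad^0(\hg_K)=\epsilon_K\oplus\mathrm{Ind}_{K/\QQ}(\Psi_{\hg_K}/\Psi_{\hg_K}^{c})$, which at the weight-one point makes the relevant factor of this adjoint $p$-adic $L$-function the value of the Katz $p$-adic $L$-function of $K$ at the trivial character. By the $p$-adic Kronecker limit formula of Katz (in the spirit of Gross \cite{Gross1980Factorization}, and as computed in \cite{BDV}), that value is $(1-p^{-1})\,\tfrac{h_K}{\omega_K}\log_p(u)$ up to elementary rational constants, and feeding this back --- together with the analytic class number formula on the complex side --- yields the claimed expression for $\Omega_{g_{\rm Eis}}$. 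One should also record that $\log_p(u)$ is independent of the choice of $u$: any two generators of $\p^{h_K}$ differ by a unit of $\cO_K$, hence by a root of unity, which is killed by $\log_p$.

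The main obstacle will be the constant bookkeeping across this Eisenstein degeneration: reconciling the powers of $2$, $\pi$ and $\sqrt{D_K}$ and the residue of $\zeta_K$ at $s=1$ produced on the archimedean side by \eqref{eqn:modifiedHidaperiods} and \eqref{eqn_2023_01_04_1800}--\eqref{eqn_2023_01_05_1121} with the trivial-zero factor $(1-p^{-1})$ of the Katz $p$-adic $L$-function, the factor $-4$, the congruence number $\mathfrak c_{g_{\rm Eis}}$ and the sign conventions on the $p$-adic side, so that exactly $4h_K(1-p^{-1})/\omega_K$ survives. A related point demanding care is to make precise the sense in which $\Omega_{g_{\rm Eis}}$ is a well-defined finite quantity at the Eisenstein point --- that is, that the $0\cdot\infty$ indeterminacy coming from \eqref{eqn:modifiedHidaperiods} is resolved consistently by $p$-adic interpolation of adjoint $L$-values along $\hg_K$; here I would rely on Hida's congruence-module theory and on the computations of \cite{BDV}.
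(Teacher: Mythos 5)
There is a genuine gap, and it sits exactly at the step you label a ``reduction''. The identity
\[
\mathcal A(\lambda,\mu) \,=\, \frac{ \mathfrak C_{\rm exc}(\hf\otimes\hg\otimes\hg^c) }{\mathfrak C_{\rm exc}(\hf\otimes\Phi(\Theta)) \cdot \mathfrak C_{\rm exc}(\hf\otimes\Phi'(\Theta))}\cdot\frac{\Omega_{ \Phi(\Theta)_{\lambda\otimes\mu} }\,\Omega_{ \Phi'(\Theta)_{\lambda\otimes\mu} }}{\Omega_{\Psi_\hg(\Theta)_{\lambda}}^2}
\]
is established only at classical points with $\wt(\lambda)>\wt(\mu)$, and its right-hand side is \emph{not defined} on the diagonal: $\Phi'(\Theta)_{\lambda\otimes\lambda}$ is the weight-one Eisenstein point $g_{\rm Eis}$, where the modified Hida period of \eqref{eqn:modifiedHidaperiods} makes no sense (the Petersson norm of an Eisenstein series diverges, the adjoint Euler factor $\mathcal E(\cdot,\Ad)$ vanishes since $p$ splits in $K$, and the congruence-number normalization is for cuspidal branches). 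Moreover, the periods $\Omega_{(\cdot)}$ are only defined pointwise at crystalline cuspidal specializations, so ``density of classical points'' does not let you pass the displayed identity to $\mu=\lambda$: the element $\mathcal A\in\mathrm{Frac}(\cR_\hg\hatotimes\cR_\hg)$ is of course determined off the diagonal, but evaluating it \emph{at} the diagonal requires rewriting the right-hand side as something continuous near the diagonal first. Your proposed remedy --- declaring $\Omega_{g_{\rm Eis}}$ to be a ``derivative-type value of Hida's adjoint $p$-adic $L$-function of $\hg_K$'' resolved ``up to elementary rational constants'' by congruence-module theory --- is not a reduction at all: making that quantity well defined and evaluating it (including the trivial-zero factor $(1-p^{-1})$, the factor $-4$, $\mathfrak c_{g_{\rm Eis}}$, and the Kronecker-limit term $\log_p(u)$) is precisely the content of the proposition, and you leave it unproved.

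The paper's proof avoids ever defining a period at the Eisenstein point. Working at points with $\wt(\lambda)>\wt(\mu)$, it converts each period into an $L$-value via the Petersson-norm computation \eqref{eqn_2023_01_04_1800} and then into a Katz $p$-adic $L$-value via \eqref{eqn_2022_12_08_1519}, so that $\mathcal A(\lambda,\mu)$ becomes a product of absolute constants, the rational factors $\mathscr O(\cdot)$, and a ratio of Katz $p$-adic $L$-functions --- all of which \emph{are} continuous in $(\lambda,\mu)$. Only then does it specialize to the diagonal: the two Katz values $\mathcal L_p^{\rm Katz}(\widetilde{\Phi}_{\lambda\otimes\lambda})$ and $\mathcal L_p^{\rm Katz}(\widetilde{\Psi}_{\hg_\lambda})$ lie in the interpolation range and are converted back into the honest ratio $\Omega_{\Phi(\Theta)_{\lambda\otimes\lambda}}/\Omega_{\hg_\lambda}^2$, the class-number formula gives $\mathscr O(\Phi'_{\lambda\otimes\lambda})=2h_K\sqrt{D_K}/\omega_K$, and the remaining off-range value $\mathcal L_p^{\rm Katz}(\mathbf{N}_K)$ is evaluated by the $p$-adic Kronecker limit formula as $-\tfrac12(1-p^{-1})\log_p(u)$. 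You correctly anticipate these last ingredients (trivial character, $g_{\rm Eis}$, class-number formula, Katz value at $\mathbf{N}_K$, $\log_p(u)$ well defined up to roots of unity), but without the intermediate rewriting in terms of Katz $p$-adic $L$-functions your passage to the diagonal, and hence your whole argument, does not go through as written.
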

    \begin{proof}
Let us consider now the families of theta series associated with the characters $\Psi_\hg$, $\Phi$, and $\Phi'$ given as in Section \ref{subsubsec_812_2022_12_21_1028}. Recall that at crystalline specializations $\lambda\otimes\mu$ of $\cR_\g  \hatotimes_{\ZZ_p}  \cR_\g$ with $\wt(\lambda)>\wt(\mu)$, these families specialize to classical cuspidal modular forms of weights $\wt(\lambda)$, $\wt(\lambda)+\wt(\mu)-1$, and $\wt(\lambda)-\wt(\mu)+1$, respectively. To ease the notation, recall from the Theorem \ref{thm:analyticfactorization} that when $\wt(\lambda)>\wt(\mu)$ we have
    \begin{equation}
        \label{eqn_2023_01_05_1111}
        \mathcal A(\lambda,\mu) := \mathfrak C_{\rm exc} 
        \cdot 
        \frac{\Omega_{ \Phi(\Theta)_{\lambda\otimes\mu} }\Omega_{ \Phi'(\Theta)_{\lambda\otimes\mu} }}{\Omega_{\Psi_\hg(\Theta)_{\lambda}}^2}, \qquad \text{where } \quad \mathfrak C_{\rm exc}:=\frac{ \mathfrak C_{\rm exc}(\hf\otimes\hg\otimes\hg^c) }{\mathfrak C_{\rm exc}(\hf\otimes\Phi(\Theta)) \cdot \mathfrak C_{\rm exc}(\hf\otimes\Phi'(\Theta))}.
    \end{equation}

    Since we have that
    $2\wt(\lambda) = (\wt(\lambda)+\wt(\mu)-1) + (\wt(\lambda)-\wt(\mu)+1)$,  
    it follows from the definition of $\mathcal A(\lambda,\mu)$ (cf. \eqref{eqn_2023_01_05_1111}), the definition of the modified Hida periods given as in Equation \eqref{eqn:modifiedHidaperiods}, together with \eqref{eqn_2023_01_04_1800} that
    \begin{align}
    \begin{aligned}
        \label{eqn_2022_12_08_1510}
         \mathcal A(\lambda,\mu) 
         &=   
         \mathfrak C_{\rm exc} \cdot \mathfrak c(\lambda\otimes\mu) \cdot \frac{\mathscr O({\Phi_{\lambda\otimes\mu}}) \, \mathscr O({\Phi_{\lambda\otimes\mu}'}) }{\mathscr O({{\Psi_{\hg_\lambda} }})^2} \cdot \frac{\mathcal E(\Phi(\Theta)_{\lambda\otimes\mu},\Ad)  \,\mathcal E(\Phi'(\Theta)_{\lambda\otimes\mu},\Ad)}{\mathcal E(\Psi_\hg(\Theta)_\lambda,\Ad)^2} 
        \\
        &\qquad\times 
        \frac{
        (\wt(\lambda)+\wt(\mu)-2)!\,  L(\widehat{\Phi}^{-1}_{\lambda\otimes\mu},0) \,  (\wt(\lambda)-\wt(\mu))!  \, L(\widehat{\Phi}'^{-1}_{\lambda\otimes\mu},0)}{ (\wt(\lambda)-1)!^2 \, L(\widehat{\Psi}_{\hg_\lambda}^{-1},0)^2}\,.
    \end{aligned}
    \end{align}
    for $\wt(\lambda)>\wt(\mu)$, where we have put
    $$\mathfrak c(\lambda\otimes\mu) := \frac{\mathfrak c_\hg(\lambda)^2}{ \mathfrak c_{\Phi(\Theta)}(\lambda\otimes\mu) \, \mathfrak c_{\Phi'(\Theta) }(\lambda\otimes\mu) }  $$
    to simplify our notation. Thanks to a theorem Goldstein and Schappacher (cf. \cite{BDP2}, Proposition 2.11 and Theorem 2.12), it follows that the ratio \eqref{eqn_2022_12_08_1510}  is a non-zero algebraic number. Moreover, for each $\rho \in \{{\Psi}_{\hg_\lambda},{\Phi}_{\lambda\otimes\mu},{\Phi}_{\lambda\otimes\mu}' \}$ of infinity type $(\ell-1,0)$, with $\ell\in \{\wt(\lambda), \wt(\lambda)+\wt(\mu)-1, \wt(\lambda)-\wt(\mu)+1\}$ as above, it is easy to see that 
    \begin{equation}
        \label{eqn_2022_12_08_1518}
        \mathcal E(\theta(\rho),\Ad) = (1-\widetilde\rho(\p)/p)(1-\widetilde\rho^{-1}(\p^c))\,,
    \end{equation} 
    where $\widetilde{\rho}:=\rho\rho^{c,-1}\mathbf{N}_K$. Let us consider a choice of complex CM period $\Omega(K)$ and $p$-adic CM period $\Omega_p(K)$ defined as in \cite{BDP2}, Eqn. (2-15) and Eqn. (2-17) (where they are denoted by $\Omega(A)$ and $\Omega_p(A)$, respectively).  We have the following interpolation formula the Katz $p$-adic $L$-function, cf. \cite[Eqn. (3-1)]{BDP2}: 
     \begin{equation}
        \label{eqn_2022_12_08_1519}
        \mathcal L_p^{\rm Katz}(\widetilde\rho) = \left(\frac{\Omega_p(K)}{\Omega(K)}\right)^{2\ell-2}\left(\frac{\sqrt{D_K}}{2\pi}\right)^{2-\ell} (1-\widetilde\rho(\p)/p)(1-\widetilde\rho^{-1}(\p^c)) \cdot (\ell-1)! \cdot L(\widetilde\rho^{-1},0),\qquad  \forall \ell>1\,.
       \end{equation} 
    Combining \eqref{eqn_2022_12_08_1510}, \eqref{eqn_2022_12_08_1518} and \eqref{eqn_2022_12_08_1519}, we conclude that
    \begin{align}
    \label{eqn_2022_12_08_1517}
        \mathcal A(\lambda,\mu) &=   \mathfrak C_{\rm exc} \cdot \mathfrak c(\lambda\otimes\mu)\cdot \frac{\mathscr O({\Phi_{\lambda\otimes\mu}}) \cdot \mathscr O(\Phi_{\lambda\otimes\mu}') }{\mathscr O({{\Psi_{\hg_\lambda} }})^2}  \cdot \frac{\mathcal L_p^{\rm Katz}(\widetilde{\Phi}_{\lambda\otimes\mu}) \cdot \mathcal L_p^{\rm Katz}(\widetilde{\Phi'}_{\lambda\otimes\mu})}{\mathcal L_p^{\rm Katz}(\widetilde{\Psi}_{\hg_{\lambda}})^2}
    \end{align}
    whenever $\wt(\lambda)>\wt(\mu)$. Notice that $\mathfrak C_{\rm exc}, \, \mathscr O({\Phi_{\lambda\otimes\mu}}), \, \mathscr O(\Phi_{\lambda\otimes\mu}'), \, \mathscr O({{\Psi_{\hg_\lambda} }})$ are absolute nonzero rational constants (see \eqref{eqn_2023_01_05_1121} for the last three). 
    Since the CM family $\g_K$ is Eisenstein-irregular in weight one, by \cite[Remark 0.1]{BetinaDimitrovPozzi} we know that $\mathfrak c_{g_{\rm Eis}} := \mathfrak c_{\Phi'(\Theta)}(\mu\otimes \mu) \ne 0$. This ensures that we can specialize $\mathfrak c(\lambda\otimes \mu)$ at $\lambda=\mu$.
    
    As a result, we may compute $\mathcal A(\mu,\mu)$ as the limit of $\mathcal A(\lambda,\mu)$ as $\lambda\to\mu$ with $\wt(\lambda)>\wt(\mu)$, which in turn equals
\begin{align}
\label{eqn_2023_1_5_1130}
        \mathcal A(\mu,\mu) &=   \mathfrak C_{\rm exc} \cdot  \mathfrak c(\mu \otimes\mu) \cdot\frac{\mathscr O({\Phi_{\mu\otimes\mu}}) \cdot \mathscr O(\Phi_{\mu\otimes\mu}') }{\mathscr O({{\Psi_{\hg_\mu} }})^2}  \cdot \frac{\mathcal L_p^{\rm Katz}(\widetilde{\Phi}_{\mu\otimes\mu}) \cdot \mathcal L_p^{\rm Katz}(\widetilde{\Phi'}_{\mu\otimes\mu})}{\mathcal L_p^{\rm Katz}(\widetilde{\Psi}_{\hg_{\mu}})^2}
    \end{align}
    by continuity of the expressions involved, on substituting $\lambda:=\mu$ in \eqref{eqn_2022_12_08_1517}. Observe that both $\widetilde{\Phi}_{\mu\otimes\mu}$ and $\widetilde{\Psi}_{\g_\mu}$ fall within the region of interpolation of the Katz $p$-adic $L$-function, while $\widetilde{\Phi'}_{\mu\otimes\mu} = \mathbf{N}_K$ falls outside its interpolation region. Substituting the interpolation formula \eqref{eqn_2022_12_08_1519}, Equation \eqref{eqn_2023_01_04_1800}, and Equation \eqref{eqn:modifiedHidaperiods}  for $\widetilde{\Phi}_{\lambda\otimes\lambda}$ and $\widetilde{\Psi}_{\g_\lambda}$ in \eqref{eqn_2023_1_5_1130}, we deduce that
    \begin{align}
    \label{eqn_2023_01_06_1238}
    \begin{aligned}
        \mathcal A(\mu,\mu) 
        &=   \mathfrak C_{\rm exc} \cdot  \frac{\mathfrak c_{\Phi(\Theta)}(\mu\otimes\mu)^{-1}}{\mathfrak c_\hg(\mu)^{-2}}  \cdot \frac{\mathscr O({\Phi_{\mu\otimes\mu}})  }{\mathscr O({{\Psi_{\hg_\mu} }})^2}  \cdot \frac{\mathcal L_p^{\rm Katz}(\widetilde{\Phi}_{\mu\otimes\mu}) }{\mathcal L_p^{\rm Katz}(\widetilde{\Psi}_{\hg_{\mu}})^2}  \cdot \frac{\mathscr O(\Phi_{\mu\otimes\mu}')\cdot \mathcal L_p^{\rm Katz}(\mathbf{N}_K)}{\mathfrak c_{\Phi'(\Theta) }(\mu\otimes\mu)} 
        \\
        &=   \frac{\mathfrak C_{\rm exc}} {\mathfrak c_{g_{\rm Eis}} } \cdot \frac{ \Omega_{\Phi(\Theta)_{\mu\otimes\mu}} }{\Omega_{\g_\mu}^2 } \cdot \frac{-4}{\sqrt{D_K} } 
        \cdot \mathscr O(\Phi_{\mu\otimes\mu}')  \cdot \mathcal L_p^{\rm Katz}(\mathbf{N}_K)\,.
    \end{aligned}
    \end{align}
    In the last step we also used that $\Phi_{\mu\otimes\mu}'$ is the trivial character, so that $\Phi(\Theta)_{\mu\otimes\mu}^\circ = g_{\rm Eis}^\circ = {\rm Eis}_1(\varepsilon_K) \in M_1(D_K,\varepsilon_K)$. Hence, we have $M=M_\chi=D_K$ in the notation of \S\ref{subsubsec_2023_01_06_1256}  
 and hence,
    \begin{equation}
    \label{eqn_2023_01_06_1300}
        \mathscr O(\Phi_{\mu\otimes\mu}') = \frac{ D_K \cdot L(\varepsilon_K,1) }{\pi} = \frac{2h_K \sqrt{D_K}}{\omega_K}
    \end{equation}
    where the latter holds because of the class number formula. Combining \eqref{eqn_2023_01_06_1238} and \eqref{eqn_2023_01_06_1300}, we conclude that 
    \[
        \mathcal A(\mu,\mu) =  \mathfrak C_{\rm exc}  \cdot \frac{ \Omega_{\Phi(\Theta)_{\mu\otimes\mu}} }{\Omega_{\g_\mu}^2 } \cdot \frac{-8h_K }{\omega_K\cdot \mathfrak c_{g_{\rm Eis}} } \cdot \mathcal L_p^{\rm Katz}(\mathbf{N}_K)
    \]
    As the last step, the $p$-adic Kronecker limit formula for the Katz $p$-adic L-function (cf. \cite{Katz76}, \S 10.4) that tells us that $\mathcal L_p^{\rm Katz}( \mathbf{N}_K) = -\frac{1}{2}(1-p^{-1})\log_p(u)$, and this completes the proof.
    \end{proof}

\subsection{Factorization of the \texorpdfstring{${\rm BDP}^2$}{} \texorpdfstring{$p$}{}-adic \texorpdfstring{$L$}{}-function}
\label{subsec_8_2_2022_09_13_1731}
Our goal in \S\ref{subsec_8_2_2022_09_13_1731} is to explain the factorization of the ${\rm BDP}^2$ $p$-adic $L$-function $\cL_p^{\rm BDP}(\hf_{/K}\otimes \mathds{1})(\kappa,\lambda)=\cL_p^{\rm BDP}(\hf_{/K}\otimes \bbpsi)(\kappa, 1)$, in a manner to reflect the $p$-adic Artin formalism. The arguments heavily draw from \cite[\S 4.3]{BDV} and the verification Eqn. (30) in op. cit. For that reason, we will work with the notation of \cite{BDV} for the most part, and we refer to loc.cit. for more details\footnote{We also refer the reader to an earlier expanded version \cite[\S8.2]{BCSarxiv} of this paper for the calculations we omit here.}.

\subsubsection{}
\label{subsubsec_821_2022_12_12_1302}
Let us put $V_\f^{?}:= T_\f^{?} \otimes \Qp$ for $?=+,-,\{\}$. We recall from \S\ref{subsubsec_2022_05_16_1506} the definitions of the $G_{\Qp}$-representations $T_\f^{?}$ and recall that we work in this section under the hypothesis that the tame nebentype character $\varepsilon_\f$ of the Hida family $\f$ is trivial. We similarly define $V_{\g_K}^{?}$. 

Following \cite[\S8.2]{KLZ2}, we put ${\bf D}(V_\f^{-}):=\left(V_\f^{-}\hatotimes_{\Zp}\Zp^{\rm ur}\right)^{G_{\Qp}}$ and ${\bf D}(V_\f^{+}):=\left(V_\f^{+}(\bbchi_\f^{-1}\chi_\cyc)\hatotimes_{\Zp}\Zp^{\rm ur}\right)^{G_{\Qp}}$. 
We define the big Perrin-Riou logarithm maps 
\begin{equation}
\label{eqn_2022_12_12_1207}
{\rm Log}_\f\, \colon \, H^1_{\rm Iw}(\Qp(\mu_{p^\infty}),V_\f^{-})\stackrel{\sim}{\lra} {\bf D}(V_\f^{-})\hatotimes_{\Zp}\LL(\Gamma_\cyc)\,,\qquad \Gamma_\cyc:=\Gal(\Qp(\mu_{p^\infty})/\Qp)\,,
\end{equation}
\begin{equation}
\label{eqn_2022_12_15_1132}
{\rm Log}_{V_\f^+}\, \colon \, H^1_{\rm Iw}(\Qp(\mu_{p^\infty}),V_\f^{+})\stackrel{\sim}{\lra} {\bf D}(V_\f^{+})\hatotimes_{\Zp}\LL(\Gamma_\cyc)
\end{equation}
as in \cite[Theorem 8.2.3]{KLZ2}. Let us put $F^+V_\f^{\dagger}:=V_\f^{+}\otimes \bbchi_\f^{-\frac{1}{2}}\chi_\cyc$ and consider the map 
\begin{equation}
\label{eqn_2022_12_15_1140}
{\rm pr}_\f^\dagger\,: \,\cR_\f\hatotimes_{\Zp} \LL(\Gamma_\cyc)\lra  \cR_\f\,,\qquad \gamma\mapsto \bbchi_\f^{-\frac{1}{2}}\chi_\cyc(\gamma).
\end{equation}
We define the map
\begin{equation}
\label{eqn_2022_12_15_1134}
{\rm Log}_{V_\f^+}\otimes_{{\rm pr}_\f^\dagger} \cR_\f\, =:{\rm Log}_{F^+V_\f^{\dagger}}\,: \, {\rm im} \left(H^1_{\rm Iw}(\Qp(\mu_{p^\infty}),V_\f^{+})\xrightarrow{{\rm pr}_\f^\dagger} H^1(\Qp,F^+V_\f^{\dagger})\right)\lra  {\bf D}(V_\f^{+})\,.
\end{equation}

\subsubsection{} 
\label{subsubsec_2022_12_15_1115}
Recall from \cite[\S2.3.3]{BDV} (see also \cite{KLZ2}, Proposition 10.1.1) that overconvergent Eichler--Shimura theorem gives rise to a pair of canonical maps 
$$\omega_\f: {\bf D}(V_\f^{+}) \stackrel{\sim}{\lra} \cR_\f[{1}/{p}]\,\,,\,\,\quad \eta_\f: {\bf D}(V_\f^{-}) \hookrightarrow \frac{1}{H_\f}\cR_\f[{1}/{p}]\,,$$
where $H_\f$ is the congruence ideal associated with the cuspidal family $\f$. Finally, we put ${\rm Log}_{\omega_\f}:=\omega_\f\circ {\rm Log}_{F^+V_\f^{\dagger}} \circ\res_p$.

\subsubsection{}
\label{subsubsec_823_2022_12_21_1117}
One may similarly define $\omega_{\g_K}$ and $\eta_{\g_K}$ over a sufficiently small wide-open disc $U$ in ${\rm Spm}(\cR_{\g_K}[\frac{1}{p}])$ about the point corresponding to $g_{\rm Eis}$. In what follows, we will continue to write (rather abusively) $\cR_{\g_K}[\frac{1}{p}]$ in place of $\LL_U[\frac{1}{p}]$ (which is the ring that coincides with $\cO(\g_K)$ in \cite{BDV}) to denote the space of power-bounded functions on $U$. Since our ultimate results will require working with the weight-one specializations $\omega_{g_{\rm Eis}}$ and $\eta_{g_{\rm Eis}}$ of $\omega_{\g_K}$ and $\eta_{\g_K}$ (cf. \cite{BDV}, \S2.3.3.1), we will not keep further track of the choice of $U$.

\subsubsection{Beilinson--Kato elements and reciprocity laws} 
Given an element $\mathfrak{F}\in \LL(\Gamma_\cyc)[\frac{1}{p}]\simeq \LL(\Zp^\times)[\frac{1}{p}]$, we consider it as a function $\mathfrak{F}=\mathfrak{F}({\pmb \sigma})$ on $\Gamma_\cyc$ via the Amice transform, where ${\pmb \sigma}$ is the ``dummy cyclotomic variable''. 

We recall from Theorem~\ref{thm_31_2022_06_02_0854} the Mazur--Kitagawa $p$-adic $L$-function $\cL_p^{\rm Kit}(\hf\otimes\chi)$. Let us denote by $\res_p^-$ the composition of the following natural arrows:
\[
H^1(\QQ(\mu_{p^\infty}),V_\f)\xrightarrow{\res_p} H^1(\Qp(\mu_{p^\infty}),V_\f)\lra H^1(\QQ(\mu_{p^\infty}),V_\f^-)\,.
\]
The following statement ($p$-optimal interpolation of Beilinson--Kato elements in Hida families) was proved by Ochiai~\cite{Ochiai2006}:
    \begin{proposition}[Ochiai] \label{prop:KatoRec}
        Let $\chi$ denote either $\epsilon_K$ or the trivial character. There exist a unique element 
        $${\rm BK}_{\hf \otimes \chi} \in H^1_{\rm Iw}(\Q(\mu_{p^\infty}),T_{\hf} \otimes \chi)$$ 
        with the property that 
        \[
            \eta_\f \,\circ \,{\rm Log}_\hf(\res_p^-({\rm BK}_{\hf \otimes \chi})) = \cL_p^{\rm Kit}(\hf\otimes\chi)(1+\pmb{\sigma})\,.
        \]
    \end{proposition}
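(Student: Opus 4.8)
The plan is to construct the big Beilinson--Kato element ${\rm BK}_{\hf\otimes\chi}$ essentially by the standard recipe of Ochiai: start from Kato's Euler system for the fixed levels, p-stabilize, and then interpolate p-adically over the Hida family, using the freeness of the relevant Iwasawa cohomology to glue. Concretely, I would first recall Kato's zeta elements $z_{\gamma}^{(p)}(f)\in H^1_{\mathrm{Iw}}(\QQ(\mu_{p^\infty}),T_f)$ for each classical specialization $f=\hf_\kappa^\circ$ (twisted by $\chi$), together with Kato's explicit reciprocity law relating $\res_p^-$ of these classes to the Mazur--Kitagawa/Manin--Vi\v sik $p$-adic $L$-function. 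The key interpolation input is that $H^1_{\mathrm{Iw}}(\QQ(\mu_{p^\infty}), T_{\hf}\otimes\chi)$ is, after possibly shrinking the weight disc, a free $\cR_\hf\widehat\otimes_{\ZZ_p}\Lambda(\Gamma_\cyc)$-module of rank one (this uses \ref{item_Irr}, the big-image hypotheses, and a control argument comparing specializations with the fixed-level cohomology). Once freeness and a suitable control theorem are in place, the classical Kato classes at arithmetic points, which are compatible under specialization up to the known interpolation factors, assemble into a unique $\cR_\hf\widehat\otimes\Lambda(\Gamma_\cyc)$-adic class ${\rm BK}_{\hf\otimes\chi}$.

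Second, I would pin down the normalization. The uniqueness and the precise equality $\eta_\f\circ{\rm Log}_\hf(\res_p^-({\rm BK}_{\hf\otimes\chi}))=\cL_p^{\rm Kit}(\hf\otimes\chi)(1+\pmb\sigma)$ follow by combining three things: (i) Ochiai's $p$-optimal interpolation of Kato's elements, which produces a $\Lambda$-adic class whose image under the big logarithm ${\rm Log}_\hf$ of \cite[Theorem 8.2.3]{KLZ2} (specialized to the minus part $V_\f^-$) interpolates the local images of the fixed-level zeta elements; (ii) Kato's explicit reciprocity law at each arithmetic specialization, identifying $\eta_{f}\circ(\text{local image})$ with the Manin--Vi\v sik $p$-adic $L$-value $L_p(f,\chi,\cdot)$ up to the $p$-adic period $C_{\f_\kappa}^\pm$; and (iii) the defining interpolation property of $\cL_p^{\rm Kit}(\hf\otimes\chi)$ recorded in Theorem~\ref{thm_31_2022_06_02_0854}, which matches exactly this collection of values (the periods $C_{\f_\kappa}^\pm$ being absorbed into the choice of $\cL_p^{\rm Kit}$). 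Since arithmetic specializations are Zariski-dense in $\cW_\f\times\cW_\cyc$ and both sides lie in the (torsion-free) module $\cR_\f[\tfrac1p]\widehat\otimes\Lambda(\Gamma_\cyc)$ — here I use that $\eta_\f$ lands in $\tfrac{1}{H_\f}\cR_\f[\tfrac1p]$ and that ${\rm Log}_\hf$ is an isomorphism — the equality of the two interpolating functions forces the equality of elements, which simultaneously gives existence and uniqueness of ${\rm BK}_{\hf\otimes\chi}$.

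I expect the main obstacle to be the control/freeness step: verifying that the fixed-level Kato classes are genuinely compatible under the specialization maps of the Hida family with exactly the interpolation factors that make them glue (rather than merely up to units), and that the ambient Iwasawa cohomology module is free of rank one so that the glued class is unique. This is precisely where Ochiai's argument in \cite{Ochiai2006} does the heavy lifting, and where one must be careful about the $p$-stabilization (the Euler factor at $p$) and the difference between the Ohta lattice $T_\hf$ and fixed-level lattices — the hypothesis \ref{item_Irr} ensuring all lattices are homothetic is used here. The remaining verification, that $\eta_\f\circ{\rm Log}_\hf\circ\res_p^-$ applied to the glued class reproduces the Mazur--Kitagawa $p$-adic $L$-function on the nose, is then a specialization-by-specialization comparison of explicit interpolation formulas, which I would treat as routine given Kato's reciprocity law and Theorem~\ref{thm_31_2022_06_02_0854}. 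Since the statement is attributed to Ochiai, the proof here is really a matter of citing \cite{Ochiai2006} and checking that our conventions (the self-dual twist $T_\hf^\dagger$, the normalization of ${\rm Log}_\hf$, and the $p$-optimal periods) line up with those in op. cit.; I would make that dictionary explicit rather than reprove the interpolation from scratch.
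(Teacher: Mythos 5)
Your proposal is correct and matches the paper's treatment: the paper gives no argument for this statement at all, simply attributing it to Ochiai \cite{Ochiai2006}, and your sketch — Kato's zeta elements, Ochiai's $\Lambda$-adic interpolation over the Hida family, and a specialization-by-specialization comparison of the Coleman-map normalization with the interpolation formula of Theorem~\ref{thm_31_2022_06_02_0854} — is precisely the content being cited, together with the dictionary of conventions you rightly say should be made explicit. The one point your write-up glosses is that uniqueness rests on the injectivity of the composite $\eta_\f\circ{\rm Log}_\hf\circ\res_p^-$ (i.e.\ the vanishing of the relevant strict Iwasawa Selmer module), not merely on Zariski-density of arithmetic points, but this too is part of what the citation to op.\ cit.\ supplies.
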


\comm{\color{magenta}
    \subsubsection{Perrin-Riou maps for Rankin--Selberg convolutions}
    \label{subsubsec_2022_12_14_1303}
    As in \cite[\S2.5]{BDV}, let us define the submodule
    \begin{align*}
         H^1_{\rm Iw,bal}(\Qp(\mu_{p^\infty}), V_\hf\hatotimes_{\ZZ_p} & V_{\hg_K}):=\ker\left(H_{\rm Iw}^1(\Qp(\mu_{p^\infty}), V_\hf\hatotimes_{\ZZ_p} V_{\hg_K})  \lra H_{\rm Iw}^1(\Q_p(\mu_{p^\infty}),F^-V_\hf\hatotimes_{\ZZ_p} F^-V_{\hg_K}) \right)\,.
    \end{align*}
    We denote by $p_\f^-$ (by slight abuse, also the maps induced from it) the morphism
    $$H^1_{\rm Iw,bal}(\Qp(\mu_{p^\infty}), V_\hf\hatotimes_{\ZZ_p} V_{\hg_K})\lra H^1_{\rm Iw}(\Qp(\mu_{p^\infty}), V_\hf^-\hatotimes_{\ZZ_p} V_{\hg_K}^+)$$
    induced from $\ker\left(V_\hf\hatotimes_{\ZZ_p} V_{\hg_K}\to  V_\hf^- \hatotimes_{\ZZ_p} V_{\hg_K}^-\right)\lra \ker\left(V_\hf\hatotimes V_{\hg_K}\to  V_\hf^- \hatotimes_{\ZZ_p} V_{\hg_K}^-\right)/V_\f^+\otimes V_{\hg_K}$\,. We similarly define the map 
    $$p_{\g_K}^- \colon H^1_{\rm Iw,bal}(\Qp(\mu_{p^\infty}), V_\hf\hatotimes_{\ZZ_p} V_{\hg_K})\lra H^1_{\rm Iw,bal}(\Qp(\mu_{p^\infty}), V_\hf^+\hatotimes_{\ZZ_p} V_{\hg_K}^-)\,.$$
       
   As in \S\ref{subsubsec_821_2022_12_12_1302} (cf. \cite{KLZ2}, Theorem 8.2.3 and Theorem 8.2.8), we have the big Perrin-Riou logarithms
    \[
        \mathcal L^{\mp\pm} \colon \quad  H_{\rm Iw}^1(\Q_p(\mu_{p^\infty}),V_\hf^\mp\hatotimes_{\ZZ_p} V_{\hg_K}^\pm) \hookrightarrow {\bf D}(V_\f^\mp) \hatotimes_{\ZZ_p} {\bf D}(V_{\g_K}^\pm) \hatotimes_{\Zp} \Lambda(\Gamma_\cyc)\,,\]
    We set $\mathscr L_\hf :=  (\eta_\hf\otimes \omega_{\g_K}) \circ \mathcal L^{-+} \circ p_{\hf}^-$\,, and $\mathscr L_{\g_K} :=  (\omega_\hf\otimes \eta_{\g_K}) \circ \mathcal L^{+-} \circ p_{\g_{K}}^-$.

   We similarly have a Perrin-Riou map
     \begin{equation*}
    \label{eqn_2023_1_3_1650}
        \mathcal L^{-+}_{g_{\rm Eis}} \colon \quad  H_{\rm Iw}^1(\Q_p(\mu_{p^\infty}),V_\hf^-\otimes_{\ZZ_p} V_{g_{\rm Eis}}^+) \hookrightarrow {\bf D}(V_\f^-) \otimes_{\ZZ_p} {\bf D}(V_{g_{\rm Eis}}^+) \hatotimes_{\Zp} \Lambda(\Gamma_\cyc)\,,
      \end{equation*}
as well as the map
\begin{equation}
    \label{eqn_2023_1_3_1655}
        \mathscr L_\hf^{(1)}\, : \,  H^1_{\rm Iw,bal}(\Qp(\mu_{p^\infty}), V_\hf\otimes V_{ g_{\rm Eis}}) \xrightarrow{ (\eta_\hf\otimes \omega_{g_{\rm Eis}})\, \circ\, \mathcal L^{-+}_{g_{\rm Eis}} \,\circ\, p_{\hf}^-} \frac{1}{H_\f}\mathcal R_\hf \hatotimes_{\ZZ_p} \Lambda(\Gamma_\cyc) [1/p]\,.
      \end{equation}
It follows from the basic properties of big Perrin-Riou maps that the following diagram commutes:
\begin{equation}
    \label{eqn_2023_01_03_1658}\begin{aligned}
        \xymatrix{
H^1_{\rm Iw,bal}(\Qp(\mu_{p^\infty}), V_\hf\otimes_{\ZZ_p} V_{\g_{K}}) \ar[r]^-{\mathscr L_\hf} \ar[d]_{1:\, \g_K \mapsto g_{\rm Eis}}&  \frac{1}{H_\f}\mathcal R_\hf \hatotimes_{\ZZ_p}\mathcal R_{\hg_K} \hatotimes_{\ZZ_p} \Lambda(\Gamma_\cyc) [1/p]  \ar[d]^{{\rm id}\otimes 1 \otimes {\rm id}}
\\
 H^1_{\rm Iw,bal}(\Qp(\mu_{p^\infty}), V_\hf\otimes_{\ZZ_p} V_{g_{\rm Eis}})\ar[r]_-{\mathscr L_\hf^{(1)}} & \frac{1}{H_\f}\mathcal R_\hf \hatotimes_{\ZZ_p} \Lambda(\Gamma_\cyc) [1/p]\,.
}
    \end{aligned}
\end{equation}
}

\subsubsection{Beilinson--Flach elements and reciprocity laws}
    \label{subsubsec_826_2022_12_21_1117}
Since we will be relying greatly on the results of \cite{BDV} for families, we will work until the end of \S\ref{subsec_75_2022_06_02_0900} over a sufficiently small wide-open disc $U_\f$ in ${\rm Spm}(\cR_{\f}[\frac{1}{p}])$ about the point corresponding to a fixed $p$-old  eigenform $f_\circ$ of weight $k\geq 2$, and with the coefficient ring $\LL_{U_\f}[\frac{1}{p}]$ power-bounded functions on $U_\f$ (which is the ring denoted by $\cO(\f)$ in \cite{BDV}) in place of  $\cR_{\f}[\frac{1}{p}]$. 

In what follows, we will (rather abusively) continue to write $\cR_{\f}[\frac{1}{p}]$ in place of $\LL_{U_\f}[\frac{1}{p}]$. We will also shrink $U_\f$ so as to ensure that $H_\f$ is a unit in $\LL_{U_\f}[\frac{1}{p}]$.

    \begin{proposition}
    \label{prop_86_2022_12_14_1124}
        For each integer $c$ coprime with $6Np$, there exists a Beilinson-Flach element
        \[
            _c\mathcal{BF}(\hf\otimes\hg_K) \in H^1_{\rm Iw,bal}(\Q(\mu_{p^\infty}), V_\hf\hatotimes_{\ZZ_p} V_{\hg_K})
        \]
        satisfying the explicit reciprocity law
        \[
            \mathscr L_{\pmb \xi}(_c\mathcal{BF}(\hf\otimes\hg_K)) = \mathscr N_{{\pmb \xi},c}\cdot \mathfrak c_{{\pmb \xi}}^{-1}\cdot \cL_p^{\pmb \xi}(\hf\otimes\hg_K, 1+\pmb{\sigma})
        \]
        for each ${\pmb \xi}\in \{\hf,\hg_K\}$, where $\mathfrak c_{\pmb \xi}$ is the congruence number of ${\pmb \xi}$ given as in  \S\ref{subsubsec_321_2022_05_26_1036}, and
        \[
            \mathscr N_{{\pmb \xi},c}(\kappa,\ell,\sigma) = - w_{\pmb \xi}\cdot \bigl( c^2-c^{2\sigma-\wt(\kappa)-\ell+4}\cdot \chi_\hf(c)^{-1}\chi_{\hg_K}(c)^{-1}\bigr)
        \]
        where $w_{\pmb \xi}\in \mathcal R_{\pmb \xi}^\times$ (Atkin--Lehner pseudo-eigenvalue) verifies $w_{\pmb \xi}(u)^2 = (-N_{\pmb \xi})^{2-u}$ for all $u\in \mathcal W_{\pmb \xi}^\cris$.
    \end{proposition}
    
    \begin{proof}
        This is \cite[Propositon 2.3]{BDV} (see also \cite[Theorem 10.2.2]{KLZ2}), with adjustments owing to our normalization of the Rankin-Selberg $p$-adic $L$-functions. The discrepancy between our statement and those in op. cit. is the factor $(-1)^{\pmb{\sigma}}\cdot \mathfrak c_{\pmb \xi}$, which is also reflected in the relevant interpolation formulae, cf. Theorem~\ref{thm:Hida-interp} and \cite[Theorem 2.7.4 \& Theorem 7.7.2]{KLZ2}.    
    \end{proof}

\subsubsection{First factorization}    We will make use of the following factorization result, which directly follows from interpolative properties of the $p$-adic $L$-functions involved in its statement, and it is a special case of \cite[Theorem 3.4]{Bertolini_Darmon_2014_IsrJ}, also disccused in \cite[\S3]{BSV2}.
    \begin{lemma}[Bertolini--Darmon] \label{lemma:RS-MKMK}
    Let us  put $\cL_p^\f(\f\otimes g_{\rm Eis})(\kappa, \pmb{\sigma}) := \cL_p^\f(\f\otimes \hg_K)(\kappa,1, \pmb{\sigma})$. 
        For sufficiently small $U_\f$ we have
        \[
            \cL_p^\f(\f\otimes g_{\rm Eis}) = \mathfrak c_\hf \cdot \mathcal C_1 \cdot\cL_p^{\rm Kit}(\hf)\cdot \cL_p^{\rm Kit}(\hf\otimes\varepsilon_K)\,,
        \]
        where  $\mathcal C_1\in \cR_{\f}[\frac{1}{p}]^\times$ is a bounded analytic function that verifies
        \begin{equation}
        \label{eqn_2022_12_21_1140}
        \mathcal C_1(\kappa)=\frac{ \mathfrak C_{\rm exc}(\hf\otimes\hg_K)}{(-2\sqrt{-1})^{\wt(\kappa)-1} \cdot C_{\hf_\kappa}^+C_{\hf_\kappa}^-\cdot \mathcal E(\hf_\kappa^\circ,\Ad)}\qquad \hbox{ for all arithmetic specializations } \kappa\in U_\f\,.
        \end{equation}
    \end{lemma}
    \begin{proof}
    Taking into account our normalizations (cf. Theorem \ref{thm_31_2022_06_02_0854} and Theorem~\ref{thm:Hida-interp}), the proof is essentially found in \cite[Theorem 4.2]{BDV} (see especially the portion of the proof after Equation (30) in op. cit.), \cite[ Theorem 3.4]{Bertolini_Darmon_2014_IsrJ}, or \cite[\S3.3-\S3.4]{BSV2} (note that our $\mathcal C_1$ is denoted by $\mathscr A_{\GL_2}$ in \cite{BSV2}).
        \comm{\color{red} On comparing the interpolation formulas for the Mazur--Kitagawa and the Hida--Rankin $p$-adic $L$-function (cf. Theorem \ref{thm_31_2022_06_02_0854} and Theorem~\ref{thm:Hida-interp}, respectively), and recalling that we choose the Shimura periods so as to ensure that $\Omega_{\hf_\kappa}^+\Omega_{\hf_\kappa}^- = \langle \hf_\kappa^\circ,\hf_\kappa^\circ\rangle$, we infer that
        \begin{align*}
            \cL_p^\f(\f\otimes \hg_K)(\kappa,1,j) &= \left(1-\frac{p^{j-1}}{\alpha_{\f_\kappa^\circ}}\right)^2 \left(1-\frac{\beta_{\f_\kappa^\circ}}{p^j}\right)^2 \cdot (\sqrt{-1})^{2-2j} \cdot \mathfrak C_{\rm exc}(\hf\otimes\hg_K) \cdot \frac{\Lambda(\hf_\kappa^\circ\otimes {\rm Eis}_1(\varepsilon_K), j)}{\Omega_{\hf_\kappa}} \\
            &=
            \mathfrak c_\hf(\kappa)\cdot \frac{(\sqrt{-1})^{\wt(\kappa)-1} \cdot \mathfrak C_{\rm exc}(\hf\otimes\hg_K) }{2^{\wt(\kappa)-1}\cdot C_{\hf_\kappa}^+C_{\hf_\kappa}^-\cdot \mathcal E(\hf_\kappa^\circ,\Ad)} \cdot \cL_p^{\rm Kit}(\hf)(\kappa,j) \cdot \cL_p^{\rm Kit}(\hf\otimes \varepsilon_K)(\kappa,j) ,
        \end{align*}
        To conclude the proof, it suffices to prove that on shrinking $U_\f$ as necessary, the multipliers $C_{\hf_\kappa}^+C_{\hf_\kappa}^-\cdot \mathcal E(\hf_\kappa^\circ,\Ad)$ can be interpolated (as $\kappa$ varies) to an element of ${\rm Frac}(\cR_\f)^\times$. This follows from \cite[\S3.3, \S3.4]{BSV2} (where the term is there called $\mathscr A_{\GL_2})$ and \cite[ Theorem 3.4]{Bertolini_Darmon_2014_IsrJ}, see also the proof of \cite[Theorem 4.2]{BDV}. 
        }
    \end{proof}
\subsubsection{Irregular factorization of the ${\rm BDP}^2$ $p$-adic $L$-function}  
\label{subsubsec_2022_12_14_1014}
Our first goal in \S\ref{subsubsec_2022_12_14_1014} is to prove a comparison of Beilinson--Kato elements and Beilinson--Flach elements, which will play a key role in the proof of the factorization of Proposition~\ref{prop:factorizationBDPKitagawa}. Our arguments in this subsection are essentially copied from \cite[\S4.3]{BDV}.

Fix an isomorphism $V_{\hg_K} \simeq {\rm Ind}_{K/\Q}({\bbpsi})$ of $\mathcal R_{\hg_K}[\frac{1}{p}][[G_\Q]]$-modules (cf. \cite{BDV}, Eqn. 23). Since $p$ splits in $K$, the restriction of $V_{\hg_K}$ to both $G_K$ and $G_{\Qp}$ decomposes as ${\bbpsi}\oplus{\bbpsi}^c$. We identify  the first summand with the $G_{\Qp}$-representation $V_{\g_K}^+$ and the second summand with $V_{\g_K}^-$.

Let $\{v_{\hg_K}^+,v_{\hg_K}^-\}$ be the $\mathcal R_{\hg_K}[\frac{1}{p}]$-basis of $V_{\hg_K}$ given as in \cite[pp. 33]{BDV}. Complex conjugation maps $v_{\hg_K}^\pm$ to $v_{\hg_K}^\mp$, and denoting weight one specialization of $v_{\hg_K}^\pm$ by $v_{g_{\rm Eis}}^\pm$, we define (as in op. cit.)
    \[
        v_{g_{\rm Eis},\mathds{1}}:=v_{g_{\rm Eis}}^++v_{g_{\rm Eis}}^-\,,\qquad  v_{g_{\rm Eis},\varepsilon_K} = v_{g_{\rm Eis}}^+-v_{g_{\rm Eis}}^-\,,
    \]
    which form a basis for $V_{g_{\rm Eis}} \simeq \mathds{1}\oplus\varepsilon_K$. Note also that $\{v_{g_{\rm Eis}}^\pm\}$ is a basis of $V_{g_{\rm Eis}}^\pm$.
    
    Following \cite[\S4.3]{BDV}, we consider the modified Beilinson--Kato element
    \begin{equation}
            \label{eqn_2022_12_14_1632}
            \resizebox{0.92\hsize}{!}{  $\mathcal{BK}_{\hf\otimes g_{\rm Eis}} :=  
        \cL_p^{\rm Kit}(\hf\otimes\varepsilon_K,1+\pmb{\sigma}) \cdot {\rm BK}_\hf \otimes  v_{g_{\rm Eis},\mathds{1}}
        + 
        \cL_p^{\rm Kit}(\hf,1+\pmb{\sigma}) \cdot {\rm BK}_{\hf,\varepsilon_K} \otimes  v_{g_{\rm Eis},\varepsilon_K} 
        \in \, H^1_{\rm Iw}(K(\mu_{p^\infty}), V_\hf\otimes V_{g_{\rm Eis}})$\,,}
    \end{equation}
    where ${\rm BK}_{\hf,\varepsilon_K}\in  H^1_{\rm Iw}(\QQ(\mu_{p^\infty}), V_\hf\otimes \varepsilon_K)$ is constructed from  ${\rm BK}_{\hf\otimes\varepsilon_K}$ mimicking the normalizations in \S4.3 of \cite{BDV}. 
        Note that $\Gal(K/\Q)$ acts trivially on $\mathcal{BK}_{\hf\otimes g_{\rm Eis}}$. Therefore,
    $\mathcal{BK}_{\hf\otimes g_{\rm Eis}} \in H^1_{\rm Iw}(\Q(\mu_{p^\infty}), V_\hf\otimes V_{g_{\rm Eis}})$. We let ${}_c\mathcal{BF}_{\hf\otimes g_{\rm Eis}} \in H^1_{\rm Iw, bal}(\Q(\mu_{p^\infty}), V_\hf\otimes V_{g_{\rm Eis}})$
    denote the image of ${}_c\mathcal{BF}(\hf\otimes \g_{K})$ under the map induced from the  specialization morphism $V_{\g_K}\to V_{g_{\rm Eis}}$.

\comm{
\begin{proposition}
    \label{prop_2022_12_14_1019}
    We have the following explicit comparison of the Beilinson--Flach class and the modified Beilinson--Kato class:
    $$2\,\omega_{g_{\rm Eis}}(v_{g_{\rm Eis}}^+)\cdot {{}_c}\mathcal{BF}_{\hf\otimes g_{\rm Eis}}   = \mathcal C_1 \cdot  \mathscr{N}_{\hf,c} \cdot \mathcal{BK}_{\hf\otimes g_{\rm Eis}}\,, $$
    for $\mathcal{C}_1$ as in Lemma~\ref{lemma:RS-MKMK} and $\mathscr N_{\hf,c}$ as in the statement of Proposition~\ref{prop_86_2022_12_14_1124}.
\end{proposition}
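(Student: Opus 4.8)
The plan is to mimic closely the argument of Bertolini--Darmon--Venerucci used to verify \cite[Eqn.~(30)]{BDV}. Both classes in the asserted identity are elements of $H^1_{\rm Iw,bal}(\Q(\mu_{p^\infty}), V_\hf\,\widehat\otimes\, V_{g_{\rm Eis}})$, and it suffices to prove that they have the same image under the single Perrin-Riou regulator map $\mathscr{L}_\hf^{(1)}$ of \eqref{eqn_2023_1_3_1655}, provided this map is injective on the cyclic submodule spanned by the two classes. So the first step is two preliminary verifications: that $2\,\omega_{g_{\rm Eis}}(v_{g_{\rm Eis}}^+)\cdot {}_c\mathcal{BF}_{\hf\otimes g_{\rm Eis}}$ and $\mathcal C_1\cdot\mathscr{N}_{\hf,c}\cdot\mathcal{BK}_{\hf\otimes g_{\rm Eis}}$ both lie in the balanced subspace (for the Beilinson--Flach class this is built into Proposition~\ref{prop_86_2022_12_14_1124}, while for $\mathcal{BK}_{\hf\otimes g_{\rm Eis}}$ it follows from Kato's reciprocity law controlling the ``$F^-$-projection'' at $p$ of ${\rm BK}_\hf$ and ${\rm BK}_{\hf,\varepsilon_K}$ together with the splitting $V_{g_{\rm Eis}}|_{G_{\Qp}} = V_{g_{\rm Eis}}^+\oplus V_{g_{\rm Eis}}^-$ and the defining linear combination \eqref{eqn_2022_12_14_1632}); and that, after shrinking the wide-open disc $U_\hf$ as in \S\ref{subsubsec_826_2022_12_21_1117} and invoking \eqref{item_MC}, \eqref{item_BI}, \eqref{item_non_anom}, the submodule containing the two classes is torsion-free over $\cR_\hf[\tfrac1p]\,\widehat\otimes\,\LL(\Gamma_\cyc)$ and $\mathscr{L}_\hf^{(1)}$ is injective on it. I expect this injectivity (which is equivalent to the non-vanishing of the relevant Selmer module, and is precisely where the hypotheses imported into \S\ref{subsubsec_826_2022_12_21_1117} are used, via Proposition~\ref{prop_suport_range_Selmer_complexes_8_6_2_2}) to be the main obstacle; everything else is a reciprocity-law bookkeeping exercise.

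Granting the injectivity, I would compute $\mathscr{L}_\hf^{(1)}$ of the left-hand side using the compatibility square \eqref{eqn_2023_01_03_1658} together with the explicit reciprocity law of Proposition~\ref{prop_86_2022_12_14_1124} for ${\pmb\xi}=\hf$, which gives $\mathscr{L}_\hf^{(1)}({}_c\mathcal{BF}_{\hf\otimes g_{\rm Eis}}) = \mathscr{N}_{\hf,c}\cdot\mathfrak c_\hf^{-1}\cdot\cL_p^{\hf}(\hf\otimes g_{\rm Eis})$; by the Bertolini--Darmon factorization in Lemma~\ref{lemma:RS-MKMK} this equals $\mathscr{N}_{\hf,c}\cdot\mathcal C_1\cdot\cL_p^{\rm Kit}(\hf)\cdot\cL_p^{\rm Kit}(\hf\otimes\varepsilon_K)$, and hence the image of the left-hand side is $2\,\omega_{g_{\rm Eis}}(v_{g_{\rm Eis}}^+)\cdot\mathscr{N}_{\hf,c}\cdot\mathcal C_1\cdot\cL_p^{\rm Kit}(\hf)\cdot\cL_p^{\rm Kit}(\hf\otimes\varepsilon_K)$. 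For the right-hand side, I would feed the definition \eqref{eqn_2022_12_14_1632} into $\mathscr{L}_\hf^{(1)}$: the projection $p_\hf^-$ kills the $v_{g_{\rm Eis}}^-$-components of both $v_{g_{\rm Eis},\mathds{1}}$ and $v_{g_{\rm Eis},\varepsilon_K}$, leaving a copy of $v_{g_{\rm Eis}}^+$ in each of the two summands, and Ochiai's reciprocity law (Proposition~\ref{prop:KatoRec}) evaluates $\eta_\hf\circ{\rm Log}_\hf$ applied to $\res_p^-({\rm BK}_\hf)$ and $\res_p^-({\rm BK}_{\hf,\varepsilon_K})$ as $\cL_p^{\rm Kit}(\hf)$ and $\cL_p^{\rm Kit}(\hf\otimes\varepsilon_K)$ respectively. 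Thus each summand contributes $\omega_{g_{\rm Eis}}(v_{g_{\rm Eis}}^+)\cdot\cL_p^{\rm Kit}(\hf)\cdot\cL_p^{\rm Kit}(\hf\otimes\varepsilon_K)$, so $\mathscr{L}_\hf^{(1)}(\mathcal C_1\cdot\mathscr{N}_{\hf,c}\cdot\mathcal{BK}_{\hf\otimes g_{\rm Eis}}) = 2\,\omega_{g_{\rm Eis}}(v_{g_{\rm Eis}}^+)\cdot\mathcal C_1\cdot\mathscr{N}_{\hf,c}\cdot\cL_p^{\rm Kit}(\hf)\cdot\cL_p^{\rm Kit}(\hf\otimes\varepsilon_K)$ — exactly the image of the left-hand side. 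Injectivity of $\mathscr{L}_\hf^{(1)}$ then forces the two classes to coincide. (Note that the factor $2$ in the statement is produced precisely by the symmetric way the two $p$-adic $L$-functions recur in the two summands of \eqref{eqn_2022_12_14_1632}.)

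A last point requiring care is the matching of normalizations: I would check that the twisted class ${\rm BK}_{\hf,\varepsilon_K}$ constructed following \cite[\S4.3]{BDV} from ${\rm BK}_{\hf\otimes\varepsilon_K}$ satisfies Proposition~\ref{prop:KatoRec} verbatim after the twist, and that $\omega_{g_{\rm Eis}}$ and $\eta_{g_{\rm Eis}}$ are compatible with the decomposition $V_{g_{\rm Eis}}\simeq\mathds{1}\oplus\varepsilon_K$ in the manner recorded there. The discrepancy between our normalization of the Rankin--Selberg $p$-adic $L$-functions and those of \cite{BDV,KLZ2} is the factor $(-1)^{\pmb\sigma}\cdot\mathfrak c_{\pmb\xi}$ identified in the proof of Proposition~\ref{prop_86_2022_12_14_1124}; this is exactly what the unit $\mathcal C_1$ and the factor $\mathfrak c_\hf$ appearing in Lemma~\ref{lemma:RS-MKMK} are designed to absorb, so the constants propagate through the computation without new input.
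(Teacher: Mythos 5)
Your route is the same as the paper's: both classes are compared through the single Perrin--Riou regulator $\mathscr L_\hf^{(1)}$ of \eqref{eqn_2023_1_3_1655}; the image of ${}_c\mathcal{BF}_{\hf\otimes g_{\rm Eis}}$ is computed from the explicit reciprocity law of Proposition~\ref{prop_86_2022_12_14_1124} together with the specialization square \eqref{eqn_2023_01_03_1658} and the Bertolini--Darmon factorization of Lemma~\ref{lemma:RS-MKMK}; the image of $\mathcal{BK}_{\hf\otimes g_{\rm Eis}}$ is computed by feeding \eqref{eqn_2022_12_14_1632} through $p_\hf^-$ and Ochiai's reciprocity law (Proposition~\ref{prop:KatoRec}), which produces both the factor $2$ and (via the cancellation of the cross-products $\cL_p^{\rm Kit}(\hf\otimes\varepsilon_K)\cL_p^{\rm Kit}(\hf)-\cL_p^{\rm Kit}(\hf)\cL_p^{\rm Kit}(\hf\otimes\varepsilon_K)$ under the injective map $\eta_\hf\circ{\rm Log}_\hf$) the balancedness of the modified Beilinson--Kato class. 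Your two regulator values agree with the paper's, so the computational half of your argument is correct and identical in substance.

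The genuine gap is the step you flag and then grant, and your proposed source for it does not work. First, a slip: injectivity is equivalent to the \emph{vanishing}, not the ``non-vanishing'', of the relevant Selmer module. More seriously, Proposition~\ref{prop_suport_range_Selmer_complexes_8_6_2_2} and the hypotheses \eqref{item_MC}, \eqref{item_BI}, \eqref{item_non_anom} concern the self-dual central-critical specialization $T_\hf^\dagger$ over $\QQ$, i.e.\ a one-variable statement, whereas what is needed here is a statement over the full base $\cR_\hf\hatotimes\Lambda(\Gamma_\cyc)$ and over $K$: since the local map $\mathcal L^{-+}_{g_{\rm Eis}}$ is injective, the kernel of $\mathscr L_\hf^{(1)}\circ\res_p$ on global classes is exactly
\[
\ker\Bigl(H^1_{\rm Iw}(K(\mu_{p^\infty}),V_\hf)\lra H^1_{\rm Iw}(K_p(\mu_{p^\infty}),V_\hf^-)\Bigr)\otimes V_{g_{\rm Eis}}={\rm Sel}_{\rm Iw}(K(\mu_{p^\infty}),V_\hf)\otimes V_{g_{\rm Eis}}\,,
\]
and the whole proof rests on the vanishing of this strict Iwasawa-theoretic Selmer module. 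That vanishing is not delivered by Proposition~\ref{prop_suport_range_Selmer_complexes_8_6_2_2}; the paper imports it from the final paragraph of \cite[\S4.3]{BDV} (it ultimately goes back to Kato's work, and requires none of \eqref{item_MC}, \eqref{item_BI}, \eqref{item_non_anom}). Once you replace your justification of the injectivity by this identification of the kernel and the cited vanishing, your argument closes and coincides with the paper's proof.
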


\begin{proof}
    The proof in family is essentially the same as that of \cite[Theorem 4.2]{BDV}, and more details with our normalizations can be found in \cite[Proposition 8.8]{BCSarxiv}. 
\end{proof}
\comm{\color{magenta}
\begin{proof}
     We begin our proof by explaining that the expression on the right of the asserted identity belongs to $H^1_{\rm Iw,bal}(\Q(\mu_{p^\infty}), V_\hf\otimes V_{g_{\rm Eis}})$, as the expression on the left does (cf. the statement of Proposition~\ref{prop_86_2022_12_14_1124}). To that end, let us observe that the element
    \begin{align}
        \begin{aligned}
            \label{eqn_2022_12_14_1129}
         &\Bigl(\cL_p^{\rm Kit}(\hf\otimes\varepsilon_K,1+\pmb{\sigma}) \cdot \res_p({\rm BK}_\hf) 
        -
        \cL_p^{\rm Kit}(\hf,1+\pmb{\sigma}) \cdot \res_p({\rm BK}_{\hf,\varepsilon_K})\Bigr) \otimes v_{g_{\rm Eis}}^- \\
        &\hspace{6.5cm}\in H^1_{\rm Iw}(\Qp(\mu_{p^\infty}), V_\hf) \otimes V_{g_{\rm Eis}}^- = H^1_{\rm Iw}(\Qp(\mu_{p^\infty}), V_\hf\otimes V_{g_{\rm Eis}}^-)
        \end{aligned}
    \end{align}
    coincides with the image of ${\rm res}_p(\mathcal{BK}_{\hf\otimes g_{\rm Eis}})$ under the map induced from the projection $V_{g_{\rm Eis}} \to V_{g_{\rm Eis}}^-$. As a result, the image of ${\rm res}_p(\mathcal{BK}_{\hf\otimes g_{\rm Eis}})$ under the map induced from $V_\hf\otimes V_{g_{\rm Eis}}\to V_\hf^-\otimes V_{g_{\rm Eis}}^-$ equals
    \begin{align}
        \begin{aligned}
            \label{eqn_2022_12_14_1229}
         &\underbrace{\Bigl(\cL_p^{\rm Kit}(\hf\otimes\varepsilon_K,1+\pmb{\sigma}) \cdot \res_p^-({\rm BK}_\hf) - \cL_p^{\rm Kit}(\hf,1+\pmb{\sigma}) \cdot \res_p^-({\rm BK}_{\hf,\varepsilon_K})\Bigr)}_{\mathfrak{BK}_p} \otimes v_{g_{\rm Eis}}^- \\
         &\hspace{6cm}\in H^1_{\rm Iw}(\Qp(\mu_{p^\infty}), V_\hf^-) \otimes V_{g_{\rm Eis}}^- =H^1_{\rm Iw}(\Qp(\mu_{p^\infty}), V_\hf^-\otimes V_{g_{\rm Eis}}^-)\,.
        \end{aligned}
    \end{align}
    It follows from Proposition \ref{prop:KatoRec} that $\mathfrak{BK}_p$ belongs to the kernel of the injective map $\eta_\hf\circ {\rm Log}_\hf$, and hence, $\mathfrak{BK}_p=0$.  This shows that the image of ${\rm res}_p(\mathcal{BK}_{\hf\otimes g_{\rm Eis}})$ under the map induced from $V_\hf\otimes V_{g_{\rm Eis}}\to V_\hf^-\otimes V_{g_{\rm Eis}}^-$ is zero, and hence
    $\mathcal{BK}_{\hf\otimes g_{\rm Eis}}\in H^1_{\rm Iw,bal}(\Q(\mu_{p^\infty}), V_\hf\otimes V_{g_{\rm Eis}})$\,. We may therefore consider the image of $\res_p(\mathcal{BK}_{\hf\otimes g_{\rm Eis}})$ under the map $p_\f^-$ given as in \S\ref{subsubsec_2022_12_14_1303} and compute that
     \begin{align}
        \begin{aligned}
            \label{eqn_2022_12_14_1306}
        p_\hf^-\Bigl( \res_p(\mathcal{BK}_{\hf\otimes g_{\rm Eis}})\Bigr) &= \Bigl(\cL_p^{\rm Kit}(\hf\otimes\varepsilon_K,1+\pmb{\sigma}) \cdot \res_p^-({\rm BK}_\hf) 
        +
        \cL_p^{\rm Kit}(\hf,1+\pmb{\sigma}) \cdot \res_p^-({\rm BK}_{\hf,\varepsilon_K})\Bigr) \otimes v_{g_{\rm Eis}}^+ \\
        &\qquad\qquad\qquad\qquad \in \quad H^1_{\rm Iw}(\Qp(\mu_{p^\infty}), V_\hf^-) \otimes V_{g_{\rm Eis}}^+ =H^1_{\rm Iw}(\Qp(\mu_{p^\infty}), V_\hf^-\otimes V_{g_{\rm Eis}}^+)\,.
\end{aligned}
    \end{align}
        Let us put
        \begin{align*}
            {\rm Log}_{\hf\otimes g_{\rm Eis}}^{-+} \,: \quad H_{\rm Iw}^1(\Qp(\mu_{p^\infty}),V_\f^-\otimes V_{g_{\rm Eis}}^+)=H_{\rm Iw}^1(\Qp(\mu_{p^\infty}),V_\f^-)\otimes V_{g_{\rm Eis}}^+ &\lra \mathcal R_\hf\,\hatotimes_{\ZZ_p} \, \LL(\Gamma_\cyc)\,,\\
             z\otimes v &\longmapsto \omega_{g_{\rm Eis}}(v)\cdot  \eta_\f\circ {\rm Log}_\hf^-(z)\,.
       \end{align*}
We then have (compare the first equality with \cite{BDV}, Eqn. 28)       
\begin{align}
    \label{eqn_2022_12_14_1316}
    \begin{aligned}
        {\rm Log}_{\hf\otimes g_{\rm Eis}}^{-+}\circ p_\hf^-\Bigl( \res_p(\mathcal{BK}_{\hf\otimes g_{\rm Eis}})\Bigr) & = 2\,\omega_{g_{\rm Eis}}(v_{g_{\rm Eis}}^+) \cdot \cL_p^{\rm Kit}(\hf,1+\pmb{\sigma})\cdot \cL_p^{\rm Kit}(\hf\otimes\varepsilon_K, 1+\pmb{\sigma})\\
        &= 2\,\omega_{g_{\rm Eis}}(v_{g_{\rm Eis}}^+) \cdot \mathfrak c_\hf^{-1} \cdot \mathcal C_1^{-1}  \cdot \cL_p^\f(\f\otimes g_{\rm Eis})\,, 
    \end{aligned}
\end{align}
where the second equality follows from Lemma~\ref{lemma:RS-MKMK}.

By the uniqueness of the Perrin-Riou maps, we have ${\rm Log}_{\hf\otimes g_{\rm Eis}}^{-+}\circ p_\hf^-=\mathscr{L}^{(1)}_\f$, where the latter map is given as in \eqref{eqn_2023_1_3_1655}. It then follows from Proposition~\ref{prop_86_2022_12_14_1124} and the diagram \eqref{eqn_2023_01_03_1658} that 
     \begin{equation}
    \label{eqn_2022_12_14_1331}
            {\rm Log}_{\hf\otimes g_{\rm Eis}}^{-+}\circ p_\hf^- \Bigl(\res_p({}_{c}\mathcal{BF}_{\hf\otimes g_{\rm Eis}})\Bigr) = \mathscr{N}_{\hf,c} \cdot \mathfrak c_\hf^{-1}\cdot \cL_p^\hf(\hf\otimes g_{\rm Eis}, 1+\pmb{\sigma}).
    \end{equation}
     Comparing \eqref{eqn_2022_12_14_1316} with \eqref{eqn_2022_12_14_1331}, we infer that
        \begin{align}
            \begin{aligned}
                \label{eqn_2022_12_14_1351}
           2\,\omega_{g_{\rm Eis}}(v_{g_{\rm Eis}}^+)\cdot {}_{c}\mathcal{BF}_{\hf\otimes g_{\rm Eis}}   - \mathcal{C}_1 \cdot \mathscr{N}_{\f,c} \cdot \mathcal{BK}_{\hf\otimes g_{\rm Eis}} &\in \ker\left({\rm Log}_{\hf\otimes g_{\rm Eis}}^{-+}\circ p_\hf^- \circ \res_p\right) = \ker\left(p_\hf^- \circ \res_p\right) 
           \\
           &= \ker\left(H_{\rm Iw}^1(K(\mu_{p^\infty}), V_\hf)\to H_{\rm Iw}^1(K_p(\mu_{p^\infty}), V_\hf^-)\right) \otimes V_{g_{\rm Eis}}
           \\
           &=: {\rm Sel}_{\rm Iw}(K(\mu_{p^\infty}), V_\hf)\otimes V_{g_{\rm Eis}}\,.
            \end{aligned}
        \end{align}
      It follows from the discussion in the final paragraph of \cite[\S4.3]{BDV} that we have ${\rm Sel}_{\rm Iw}(K(\mu_{p^\infty}), V_\hf)=\{0\}$, and this vanishing statement completes our proof.
\end{proof}
}
}

We are now ready to state the main result in \S\ref{subsubsec_2022_12_14_1014}:
    \begin{proposition}\label{prop:factorizationBDPKitagawa}
    We have the following factorization of the ${\rm BDP}^2$ $p$-adic $L$-function restricted to the central critical line in the weight space:
    \[
        \cL_p^{\hg_K}(\hf\otimes\hg_K)(\kappa,1,\wt(\kappa)/2) \, = \, \mathcal C(\kappa)\cdot \cL_p^{\rm Kit}(\f\otimes\epsilon_K)(\kappa, \rmw(\kappa)/2) \cdot {\rm Log}_{\omega_\f}({\rm BK}_{\f}^\dagger) \,.
    \]
    Here,
    \begin{equation}
    \label{eqn_2022_12_21_1139}
        \mathcal C 
        =
        \frac{\mathfrak c_{g_{\rm Eis}}\cdot \eta_{g_{\rm Eis}}( v_{g_{\rm Eis}}^-)}{ 2\, \omega_{g_{\rm Eis}}(v_{g_{\rm Eis}}^+)} \cdot \frac{w_\f}{w_{g_{\rm Eis}} } \cdot \mathcal C_1 
    \end{equation}
    with $\mathcal C_1 \in \cR_{\f}[\frac{1}{p}]^\times$ is as in Lemma~\ref{lemma:RS-MKMK}, and $w_\hf(\kappa)^2 = (-N_\hf)^{2-\wt(\kappa)}$ and $w_{g_{\rm Eis}}^2 = (-D_K)$.
    \end{proposition}

    \begin{proof}
    Following the proof of \cite[Theorem 4.2]{BDV}\footnote{The detailed calculation is available in the proof of \cite[Proposition 8.8]{BCSarxiv} for the interested reader. Since there are new ideas besides those readily present in \cite{BDV}, we decided to omit them here.}, one proves that
    \[
        2\,\omega_{g_{\rm Eis}}(v_{g_{\rm Eis}}^+)\cdot {{}_c}\mathcal{BF}_{\hf\otimes g_{\rm Eis}}   = \mathcal C_1 \cdot  \mathscr{N}_{\hf,c} \cdot \mathcal{BK}_{\hf\otimes g_{\rm Eis}}\,.
    \]
    where $\mathcal{C}_1$nis as in Lemma~\ref{lemma:RS-MKMK}, and $\mathscr N_{\hf,c}$ as in the statement of Proposition~\ref{prop_86_2022_12_14_1124}.  The proof then follows after a careful comparison\footnote{Details regarding this comparison can be found in \cite[\S8.2]{BCSarxiv}, especially as part of Proposition 8.9 in op. cit. The argument is an adaptation of some of the point-wise calculations of \cite[\S4.5 and \S4.6]{BDV}.} of the specialization of the logarithm maps $\mathscr L_{\g_K}$ and ${\rm Log}_{\omega_\f}$. 
    
    \comm{\color{magenta}   On specializing \eqref{eqn_2022_12_14_1632} to ${\pmb \sigma}=\wt(\kappa)/2-1$, we obtain the modified Beilinson--Kato element for the central critical twists:
    \[
        \mathcal{BK}_{\hf\otimes g_{\rm Eis}}^\dagger 
        :=  
        \cL_p^{\rm Kit}(\hf\otimes\varepsilon_K,\wt(\kappa)/2) \cdot {\rm BK}_\hf^\dagger \otimes  v_{g_{\rm Eis},\mathds{1}}
        + 
        \cL_p^{\rm Kit}(\hf,\wt(\kappa)/2) \cdot {\rm BK}_{\hf,\varepsilon_K}^\dagger \otimes  v_{g_{\rm Eis},\varepsilon_K}\,.
    \]
    Under our running assumption that $\varepsilon(\hf)=-1$, it follows that $\cL_p^{\rm Kit}(\hf,\wt(\kappa)/2)=0$ and hence
    \begin{align}
        \label{eqn_2022_12_14_1638}
        \mathcal{BK}_{\hf\otimes g_{\rm Eis}}^\dagger = \cL_p^{\rm Kit}(\hf\otimes\varepsilon_K,\wt(\kappa)/2) \cdot {\rm BK}_\hf^\dagger \otimes  v_{g_{\rm Eis},\mathds{1}}\,.
    \end{align}    
    Similarly, let us denote by ${}_{c}\mathcal{BF}_{\hf\otimes g_{\rm Eis}}^\dagger \in H^1(\QQ,V_\f^\dagger\otimes V_{g_{\rm Eis}})$ the specialization of the Beilinson--Flach class ${}_{c}\mathcal{BF}_{\hf\otimes g_{\rm Eis}}$ to the central critical line in the weight space. The identity in Proposition~\ref{prop_2022_12_14_1019} then specializes to
    \begin{align}
    \label{eqn_2022_12_14_1644}
               2\, \omega_{g_{\rm Eis}}(v_{g_{\rm Eis}}^+) \, {}_c\mathcal{BF}_{\hf\otimes g_{\rm Eis}}^\dagger = \mathcal{C}_1 \cdot  \mathscr{N}_{\hf,c} \cdot 
        \cL_p^{\rm Kit}(\hf\otimes\varepsilon_K,\wt(\kappa)/2) \cdot {\rm BK}_\hf^\dagger \otimes  v_{g_{\rm Eis},\mathds{1}}\,.
    \end{align}
 
    Let us consider the map $\mathscr L_{g_{\rm Eis}}$ given by
$$
{\mathscr L_{g_{\rm Eis}}}\,:\quad {\rm im}\left(H^1_{\rm Iw,bal}(\Qp(\mu_{p^\infty}), V_\hf\hatotimes_{\ZZ_p} V_{\hg_K})  \xrightarrow{{\rm id}\,\otimes 1 \,\otimes \,{\rm pr}_\f^\dagger} H^1_{\rm bal}(\Qp, V_\hf^\dagger\otimes V_{g_{\rm Eis}})\right)\lra\mathcal R_\hf  
$$
that we obtain as the base change of the Perrin-Riou map $\mathscr L_{\g_K}$ given as in \S\ref{subsubsec_2022_12_14_1303} via the morphism denoted by ${\rm id}\,\otimes 1 \,\otimes \,{\rm pr}_\f^\dagger$ above, where we recall that the specialization map $1$ stands for that corresponds to $\g_K\mapsto g_{\rm Eis}$ and the map ${\rm pr}_\f^\dagger$ is given as in \eqref{eqn_2022_12_15_1140}. Note that the map ${\mathscr L_{g_{\rm Eis}}}$ factors through 
\begin{align}
\begin{aligned}
\label{eqn_2022_12_15_1117}
       {\rm im}&\left(H^1_{\rm Iw}(\Qp(\mu_{p^\infty}), V_\hf^+\hatotimes_{\ZZ_p} V_{\hg_K}^-)  \xrightarrow{{\rm id}\,\otimes 1 \,\otimes \,{\rm pr}_\f^\dagger} H^1_{\rm bal}(\Qp, F^+V_\hf^\dagger\otimes V_{g_{\rm Eis}}^-)\right)
    \\ &\qquad\qquad \qquad\qquad={\rm im}\left(H^1_{\rm Iw}(\Qp(\mu_{p^\infty}), V_\hf^+\hatotimes_{\ZZ_p} V_{\hg_K}^-)  \lra H^1_{\rm bal}(\Qp, F^+V_\hf^\dagger)\otimes V_{g_{\rm Eis}}^-\right)
\end{aligned}
\end{align}
by its very definition. Here we recall that $F^+V_\hf^\dagger:=V_\hf^+\otimes \bbchi_\f^{-\frac{1}{2}}\chi_\cyc$ and note that ${\mathscr L_{g_{\rm Eis}}}\circ \res_p$ coincides with the map ${\rm Log}_{\omega_\f} \otimes \eta_{g_{\rm Eis}}$ (restricted to the module given in \eqref{eqn_2022_12_15_1117}), where  the map ${\rm Log}_{\omega_\f}$ is the one given in \S\ref{subsubsec_2022_12_15_1115}.

Let us apply the map $\mathscr L_{g_{\rm Eis}}\circ \res_p$ on both sides of Equation \eqref{eqn_2022_12_14_1644}, so that we have
  \begin{equation}
    \label{eqn_2022_12_15_1025}
         2\, \omega_{g_{\rm Eis}}(v_{g_{\rm Eis}}^+) \cdot \mathscr L_{g_{\rm Eis}}\circ\res_p(\mathcal{BF}_{\hf\otimes g_{\rm Eis}}^\dagger) = \mathscr N_{\hf,c}^\dagger \cdot \mathcal C_1 \cdot \mathcal L_p^{\rm Kit}(\hf\otimes\varepsilon_K,\wt(\kappa)/2) \cdot \eta_{g_{\rm Eis}}( v_{g_{\rm Eis}}^-) \cdot {\rm Log}_{\omega_\hf}({\rm BK}_\hf^\dagger)\,.
      \end{equation}
Equation \eqref{eqn_2022_12_15_1025} combined with Proposition~\ref{prop_86_2022_12_14_1124} gives
    \[
        \mathcal L_p^{\g_K}(\hf\otimes\hg_K)(\kappa,1,\wt(\kappa)/2) 
        = 
        \frac{\mathfrak c_{g_{\rm Eis}}\cdot \eta_{g_{\rm Eis}}( v_{g_{\rm Eis}}^-)}{ 2\, \omega_{g_{\rm Eis}}(v_{g_{\rm Eis}}^+)} \cdot \frac{\mathscr N_{\hf,c}^\dagger}{\mathscr N_{{g_{\rm Eis}},c}^\dagger} \cdot \mathcal C_1 \cdot \mathcal L_p^{\rm Kit}(\hf\otimes\varepsilon_K,\wt(\kappa)/2)  \cdot {\rm Log}_{\omega_\hf}({\rm BK}_\hf^\dagger)\,,
    \]
    and concludes our proof. Finally, recall that we have
    $   \frac{\mathscr N_{\hf,c}^\dagger(\kappa,1,\wt(\kappa)/2)}{\mathscr N_{{g_{\rm Eis}},c}^\dagger(\kappa,1,\wt(\kappa)/2)} = \frac{w_\hf(\kappa)}{w_\hg(1)}$\,, where $w_\hf(\kappa)^2 = (-N_\hf)^{2-\wt(\kappa)}$ and $w_{g_{\rm Eis}}^2 := w_{\g_K}(1)^2 = (-D_K)$  (cf. the statement of Proposition \ref{prop_86_2022_12_14_1124}), and this concludes the proof.
    }
    \end{proof}

    \subsection{Putting the pieces together}
    We are now ready to complete the proof of Theorem~\ref{thm_main_6_plus_2} in the scenario when $\g$ has CM, on passing to sufficiently small wide-open discs in ${\rm Spm}(R_{\pmb \f}[\frac{1}{p}])$ and ${\rm Spm}(R_{\pmb \g}[\frac{1}{p}])$ (cf. the discussion in \S\ref{subsubsec_823_2022_12_21_1117} and \S\ref{subsubsec_826_2022_12_21_1117}).

     \subsubsection{}
    \label{subsubsec_deg6_family_CM_case} 
     We begin this subsection with a rather ad hoc proof of a weak form of Conjecture~\ref{conj_2022_06_02_0940} on the existence of $\cL_p^{\Ad}(\hf\otimes \Ad^0\hg )$ in the situation when $\g$ has CM. 
     Recall the twist $\Phi=\Psi^\Ad \otimes \bbchi_\g\chi_\cyc^{-1}$ given as in \S\ref{subsubsec_812_2022_12_21_1028} of the the character $\Psi^\Ad$ given in \S\ref{subsubsec_813_2022_12_05}.
    
    \begin{proposition} \label{prop:factorizationAdBDP}
        \item[i)] There exists a unique element $ \mathcal B(\kappa,\lambda) \in {\rm Frac}(\cR_\f\,\hatotimes_{\ZZ_p}\, \cR_\g)$ with the property that 
        \begin{align}
        \begin{aligned}
        \label{eqn_2023_01_09_1751}
            \mathcal B(\kappa,\lambda) 
        &= \frac{2 (\sqrt{-1})^{\wt(\kappa)/2-1} }{\mathfrak C_{\rm exc}(\hf\otimes\Phi(\Theta)) } \cdot \frac{\Omega_{\Phi(\Theta)_{\lambda\otimes\lambda}}}{\Omega_{\hg_\lambda}^2}\,\\
        &
        = - \frac{2 (\sqrt{-1})^{\wt(\kappa)/2-1} }{\mathfrak C_{\rm exc}(\hf\otimes\Phi(\Theta)) } \cdot \mathfrak c_{g_{\rm Eis}}\cdot \mathfrak c(\lambda \otimes\lambda) \cdot \sqrt{D_K}\cdot \frac{\mathscr O({\Phi_{\lambda\otimes\lambda}})}{\mathscr O({{\Psi_{\hg_\lambda} }})^2}  \cdot \frac{\mathcal L_p^{\rm Katz}(\widetilde{\Phi}_{\lambda\otimes\lambda})}{\mathcal L_p^{\rm Katz}(\widetilde{\Psi}_{\hg_{\lambda}})^2}  \,\in \,\overline{\QQ}
        \end{aligned}
        \end{align}
        for all crystalline $\kappa,\lambda$.       
        \item[ii)] The $p$-adic $L$-function $\cL_p^\Ad(\hf\otimes \Ad^0\hg) \in {\rm Frac}(\cR_\f\,\hatotimes_{\ZZ_p}\, \cR_\g)$ defined on setting
        \[
            \cL_p^\Ad(\hf\otimes \Ad^0\hg)(\kappa,\lambda) := \mathcal B(\kappa,\lambda) \cdot \cL_p^{\rm BDP}(\hf_{/K}\otimes\Psi^{\Ad})(\kappa,\lambda) \cdot \cL_p^{\rm Kit}(\hf\otimes \varepsilon_K)(\kappa,{\rm w}(\kappa)/2)
        \]          
        verifies the interpolation formula of Conjecture~\ref{conj_2022_06_02_0940}.
        \item[iii)] We have
        $$\mathcal L_p^{\rm Katz}(\widetilde{\Psi}_{\hg})^2\cdot  \cL_p^\Ad(\hf\otimes \Ad^0\hg)\in (\cR_\f\, \hatotimes_{\ZZ_p}\, \cR_\g)[1/p]\,.$$
        In particular, the denominator of $ \cL_p^\Ad(\hf\otimes \Ad^0\hg)$ given as in (ii) is a function of $\lambda$ alone.
    \end{proposition}

   We will strengthen this result in Corollary~\ref{eqn_cor_2023_01_09_1840} to prove that $\cL_p^\Ad(\hf\otimes \Ad^0\hg)\in (\cR_\f\, \hatotimes_{\ZZ_p}\, \cR_\g)[1/p]$.  
    \begin{proof}
\item[i)] The first equality in the statement of this portion follows from \eqref{eqn:modifiedHidaperiods}, \eqref{eqn_2023_01_04_1800}, and \eqref{eqn_2022_12_08_1519}, utilized as in the verification of \eqref{eqn_2023_01_06_1238}. The remaining assertions in this portion are clear as per definitions.

\item[ii)] Recall the decomposition
    \[
        T_{\hf_\kappa}^\dagger\otimes \Ad^0(T_{\hg_\lambda}) \simeq 
        \bigl( T_{\hf_\kappa}^\dagger \otimes \Ind_{K/\Q}(\Psi^\Ad_{\lambda\otimes\lambda}) \bigr) 
        \oplus 
        \bigl(T_{\hf_\kappa}^\dagger\otimes \varepsilon_K \bigr)
        \simeq 
        \bigl(T_{\hf_\kappa} \otimes \Ind_{K/\Q}(\Phi_{\lambda\otimes\lambda}) \bigr)(1-c) 
        \oplus 
        \bigl( T_{\hf_\kappa}^\dagger\otimes \varepsilon_K \bigr)
    \]
    of Galois representations (where $c=\wt(\kappa)/2+\wt(\lambda)-1$), and correspondingly, the factorization of algebraic parts of the $L$-values:
    \begin{align}
    \begin{aligned}
    \label{eqn_2022_12_21_1051}
        \frac{\Lambda(\hf_\kappa \otimes\Ad^0(\hg_\lambda), \wt(\kappa)/2)}{ \Omega_{\hg_\lambda}^2 \Omega_{\hf_\kappa}^-} 
        &=
        \frac{\Omega_{\Phi(\Theta)_{\lambda\otimes\lambda}}}{\Omega_{\hg_\lambda}^2}
        \cdot
        \frac{\Lambda(\hf_\kappa\otimes\Phi(\Theta)_{\lambda\otimes\lambda}^\circ, c)}{\Omega_{\Phi(\Theta)_{\lambda\otimes\lambda}}} \cdot \frac{\Lambda(\hf_\kappa\otimes\varepsilon_K, \wt(\kappa)/2)}{\Omega_{\hf_\kappa}^-}\,.
    \end{aligned}
    \end{align}
    By direct calculation, we also have the factorization of the Euler-like factors:
    \begin{equation}
        \label{eqn_2022_12_21_1052}
         \mathcal E_p^\Ad(\hf_\kappa\otimes \Ad^0(\hg_\lambda), \wt(\kappa)/2) = \mathcal E_p^{\Phi(\Theta)}(\hf_\kappa\otimes\Phi(\Theta)_{\lambda\otimes\lambda},c) \cdot \mathcal E_p(\hf_\kappa\otimes \varepsilon_K, \wt(\kappa)/2).
    \end{equation}
    By the interpolation formulae of the relevant $p$-adic $L$-functions (cf. \eqref{eqn_2022_12_07_1302} and Theorem~\ref{thm_31_2022_06_02_0854}) combined with \eqref{eqn_2022_12_21_1051} and \eqref{eqn_2022_12_21_1052},  for all $(\kappa,\lambda) \in \cW_2^\Ad$ we have 
    \begin{align} \label{eqn_2022_12_21_1234}
    \begin{aligned}
    \cL_p^\Ad(\hf\otimes \Ad^0\hg)(\kappa,\lambda) 
    &:= \mathcal B(\kappa,\lambda) \cdot \cL_p^{\rm BDP}(\hf_{/K}\otimes\Psi^{\Ad})(\kappa, \lambda) \cdot \cL_p^{\rm Kit}(\hf\otimes \varepsilon_K)(\kappa,\wt(\kappa)/2)
   \\
        &=
        \frac{\Omega_{\Phi(\Theta)_{\lambda\otimes\lambda}}}{\Omega_{\hg_\lambda}^2}
    \cdot
    \mathcal E_p^{\Phi(\Theta)}(\hf_\kappa\otimes\Phi(\Theta)_{\lambda\otimes\lambda},c) \cdot 
    \frac{\Lambda(\hf_\kappa^\circ \otimes\Phi(\Theta)_{\lambda\otimes\lambda}^\circ, c)}{\Omega_{\Phi(\Theta)_{\lambda\otimes\lambda}}} 
    \\ 
    &\hspace{90pt}\times C_{\hf_\kappa}^- \cdot  \mathcal E_p(\hf_\kappa\otimes \varepsilon_K, \wt(\kappa)/2)\cdot \frac{\Lambda(\hf_\kappa^\circ\otimes\varepsilon_K, \wt(\kappa)/2)}{\Omega_{\hf_\kappa}^-}
   \\
    &=
    C_{\hf_\kappa}^- \cdot \mathcal E_p^\Ad(\hf_\kappa\otimes \Ad^0(\hg_\lambda), \wt(\kappa)/2) \cdot \frac{\Lambda(\hf_\kappa \otimes\Ad^0(\hg_\lambda), \wt(\kappa)/2)}{ \Omega_{\hg_\lambda}^2 \Omega_{\hf_\kappa}^-}\,,  
    \end{aligned}
    \end{align}
    where $\mathcal B(\kappa,\lambda)$ is as in Part (i), as required.

   \item[iii)] Since we have 
   $$\cL_p^{\rm BDP}(\hf_{/K}\otimes\Psi^{\Ad})(\kappa,\lambda) \cdot \cL_p^{\rm Kit}(\hf\otimes \varepsilon_K)(\kappa,{\rm w}(\kappa)/2) \in \cR_\f \,\hatotimes_{\ZZ_p}\,\cR_\g,$$ 
   the proof of this portion follows immediately from the definition of $\cL_p^{\rm BDP}(\hf_{/K}\otimes\Psi^{\Ad})(\kappa,\lambda)$ and the description of the denominator of the factor $\mathcal B (\kappa,\lambda)$ in terms of the indicated Katz $p$-adic $L$-function in Part (i).
    \end{proof}


    \begin{theorem} \label{Thm:8=6+2CM}
        When $\hg$ is a CM form, Theorem \ref{thm_main_6_plus_2} is true. Namely, we have the following  factorization of $p$-adic $L$-functions:
        \begin{equation}
            \label{eqn_2023_01_09_1846}
            \cL_p^\hg(\hf\otimes \hg\otimes \hg^c)^2(\kappa,\lambda,\lambda) = \mathscr C(\kappa)\cdot \cL_p^\Ad(\hf\otimes \Ad^0\hg)(\kappa,\lambda) \cdot {\rm Log}_{\omega_\f}({\rm BK}_{\f}^\dagger)\,.
        \end{equation}
        Here, 
        $\mathscr C(\kappa)$ is a meromorphic function
        and it has the following algebraicity property: For all crystalline $\kappa,\lambda$, 
        $$\mathscr C(\kappa) \cdot \cL_p^{\rm Kit}(\f\otimes\epsilon_K)(\kappa,\wt(\kappa)/2)\cdot {\rm Log}_{\omega_\f}({\rm BK}_{\f}^\dagger)(\kappa)$$
        is a non-zero algebraic multiple of 
        $$\cL_p^{\rm MV}(\f_\kappa\otimes\epsilon_K)(\wt(\kappa)/2)\cdot\log_{\omega_{\f_\kappa}}({\rm BK}_{\f_\kappa}^\dagger)\,,$$ 
        where ${\rm BK}_{\f_\kappa}^\dagger$ is the central critical twist of the optimal Beilinson--Kato element ${\rm BK}_{\f_\kappa}$ attached to $\f_\kappa$, and $\cL_p^{\rm MV}(\f_\kappa\otimes\epsilon_K)$ is the Manin--Vi\v{s}ik $p$-adic $L$-function attached to $\f_\kappa$ and the canonical periods $\Omega_{\f_\kappa}^\pm$.
    \end{theorem}
    \begin{proof}
    Thanks to Corollary~\ref{cor:analyticfactorization}, we have the following factorization:
    \[
        \cL_p^\hg(\hf\otimes \hg\otimes \hg^c)^2(\kappa,\lambda,\lambda)= \mathcal A(\lambda,\lambda) \cdot 
        \cL_p^{\rm BDP}(\hf_{/K} \otimes \Psi^\Ad )(\kappa, \lambda ) \cdot \cL_p^{\rm BDP}(\hf_{/K} \otimes \mathds{1})(\kappa,\lambda )\,,
    \]
    where $\mathcal A(\lambda,\lambda)$ is explicitly described in Proposition \ref{prop:Alambdalambda}. Using Proposition~\ref{prop:factorizationAdBDP}, we further deduce that
    \begin{align*}
        \cL_p^\hg(\hf\otimes \hg\otimes \hg^c)^2(\kappa,\lambda,\lambda)
        &= \frac{\mathcal A(\lambda,\lambda)}{\mathcal B(\kappa,\lambda)} \cdot \mathcal L_p^\Ad(\hf\otimes\Ad^0\hg)(\kappa,\lambda) \cdot \frac{\mathcal L_p^{\rm BDP}(\hf_{/K},\mathds{1})(\kappa,\lambda)}{\mathcal{L}_p^{\rm Kit}(\hf\otimes\varepsilon_K)(\kappa,\wt(\kappa)/2)}\,,
    \end{align*}
   where
    \begin{align}
    \begin{aligned}
        \label{eqn_2022_12_21_1132}
          \dfrac{ \mathcal A(\lambda,\lambda)}{\mathcal B(\kappa,\lambda)} 
        &=  \dfrac{ \dfrac{ \mathfrak C_{\rm exc}(\hf\otimes\hg\otimes\hg^c) }{\mathfrak C_{\rm exc}(\hf\otimes\Phi(\Theta)) \cdot \mathfrak C_{\rm exc}(\hf\otimes\Phi'(\Theta))} 
        \cdot
        \dfrac{\Omega_{ \Phi(\Theta)_{\lambda\otimes\lambda} } }{\Omega_{\Psi_\hg(\Theta)_{\lambda}}^2} 
        \cdot \dfrac{4h_K(1-p^{-1})}{\omega_K \cdot \mathfrak{c}_{g_{\rm Eis}}} \cdot \log_p(u)
        }{ \dfrac{2 (\sqrt{-1})^{\wt(\kappa)/2-1} }{\mathfrak C_{\rm exc}(\hf\otimes\Phi(\Theta)) } \cdot \dfrac{\Omega_{\Phi(\Theta)_{\lambda\otimes\lambda}}}{\Omega_{\hg_\lambda}^2}} \\
        &= \dfrac{ \mathfrak C_{\rm exc}(\hf\otimes\hg\otimes\hg^c) }{ \mathfrak C_{\rm exc}(\hf\otimes\Phi'(\Theta)) } 
        \cdot \frac{ 2h_K(1-p^{-1})}{\omega_K (\sqrt{-1})^{\wt(\kappa)/2-1} \mathfrak c_{g_{\rm Eis}} } \cdot \log_p(u)\,.
    \end{aligned}
    \end{align}
    It follows from Proposition~\ref{prop:factorizationBDPKitagawa} that we have
    \[
        \frac{\mathcal{L}_p^{\rm BDP}(\f_{/K} \otimes \mathds{1}) (\kappa,\lambda)}{\mathcal L_p^{\rm Kit}(\hf\otimes\varepsilon_K)(\kappa,\wt(\kappa)/2)} 
        = \frac{\cL_p^{\hg_K}(\hf\otimes\hg_K)(\kappa,1,\wt(\kappa)/2)}{\mathcal L_p^{\rm Kit}(\hf\otimes\varepsilon_K)(\kappa,\wt(\kappa)/2)} 
        =
        \mathcal C(\kappa) \cdot {\rm Log}_{\omega_\hf}({\rm BK}_\hf^\dagger),
    \]
    so that
    \[
     \mathcal L_p^\hg(\hf\otimes\hg\otimes\hg^c)(\kappa,\lambda,\lambda) = \frac{\mathcal A(\lambda,\lambda) \cdot\mathcal C(\kappa)}{\mathcal B(\kappa,\lambda)} \cdot \mathcal L_p^\Ad(\hf\otimes\Ad^0\hg)(\kappa,\lambda) \cdot {\rm Log}_{\omega_\hf}({\rm BK}_{\hf}^\dagger).
    \]
    It remains to analyze the specializations of 
    $$\mathscr{C}(\kappa,\lambda):=\frac{\mathcal A (\lambda,\lambda)\cdot \mathcal C (\kappa)}{\mathcal B (\kappa,\lambda)}\,.$$ 
    Using \eqref{eqn_2022_12_21_1140}, \eqref{eqn_2022_12_21_1139},  and \eqref{eqn_2022_12_21_1132}, we infer that $\mathscr{C}(\kappa,\lambda)$ equals
    \begin{align}
    \label{eqn_2023_01_06_1436}
    \begin{aligned}
        &\frac{ \mathfrak C_{\rm exc}(\hf\otimes\hg\otimes\hg^c) }{\mathcal C^{\Ad}(\kappa,\lambda) \cdot\mathfrak C_{\rm exc}(\hf\otimes\Phi'(\Theta)) } 
        \cdot \frac{2h_K(1-p^{-1})}{\omega_K (\sqrt{-1})^{\wt(\kappa)/2-1} \mathfrak c_{g_{\rm Eis}} } \cdot \log_p(u)  \cdot 
        \frac{\mathfrak c_{g_{\rm Eis}}\cdot\eta_{g_{\rm Eis}}(v_{g_{\rm Eis}}^-)}{2\omega_{g_{\rm Eis}}(v_{g_{\rm Eis}}^+)} \cdot \frac{w_\hf(\kappa)}{w_{g_{\rm Eis}} } 
        \\
        &\hspace{9cm}\times \frac{ - \mathfrak C_{\rm exc}(\hf\otimes\hg_K) }{(-2\sqrt{-1})^{\wt(\kappa)-1}\cdot C_{\hf_\kappa}^+C_{\hf_\kappa}^-\cdot \mathcal E(\hf_\kappa^\circ,\Ad)}  \\
        &= 
        \mathfrak C_{\rm exc}(\hf\otimes\hg\otimes\hg^c)
        \cdot \frac{h_K(1-p^{-1}) (\sqrt{-1})^{\wt(\kappa)/2}}{2^{\wt(\kappa)-1}\omega_K   } \cdot  \frac{w_\hf(\kappa)}{w_{g_{\rm Eis}} }    \cdot \frac{ 1 }{ C_{\hf_\kappa}^+C_{\hf_\kappa}^-\cdot \mathcal E(\hf_\kappa^\circ,\Ad)} 
        \cdot 
        \frac{\eta_{g_{\rm Eis}}(v_{g_{\rm Eis}}^-)}{\omega_{g_{\rm Eis}}(v_{g_{\rm Eis}}^+)} \cdot \log_p(u)  
        \\
        &= \mathfrak C_{\rm exc}(\hf\otimes\hg\otimes\hg^c) 
        \times \frac{h_K(1-p^{-1}) }{\omega_K  w_{g_\mathrm{Eis}}   } 
        \times 
       \frac{\eta_{g_{\rm Eis}}(v_{g_{\rm Eis}}^-)}{\omega_{g_{\rm Eis}}(v_{g_{\rm Eis}}^+)} \cdot \log_p(u)
       \times \frac{(\sqrt{-1})^{\wt(\kappa)/2} {w_\hf(\kappa)} }{2^{\wt(\kappa)-1}\cdot \mathcal E(\hf_\kappa^\circ,\Ad)}    
         \times \frac{ 1 }{ C_{\hf_\kappa}^+C_{\hf_\kappa}^-}  \,,
    \end{aligned}
    \end{align}
    where we recall that $w_\hf(\kappa)$ and $w_{g_\mathrm{Eis}}$ are the Atkin--Lehner pseudo-eigenvalues, and they verify $w_\hf(\kappa)^2=(-N_\f)^{2-\wt(\kappa)}$, $w_{g_\mathrm{Eis}}^2=-D_K$. It  follows from \cite[Lemma 4.6]{BDV} that  $\dfrac{\eta_{g_{\rm Eis}}(v_{g_{\rm Eis}}^-)}{\omega_{g_{\rm Eis}}(v_{g_{\rm Eis}}^+)} \cdot \log_p(u) $ is a non-zero rational number. Therefore, in the very last line of \eqref{eqn_2023_01_06_1436} all factors but the last take non-zero algebraic values for crystalline $\kappa,\lambda$, which are clearly independent on $\lambda$, so that $\mathscr C(\kappa,\lambda)=\mathscr C(\kappa)$. The claimed algebraicity property of $\mathscr{C}(\kappa)$ follows from the choice of the $p$-optimal Beilinson--Kato elements ${\rm BK}_{\f}^\dagger$ (cf. Proposition~\ref{prop:KatoRec}) and the choice of the $p$-adic periods $C_{\f_\kappa}^\pm$ (cf. Theorem~\ref{thm_31_2022_06_02_0854}), which tells us that
    $$\cL_p^{\rm Kit}(\f)(\kappa,\wt(\kappa)/2)=C_{\f_\kappa}^\pm\cL_p^{\rm MV}(\f_\kappa, \wt(\kappa)/2)\,\,,\,\,\,\, \cL_p^{\rm Kit}(\f\otimes\epsilon_K)(\kappa,\wt(\kappa)/2)=C_{\f_\kappa}^\mp\cL_p^{\rm MV}(\f_\kappa\otimes\epsilon_K, \wt(\kappa)/2)\,.$$
    \end{proof}

\begin{corollary}
    \label{eqn_cor_2023_01_09_1840}
    We have $ \cL_p^\Ad(\hf\otimes \Ad^0\hg) \in (\cR_\f \,\hatotimes_{\ZZ_p}\,\cR_\g)[1/p]$ for the $p$-adic $L$-function given as in Proposition~\ref{prop:factorizationAdBDP}(ii). 
\end{corollary}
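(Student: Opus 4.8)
The plan is to deduce the Corollary from Proposition~\ref{prop:factorizationAdBDP}(iii) together with the degree-$8$ factorization of Theorem~\ref{Thm:8=6+2CM}, by playing off two incompatible descriptions of the pole divisor of $\cL_p^{\Ad}(\hf\otimes \Ad^0\hg)$ against one another. Write $R:=(\cR_\hf\hatotimes_{\ZZ_p}\cR_\hg)[1/p]$; since $\cR_\hf$ and $\cR_\hg$ are regular, $R$ is normal, so $\cL_p^{\Ad}(\hf\otimes \Ad^0\hg)$ has a well-defined denominator, a reflexive ideal $\mathfrak{d}\subseteq R$ minimal with $\mathfrak{d}\cdot\cL_p^{\Ad}(\hf\otimes\Ad^0\hg)\subseteq R$, and one must show $\mathfrak{d}=R$. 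By Lemma~\ref{lem:reflexible} this may be checked at height-one primes of $R$, and by Proposition~\ref{prop:factorizationAdBDP}(iii) the ideal $\mathfrak{d}$ divides $(\mathcal{L}_p^{\mathrm{Katz}}(\widetilde{\Psi}_\hg)^2)$ with $\mathcal{L}_p^{\mathrm{Katz}}(\widetilde{\Psi}_\hg)\in \cR_\hg[1/p]$; hence every height-one prime $\fp$ with $\mathfrak{d}R_\fp\neq R_\fp$ is \emph{vertical}, i.e.\ is the extension to $R$ of a height-one prime of $\cR_\hg[1/p]$.

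Next I would feed in the factorization \eqref{eqn_2023_01_09_1846}, namely $\cL_p^{\hg}(\hf\otimes\hg\otimes\hg^c)^2(\kappa,\lambda,\lambda)=\mathscr C(\kappa,\lambda)\cdot\cL_p^{\Ad}(\hf\otimes\Ad^0\hg)(\kappa,\lambda)\cdot\varpi_{2,1}^*{\rm Log}_{\omega_\f}({\rm BK}_\f^\dagger)$. The left-hand side lies in $\cR_2=\cR_\hf\hatotimes_{\ZZ_p}\cR_\hg$: Hsieh's $p$-adic $L$-function $\cL_p^{\hg}(\hf\otimes\hg\otimes\hg^c)$ lies in $\cR_3$ by Theorem~\ref{thm:unbalancedinterpolation}, and restricting along $\iota_{2,3}^*$ and squaring keeps it in $\cR_2$. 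On the right, ${\rm Log}_{\omega_\f}({\rm BK}_\f^\dagger)\in\cR_\hf[1/p]$ depends only on $\kappa$ and is non-zero in the setting at hand (this follows from $\delta(T_\f^\dagger,\Delta_\emptyset)=\cR_\f\cdot{\rm BK}_\f^\dagger\neq 0$ together with Proposition~\ref{prop_suport_range_Selmer_complexes_8_6_2_2}, so that ${\rm Log}_{\omega_\f}$ restricted to the cohomology class ${\rm BK}_\f^\dagger$ is injective); in particular, at a vertical prime $\fp$ one has $R/\fp\supseteq \cR_\hf[1/p]$, whence $\mathrm{ord}_{\fp}(\varpi_{2,1}^*{\rm Log}_{\omega_\f}({\rm BK}_\f^\dagger))=0$. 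As for $\mathscr C(\kappa,\lambda)=\mathcal{A}(\lambda,\lambda)\mathcal{C}(\kappa)/\mathcal{B}(\kappa,\lambda)$: by the explicit computation \eqref{eqn_2022_12_21_1132} (equivalently by \eqref{eqn_2023_01_06_1436}) every factor of $\mathscr C$ is a non-zero constant, a function of $\kappa$ alone, or the reciprocal of $\mathcal{C}^{\Ad}(\kappa,\lambda)=CA^{\rmw(\kappa)}B^{\rmw(\lambda)}$ whose entire $\lambda$-dependence is the unit $B^{\rmw(\lambda)}$ (as $p\nmid B$); hence $\mathrm{ord}_\fp(\mathscr C)=0$ at every vertical $\fp$ as well.

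Combining: since $\cL_p^{\Ad}(\hf\otimes\Ad^0\hg)=\cL_p^{\hg}(\hf\otimes\hg\otimes\hg^c)^2(\kappa,\lambda,\lambda)\cdot\bigl(\mathscr C\cdot\varpi_{2,1}^*{\rm Log}_{\omega_\f}({\rm BK}_\f^\dagger)\bigr)^{-1}$, for any vertical prime $\fp$ we get $\mathrm{ord}_\fp(\cL_p^{\Ad}(\hf\otimes\Ad^0\hg))=\mathrm{ord}_\fp(\cL_p^{\hg}(\hf\otimes\hg\otimes\hg^c)^2(\kappa,\lambda,\lambda))\geq 0$, because the other two $\mathrm{ord}_\fp$'s vanish and the degree-$8$ $p$-adic $L$-function is integral. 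Thus $\cL_p^{\Ad}(\hf\otimes\Ad^0\hg)$ has no pole along any vertical divisor; but by the first paragraph all of its poles are vertical, so it has no poles at all, i.e.\ $\mathfrak{d}=R$ and $\cL_p^{\Ad}(\hf\otimes\Ad^0\hg)\in(\cR_\hf\hatotimes_{\ZZ_p}\cR_\hg)[1/p]$.

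The hard part — and the reason the Corollary is not immediate from Proposition~\ref{prop:factorizationAdBDP}(iii) — is exactly that the factor $\mathcal{B}(\kappa,\lambda)$ in the defining expression of $\cL_p^{\Ad}(\hf\otimes\Ad^0\hg)$ (Proposition~\ref{prop:factorizationAdBDP}(ii)) carries a genuine-looking denominator $\mathcal{L}_p^{\mathrm{Katz}}(\widetilde{\Psi}_\hg)^2$ which is \emph{not} visibly cancelled by $\cL_p^{\mathrm{BDP}}(\hf_{/K}\otimes\Psi^{\Ad})$, since the Petersson norm entering the latter involves $L(\Psi_{\rm ad}^{\pm 2},1)$ rather than $L(\Psi_{\rm ad}^{\pm 1},1)$. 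Rather than establishing this cancellation by hand, the argument above bypasses it by importing the alternative factorization of $\cL_p^{\hg}(\hf\otimes\hg\otimes\hg^c)^2$, which re-expresses $\cL_p^{\Ad}(\hf\otimes\Ad^0\hg)$ with a denominator supported in the $\kappa$-direction; the only genuinely analytic inputs needed are Hsieh's integrality statement and the non-vanishing of ${\rm Log}_{\omega_\f}({\rm BK}_\f^\dagger)$. (Should the latter fail, \eqref{eqn_2023_01_09_1846} would force $\cL_p^{\hg}(\hf\otimes\hg\otimes\hg^c)^2(\kappa,\lambda,\lambda)\equiv 0$, and then Corollary~\ref{cor:analyticfactorization} together with the non-degeneracy of $\mathcal{A}$ would give either $\cL_p^{\mathrm{BDP}}(\hf_{/K}\otimes\Psi^{\Ad})\equiv 0$ — whence $\cL_p^{\Ad}(\hf\otimes\Ad^0\hg)=0$ by Proposition~\ref{prop:factorizationAdBDP}(ii) — or $\cL_p^{\mathrm{BDP}}(\hf_{/K}\otimes\mathds{1})\equiv 0$, a case excluded under the hypotheses governing these non-vanishing statements; so the conclusion persists.)
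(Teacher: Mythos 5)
Your main argument is the paper's own proof of this corollary: you play Proposition~\ref{prop:factorizationAdBDP}(iii), which confines the poles of $\cL_p^\Ad(\hf\otimes\Ad^0\hg)$ to divisors coming from the $\hg$-direction, against the factorization \eqref{eqn_2023_01_09_1846}, in which (by \eqref{eqn_2023_01_06_1436}) the numerator of $\mathscr C(\kappa,\lambda)$ and the factor ${\rm Log}_{\omega_\f}({\rm BK}_\f^\dagger)$ depend on $\kappa$ alone while $\cL_p^{\hg}(\hf\otimes\hg\otimes\hg^c)^2(\kappa,\lambda,\lambda)$ is integral; crossing the two descriptions kills the denominator. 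This is exactly the published argument, rephrased in terms of height-one primes (and note you only need ${\rm ord}_\fp(\mathscr C)\le 0$ at vertical primes, i.e.\ the paper's statement that the numerator of $\mathscr C$ is a function of $\kappa$ alone, so the discussion of $B^{\rmw(\lambda)}$ being a unit is harmless but unnecessary).

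Two of your auxiliary claims are, however, incorrect. First, the non-vanishing of ${\rm Log}_{\omega_\f}({\rm BK}_\f^\dagger)$ does not follow from $\delta(T_\f^\dagger,\Delta_\emptyset)=\cR_\f\cdot{\rm BK}_\f^\dagger\neq 0$ together with the injectivity of ${\rm Log}_{\omega_\f}$ on $H^1(G_p,F^+T_\f^\dagger)$: what is needed is $\res_p({\rm BK}_\f^\dagger)\neq 0$, and the kernel of $\res_p$ on $\widetilde{H}^1_{\rm f}(G_{\QQ,\Sigma},T_\f^\dagger,\Delta_\emptyset)$ is $\widetilde{H}^1_{\rm f}(G_{\QQ,\Sigma},T_\f^\dagger,\Delta_0)$, whose vanishing in Proposition~\ref{prop_suport_range_Selmer_complexes_8_6_2_2}(iii) is itself established \emph{under} the hypothesis $\res_p(\delta(T_\f^\dagger,\Delta_\emptyset))\neq 0$; so your justification is circular. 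In the paper this non-vanishing is equivalent to the non-vanishing of derivatives of Manin--Vi\v{s}ik $p$-adic $L$-functions at the central point (Greenberg's conjecture; cf.\ Proposition~\ref{prop_2022_09_20_1057} and Remark~\ref{remark_Greenberg_conjecture}) and is imposed as an explicit hypothesis where it is needed (Corollary~\ref{cor_2022_09_20_1303}). Second, your patch for the degenerate case ${\rm Log}_{\omega_\f}({\rm BK}_\f^\dagger)=0$ fails: by Proposition~\ref{prop:factorizationBDPKitagawa} this vanishing forces $\cL_p^{\rm BDP}(\hf_{/K}\otimes\mathds{1})\equiv 0$, so the branch you declare ``excluded'' is precisely the one that occurs; Corollary~\ref{cor:analyticfactorization} then reads $0=0$ and gives no control on $\cL_p^{\rm BDP}(\hf_{/K}\otimes\Psi^{\Ad})$, hence none on $\cL_p^\Ad(\hf\otimes\Ad^0\hg)$. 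To be fair, the paper's own proof also implicitly divides \eqref{eqn_2023_01_09_1846} by $\mathscr C\cdot{\rm Log}_{\omega_\f}({\rm BK}_\f^\dagger)$, so your main line is on the same footing as the published one; but the two added claims should be deleted or replaced by an explicit non-vanishing hypothesis, since as written they are false steps rather than strengthenings.
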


\begin{proof}
It follows from \eqref{eqn_2023_01_06_1436} that the numerator of $\mathscr C (\kappa,\lambda)$ is a function of $\kappa$ alone. The same also applies to the factor ${\rm Log}_{\omega_\f}({\rm BK}_{\f}^\dagger)$ in \eqref{eqn_2023_01_09_1846}. As a result, since $ \cL_p^\hg(\hf\otimes \hg\otimes \hg^c)^2 (\kappa,\lambda,\lambda)\in \cR_\f\,\hatotimes_{\ZZ_p}\,\cR_\g$, we infer from \eqref{eqn_2023_01_09_1846} that the denominator of $ \cL_p^\Ad(\hf\otimes \Ad^0\hg)$ is a function of $\kappa$ alone. The proof follows on combining this observation with Proposition~\ref{eqn_2023_01_09_1751}(iii).
\end{proof}

\section{Super-factorization of algebraic \texorpdfstring{$p$}{}-adic \texorpdfstring{$L$}{}-functions}
\label{sec_super_factorization}
In this section, we prove the algebraic variant of Theorem~\ref{Thm:8=6+2CM}, which amounts to the factorization of the corresponding algebraic $p$-adic $L$-functions. Our discussion here supplements and refines,  in the scenario when the family $\g$ has CM, that in the companion paper~\cite{BS_Part1}. 

In what follows, we will crucially rely on the notation, conventions, and constructions in op. cit.



\subsection{Greenberg local conditions}
\label{subsec_examples_local_conditons_CM} In this paragraph, we introduce the  Greenberg local conditions, which we will use to define the Selmer complexes that are to our goals.

Our assumption that $\g$ has CM (alongside our running hypotheses \ref{item_Irr} and \ref{item_Dist}) allows us to identify $\cR_\g$ with a finite flat extension of $\ZZ_p[[\Gamma_\p]]$ and $T_\g$ with ${\rm Ind}_{K/\Q}\Psi$ for some $\cR_\g$-valued character $\Psi$ of $G_K$. Let us put $\Psi^c:=\Psi\circ c$ (so that $\Psi^c(g)=\Psi(cgc^{-1})$ for all $g\in G_K$, where we have denoted by $c$ any lift of $c$ to $G_{\Q}$) and $\Psi_{\rm ad}:=\Psi/\Psi^c$. We remark that \ref{item_Dist} translates in the present setting to the following condition:

\begin{itemize}
 \item[\mylabel{item_Dist_CM}{\bf (Dist$'$)}]  $(\Psi(\p) \mod \mathfrak{m}_\g) \neq (\Psi(\p^c) \mod \mathfrak{m}_\g) ,\qquad\qquad \mathfrak{m}_\g\subset \cR_\g \hbox{ is the maximal ideal}\,.$
 \end{itemize}

When we wish to distinguish the weight variables of $\g$ and $\g^c$, we shall write $\Psi(\bl)$ and $\Psi(\bm)$ for the corresponding characters of $G_K$. Note that we have 
$$T_3^\dagger=T_\f^\dagger\,\widehat\otimes \, {\rm Ind}_{K/\Q}\Psi(\bl )\,\widehat\otimes \, {\rm Ind}_{K/\Q}\Psi(\bm )^{-1}=T_\hf^\dagger\,\widehat\otimes \, {{\rm Ind}_{K/\Q}}\,\underbrace{\left(\Psi({\bf l})\widehat\otimes\Psi^{-1}({\bf m})\oplus \Psi({\bf l})\widehat\otimes\Psi^{-1}({\bf m})^c \right)}_{U_2^\dagger}\,.$$

\subsubsection{} 
We remark that 
\begin{equation}
\label{eqn_U3_vs_U2}
\iota_{2,3}^*\left(T_\hf^\dagger\,\widehat\otimes\,U_2^\dagger\right)=T_\hf^\dagger\,\widehat\otimes \underbrace{\left(\mathds{1}\oplus \Psi_{\rm ad}\right)}_{U_1^\dagger}\qquad,\qquad T_2^\dagger=T_\hf^\dagger\,\widehat\otimes\,{\rm Ind}_{K/\QQ}\,{U_1^\dagger}
\end{equation}
We therefore have a split exact sequence
\begin{equation}
\label{eqn_CM_split_exact_sequence}
    0\lra T_\f\,\widehat\otimes\, {\rm Ind}_{K/\QQ}\mathds{1}\lra T_2^\dagger \lra T_\f\,\widehat\otimes\,{\rm Ind}_{K/\QQ}\Psi_{\rm ad}\lra 0\,.
\end{equation}

\subsubsection{}\label{subsubsubsec_631_08042021}  
Notice that we have a natural injection
$$
{\rm Ind}_{K/\Q}\,(T_{\hf}^\dagger \, \widehat{\otimes} \, \mathds{1}) \, \cong \, T_{\hf}^\dagger\,\widehat\otimes \,{\rm Ind}_{K/\Q}\left(\Psi({\bf l})\otimes_{\cR_\g} \Psi({\bf l})^{-1}\right) 
\lra T_{\hf}^\dagger\widehat\otimes {\rm Ind}_{K/\Q}\Psi({\bf l})\otimes_{\cR_\g} {\rm Ind}_{K/\Q}\Psi({\bf l})^{-1}=T_2^\dagger\,.
$$
Here the isomorphism 
\[
{\rm Ind}_{K/\Q}\,(T_{\hf}^\dagger \, \widehat{\otimes} \, \mathds{1}) \, \cong \, T_{\hf}^\dagger\,\widehat\otimes \,{\rm Ind}_{K/\Q}\left(\Psi({\bf l})\otimes_{\cR_\g} \Psi({\bf l})^{-1}\right)
\]
is induced by the isomorphism $T_{\hf}^\dagger \cong (T_{\hf}^\dagger)^c$ given by $x \mapsto c \cdot x$. This, combined with the natural decomposition\footnote{In fact, when we need to exercise further caution about the tempting identification 
$$
\Zp[\Gal(K/\Q)]\otimes T_{\hf}^\dagger\supset 1\otimes T_{\hf}^\dagger=T_{\hf}^\dagger
$$ 
(which is valid only as $G_K$-modules, but not as $G_\QQ$-modules), we will write this identity as 
\begin{equation}
    \label{eqn_factor_trivial_rep_by_idempotents}
    {\rm Ind}_{K/\Q}\,T_{\hf}^\dagger=\left(\frac{1+c}{2}\otimes T_{\hf}^\dagger\right) \oplus \left(\frac{1-c}{2} \otimes T_{\hf}^\dagger\right)
\end{equation}
which takes place in $\Zp[\Gal(K/\Q)]\otimes T_{\hf}^\dagger$.}
$$
{\rm Ind}_{K/\Q}\,(T_{\hf}^\dagger \, \widehat{\otimes} \, \mathds{1})  =  
({\rm Ind}_{K/\Q}\, T_{\hf}^\dagger) \, \widehat{\otimes} \, \mathds{1} =T_{\hf}^\dagger \, \widehat{\otimes} \,  \mathds{1} \oplus (T_{\hf}^\dagger\otimes\epsilon_K) \, \widehat{\otimes} \, \mathds{1}
$$
yields an exact sequence of $G_\QQ$-modules
\begin{equation}
    \label{eqn_splitting_dual_global}
    0\lra \left(T_{\hf}^\dagger= \frac{1+c}{2}\otimes T_{\hf}^\dagger \right) 
    \, \widehat{\otimes} \, \mathds{1} 
    \lra T_2^\dagger \lra \left(\frac{1-c}{2}\otimes T_\hf^\dagger\right) 
    \, \widehat{\otimes} \, \mathds{1} 
    \oplus (T_\hf^\dagger\, \widehat\otimes \, {\rm Ind}_{K/\Q}\Psi_{\rm ad})=M_2^\dagger\lra 0\,.
\end{equation}
This is one of the reasons why we prefer to work with the splitting \eqref{eqn_factorisationrepresentations_dual_trace} given by the transpose of the trace (rather than the trace itself).

\subsubsection{} 
\label{subsubsec_322_14_05_2021}
We let $D_\p\subset G_K$ denote the choice of a decomposition group at $\p$, which identifies $D_\p$ as the absolute Galois group $G_{K_\p}$ of the corresponding completion $K_\p:=\iota_p(K)\Q_p=\Q_p$ of $K$. Then $D_{\p^c}:=cD_{\p}c^{-1}$ is a decomposition group at $\p^c$, which we identify with $G_{K_{\p^c}}$. The embedding $\iota_p$ also determines a decomposition group $D_p\subset G_{\Q}$, which is identified by $D_\p$ thanks to our choices.  Henceforth, $H^*(\Q_p,-)=H^*(D_p,-)$ and $H^*(K_?,-)=H^*(D_?,-)$ where $?=\p,\p^c$.

For $\eta\in\{\Psi,\Psi^{-1}\}$ of $G_K$, let us write $\{v_\eta\}$ (resp. $cv_{\eta^c}$) denote a basis of the free module on which $G_K$ acts via $\eta$ (resp. via $\eta^c$). 
We will also put $v_{\Psi_{\rm ad}}:=v_\Psi\otimes cv_{\Psi^{c,-1}}$ (resp. $v_{\Psi_{\rm ad}^c}:=cv_{\Psi^c}\otimes v_{\Psi^{-1}}$), so that $\{v_{\Psi_{\rm ad}}\}$ (resp. $\{v_{\Psi_{\rm ad}^c}\}$) is the basis of the free $\LL_\g$-module of rank one on which $G_K$ acts via $\Psi_{\rm ad}$ (resp. $\Psi_{\rm ad}^c$). 

For $\eta\in\{\Psi,\Psi^{-1},\Psi_{\rm ad}\}$, we note that 
$${\rm Ind}_{K/\Q}\eta:=\ZZ_p[\Delta]\otimes \eta=1\otimes\eta \oplus c\otimes\eta={\rm span}\{v_{\eta},c v_{\eta^c}\}$$
where for the final equality, we use the natural identification of the left (diagonal) $G_K$-action on $c\otimes {\eta}$ with the $G_K$-action via ${\eta^c}$. 

With respect to the fixed basis $\{v_{\eta},c v_{\eta^c}\}$ of ${\rm Ind}_{K/\Q}\eta$, an element $g\in G_K$ acts via the diagonal matrix $\begin{bmatrix}
\eta(g) & 0\\
0 & \eta^c(g) 
\end{bmatrix}$ 
whereas $cg\in G_\QQ\setminus G_K$ acts via
$\begin{bmatrix}
0 & \eta^c(g)\\
\eta(g) &  0
\end{bmatrix}$. In particular, $c\cdot v_\eta= c v_{\eta^c}$ and $c\cdot cv_{\eta^c}=v_\eta$. With these choices at hand, observe that 
 
\begin{equation}
    \label{eqn_InfPsiptotimesIndpsiinverse}
   \resizebox{0.95\hsize}{!}{
   ${\rm Ind}_{K/\Q}\Psi \otimes {\rm Ind}_{K/\Q}\Psi^{-1}
    =\underbrace{{\rm span}\left\{\frac{1+c}{2}\cdot v_{\Psi}\otimes v_{\Psi^{-1}}\right\}}_{\mathds{1}} \oplus \underbrace{{\rm span}\left\{\frac{1-c}{2}\cdot v_{\Psi}\otimes v_{\Psi^{-1}}\right\}}_{\epsilon_K} \oplus \underbrace{{\rm span}\left\{v_{\Psi_{\rm ad}}, v_{\Psi_{\rm ad}^c} \right\}}_{{\rm Ind}_{K/\Q}\Psi_{\rm ad}}.$}
\end{equation}

\begin{lemma}[Semi-local Shapiro's lemma]
\label{lemma_semi_local_shapiro}
We have the following quasi-isomorphisms:
 \begin{equation}
\label{eqn_lemma_shapiro_explicit_1}
\resizebox{0.92\hsize}{!}{$
    R\Gamma(D_p,T_\f^\dagger\,\widehat\otimes\,{\rm Ind}_{K/\Q}\Psi)\xrightarrow[\iota_p]{\sim} R\Gamma(D_\p,T_\f^\dagger\,\widehat\otimes\,\Psi)\oplus R\Gamma(D_{\p},T_\f^\dagger\,\widehat\otimes\,\Psi^c)\xrightarrow[{\rm id}\oplus c_*]{\sim}R\Gamma(D_\p,T_\f^\dagger\,\widehat\otimes\,\Psi)\oplus R\Gamma(D_{\p^c},T_\f^\dagger\,\widehat\otimes\,\Psi)\,,$
    }
\end{equation}
\vspace{-0.5cm}
\begin{equation}
\label{eqn_lemma_shapiro_explicit_2}
    R\Gamma(D_p,T_\f^\dagger\,\widehat\otimes\,{\rm Ind}_{K/\Q}\mathds{1})\xrightarrow{\sim} R\Gamma(D_p,T_\f^\dagger\,\widehat\otimes\,\mathds{1})\oplus R\Gamma(D_{p},T_\f^\dagger\,\widehat\otimes\,\epsilon_K)\xrightarrow{\sim} R\Gamma(D_\p,T_\f^\dagger)\oplus R\Gamma(D_{\p^c},T_\f^\dagger)\,.
\end{equation}
\end{lemma}

\begin{proof}
This is clear. We record the explicit description of the isomorphisms (on the level of continuous cochains) \eqref{eqn_lemma_shapiro_explicit_1} and \eqref{eqn_lemma_shapiro_explicit_2}, which will be useful in what follows:

\begin{align}
    \label{eqn_exact_sequence_BDP_splitting_CM_Q_cohom_bisbis}
    \begin{aligned}
    \resizebox{0.91\hsize}{!}{
    \xymatrix{C^\bullet_{\rm cont}(D_p,T_\hf^\dagger\,\widehat\otimes\, {\rm Ind}_{K/\Q}\Psi) \ar@{=}[r] & C^\bullet_{\rm cont}(D_p,  T_\hf^\dagger\,\widehat\otimes\,(1\otimes\Psi)\oplus T_\hf^\dagger \,\widehat\otimes\,(c\otimes \Psi))\ar[ld]_(.45){\sim}\\
    C^\bullet_{\rm cont}(D_p,  T_\hf^\dagger\,\widehat\otimes\,(1\otimes\Psi)\oplus C^\bullet_{\rm cont}(D_p,T_\hf^\dagger \,\widehat\otimes\,(c\otimes \Psi))\ar[r]^(.55){\sim}_(.55){\iota_p}& C^\bullet_{\rm cont}(D_\p,  T_\hf^\dagger\,\widehat\otimes\,\Psi)\oplus C^\bullet_{\rm cont}(D_\p,T_\hf^\dagger \,\widehat\otimes\,\Psi^c)\ar[d]^{\sim}\\
    &C(K_p,T_\hf^\dagger\,\widehat\otimes\,\Psi):=C^\bullet_{\rm cont}(D_\p, T_\hf^\dagger\,\widehat\otimes\,\Psi)\oplus C^\bullet_{\rm cont}(D_{\p^c}, T_\hf^\dagger\,\widehat\otimes\,\Psi) \\
     }
     }
    \end{aligned}
\end{align}

\begin{align}
    \label{eqn_exact_sequence_BDP_splitting_CM_Q_cohom_bis}
    \begin{aligned}
    \resizebox{0.91\hsize}{!}{
    \xymatrix@C=.1cm{C^\bullet_{\rm cont}(D_p,T_\hf^\dagger\otimes {\rm Ind}_{K/\Q}\mathds{1}) \ar@{=}[d]  &&C^\bullet_{\rm cont}(D_p, T_\hf^\dagger\otimes \mathds{1})\oplus C^\bullet_{\rm cont}(D_p, T_\hf^\dagger\otimes\epsilon_K)\ar@{=}[dd]_{{(x\otimes 1,y\otimes \xi_{\epsilon_K})\mapsto (\frac{1+c}{2}\otimes x,\frac{1-c}{2}\otimes y)}}\\
     C^\bullet_{\rm cont}(D_p, 1\otimes T_\hf^\dagger)\oplus  C^\bullet_{\rm cont}(D_p, c\otimes T_\hf^\dagger)\ar[d]_{\sim}&& \\
    C^\bullet_{\rm cont}(D_p, 1\otimes T_\hf^\dagger\oplus c\otimes T_\hf^\dagger)\ar[rd]^{\sim}_{{(1\otimes a,c\otimes b){\mapsto} (a\,\circ\, \iota_p, b\,\circ\, \iota_p^c)} \qquad\qquad} \ar@{<->}[rr]_{\sim}^{\substack{{(1\otimes\frac{1}{2}(x+y), c\otimes\frac{1}{2}(x-y)){\mapsfrom}(\frac{1+c}{2}\otimes x,\frac{1-c}{2}\otimes y)}\\ {(1\otimes a, c\otimes b){\mapsto}(\frac{1+c}{2}\otimes (a+b),\frac{1-c}{2}\otimes (a-b))}}}& &C^\bullet_{\rm cont}(D_p, \frac{1+c}{2}\otimes T_\hf^\dagger)\oplus C^\bullet_{\rm cont}(D_p, \frac{1-c}{2}\otimes T_\hf^\dagger) \\
   & C^\bullet_{\rm cont}(D_\p, T_\hf^\dagger)\oplus C^\bullet_{\rm cont}(D_{\p^c}, T_\hf^\dagger)=:C(K_p,T_\hf^\dagger) &
     }
     }
    \end{aligned}
\end{align}
Here,
$\{\xi_{\epsilon_K}\}\in \Zp(\epsilon_K)$ is the basis that corresponds to $\{\frac{1-c}{2}\}$ under the isomorphism $\Zp(\epsilon_K)\simeq \Zp[\Gal(K/\Q)]^{c=-1}\,.$
\end{proof}

\begin{corollary}
We have the following diagram with Cartesian squares and split exact rows:
\begin{equation}
    \label{eqn_exact_sequence_BDP_splitting_CM_Q_cohom}
    \begin{aligned}
    \resizebox{0.91\hsize}{!}{
    \xymatrix{&&\qquad\,\,\,\, C^\bullet_{\rm cont}(K_p,T_\hf^\dagger)\oplus C^\bullet_{\rm cont}(K_p,T_\hf^\dagger\,\widehat\otimes\, \Psi_{\rm ad})\ar@{=}[d]^{\eqref{eqn_U3_vs_U2}}&& \\
    0\ar[r]& C^\bullet_{\rm cont}(K_p, T_\hf^\dagger)\ar[r]  & C^\bullet_{\rm cont}(K_p,T_\hf^\dagger\,\widehat\otimes\, U_1^\dagger)\ar[r] &C^\bullet_{\rm cont}(K_p, T_\hf^\dagger\otimes\Psi_{\rm ad})\ar[r]&0\\
    0\ar[r]&C^\bullet_{\rm cont}(D_p, T_\hf^\dagger\otimes {\rm Ind}_{K/\Q}\mathds{1})\ar[r] \ar[u]^{\eqref{eqn_lemma_shapiro_explicit_2}}_{\sim} & C^\bullet_{\rm cont}(D_p, T_2^\dagger)\ar[r] \ar[u]^{\eqref{eqn_lemma_shapiro_explicit_2}\oplus \eqref{eqn_lemma_shapiro_explicit_1}}_\sim&C^\bullet_{\rm cont}(D_p, V_\hf^\dagger\otimes {\rm Ind}_{K/\Q}\Psi_{\rm ad})\ar[u]^{\eqref{eqn_lemma_shapiro_explicit_1}}_{\sim}\ar[r]&0
    }}
    \end{aligned}
    \end{equation}
where
\begin{itemize}
\item $C^\bullet_{\rm cont}(K_p,-)\,:=\,C^\bullet_{\rm cont}(D_\p,-)\oplus  C^\bullet_{\rm cont}(D_{\p^c},-)$,
    \item the vertical isomorphism in the middle is deduced from the canonical identification \eqref{eqn_U3_vs_U2}.
\end{itemize}
\end{corollary}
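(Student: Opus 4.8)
The plan is to deduce the Corollary formally from Lemma~\ref{lemma_semi_local_shapiro} together with the module-theoretic identification \eqref{eqn_U3_vs_U2}. First I would observe that the bottom row is simply the additive functor $C^\bullet_{\rm cont}(D_p,-)$ applied to the restriction to $D_p$ of the short exact sequence \eqref{eqn_CM_split_exact_sequence}. Since the decomposition $U_1^\dagger=\mathds{1}\oplus\Psi_{\rm ad}$ of $G_K$-modules induces a decomposition ${\rm Ind}_{K/\Q}U_1^\dagger={\rm Ind}_{K/\Q}\mathds{1}\oplus{\rm Ind}_{K/\Q}\Psi_{\rm ad}$ of $G_\QQ$-modules, tensoring with $T_\hf^\dagger$ and invoking \eqref{eqn_U3_vs_U2} exhibits \eqref{eqn_CM_split_exact_sequence} as a \emph{split} short exact sequence of $G_\QQ$-modules; a fortiori its restriction to $D_p$ is split, and $C^\bullet_{\rm cont}(D_p,-)$, being additive, carries it to a split short exact sequence of complexes. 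This is the bottom row.

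Next, the three vertical arrows are the semi-local Shapiro isomorphisms furnished by Lemma~\ref{lemma_semi_local_shapiro}: the left one is the cochain-level isomorphism \eqref{eqn_lemma_shapiro_explicit_2} with coefficients $T_\hf^\dagger$; the right one is \eqref{eqn_lemma_shapiro_explicit_1} after replacing $\Psi$ by $\Psi_{\rm ad}$ (legitimate, since that identification is valid for an arbitrary $\cR_\g$-valued character of $G_K$), applied to coefficients $T_\hf^\dagger\widehat\otimes\Psi_{\rm ad}$; and the middle one is the direct sum of these two, which is meaningful precisely because of the splitting $T_2^\dagger=T_\hf^\dagger\widehat\otimes{\rm Ind}_{K/\Q}\mathds{1}\oplus T_\hf^\dagger\widehat\otimes{\rm Ind}_{K/\Q}\Psi_{\rm ad}$ recorded in \eqref{eqn_U3_vs_U2}. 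As noted in the proof of Lemma~\ref{lemma_semi_local_shapiro}, all three are honest isomorphisms of complexes, not merely quasi-isomorphisms. The middle row is then obtained by transporting the (split) bottom row along these isomorphisms, so it is again split exact, and identifying its outer terms with $C^\bullet_{\rm cont}(K_p,T_\hf^\dagger)$ and $C^\bullet_{\rm cont}(K_p,T_\hf^\dagger\widehat\otimes\Psi_{\rm ad})$ — hence the top term with their direct sum — is just the content of $U_1^\dagger=\mathds{1}\oplus\Psi_{\rm ad}$, i.e. of \eqref{eqn_U3_vs_U2}.

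Finally, for the two squares to be Cartesian it is enough to check that they commute, since a commuting square whose vertical arrows are isomorphisms is automatically a pullback; and commutativity amounts to the assertion that the Shapiro isomorphisms \eqref{eqn_lemma_shapiro_explicit_1}, \eqref{eqn_lemma_shapiro_explicit_2} are compatible with the inclusion $T_\hf^\dagger\widehat\otimes{\rm Ind}_{K/\Q}\mathds{1}\hookrightarrow T_2^\dagger$ and the projection $T_2^\dagger\twoheadrightarrow T_\hf^\dagger\widehat\otimes{\rm Ind}_{K/\Q}\Psi_{\rm ad}$ of \eqref{eqn_CM_split_exact_sequence}. I would verify this at the level of continuous cochains using the explicit diagrams \eqref{eqn_exact_sequence_BDP_splitting_CM_Q_cohom_bisbis} and \eqref{eqn_exact_sequence_BDP_splitting_CM_Q_cohom_bis}; the only subtlety is to keep careful track of the idempotents $\tfrac{1\pm c}{2}$ and of the $c$-twisting isomorphism $T_\hf^\dagger\xrightarrow{\sim}(T_\hf^\dagger)^c$ employed in \S\ref{subsubsubsec_631_08042021}. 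I expect this bookkeeping to be the only real obstacle; everything else is a formal consequence of the additivity of $C^\bullet_{\rm cont}(D_p,-)$ and of the vertical maps being isomorphisms.
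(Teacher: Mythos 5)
Your proposal is correct, and it is the derivation the paper intends — there is no separate proof printed, because the corollary is meant to follow formally from Lemma~\ref{lemma_semi_local_shapiro} by exactly the three observations you make: (a) the bottom row is $C^\bullet_{\rm cont}(D_p,-)$ applied to the split exact sequence \eqref{eqn_CM_split_exact_sequence}, hence split exact by additivity; (b) the three verticals are the cochain-level (not merely derived-category) Shapiro isomorphisms from the lemma, with the middle one being the outer two glued via \eqref{eqn_U3_vs_U2}; (c) once commutativity of the squares is checked against the explicit cochain formulas \eqref{eqn_exact_sequence_BDP_splitting_CM_Q_cohom_bisbis} and \eqref{eqn_exact_sequence_BDP_splitting_CM_Q_cohom_bis}, Cartesianity is automatic because the verticals are isomorphisms. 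Your observation that \eqref{eqn_lemma_shapiro_explicit_1} applies verbatim with $\Psi$ replaced by $\Psi_{\rm ad}$ is also right, and is the only point where one must pause (the formula is stated for a particular character but its proof uses only that $\Psi$ is an $\cR_\g$-valued character of $G_K$; the fact that $\Psi_{\rm ad}^c=\Psi_{\rm ad}^{-1}\neq\Psi_{\rm ad}$ is harmless). The one place where you could tighten the write-up is the redundancy between your second and third paragraphs: you first say the middle row is \emph{obtained by transport}, and then say one must \emph{check} that the squares commute — but if the middle row were literally defined by transport there would be nothing to check. The cleaner formulation (which I think you intend) is that the middle row is the semi-local Shapiro picture in its own right, and the commutativity check in the third paragraph is what identifies it with the transported bottom row.
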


\begin{lemma}
\label{lemma_3_8_15_05_2021}
The semi-local Shapiro's lemma isomorphism
$$C^\bullet_{\rm cont}(D_p, T_2^\dagger) \xrightarrow[\eqref{eqn_lemma_shapiro_explicit_1}\oplus\eqref{eqn_lemma_shapiro_explicit_2}]{\sim} C^\bullet_{\rm cont}(K_p,T_\hf^\dagger)\oplus C^\bullet_{\rm cont}(K_p,T_\hf^\dagger\,\widehat\otimes\, \Psi_{\rm ad})=C^\bullet_{\rm cont}(K_p,T_\hf^\dagger\,\widehat\otimes\, U_1^\dagger)$$
restricts to a canonical identification
$$C^\bullet_{\rm cont}(D_p,F^+_{\hg}T_2^\dagger)\xrightarrow{\sim}C^\bullet_{\rm cont}(D_\p,T_\hf^\dagger)\oplus C^\bullet_{\rm cont}(D_{\p},T_\hf^\dagger\widehat{\otimes}\Psi_{\rm ad})=C^\bullet_{\rm cont}(D_\p,T_\hf^\dagger\,\widehat\otimes\, U_1^\dagger)\,.$$
\end{lemma}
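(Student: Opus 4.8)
The plan is to unwind the definition of $F^+_\hg T_2^\dagger$ in the CM setting and to match it, generator by generator, with the explicit cochain-level description of the semi-local Shapiro isomorphism recorded in (the proof of) Lemma~\ref{lemma_semi_local_shapiro}. First I would recall from Example~\ref{example_local_conditions}(ii) that $F^+_\hg T_2^\dagger=T_\hf^\dagger\,\widehat{\otimes}\,(F^+T_\g\otimes_{\cR_\g}T_\g^*)$, and that in the situation of \S\ref{subsec_examples_local_conditons_CM} we have $T_\g={\rm Ind}_{K/\Q}\Psi$ with $T_\g^*={\rm Ind}_{K/\Q}\Psi^{-1}$, while \ref{item_Dist_CM} guarantees that the filtration \eqref{eqn_11_05_2021_filtration} of $T_\g$ splits over $D_p=D_\p$ and, in accordance with \eqref{eqn_U3_vs_U2}, singles out $F^+T_\g={\rm span}_{\cR_\g}\{v_\Psi\}$. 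With the dual-basis conventions $v_+=v_\Psi$, $v_+^*=v_{\Psi^{-1}}$, $v_-=cv_{\Psi^c}$, $v_-^*=cv_{\Psi^{c,-1}}$ of \S\ref{subsubsec_aux_definitions_grading_on_ad0g}, this gives
\[
F_\g^+\Ad(T_\g)=F^+T_\g\otimes_{\cR_\g}T_\g^*={\rm span}_{\cR_\g}\bigl\{\,v_\Psi\otimes v_{\Psi^{-1}}\,,\ v_\Psi\otimes cv_{\Psi^{c,-1}}\,\bigr\}\,.
\]

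Next I would locate these two generators inside the decomposition \eqref{eqn_InfPsiptotimesIndpsiinverse} of $\Ad(T_\g)={\rm Ind}_{K/\Q}\Psi\otimes{\rm Ind}_{K/\Q}\Psi^{-1}=\mathds{1}\oplus\epsilon_K\oplus{\rm Ind}_{K/\Q}\Psi_{\rm ad}$: the vector $v_\Psi\otimes cv_{\Psi^{c,-1}}=v_{\Psi_{\rm ad}}$ is, by definition, the ``$\p$-type'' basis vector of ${\rm Ind}_{K/\Q}\Psi_{\rm ad}={\rm span}\{v_{\Psi_{\rm ad}},cv_{\Psi_{\rm ad}^c}\}$, and $v_\Psi\otimes v_{\Psi^{-1}}$ is — under the identification of \S\ref{subsubsubsec_631_08042021}, which uses $(T_\hf^\dagger)^c\cong T_\hf^\dagger$ and the decomposition ${\rm Ind}_{K/\Q}\mathds{1}={\rm span}\{v_{\mathds{1}},cv_{\mathds{1}}\}$ with $v_{\mathds{1}}=v_\Psi\otimes v_{\Psi^{-1}}$ and $cv_{\mathds{1}}=cv_{\Psi^c}\otimes cv_{\Psi^{c,-1}}$ — the ``$\p$-type'' basis vector of ${\rm Ind}_{K/\Q}\mathds{1}$. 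Hence, as a $D_p$-stable direct summand of $T_2^\dagger=T_\hf^\dagger\,\widehat{\otimes}\,({\rm Ind}_{K/\Q}\mathds{1}\oplus{\rm Ind}_{K/\Q}\Psi_{\rm ad})$, the module $F^+_\hg T_2^\dagger$ is exactly the sum of the two ``$\p$-components'' $T_\hf^\dagger\,\widehat{\otimes}\,{\rm span}\{v_{\mathds{1}}\}$ and $T_\hf^\dagger\,\widehat{\otimes}\,{\rm span}\{v_{\Psi_{\rm ad}}\}$, on which $D_\p$ acts through $\mathds{1}$ and $\Psi_{\rm ad}|_{D_\p}$ respectively; equivalently, as a $G_K$-module $F^+T_\g\otimes_{\cR_\g}T_\g^*=\Psi\otimes(\Psi^{-1}\oplus\Psi^{-c})=\mathds{1}\oplus\Psi_{\rm ad}=U_1^\dagger$, which already matches the claimed target.

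Now I would invoke functoriality: the inclusion $F^+_\hg T_2^\dagger\hookrightarrow T_2^\dagger$ is $D_p$-equivariant, so it induces an inclusion of cochain complexes $C^\bullet_{\rm cont}(D_p,F^+_\hg T_2^\dagger)\hookrightarrow C^\bullet_{\rm cont}(D_p,T_2^\dagger)$ compatible with the Shapiro isomorphism \eqref{eqn_lemma_shapiro_explicit_1}$\oplus$\eqref{eqn_lemma_shapiro_explicit_2}. By the explicit cochain-level formulas in \eqref{eqn_exact_sequence_BDP_splitting_CM_Q_cohom_bisbis} and \eqref{eqn_exact_sequence_BDP_splitting_CM_Q_cohom_bis}, this isomorphism carries the ``$v_\eta$''-summand of $C^\bullet_{\rm cont}(D_p,T_\hf^\dagger\,\widehat{\otimes}\,{\rm Ind}_{K/\Q}\eta)$ isomorphically onto the $D_\p$-factor $C^\bullet_{\rm cont}(D_\p,T_\hf^\dagger\,\widehat{\otimes}\,\eta)$ and the ``$cv_{\eta^c}$''-summand onto the $D_{\p^c}$-factor (for $\eta\in\{\mathds{1},\Psi_{\rm ad}\}$, the latter after applying $c_*$). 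Since $C^\bullet_{\rm cont}(D_p,F^+_\hg T_2^\dagger)$ is precisely the sum of the two ``$v_\eta$''-summands, it is carried isomorphically onto $C^\bullet_{\rm cont}(D_\p,T_\hf^\dagger)\oplus C^\bullet_{\rm cont}(D_\p,T_\hf^\dagger\,\widehat{\otimes}\,\Psi_{\rm ad})=C^\bullet_{\rm cont}(D_\p,T_\hf^\dagger\,\widehat{\otimes}\,U_1^\dagger)$, which is the assertion. The only genuinely delicate point is the bookkeeping in the second step — in particular checking that the ``diagonal'' tensor $v_\Psi\otimes v_{\Psi^{-1}}$ is really the basis vector $v_{\mathds{1}}$ feeding into the $D_\p$-factor, and not a $\tfrac{1\pm c}{2}$-mixture that would leak into the $D_{\p^c}$-factor; this rests on the precise shape of the identification in \S\ref{subsubsubsec_631_08042021}, and everything else is formal given Lemma~\ref{lemma_semi_local_shapiro}.
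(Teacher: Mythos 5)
You follow essentially the same route as the paper: restrict $F^+_\hg T_2^\dagger$ to $D_p$, express it in terms of the $\p$- and $\p^c$-components of the induced modules, and check the result against the explicit cochain-level description of the semi-local Shapiro map from Lemma~\ref{lemma_semi_local_shapiro}. Your generator bookkeeping is correct; in particular, the point you flag as delicate --- that $v_\Psi\otimes v_{\Psi^{-1}}$ is the ``$1\otimes$'' ($\p$-type) vector of ${\rm Ind}_{K/\QQ}\mathds{1}$ and not one of the idempotent combinations $\tfrac{1\pm c}{2}\cdot(v_\Psi\otimes v_{\Psi^{-1}})$ --- is indeed what prevents leakage into the $D_{\p^c}$-factor, and you resolve it correctly.

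The one genuine gap is the justification of the pivotal identification $F^+T_\g|_{D_p}={\rm span}_{\cR_\g}\{v_\Psi\}$. You attribute it to \ref{item_Dist_CM} together with \eqref{eqn_U3_vs_U2}, but neither determines \emph{which} of the two lines $\Psi|_{D_\p}$ and $\Psi^c|_{D_\p}$ is the ordinary filtration: \ref{item_Dist_CM} is symmetric in $\Psi$ and $\Psi^c$ (it only guarantees that the two Frobenius eigenvalues are residually distinct, hence that the local decomposition is canonical), and \eqref{eqn_U3_vs_U2} is a statement about the global splitting of $T_2^\dagger$ that makes no reference to $F^+$. If $F^+T_\g|_{D_p}$ were instead the $\Psi^c$-line, your argument would identify $C^\bullet_{\rm cont}(D_p,F^+_\hg T_2^\dagger)$ with the two $D_{\p^c}$-factors, so this step carries the entire content of the lemma. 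The missing input, which is exactly what the paper's proof supplies, is a ramification comparison: by \eqref{eqn:filtrationf} the submodule $F^+T_\g$ carries the ramified character $\delta_\g$ while the quotient carries the unramified character $\alpha_\g$, and with the choices in force $\Psi|_{I_\p}$ is non-trivial whereas $\Psi^c|_{I_\p}$ is trivial; hence $F^+T_\g|_{D_p}$ must be the $\Psi$-line. Once this sentence is added, your proof is complete and coincides with the paper's.
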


\begin{proof}
Recall that $T_\g={\rm Ind}_{K/\QQ}\Psi$, so that $$T_\g\vert_{D_p}\stackrel{\iota_p}{=}\Psi_{\vert_{D_\p}}\oplus \Psi^c_{\vert_{D_{\p}}}\xrightarrow[{\rm id}\oplus c_*]{\sim}\Psi_{\vert_{D_\p}}\oplus \Psi_{\vert_{D_{\p^c}}}.$$ 
With our present choices, we note that $\Psi_{\vert_{I_\p}}$ is non-trivial whereas $\Psi^c_{\vert_{I_{\p}}}$ is trivial. This in turn tells us that
$$F^+T_\g\vert_{D_p}=\Psi_{\vert_{D_\p}}\oplus\{0\}\,.$$ 
The proof of our lemma follows from the following chain of natural identifications:
\begin{align*}
    C^\bullet_{\rm cont}(D_p,F^+_{\hg}T_2^\dagger)\xrightarrow[\iota_p]{\sim} C^\bullet_{\rm cont}(D_\p, T_\f^\dagger\,\widehat\otimes\,\Psi\otimes (\Psi^{-1}\oplus (\Psi^c)^{-1})=C^\bullet_{\rm cont}(D_\p, T_\f^\dagger)\oplus C^\bullet_{\rm cont}(D_\p, T_\f^\dagger\,\widehat\otimes\,\Psi_{\rm ad})\,.
\end{align*}
\end{proof}

\begin{defn}
\label{defn_BDP_local_conditions}
For $\eta\in\{\mathds{1},\Psi_{\rm ad}\}$ and $?\in\{1,2\}$, we define the Greenberg local conditions $\Delta_{\rm BDP}$ on the $G_K$-representation $X\in \{T_\f^\dagger{}_{\vert_{G_K}}\,\widehat\otimes\, \eta\,,\, T_\f^\dagger{}_{\vert_{G_K}}\,\widehat\otimes\,U_?^\dagger\}$ via the morphisms 
$$\iota_\p^+: C^\bullet_{\rm cont}(G_\p,X)\xrightarrow{\rm id} C^\bullet_{\rm cont}(G_\p,X)\,,$$
$$\iota_{\p^c}^+: \{0\}\lra C^\bullet_{\rm cont}(G_\p,X)$$
corresponding to the choices of 1-step filtrations $F^+_\p X=X$ and $F^+_{\p^c} X=\{0\}$. We put 
$$U_p^\bullet(\Delta_{\rm BDP},T_\f^\dagger{}_{\vert_{G_K}}\,\widehat\otimes\, \eta):=C^\bullet_{\rm cont}(G_\p,T_\f^\dagger{}_{\vert_{G_K}}\,\widehat\otimes\, \eta)\oplus \{0\}\,.$$
\end{defn}

\begin{defn}
\label{defn_propagate_BDP_to_V_f_induced_1_and_Psiad}
For $\eta\in\{\mathds{1},\Psi_{\rm ad}\}$, we let $\Delta_{\rm BDP}$ also denote the Greenberg local conditions on $T_\f^\dagger\,\widehat\otimes\, {\rm Ind}_{K/\QQ}\,\eta$ that one obtains by propagating the local conditions $\Delta_\g$ on $T_2^\dagger$ via the exact sequence \eqref{eqn_CM_split_exact_sequence}. 
\end{defn}

In Lemma~\ref{lemma_propagate_BDP_to_V_f_induced_1_and_Psiad} and Corollary~\ref{cor_semi_local_shapiro_Delta_g_vs_BDP} below, we explain (to justify the nomenclature of these objects) that these two sets of local conditions introduced in Definition~\ref{defn_BDP_local_conditions} and Definition~\ref{defn_propagate_BDP_to_V_f_induced_1_and_Psiad} coincide under the semi-local Shapiro morphism.

\begin{lemma}
\label{lemma_propagate_BDP_to_V_f_induced_1_and_Psiad}
For $\eta\in\{\mathds{1},\Psi_{\rm ad}\}$, the Greenberg local conditions $\Delta_{\rm BDP}$ on $T_\f^\dagger\,\widehat\otimes\, {\rm Ind}_{K/\QQ}\,\eta$ are given in explicit terms via the  following morphisms at $p$:
\begin{align}
\label{eqn_lemma_propagate_BDP_to_V_f_induced_1_and_Psiad_induced_1}
\begin{aligned}
\resizebox{0.92\hsize}{!}{\xymatrix@C=.3cm{
U_p^\bullet(\Delta_{\rm BDP},T_\f^\dagger\otimes {\rm Ind}_{K/\QQ}\,\mathds{1})\ar@{=}[r]&\left\{\left(1\otimes x, c\otimes 0\right): x\in C^\bullet_{\rm cont}(D_p,T_\f^\dagger)\right\}\ar@{^{(}->}[r]\ar[dd]_{\sim}&C^\bullet_{\rm cont}(D_p,1\otimes T_\f^\dagger\oplus c\otimes T_\f^\dagger)\ar@{=}[dd]_{(1\otimes a, c\otimes b){\mapsto}(\frac{1+c}{2}\otimes (a+b),\frac{1-c}{2}\otimes (a-b))}\ar[rrr]^(.6){\sim}_(.6){\substack{{}\\{}\\{(1\otimes a,c\otimes b){\mapsto} (a\,\circ\, \iota_p, b\,\circ\, \iota_p^c)}}}&&& C^\bullet_{\rm cont}(K_p,T_\f^\dagger)\\\\
& \left\{\left(\frac{1+c}{2}\otimes x, \frac{1-c}{2}\otimes x\right): x\in C^\bullet_{\rm cont}(D_p,T_\f^\dagger)\right\}\ar@{^{(}->}[r]& C^\bullet_{\rm cont}(D_p,\frac{1+c}{2}\otimes T_\f^\dagger\oplus\frac{1-c}{2}\otimes T_\f^\dagger)
}
}
\end{aligned}
\end{align}  
and
\begin{align}
\label{eqn_lemma_propagate_BDP_to_V_f_induced_1_and_Psiad_Psiad}
\begin{aligned}
U_p^\bullet(\Delta_{\rm BDP},T_\f^\dagger\,\widehat\otimes\, {\rm Ind}_{K/\QQ}\Psi_{\rm ad})&:= \left\{\left(1\otimes x, c\otimes 0\right): x\in C^\bullet_{\rm cont}(D_p,T_\f^\dagger\,\widehat\otimes\, \Psi_{\rm ad})\right\}\\
&\qquad\xrightarrow[\iota_p]{\sim} C^\bullet_{\rm cont}(D_\p,T_\f^\dagger\,\widehat\otimes\, \Psi_{\rm ad}) \oplus \{0\}\hookrightarrow C^\bullet_{\rm cont}(K_p,T_\f^\dagger\,\widehat\otimes\, \Psi_{\rm ad})\,.
\end{aligned}
\end{align} 
\end{lemma}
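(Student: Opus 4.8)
The plan is to unwind the two sets of local conditions through the explicit cochain-level identifications already recorded in Lemma~\ref{lemma_semi_local_shapiro} and its Corollary, and simply observe that they agree. Concretely, Definition~\ref{defn_propagate_BDP_to_V_f_induced_1_and_Psiad} tells us to take the $p$-local condition $U_p^\bullet(\Delta_\g, T_2^\dagger)=C^\bullet_{\rm cont}(D_p,F_\g^+T_2^\dagger)$ and push/pull it along the split exact sequence \eqref{eqn_CM_split_exact_sequence}; so the first step is to describe $C^\bullet_{\rm cont}(D_p,F_\g^+T_2^\dagger)$ under semi-local Shapiro. This is exactly the content of Lemma~\ref{lemma_3_8_15_05_2021}, which identifies it with $C^\bullet_{\rm cont}(D_\p,T_\f^\dagger)\oplus C^\bullet_{\rm cont}(D_\p,T_\f^\dagger\widehat\otimes\Psi_{\rm ad})$ inside $C^\bullet_{\rm cont}(K_p,T_\f^\dagger\widehat\otimes U_1^\dagger)$. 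The key computational input is the one extracted in the proof of that lemma: with the chosen basis, $F^+T_\g|_{D_p}=\Psi_{|D_\p}\oplus\{0\}$ because $\Psi|_{I_\p}$ is ramified while $\Psi^c|_{I_\p}$ is unramified (this is where \ref{item_Dist_CM} is used to guarantee the filtration is well-defined and the two characters are genuinely distinct).

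Next I would track how this summand sits relative to the decomposition \eqref{eqn_U3_vs_U2} of $T_2^\dagger=T_\hf^\dagger\widehat\otimes{\rm Ind}_{K/\Q}U_1^\dagger$ into the $\mathds 1$-part and the $\Psi_{\rm ad}$-part, i.e. relative to the splitting \eqref{eqn_CM_split_exact_sequence}. Under the identification of the Corollary, $F_\g^+T_2^\dagger$ meets the $\mathds 1$-block $T_\hf^\dagger\widehat\otimes{\rm Ind}_{K/\Q}\mathds 1$ in the ``$\p$-only'' piece $C^\bullet_{\rm cont}(D_\p,T_\f^\dagger)\oplus\{0\}$ of $C^\bullet_{\rm cont}(K_p,T_\f^\dagger)$, and it projects onto the ``$\p$-only'' piece $C^\bullet_{\rm cont}(D_\p,T_\f^\dagger\widehat\otimes\Psi_{\rm ad})\oplus\{0\}$ of $C^\bullet_{\rm cont}(K_p,T_\f^\dagger\widehat\otimes\Psi_{\rm ad})$. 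But ``$\p$-only'' is precisely the recipe of Definition~\ref{defn_BDP_local_conditions}: the map $\iota_\p^+$ is the identity and $\iota_{\p^c}^+$ comes from the zero subspace, so $F_\p^+X=X$, $F_{\p^c}^+X=\{0\}$. Hence the propagated local condition on $T_\f^\dagger\widehat\otimes{\rm Ind}_{K/\Q}\mathds 1$ (resp.\ on $T_\f^\dagger\widehat\otimes{\rm Ind}_{K/\Q}\Psi_{\rm ad}$) is exactly $\Delta_{\rm BDP}$ as in Definition~\ref{defn_BDP_local_conditions}, transported via semi-local Shapiro. To finish, I would just rewrite these identifications in the basis-level form asserted in \eqref{eqn_lemma_propagate_BDP_to_V_f_induced_1_and_Psiad_induced_1} and \eqref{eqn_lemma_propagate_BDP_to_V_f_induced_1_and_Psiad_Psiad}, matching them square-by-square with the explicit arrows in \eqref{eqn_exact_sequence_BDP_splitting_CM_Q_cohom_bisbis} and \eqref{eqn_exact_sequence_BDP_splitting_CM_Q_cohom_bis} (this is the origin of the $(1\otimes x,c\otimes 0)\mapsto(\frac{1+c}{2}\otimes x,\frac{1-c}{2}\otimes x)$ bookkeeping in the diagram).

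The only genuinely delicate point — and the step I expect to absorb most of the care — is the compatibility of the idempotent decomposition \eqref{eqn_factor_trivial_rep_by_idempotents} of ${\rm Ind}_{K/\Q}T_\f^\dagger$ with the $c_*$-twist built into the semi-local Shapiro isomorphism \eqref{eqn_lemma_shapiro_explicit_1}: one must be sure that the basis $\{v_\Psi\otimes v_{\Psi^{-1}}, cv_{\Psi^c}\otimes cv_{\Psi^{c,-1}}\}$ of the $\mathds 1$-summand of ${\rm Ind}_{K/\Q}\Psi\otimes{\rm Ind}_{K/\Q}\Psi^{-1}$ from \eqref{eqn_InfPsiptotimesIndpsiinverse} matches $\{\frac{1+c}{2}\otimes v,\frac{1-c}{2}\otimes v\}$ correctly, and that the $D_p$-restriction lands on the $\p$-component and not the $\p^c$-component. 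Since $\Psi|_{I_\p}$ is the ramified one, the $F^+$-part of $T_\g$ is supported on the $\p$-factor, which forces the propagated condition to be the ``$\p$-only'' one and not its $\p^c$-mirror; getting this orientation right (rather than its reflection) is the crux, and it is exactly what \eqref{eqn_exact_sequence_BDP_splitting_CM_Q_cohom_bis} was set up to encode. Everything else is a routine diagram chase through already-established quasi-isomorphisms.
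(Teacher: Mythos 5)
Your proposal is correct and follows essentially the same route as the paper: the paper's proof is precisely "Lemma~\ref{lemma_3_8_15_05_2021} combined with the explicit semi-local Shapiro morphisms \eqref{eqn_exact_sequence_BDP_splitting_CM_Q_cohom_bisbis} and \eqref{eqn_exact_sequence_BDP_splitting_CM_Q_cohom_bis}", which is exactly the chase you carry out, including the key observation that $F^+T_\g\vert_{D_p}=\Psi_{\vert_{D_\p}}\oplus\{0\}$ forces the propagated condition to be the $\p$-only one.
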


\begin{proof}
Both assertions follow from Lemma~\ref{lemma_3_8_15_05_2021} combined with the explicit description of the semi-local Shapiro morphisms in \eqref{eqn_exact_sequence_BDP_splitting_CM_Q_cohom_bisbis} and \eqref{eqn_exact_sequence_BDP_splitting_CM_Q_cohom_bis}.
\end{proof}

\begin{corollary}
\label{cor_semi_local_shapiro_Delta_g_vs_BDP}
For $\eta=\mathds{1}, \Psi_{\rm ad}$, we have the following commutative diagram of cochain complexes, where the vertical arrows are induced from the semi-local Shapiro's lemma, c.f. \eqref{eqn_exact_sequence_BDP_splitting_CM_Q_cohom_bisbis} and \eqref{eqn_exact_sequence_BDP_splitting_CM_Q_cohom_bis}:
\begin{equation}
\label{eqn_prop_semi_local_shapiro_Delta_g_vs_BDP_1}
\begin{aligned}
 \xymatrix{
U_p^\bullet(\Delta_{\rm BDP},T_\f^\dagger\,\widehat\otimes\, {\rm Ind}_{K/\QQ}\eta)\ar[d]_{\rm Lemma~\ref{lemma_propagate_BDP_to_V_f_induced_1_and_Psiad}}^{\sim} \ar[rr]^{i_p^+}_{\rm Definition~\ref{defn_propagate_BDP_to_V_f_induced_1_and_Psiad}}&& C^\bullet_{\rm cont}(D_p, T_\f^\dagger\,\widehat\otimes\, {\rm Ind}_{K/\QQ}\eta)\ar[d]^{\eqref{eqn_exact_sequence_BDP_splitting_CM_Q_cohom_bisbis}{\rm \,and\, }\eqref{eqn_exact_sequence_BDP_splitting_CM_Q_cohom_bis}}_\sim\\
U_p^\bullet(\Delta_{\rm BDP},T_\f^\dagger\vert_{G_K}\,\widehat\otimes\,\eta) \ar[rr]_{\iota_\p^+\oplus \iota_{\p^c}^+}^{\rm Definition~\ref{defn_BDP_local_conditions}}&& C^\bullet_{\rm cont}(K_p, T_\f^\dagger\vert_{G_K}\,\widehat\otimes\,\eta)
}
\end{aligned}
\end{equation}
\end{corollary}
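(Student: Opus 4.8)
The statement to prove is Corollary~\ref{cor_semi_local_shapiro_Delta_g_vs_BDP}: the commutativity of the displayed diagram of cochain complexes, asserting that the $\Delta_{\rm BDP}$ local conditions on $T_\f^\dagger\,\widehat\otimes\, {\rm Ind}_{K/\QQ}\eta$ (obtained by propagation via \eqref{eqn_CM_split_exact_sequence}, as in Definition~\ref{defn_propagate_BDP_to_V_f_induced_1_and_Psiad}) match the $\Delta_{\rm BDP}$ local conditions on $T_\f^\dagger\vert_{G_K}\,\widehat\otimes\,\eta$ (the ``at $\p$, everything; at $\p^c$, nothing'' conditions of Definition~\ref{defn_BDP_local_conditions}) under the semi-local Shapiro identifications \eqref{eqn_exact_sequence_BDP_splitting_CM_Q_cohom_bisbis} and \eqref{eqn_exact_sequence_BDP_splitting_CM_Q_cohom_bis}.

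\medskip

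The plan is essentially to assemble Lemma~\ref{lemma_propagate_BDP_to_V_f_induced_1_and_Psiad} and the explicit descriptions of the semi-local Shapiro morphisms, since nearly all the content has already been extracted. First I would fix $\eta\in\{\mathds{1},\Psi_{\rm ad}\}$ and unwind the four corners of the square. The top-left corner $U_p^\bullet(\Delta_{\rm BDP},T_\f^\dagger\,\widehat\otimes\, {\rm Ind}_{K/\QQ}\eta)$ is, by Definition~\ref{defn_propagate_BDP_to_V_f_induced_1_and_Psiad}, the pullback of $U_p^\bullet(\Delta_\g, T_2^\dagger)$ along the relevant face of \eqref{eqn_CM_split_exact_sequence}; Lemma~\ref{lemma_propagate_BDP_to_V_f_induced_1_and_Psiad} (specifically \eqref{eqn_lemma_propagate_BDP_to_V_f_induced_1_and_Psiad_induced_1} for $\eta=\mathds 1$, \eqref{eqn_lemma_propagate_BDP_to_V_f_induced_1_and_Psiad_Psiad} for $\eta=\Psi_{\rm ad}$) already identifies this subcomplex, written in the $1\otimes(-)\oplus c\otimes(-)$ basis, with $\{(1\otimes x, c\otimes 0)\}$. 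The left vertical arrow of the diagram is precisely the isomorphism named in Lemma~\ref{lemma_propagate_BDP_to_V_f_induced_1_and_Psiad}, landing in $U_p^\bullet(\Delta_{\rm BDP},T_\f^\dagger\vert_{G_K}\,\widehat\otimes\,\eta)=C^\bullet_{\rm cont}(D_\p, T_\f^\dagger\vert_{G_K}\,\widehat\otimes\,\eta)\oplus\{0\}$ (Definition~\ref{defn_BDP_local_conditions}).

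\medskip

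Next I would chase an arbitrary cochain $(1\otimes x, c\otimes 0)$ around both paths. Along the top-then-right: the inclusion $i_p^+$ is the tautological inclusion into $C^\bullet_{\rm cont}(D_p, T_\f^\dagger\,\widehat\otimes\,{\rm Ind}_{K/\QQ}\eta)=C^\bullet_{\rm cont}(D_p,1\otimes T_\f^\dagger{}_\eta\oplus c\otimes T_\f^\dagger{}_\eta)$, and then one applies \eqref{eqn_exact_sequence_BDP_splitting_CM_Q_cohom_bisbis} (or its $\eta=\mathds 1$ analogue \eqref{eqn_exact_sequence_BDP_splitting_CM_Q_cohom_bis}), which sends $(1\otimes a, c\otimes b)$ to $(a\circ\iota_p, b\circ\iota_p^c)$ in $C^\bullet_{\rm cont}(D_\p,T_\f^\dagger{}_\eta)\oplus C^\bullet_{\rm cont}(D_{\p^c},T_\f^\dagger{}_\eta)$; with $b=0$ this is $(x\circ\iota_p, 0)$. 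Along the left-then-bottom: the left vertical arrow, per Lemma~\ref{lemma_propagate_BDP_to_V_f_induced_1_and_Psiad}, sends $(1\otimes x, c\otimes 0)$ to the cochain $x\circ\iota_p\in C^\bullet_{\rm cont}(D_\p,T_\f^\dagger{}_\eta)$ placed in the first (i.e. $\p$-) summand and $0$ in the $\p^c$-summand; then $\iota_\p^+\oplus\iota_{\p^c}^+$ is, by Definition~\ref{defn_BDP_local_conditions}, the identity on the $\p$-summand and the zero map from $\{0\}$ into the $\p^c$-summand, giving again $(x\circ\iota_p, 0)$. The two agree on the nose, so the square commutes; this is a formal verification with no real obstacle, the only care needed being to keep track of the various reindexings ($\iota_p$ versus $\iota_p^c$, the idempotent basis $\frac{1\pm c}2$ versus the $1\otimes(-),c\otimes(-)$ basis) consistently, exactly as recorded in the proof of Lemma~\ref{lemma_semi_local_shapiro}.

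\medskip

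The mild subtlety — and the one place I would be explicit — is the case $\eta=\mathds 1$, where the relevant Shapiro identification is the two-step map \eqref{eqn_exact_sequence_BDP_splitting_CM_Q_cohom_bis} passing through the Teichm\"uller-twist decomposition ${\rm Ind}_{K/\Q}\mathds 1 = \mathds 1\oplus\epsilon_K$ and the idempotent presentation $\frac{1+c}2\otimes T_\f^\dagger\oplus\frac{1-c}2\otimes T_\f^\dagger$; here I would note that under the change of basis $(1\otimes a, c\otimes b)\mapsto(\frac{1+c}2\otimes(a+b),\frac{1-c}2\otimes(a-b))$, the subcomplex $\{(1\otimes x, c\otimes 0)\}$ becomes $\{(\frac{1+c}2\otimes x,\frac{1-c}2\otimes x)\}$, which is precisely the image recorded in \eqref{eqn_lemma_propagate_BDP_to_V_f_induced_1_and_Psiad_induced_1}, and then composing with $(1\otimes a,c\otimes b)\mapsto(a\circ\iota_p,b\circ\iota_p^c)$ — equivalently, recombining and restricting — lands $x$ in the $D_\p$-summand and $0$ in the $D_{\p^c}$-summand, matching the target of Definition~\ref{defn_BDP_local_conditions}. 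For $\eta=\Psi_{\rm ad}$ the identification \eqref{eqn_exact_sequence_BDP_splitting_CM_Q_cohom_bisbis} is already a single $\iota_p$-twist with no idempotent bookkeeping, so the chase is immediate from \eqref{eqn_lemma_propagate_BDP_to_V_f_induced_1_and_Psiad_Psiad}. In both cases commutativity of the remaining (outer) square of the diagram follows, completing the proof.
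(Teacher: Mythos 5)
Your proof is correct and follows exactly the route the paper intends: the corollary is a formal consequence of the explicit descriptions in Lemma~\ref{lemma_propagate_BDP_to_V_f_induced_1_and_Psiad} together with the Shapiro identifications \eqref{eqn_exact_sequence_BDP_splitting_CM_Q_cohom_bisbis} and \eqref{eqn_exact_sequence_BDP_splitting_CM_Q_cohom_bis}, and your cochain chase of $(1\otimes x, c\otimes 0)$ around both sides of the square, including the idempotent bookkeeping in the $\eta=\mathds{1}$ case, is precisely the verification the paper leaves implicit.
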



\subsubsection{}
\label{subsubsec_3_2_5_24_05_2021_11_17}In this paragraph, we will explicitly describe the propagation of the Greenberg-local condition $\Delta_{\rm BDP}$ on $T_\hf^\dagger\,\widehat\otimes\,{\rm Ind}_{K/\Q}\mathds{1}$ (given as in Definition~\ref{defn_propagate_BDP_to_V_f_induced_1_and_Psiad} and Lemma~\ref{lemma_propagate_BDP_to_V_f_induced_1_and_Psiad}) to $T_\hf^\dagger\otimes\mathds{1}$ and $T_\hf^\dagger\otimes\epsilon_K$, via the split exact sequence
\begin{equation}
    \label{eqn_induced_split_exact_sequence_trivial}
    0\lra \frac{1+c}{2}\otimes T_\f^\dagger=T_\hf^\dagger\otimes\mathds{1}\lra T_\hf^\dagger\,\widehat\otimes\,{\rm Ind}_{K/\Q}\mathds{1} \lra T_\hf^\dagger\otimes\epsilon_K=\frac{1-c}{2}\otimes T_\f^\dagger\lra 0\,.
\end{equation}
The explicit description in \eqref{eqn_exact_sequence_BDP_splitting_CM_Q_cohom_bis} of the semi-local Shapiro morphism shows that $\Delta_{\rm BDP}$ propagates to the local conditions given by
\begin{align}
\label{eqn_propagated_local_conditions_Delta_g_11}
\begin{aligned}
U^\bullet_{p}(\Delta_{\rm BDP}, T_\hf^\dagger\otimes \mathds{1})=&\{0\}=: U^\bullet_{\rm cont}(\Delta_{0}, T_\hf^\dagger\otimes \mathds{1}) \qquad \hbox{(strict conditions)}\,,\\ 
U^\bullet_{p}(\Delta_{\rm BDP}, T_\hf^\dagger\otimes \epsilon_K)=C^\bullet_{\rm cont}(\Q_p, &\, T_\hf^\dagger\otimes \epsilon_K)=: U^\bullet_p(\Delta_{\emptyset}, T_\hf^\dagger\otimes \epsilon_K) \qquad \hbox{(relaxed conditions)}\,.
\end{aligned}
\end{align}

\subsubsection{}
\label{subsubsec_3_2_6_16_05_2021}
We next analyze the $\Delta_{\hg}$-local conditions propagated through the exact sequence \eqref{eqn_splitting_dual_global}, which is a restatement (in the scenario when $\hg$ is a CM family) of the exact sequence \eqref{eqn_factorisationrepresentations_dual_trace} deduced from the dual-trace morphism.

We have ${\rm tr}^*\Delta_{\g}=\Delta_0$ (strict conditions) for the propagated local conditions on $T_\f^\dagger$. In the remainder of \S\ref{subsubsec_3_2_6_16_05_2021}, we will next explain that 
\begin{align}
\label{eqn_propagated_local_conditions_Delta_g_14_2}
\begin{aligned}
U^\bullet_{p}({\rm tr}^*\Delta_{\hg}, M_2^\dagger)&=U^\bullet_{p}({\rm tr}^*\Delta_{\hg},T_{\hf}^\dagger\otimes\epsilon_K)\oplus U^\bullet_{p}({\rm tr}^*\Delta_{\hg}, {\rm Ind}_{K/\Q}T_\hf^\dagger\,\widehat\otimes\, \Psi_{\rm ad})\\
&=U^\bullet_{p}(\Delta_{\emptyset}, T_{\hf}^\dagger\otimes\epsilon_K)\oplus U^\bullet_{p}(\Delta_{\rm BDP}, T_\hf^\dagger\,\widehat\otimes\, {\rm Ind}_{K/\Q}\,\Psi_{\rm ad})\,,
\end{aligned}
\end{align}
where the first equality is clear. We retain the notation of \S\ref{subsubsec_322_14_05_2021}. As we have seen in the proof of Lemma~\ref{lemma_3_8_15_05_2021}, the local conditions $\Delta_{\hg}$ on $T_2^\dagger:=V_\hf^\dagger\widehat\otimes {\rm Ind}_{K/\Q}\Psi \otimes {\rm Ind}_{K/\Q}\Psi^{-1}$ is given by the $D_p$-stable submodule 
\begin{align*}
   F_\g^+T_2^\dagger:= T_\hf^\dagger\,\widehat\otimes& \,{\rm span}\{v_{\Psi}\}\otimes {\rm Ind}_{K/\Q}\,\Psi^{-1}=T_\hf^\dagger\,\widehat\otimes \,{\rm span}\{v_{\Psi}\otimes \, v_{\Psi^{-1}}, v_{\Psi}\otimes c v_{\Psi^{c,-1}}\} \\
  &\qquad \hookrightarrow T_\hf^\dagger\,\widehat\otimes \,{\rm Ind}_{K/\Q}\Psi \otimes {\rm Ind}_{K/\Q}\Psi^{-1}
    \stackrel{\eqref{eqn_InfPsiptotimesIndpsiinverse}}{=}(T_{\hf}^\dagger\otimes \mathds{1})\oplus (T_{\hf}^\dagger\otimes \epsilon_K)\oplus (T_{\hf}^\dagger\otimes {\rm Ind}_{K/\Q}\Psi_{\rm ad})\,.
\end{align*}
Observing that 
$${\rm span}\left\{\frac{1+c}{2}\cdot (v_\Psi\otimes v_{\Psi^{-1}})\right\}\,\cap  \,{\rm span}\{v_{\Psi}\otimes \, v_{\Psi^{-1}}, v_{\Psi}\otimes c v_{\Psi^{c,-1}}\} =\{0\}, $$
we infer the the image $F_\g^+T_2^\dagger \subset T_2^\dagger$ under the projection 
$T_2^\dagger \xrightarrow{\eqref{eqn_factorisationrepresentations_dual_trace}} M_2^\dagger= T_{\hf}^\dagger\otimes \epsilon_K\oplus T_{\hf}^\dagger\,\widehat\otimes\, {\rm Ind}_{K/\Q}\Psi_{\rm ad}$
modulo the direct summand 
$$T_{\hf}^\dagger\,\widehat\otimes\,\mathds{1}=T_\hf^\dagger\,\widehat\otimes \,{\rm span}\left\{\frac{1+c}{2}\cdot(v_{\Psi}\otimes v_{\Psi^{-1}})\right\}\hookrightarrow T_2^\dagger=T_{\hf}^\dagger\otimes \mathds{1}\oplus T_{\hf}^\dagger\otimes \epsilon_K\oplus T_{\hf}^\dagger\otimes {\rm Ind}_{K/\Q}\Psi_{\rm ad}$$ 
equals
$$\underbrace{T_\hf^\dagger\,\widehat\otimes \,{\rm span}\left\{\frac{1-c}{2}\cdot(v_{\Psi}\otimes v_{\Psi^{-1}})\right\}}_{:=F_\g^+
\left(T_\hf^\dagger\,\widehat\otimes \,\epsilon_K\right)=T_\hf^\dagger\,\widehat\otimes \,\epsilon_K}\oplus \underbrace{T_\hf^\dagger\,\widehat\otimes \,{\rm span}\left\{v_{\Psi}\otimes c v_{\Psi^{c,-1}} \right\}}_{:=F_\g^+\,\left( T_{\hf}^\dagger\,\widehat\otimes\,{\rm Ind}_{K/\Q}\Psi_{\rm ad}\right)}\,.$$
This tells us that 
$ U_p^\bullet({\rm tr}^*\Delta_\g,T_\hf^\dagger\,\widehat\otimes \,\epsilon_K)=U^\bullet_{p}(\Delta_{\emptyset}, T_{\hf}^\dagger\otimes\epsilon_K)$ and $U_p^\bullet({\rm tr}^*\Delta_\g,T_\hf^\dagger\,\widehat\otimes \,\Psi_{\rm ad})\stackrel{\eqref{eqn_lemma_propagate_BDP_to_V_f_induced_1_and_Psiad_Psiad}}{=} U^\bullet_{p}(\Delta_{\rm BDP}, T_{\f}^\dagger\,\widehat\otimes \,\Psi_{\rm ad})$, as required.

\subsection{Factorization: \texorpdfstring{$8=4+4$}{}.}
\label{subseec_7_1_2022_09_18_1414} We shall work under the following hypotheses until the end of this paper.

\subsubsection{Hypotheses}\label{subsubsec_hypothesis_Sec_7} We assume the family $\f$ is non-CM, and that Assumption 4.27 in \cite{BS_Part1} is valid for $\f$. Note that this condition is readily for the CM family $\g$ and it trivially holds if the tame conductor of the Hida family $\f$ is square-free. We assume in addition that \ref{item_Irr} holds for the family $\f$, \ref{item_Dist} for both $\f$ and $\g$. We remark that the condition (Irr+) in \cite[\S6.1.1]{BS_Part1} fails for $\g$, and therefore the results of \S6 in op. cit. does not apply in the present setup.
    
The hypotheses $(H^0)$ and (Tam) introduced in \cite[\S4.4.2]{BS_Part1} are still valid for $T_3^\dagger$,$T_2^\dagger$, $M_2^\dagger$ and $T_\f^\dagger$ thanks to the assumptions listed above. In particular, the main conclusions in \cite[\S4]{BS_Part1} apply and show that the Selmer complexes associated with these Galois representations are perfect. The constructions of \cite[\S5]{BS_Part1} apply as well; we will recall these in \S\ref{subsubsec_711_2022_09_18_1509} below.

Finally, we continue to assume that the rings $\cR_\f$ and $\cR_\g$ are both regular. We note this assumption can be ensured on passing to a smaller wide-open disc in the weight space (cf. \cite[\S5]{BSV}, where $\cR_?$ here, for $?=\f,\g$, plays the role of $\LL_?$ in op. cit.).

We continue to work in the scenario of \S\ref{subsubsec_2017_05_17_1800} so that we have
\begin{itemize}
    \item[\mylabel{item_root_numbers_general_CM}{${\bf (Sign')}$}] 
    \quad $\varepsilon(\hf)=-1$,\quad $\varepsilon(\hf\otimes \epsilon_K)=+1$\,, \quad and \quad $\varepsilon({\hf_{\kappa}}_{/K}\otimes \Psi_\lambda^{\rm ad})= \begin{cases}
         +1& \hbox{ if } (\kappa,\lambda)\in \cW_2^\g\,\\
         -1& \hbox{ if } (\kappa,\lambda)\in \cW_2^\hf\,
     \end{cases}$\,
\end{itemize}
where we recall that $\epsilon_K$ is the unique quadratic Dirichlet character attached to $K$.


\subsubsection{}  We are now ready to begin our work on algebraic $p$-adic $L$-functions. 
\begin{proposition}
    \label{prop_BDP_8_4_4_algebraic} We have the following exact triangles in the derived category:
\item[i)] 
$\widetilde{R\Gamma}_{\rm f}(G_{K,\Sigma_K},T_\f^\dagger,\Delta_{\rm BDP})\otimes_{\cR_\f}\cR_2\lra \widetilde{R\Gamma}_{\rm f}(G_{K,\Sigma_K},T_\f^\dagger\,\widehat\otimes\,U_1^\dagger,\Delta_{\rm BDP})\lra 
\widetilde{R\Gamma}_{\rm f}(G_{K,\Sigma_K},T_\f^\dagger\,\widehat\otimes\,\Psi_{\rm ad},\Delta_{\rm BDP})\,.$
\item[ii)] $\widetilde{R\Gamma}_{\rm f}(G_{K,\Sigma_K},T_\f^\dagger,\Delta_{\rm BDP})\otimes_{\cR_\f}\cR_2  \lra \widetilde{R\Gamma}_{\rm f}(G_{\QQ,\Sigma},T_2^\dagger,\Delta_{\g})\lra \widetilde{R\Gamma}_{\rm f}(G_{K,\Sigma_K},T_\f^\dagger\,\widehat\otimes\,\Psi_{\rm ad},\Delta_{\rm BDP})$\,. 

In particular, we have a factorization 
$$\det\,\widetilde{R\Gamma}_{\rm f}(G_{\QQ,\Sigma},T_2^\dagger,\Delta_{\g})=\varpi_{2,1}^*\det\,\widetilde{R\Gamma}_{\rm f}(G_{K,\Sigma_K},T_\f^\dagger,\Delta_{\rm BDP})\,\cdot\, \det\,\widetilde{R\Gamma}_{\rm f}(G_{K,\Sigma_K},T_\f^\dagger\,\widehat\otimes\,\Psi_{\rm ad},\Delta_{\rm BDP})\,.$$
\end{proposition}

\begin{proof}
The first assertion follows from definitions, whereas the second from the first part combined with \eqref{eqn_U3_vs_U2} and Corollary~\ref{cor_semi_local_shapiro_Delta_g_vs_BDP}. We remark that determinants of the Selmer complexes that make an appearance in the statement of our proposition make sense, since all these complexes are perfect thanks to our running hypotheses listed in \S\ref{subsubsec_hypothesis_Sec_7}.
\end{proof}
\begin{remark}
One may also prove a 3-variable version of Proposition~\ref{prop_BDP_8_4_4_algebraic} over the ring $\cR_3$, factoring the determinant of the Selmer complex $\widetilde{R\Gamma}_{\rm f}(G_{\QQ,\Sigma},T_3^\dagger,\Delta_{\g})$. Note that our main results in \S\ref{sec_super_factorization} does not dwell on this factorization, as opposed to \S\ref{subsec_75_2022_06_02_0900} below where the very first step in our argument (laid out in \S\ref{sec:8=4+4}) is a factorization of a $p$-adic $L$-function in $3$-variables.
\end{remark}

\subsubsection{Leading terms} 
\label{subsubsec_711_2022_09_18_1509}
We recall the ``module of leading terms'' $\delta(T_2^\dagger,\Delta_\g)\subset \cR_2^\dagger$ given as in \cite[\S5.2.2]{BS_Part1} (see also \cite[\S6.1.4]{BS_Part1}). Let us observe that we have 
\[
\widetilde{R\Gamma}_{\rm f}(G_{K,\Sigma_K},T_\f^\dagger,\Delta_{\rm BDP}) \in D_{\rm parf}^{[1,2]}({}_{\cR_\f}{\rm Mod}) \,\,\, 
\textrm{ and } \,\,\, \widetilde{R\Gamma}_{\rm f}(G_{K,\Sigma_K},T_\f^\dagger\,\widehat\otimes\,\Psi_{\rm ad},\Delta_{\rm BDP})\in D_{\rm parf}^{[1,2]}({}_{\cR_2}{\rm Mod})
\]
thanks to Proposition 4.38 in op. cit., and 
\begin{equation}
    \label{eqn_2022_09_18_2032}
    \chi(\widetilde{R\Gamma}_{\rm f}(G_{K,\Sigma_K},T_\f^\dagger,\Delta_{\rm BDP}))=0=\chi(\widetilde{R\Gamma}_{\rm f}(G_{K,\Sigma_K},T_\f^\dagger\,\widehat\otimes\,\Psi_{\rm ad},\Delta_{\rm BDP}))
\end{equation}
for the Euler--Poincar\'e characteristics of the indicated Selmer complexes. As a result, we are in the situation of \cite[\S5.2.2]{BS_Part1} and we have the modules
$$\delta(T_\f^\dagger,\Delta_{\rm BDP})\subset \cR_\f\,,\qquad  \delta(T_\f^\dagger\,\widehat\otimes\,\Psi_{\rm ad},\Delta_{\rm BDP})\subset \cR_2$$
of leading terms.

\begin{proposition}
    \label{prop_2022_09_18_1639}
    $\delta(T_2^\dagger,\Delta_\g)\neq 0$ if and only if $\delta(T_\f^\dagger,\Delta_{\rm BDP})\neq 0\neq  \delta(T_\f^\dagger\,\widehat\otimes\,\Psi_{\rm ad},\Delta_{\rm BDP})$. 
\end{proposition}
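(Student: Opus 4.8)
The statement is an equivalence about non-vanishing of three modules of leading terms, and the natural tool is the factorization of determinants recorded in Proposition~\ref{prop_BDP_8_4_4_algebraic}(ii) together with the general formalism of \S\ref{subsubsec_EP_char_is_zero}. The plan is to translate each non-vanishing statement into a statement about $\widetilde{H}^1_{\rm f}$ or $\widetilde{H}^2_{\rm f}$ being torsion, and then exploit the exact triangle \eqref{eqn_prop_BDP_8_4_4_algebraic} (equivalently the long exact cohomology sequence it induces) to compare these conditions. Recall that for each of the three pairs $(T,\Delta)$ in question the relevant Euler--Poincar\'e characteristic vanishes (this is \eqref{eqn_2022_09_18_2032} for the two BDP-complexes, and was noted in \S\ref{subsubsec_leading_terms_bis_1} for $(T_2^\dagger,\Delta_\g)$), so by Theorem~\ref{theorem_delta_rank0}.1 we have in all three cases $\delta(T,\Delta)=\mathrm{Fitt}^0_R\,\widetilde{H}^2_{\rm f}(\cdots)$; since each $R\in\{\cR_\f,\cR_2\}$ is a regular (in particular domain) local ring, the non-vanishing of $\delta(T,\Delta)$ is equivalent to $\widetilde{H}^2_{\rm f}(\cdots)$ being a torsion $R$-module, equivalently (as the Selmer complex lies in $D^{[1,2]}_{\rm parf}$ with vanishing Euler characteristic) to $\widetilde{H}^1_{\rm f}(\cdots)=0$.

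First I would set up the long exact cohomology sequence attached to \eqref{eqn_prop_BDP_8_4_4_algebraic}. Writing $A^\bullet:=\widetilde{R\Gamma}_{\rm f}(G_{K,\Sigma_K},T_\f^\dagger,\Delta_{\rm BDP})\otimes_{\cR_\f}\cR_2$, $B^\bullet:=\widetilde{R\Gamma}_{\rm f}(G_{\QQ,\Sigma},T_2^\dagger,\Delta_{\g})$ and $C^\bullet:=\widetilde{R\Gamma}_{\rm f}(G_{K,\Sigma_K},T_\f^\dagger\,\widehat\otimes\,\Psi_{\rm ad},\Delta_{\rm BDP})$, all of these are perfect complexes over $\cR_2$ concentrated in degrees $[1,2]$ (for $A^\bullet$ this uses flat base change along $\varpi_{2,1}^*$, which is exact since $\cR_2$ is free over $\cR_\f$ up to completed tensor; I would record this flatness carefully), so the triangle yields
\begin{align*}
0\longrightarrow \widetilde{H}^1(A^\bullet)\longrightarrow \widetilde{H}^1(B^\bullet)\longrightarrow \widetilde{H}^1(C^\bullet)&\longrightarrow \widetilde{H}^2(A^\bullet)\\
&\longrightarrow \widetilde{H}^2(B^\bullet)\longrightarrow \widetilde{H}^2(C^\bullet)\longrightarrow 0.
\end{align*}
For the forward implication ($\delta(T_2^\dagger,\Delta_\g)\neq 0$, i.e. $\widetilde{H}^1(B^\bullet)=0$): injectivity of the first map forces $\widetilde{H}^1(A^\bullet)=0$, hence $\delta(T_\f^\dagger,\Delta_{\rm BDP})\neq 0$ (base change along $\varpi_{2,1}^*$ being faithfully flat or at least injective on the relevant modules, one recovers $\widetilde{H}^1(\widetilde{R\Gamma}_{\rm f}(G_{K,\Sigma_K},T_\f^\dagger,\Delta_{\rm BDP}))=0$ over $\cR_\f$ — here I would invoke Proposition~\ref{prop_[1,2]_parf}.1 and a rank count, or simply pass to the total ring of fractions); and the same vanishing $\widetilde{H}^1(B^\bullet)=0$ together with the exactness of $0\to\widetilde{H}^1(B^\bullet)\to\widetilde{H}^1(C^\bullet)\to\widetilde{H}^2(A^\bullet)$ shows $\widetilde{H}^1(C^\bullet)$ injects into $\widetilde{H}^2(A^\bullet)$, which is torsion (since $\widetilde{H}^1(A^\bullet)=0$ and $\chi(A^\bullet)=0$), so $\widetilde{H}^1(C^\bullet)$ is torsion; but $\widetilde{H}^1(C^\bullet)$ is also torsion-free over the regular ring $\cR_2$ when it has rank $0$ — more cleanly, $\chi(C^\bullet)=0$ forces $\mathrm{rank}\,\widetilde{H}^1(C^\bullet)=\mathrm{rank}\,\widetilde{H}^2(C^\bullet)$, and the injection into a torsion module forces this common rank to be $0$, whence $\widetilde{H}^2(C^\bullet)$ is torsion and $\delta(T_\f^\dagger\,\widehat\otimes\,\Psi_{\rm ad},\Delta_{\rm BDP})\neq 0$.

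For the converse, I would argue by contrapositive or directly: assume $\widetilde{H}^1(A^\bullet)=0$ and $\widetilde{H}^1(C^\bullet)=0$; I want $\widetilde{H}^1(B^\bullet)=0$. From the exact sequence $0\to\widetilde{H}^1(A^\bullet)\to\widetilde{H}^1(B^\bullet)\to\widetilde{H}^1(C^\bullet)$ this is immediate. The only subtlety is again the base-change step: I need to know that $\delta(T_\f^\dagger,\Delta_{\rm BDP})\neq 0$ over $\cR_\f$ implies $\widetilde{H}^1(A^\bullet)=\widetilde{H}^1(\widetilde{R\Gamma}_{\rm f}(G_{K,\Sigma_K},T_\f^\dagger,\Delta_{\rm BDP}))\otimes_{\cR_\f}\cR_2=0$, which follows from Proposition~\ref{prop_[1,2]_parf}.1 (base change commutes with the Selmer complex) combined with the fact that $\widetilde{H}^1$ of the $\cR_\f$-complex already vanishes by Theorem~\ref{theorem_delta_rank0}.3. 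Alternatively, one uses that $\cR_2$ is faithfully flat over $\cR_\f$ (the completed tensor with a one-dimensional power-series-type ring), so vanishing is preserved. The main obstacle I anticipate is precisely this flatness/base-change bookkeeping for the completed tensor products defining $\cR_2=\cR_\f\hatotimes_{\ZZ_p}\cR_\g$ and for the Shapiro-type identifications of Corollary~\ref{cor_semi_local_shapiro_Delta_g_vs_BDP} that underlie Proposition~\ref{prop_BDP_8_4_4_algebraic}; once those are in place the proof is a short diagram chase with the long exact sequence and the dictionary "$\delta\neq 0 \Leftrightarrow \widetilde{H}^2$ torsion $\Leftrightarrow \widetilde{H}^1=0$" supplied by Theorem~\ref{theorem_delta_rank0} and the regularity of $\cR_\f,\cR_2$.
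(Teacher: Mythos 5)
Your proposal is correct and follows essentially the same route as the paper: the proof there consists precisely of the four-term exact sequence \eqref{eqn_2022_09_18_2023} obtained from the triangle of Proposition~\ref{prop_BDP_8_4_4_algebraic}(ii), combined with the dictionary \eqref{eqn_2022_09_18_2026} (``$\delta\neq 0$ iff $\widetilde{H}^1_{\rm f}=0$ iff ${\rm rk}\,\widetilde{H}^2_{\rm f}=0$'') coming from Theorem~\ref{theorem_delta_rank0} and the vanishing Euler--Poincar\'e characteristics \eqref{eqn_2022_09_18_2032}. The flatness/base-change bookkeeping you flag is the only point the paper leaves implicit, and your treatment of it is adequate.
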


\begin{proof}
The exact triangle in Proposition~\ref{prop_BDP_8_4_4_algebraic}(ii) tells us that
\begin{align}
 \notag    0\lra \widetilde{H}^1_{\rm f}(G_{K,\Sigma_K},T_\f^\dagger,\Delta_{\rm BDP})\otimes_{\cR_\f}\cR_2  \lra \widetilde{H}^1_{\rm f}(G_{\QQ,\Sigma},T_2^\dagger,\Delta_{\g})\lra 
     \widetilde{H}^1_{\rm f}&(G_{K,\Sigma_K},T_\f^\dagger\,\widehat\otimes\,\Psi_{\rm ad},\Delta_{\rm BDP})\\
          \label{eqn_2022_09_18_2023}
     &\lra  \widetilde{H}^2_{\rm f}(G_{K,\Sigma_K},T_\f^\dagger,\Delta_{\rm BDP})\otimes_{\cR_\f}\cR_2\,.
\end{align}
It also follows from \cite[Theorem 5.11]{BS_Part1} and \eqref{eqn_2022_09_18_2032} that 
\begin{align*}
    \begin{aligned}
     \label{eqn_2022_09_18_2026}
     \delta(T_2^\dagger,\Delta_\g)\neq 0 \quad &\iff{\quad \widetilde{H}^1_{\rm f}(G_{\QQ,\Sigma},T_2^\dagger,\Delta_{\g})=0}\,,\\\\
     \delta(T_\f^\dagger,\Delta_{\rm BDP})\neq 0\neq  \delta(T_\f^\dagger\,\widehat\otimes\,\Psi_{\rm ad},\Delta_{\rm BDP}) \, &\iff{\, \widetilde{H}^1_{\rm f}(G_{K,\Sigma_K},T_\f^\dagger,\Delta_{\rm BDP})=0=\widetilde{H}^1_{\rm f}(G_{K,\Sigma_K},T_\f^\dagger\,\widehat\otimes\,\Psi_{\rm ad},\Delta_{\rm BDP})}\\
     &\iff{\, {\rm rk}\,\widetilde{H}^2_{\rm f}(G_{K,\Sigma_K},T_\f^\dagger,\Delta_{\rm BDP})=0={\rm rk}\,\widetilde{H}^2_{\rm f}(G_{K,\Sigma_K},T_\f^\dagger\,\widehat\otimes\,\Psi_{\rm ad},\Delta_{\rm BDP})}\,.
    \end{aligned}
\end{align*}
The proof of our proposition follows by combining these equivalences with \eqref{eqn_2022_09_18_2023}.
\end{proof}

In \S\ref{subsubsec_2022_09_18_2046}, we will give an explicit criterion for the validity of the non-vanishing condition in Proposition~\ref{prop_2022_09_18_1639} in terms of Beilinson--Flach elements and Beilinson--Kato elements.

\begin{corollary}
\label{cor_2022_09_19_1230}
Suppose that $\delta(T_2^\dagger,\Delta_\g)\neq 0$. Then the $\cR_\f$-module $\widetilde{H}^2_{\rm f}(G_{K,\Sigma_K},T_\f^\dagger,\Delta_{\rm BDP})$ as well as both $\cR_2$-modules $\widetilde{H}^2_{\rm f}(G_{K,\Sigma_K},T_\f^\dagger\,\widehat\otimes\,\Psi_{\rm ad},\Delta_{\rm BDP})$ and $\widetilde{H}^2_{\rm f}(G_{\QQ,\Sigma},T_2^\dagger,\Delta_{\g})$ are torsion with projective dimension $1$. Moreover, we have a factorization
$$\det\,\widetilde{H}^2_{\rm f}(G_{\QQ,\Sigma},T_2^\dagger,\Delta_{\g})= \det\,\widetilde{H}^2_{\rm f}(G_{K,\Sigma_K},T_\f^\dagger\,\widehat\otimes\,\Psi_{\rm ad},\Delta_{\rm BDP})\,\cdot\,\varpi_{2,1}^*\det\,\widetilde{H}^2_{\rm f}(G_{K,\Sigma_K},T_\f^\dagger,\Delta_{\rm BDP})$$
of algebraic $p$-adic $L$-functions.
\end{corollary}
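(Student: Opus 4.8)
\textbf{Proof proposal for Corollary~\ref{cor_2022_09_19_1230}.} The plan is to extract everything from the exact triangle \eqref{eqn_prop_BDP_8_4_4_algebraic} of Proposition~\ref{prop_BDP_8_4_4_algebraic}(ii), together with the structural information already obtained in Proposition~\ref{prop_2022_09_18_1639}. First I would record that, by the hypotheses of \S\ref{subsubsec_hypothesis_Sec_7}, Proposition~\ref{prop_[1,2]_parf} applies to all three Galois representations in play, so that
\[
\widetilde{R\Gamma}_{\rm f}(G_{\QQ,\Sigma},T_2^\dagger,\Delta_{\g})\,,\quad
\widetilde{R\Gamma}_{\rm f}(G_{K,\Sigma_K},T_\f^\dagger,\Delta_{\rm BDP})\otimes_{\cR_\f}\cR_2\,,\quad
\widetilde{R\Gamma}_{\rm f}(G_{K,\Sigma_K},T_\f^\dagger\,\widehat\otimes\,\Psi_{\rm ad},\Delta_{\rm BDP})
\]
all lie in $D_{\rm parf}^{[1,2]}$ over the appropriate (regular, hence normal) rings, and each has vanishing Euler--Poincar\'e characteristic by \eqref{eqn_2022_09_18_2032} and the analogous computation for $T_2^\dagger$ recorded in \S\ref{subsubsec_leading_terms_bis_1}. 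The hypothesis $\delta(T_2^\dagger,\Delta_\g)\neq 0$ forces, via Proposition~\ref{prop_2022_09_18_1639} and the displayed equivalences \eqref{eqn_2022_09_18_2026}, the vanishing of all three degree-one extended Selmer groups; then Theorem~\ref{theorem_delta_rank0}(2) (or equivalently Corollary~\ref{corollary_normal_det=char}) shows that the three degree-two extended Selmer groups are torsion $R$-modules of projective dimension exactly $1$, and that in each case $\det(\widetilde{R\Gamma}_{\rm f}) = \mathrm{char}_R(\widetilde{H}^2_{\rm f})$.

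Next I would feed the vanishing of the $\widetilde{H}^1_{\rm f}$'s back into the long exact cohomology sequence associated to \eqref{eqn_prop_BDP_8_4_4_algebraic}. Since the three $\widetilde{H}^1_{\rm f}$ terms vanish, that long exact sequence collapses to a short exact sequence of torsion $\cR_2$-modules
\[
0 \longrightarrow \widetilde{H}^2_{\rm f}(G_{K,\Sigma_K},T_\f^\dagger,\Delta_{\rm BDP})\otimes_{\cR_\f}\cR_2 \longrightarrow \widetilde{H}^2_{\rm f}(G_{\QQ,\Sigma},T_2^\dagger,\Delta_{\g}) \longrightarrow \widetilde{H}^2_{\rm f}(G_{K,\Sigma_K},T_\f^\dagger\,\widehat\otimes\,\Psi_{\rm ad},\Delta_{\rm BDP}) \longrightarrow 0\,.
\]
Here I should be slightly careful about the base-change term: $\widetilde{H}^2_{\rm f}(G_{K,\Sigma_K},T_\f^\dagger,\Delta_{\rm BDP})$ has projective dimension $1$ over $\cR_\f$, and since $\widetilde{H}^1_{\rm f}(G_{K,\Sigma_K},T_\f^\dagger,\Delta_{\rm BDP})=0$ there is no higher $\mathrm{Tor}$ contribution, so $\widetilde{R\Gamma}_{\rm f}(G_{K,\Sigma_K},T_\f^\dagger,\Delta_{\rm BDP})\otimes_{\cR_\f}^{\mathbb L}\cR_2$ is concentrated in degree $2$ and equals $\widetilde{H}^2_{\rm f}(G_{K,\Sigma_K},T_\f^\dagger,\Delta_{\rm BDP})\otimes_{\cR_\f}\cR_2$ with $\mathrm{char}_{\cR_2}$ equal to $\varpi_{2,1}^*\mathrm{char}_{\cR_\f}(\widetilde{H}^2_{\rm f}(G_{K,\Sigma_K},T_\f^\dagger,\Delta_{\rm BDP}))$ (flatness of $\varpi_{2,1}^*$ and multiplicativity of characteristic ideals in short exact sequences). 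Applying the multiplicativity of $\mathrm{char}_{\cR_2}$ (equivalently, of $\det$, via Corollary~\ref{cor:det=char}) to the displayed short exact sequence then gives
\[
\det\,\widetilde{H}^2_{\rm f}(G_{\QQ,\Sigma},T_2^\dagger,\Delta_{\g}) = \det\,\widetilde{H}^2_{\rm f}(G_{K,\Sigma_K},T_\f^\dagger\,\widehat\otimes\,\Psi_{\rm ad},\Delta_{\rm BDP})\cdot \varpi_{2,1}^*\det\,\widetilde{H}^2_{\rm f}(G_{K,\Sigma_K},T_\f^\dagger,\Delta_{\rm BDP})\,,
\]
which is the asserted factorization. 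Alternatively, and more directly, the same identity drops out of Lemma~\ref{lemma_determinant}(2) applied to the exact triangle \eqref{eqn_prop_BDP_8_4_4_algebraic} itself, combined with Theorem~\ref{theorem_delta_rank0}(2) to identify each determinant with the characteristic ideal of the relevant $\widetilde{H}^2_{\rm f}$; the short-exact-sequence argument above is just the cohomological shadow of this.

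I do not expect a genuine obstacle here: the statement is essentially a bookkeeping corollary of Proposition~\ref{prop_BDP_8_4_4_algebraic} and Proposition~\ref{prop_2022_09_18_1639}. The one point that requires a little attention — and which I would make sure to spell out — is the interaction between derived base change along $\varpi_{2,1}^*$ and the claim about projective dimension: I must check that $\widetilde{H}^2_{\rm f}(G_{K,\Sigma_K},T_\f^\dagger,\Delta_{\rm BDP})\otimes_{\cR_\f}\cR_2$ is still a torsion $\cR_2$-module of projective dimension $1$ (this uses that $\cR_2 = \cR_\f\,\widehat\otimes_{\ZZ_p}\cR_\g$ is flat over $\cR_\f$ and that $\cR_2$ is regular, so finite projective dimension over $\cR_\f$ is preserved) and that its characteristic ideal is the pullback of the one downstairs. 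Once that is in place the rest is immediate.
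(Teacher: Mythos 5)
Your argument is correct and follows essentially the same route as the paper: the paper's proof is a one-line citation of Corollary~\ref{corollary_normal_det=char}, Theorem~\ref{theorem_delta_rank0}, Proposition~\ref{prop_BDP_8_4_4_algebraic}(ii) and Proposition~\ref{prop_2022_09_18_1639}, which is exactly the chain you spell out (vanishing of the three $\widetilde{H}^1_{\rm f}$'s from the non-vanishing of the leading terms, identification of determinants with characteristic ideals of the $\widetilde{H}^2_{\rm f}$'s, and multiplicativity of determinants along the exact triangle \eqref{eqn_prop_BDP_8_4_4_algebraic}). Your extra care about derived base change along $\varpi_{2,1}^*$ is a reasonable elaboration of the same argument rather than a different approach.
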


\begin{proof}
This is an immediate consequence of Corollary~4.41 and Theorem~5.11 in \cite{BS_Part1}, together with Proposition~\ref{prop_BDP_8_4_4_algebraic}(ii) and Proposition~\ref{prop_2022_09_18_1639}.
\end{proof}


\subsection{Factorization: \texorpdfstring{$4=2+2$}{}}
\label{subsec_factorize_BDP_padic_L_function}
Our main goal in \S\ref{subsec_factorize_BDP_padic_L_function} is to factor the algebraic ${\rm BDP}^2$ $p$-adic $L$-function associated with the Rankin--Selberg convolution $\f_{/K}\otimes \mathds{1}$.  Note that the $p$-adic analytic variant of this result has been proved in \S\ref{subsec_8_2_2022_09_13_1731} under a less stringent hypothesis.
\begin{theorem}
\label{thm_main_BDP_factorization}
In addition to our running hypothesis listed in \S\ref{subsubsec_hypothesis_Sec_7}, suppose that $\delta(T_2^\dagger,\Delta_\g)\neq 0$. We assume also that the conditions \eqref{item_MC}, \eqref{item_BI} and \eqref{item_non_anom} hold true. Then,
\begin{equation}
\label{eqn_2022_09_20_1331}
    \det\widetilde{R\Gamma}_{\rm f}(G_{K,\Sigma_K},T_\f^\dagger,\Delta_{\rm BDP})=\det\widetilde{H}^2_{\rm f}(G_{K,\Sigma_K},T_\f^\dagger,\Delta_{\rm BDP})={\rm Exp}^*_{F^-T_\f^\dagger\otimes\epsilon_K}(\delta(T_\f^\dagger\otimes\epsilon_K,\Delta_\emptyset))\cdot {\rm Log}_{\omega_\f}(\delta(T_\f^\dagger,\Delta_\emptyset))\,.
\end{equation}
\end{theorem}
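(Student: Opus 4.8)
The plan is to run the general machinery of \S\ref{subsubsec_EP_char_is_1}--\S\ref{subsubsec_EP_char_is_zero} and Theorem~\ref{thm_PR_formal} for the $G_K$-representation $T_\f^\dagger$ equipped with the local condition $\Delta_{\rm BDP}$, exactly as Theorem~\ref{thm_main_8_4_4_factorization} was deduced for the $\Delta_\g$-Selmer complex of $T_2^\dagger$, but now exploiting the \emph{super-factorization}: by \eqref{eqn_induced_split_exact_sequence_trivial} and Lemma~\ref{lemma_propagate_BDP_to_V_f_induced_1_and_Psiad}, the condition $\Delta_{\rm BDP}$ on $T_\f^\dagger\,\widehat\otimes\,{\rm Ind}_{K/\QQ}\mathds{1}$ propagates through the split exact sequence $0\to T_\f^\dagger\otimes\mathds{1}\to T_\f^\dagger\,\widehat\otimes\,{\rm Ind}_{K/\QQ}\mathds{1}\to T_\f^\dagger\otimes\epsilon_K\to 0$ to the strict condition $\Delta_0$ on $T_\f^\dagger\otimes\mathds{1}$ and the relaxed condition $\Delta_\emptyset$ on $T_\f^\dagger\otimes\epsilon_K$, cf. \eqref{eqn_propagated_local_conditions_Delta_g_11}. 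Since $\Delta_{\rm BDP}$ on $T_\f^\dagger{}_{\vert G_K}\,\widehat\otimes\,\mathds{1}$ is identified under semi-local Shapiro with the $\Delta_{\rm BDP}$ data on $T_\f^\dagger\,\widehat\otimes\,{\rm Ind}_{K/\QQ}\mathds{1}$ (Corollary~\ref{cor_semi_local_shapiro_Delta_g_vs_BDP}), the Selmer complex $\widetilde{R\Gamma}_{\rm f}(G_{K,\Sigma_K},T_\f^\dagger,\Delta_{\rm BDP})$ fits in an exact triangle
\[
\widetilde{R\Gamma}_{\rm f}(G_{\QQ,\Sigma},T_\f^\dagger\otimes\mathds{1},\Delta_0)\lra \widetilde{R\Gamma}_{\rm f}(G_{K,\Sigma_K},T_\f^\dagger,\Delta_{\rm BDP})\lra \widetilde{R\Gamma}_{\rm f}(G_{\QQ,\Sigma},T_\f^\dagger\otimes\epsilon_K,\Delta_\emptyset)\xrightarrow{+1}\,,
\]
together with the dual triangle obtained by swapping $\Delta_0\leftrightarrow\Delta_\emptyset$ on the two summands (using the self-duality observed in Example~iv of \S\ref{subsubsec_2022_09_08_1236} and Remark~\ref{remark_self_duality_orthogonal_complements}).

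First I would record the numerology: $\delta_{\rm Pan}(T_\f(\kappa)\otimes{\rm Ind}_{K/\QQ}\mathds{1},\Delta_{\rm BDP})=0$ (Example~ii of \S\ref{subsubsec_2022_09_08_1236}), so $\chi(\widetilde{R\Gamma}_{\rm f}(G_{K,\Sigma_K},T_\f^\dagger,\Delta_{\rm BDP}))=0$, while $\delta_{\rm Pan}(T_\f(\kappa)\otimes\mathds{1},\Delta_0)=1=\delta_{\rm Pan}(T_\f(\kappa)\otimes\epsilon_K,\Delta_\emptyset)$, so the two ``flanking'' Selmer complexes have Euler characteristic $\mp1$ and fall under \S\ref{subsubsec_EP_char_is_1}. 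By hypothesis $\delta(T_2^\dagger,\Delta_\g)\neq 0$, hence by Proposition~\ref{prop_2022_09_18_1639} we have $\delta(T_\f^\dagger,\Delta_{\rm BDP})\neq 0$, and by Corollary~\ref{cor_2022_09_19_1230} the module $\widetilde{H}^2_{\rm f}(G_{K,\Sigma_K},T_\f^\dagger,\Delta_{\rm BDP})$ is torsion of projective dimension $1$; Theorem~\ref{theorem_delta_rank0}(2) then gives $\delta(T_\f^\dagger,\Delta_{\rm BDP})=\det\widetilde{R\Gamma}_{\rm f}(G_{K,\Sigma_K},T_\f^\dagger,\Delta_{\rm BDP})=\det\widetilde{H}^2_{\rm f}(G_{K,\Sigma_K},T_\f^\dagger,\Delta_{\rm BDP})$, which is the first equality in \eqref{eqn_2022_09_20_1331}. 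For the right-hand factorization I would apply Theorem~\ref{thm_PR_formal} twice along the triangle above: once to pass from $\Delta_\emptyset$ to $\Delta_{\rm Pan}$ on the $\epsilon_K$-summand via $\mathrm{Exp}^*_{F^-T_\f^\dagger\otimes\epsilon_K}$ (using that the non-vanishing of $\delta(T_2^\dagger,\Delta_\g)$ forces, as in Proposition~\ref{prop_suport_range_Selmer_complexes_8_6_2_8}, the relevant local leading-term images to be regular), and once to handle the $\Delta_0$ on the $\mathds{1}$-summand via ${\rm Log}_{\omega_\f}$, identifying $\delta(T_\f^\dagger,\Delta_\emptyset)=\cR_\f\cdot{\rm BK}_\f^\dagger$ through \eqref{eqn_2022_09_12_1953_bis} (where \eqref{item_MC}, \eqref{item_BI}, \eqref{item_non_anom} enter). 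Taking determinants of the two flanking complexes and multiplying, exactly as in the chain of equalities \eqref{eqn_eqn_sequence_dual_trace_cohomology_first_consequence}--\eqref{eqn_eqn_sequence_dual_trace_cohomology_second_consequence_2} in the proof of Theorem~\ref{thm_main_8_4_4_factorization}, yields $\det\widetilde{H}^2_{\rm f}(G_{K,\Sigma_K},T_\f^\dagger,\Delta_{\rm BDP})={\rm Exp}^*_{F^-T_\f^\dagger\otimes\epsilon_K}(\delta(T_\f^\dagger\otimes\epsilon_K,\Delta_\emptyset))\cdot {\rm Log}_{\omega_\f}(\delta(T_\f^\dagger,\Delta_\emptyset))$.

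The technical heart of the argument — and the place where \eqref{item_MC}, \eqref{item_BI}, \eqref{item_non_anom} are genuinely used — is to verify that the connecting map $\delta^1$ for the $\Delta_{\rm BDP}$-triangle is injective and factors through the quotient $H^1(G_p,T_\f^\dagger)/\res_p(\widetilde{H}^1_{\rm f}(T_\f^\dagger,\Delta_{\rm Pan}))$, together with the identifications $\widetilde{H}^1_{\rm f}(T_\f^\dagger,\Delta_\emptyset)=\widetilde{H}^1_{\rm f}(T_\f^\dagger,\Delta_{\rm Pan})$ and $\widetilde{H}^1_{\rm f}(T_\f^\dagger\otimes\epsilon_K,\Delta_0)=0$. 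Here I would mimic \S\ref{subsubsec_delta_1_for_8_6_2_dual_trace}--\S\ref{subsubsec_factor_general_20_05_2021_prelim}: the vanishing $\res_p^-(\delta(T_\f^\dagger,\Delta_\emptyset))=0$ comes from Kato's reciprocity law and $\varepsilon(\f)=-1$ (Proposition~\ref{prop_suport_range_Selmer_complexes_8_6_2_2}), while the analogous statement for the $\epsilon_K$-twist follows from $\varepsilon(\f\otimes\epsilon_K)=+1$ in \eqref{item_root_numbers_general_CM} together with the Iwasawa main conjecture for $\f\otimes\epsilon_K$ (which needs \eqref{item_MC} and \eqref{item_BI}) and the freeness of the relevant local $H^1$ (which needs \eqref{item_non_anom}). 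I expect the main obstacle to be precisely this: establishing that $\widetilde{H}^1_{\rm f}(G_{\QQ,\Sigma},T_\f^\dagger\otimes\epsilon_K,\Delta_0)=0$ and that the degree-2 leading term $\delta(T_\f^\dagger\otimes\epsilon_K,\Delta_\emptyset)$ is non-zero with non-vanishing $\res_p$, since this is what bootstraps the torsionness of $\widetilde{H}^2_{\rm f}(T_\f^\dagger,\Delta_{\rm BDP})$ into the clean determinant factorization — everything else is formal bookkeeping with exterior bi-duals and the functoriality of \S\ref{sec_Koly_Sys_Dec_4_18_05_2021}. A version of Proposition~\ref{prop_suport_range_Selmer_complexes_8_6_2_8}, adapted to swap the roles of $\Delta_\g$ and $\Delta_{\rm BDP}$, should provide exactly the package of non-vanishing statements needed, and this is where the hypothesis $\delta(T_2^\dagger,\Delta_\g)\neq 0$ is consumed.
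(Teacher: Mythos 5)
Your proposal is correct and coincides with the paper's argument: the proof in \S\ref{subsec_factorize_BDP_padic_L_function} likewise combines Shapiro's lemma with the splitting of $T_\f^\dagger\,\widehat\otimes\,{\rm Ind}_{K/\QQ}\mathds{1}$ to obtain the exact triangle with propagated conditions $(\Delta_0,\Delta_{\rm BDP},\Delta_\emptyset)$, invokes Proposition~\ref{prop_2022_09_18_1639} and Corollary~\ref{cor_2022_09_19_1230} for the non-vanishing, the torsionness and the first equality, and then reruns the connecting-map and characteristic-ideal analysis of Theorem~\ref{thm_main_8_4_4_factorization} (Lemmas~\ref{lemma_2022_09_19_1135}--\ref{lemma_2022_09_19_1253} and Propositions~\ref{prop_delta1_K_explicit}--\ref{prop_useful_step_in_factorization_21_05_2021_CM}), with \eqref{item_MC}, \eqref{item_BI}, \eqref{item_non_anom} consumed through Proposition~\ref{prop_suport_range_Selmer_complexes_8_6_2_2} exactly as you indicate. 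The one imprecision is the suggestion to ``apply Theorem~\ref{thm_PR_formal} twice'': that theorem cannot produce the ${\rm Log}_{\omega_\f}$ factor, since the relevant pair $(T_\f^\dagger,\Delta_0\subset\Delta_{\rm Pan})$ has $r(T_\f^\dagger,\Delta_0)=-1$, outside its scope; but the fallback you state --- mimicking the chain of characteristic-ideal identities from the proof of Theorem~\ref{thm_main_8_4_4_factorization}, with Kato's reciprocity placing $\res_p(\delta(T_\f^\dagger,\Delta_\emptyset))$ inside $H^1(G_p,F^+T_\f^\dagger)$ --- is precisely how the paper obtains that factor.
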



The remainder of \S\ref{subsec_factorize_BDP_padic_L_function} is dedicated to a proof of Theorem~\ref{thm_main_BDP_factorization}. 

\subsubsection{}
\label{subsubsec_1_18_05_2021_subsec_factorize_BDP_padic_L_function} 
We recall that we have an isomorphism of Selmer complexes
\begin{equation}
\label{eqn_2022_09_19_1231}
    \widetilde{R\Gamma}_{\rm f}(G_{\QQ,\Sigma},T_\f^\dagger\,\otimes\,{\rm Ind}_{K/\QQ}\mathds{1},\Delta_{\rm BDP})\xrightarrow[\mathfrak{sh}]{\,\,\sim\,\,} \widetilde{R\Gamma}_{\rm f}(G_{K,\Sigma_K},T_\f^\dagger,\Delta_{\rm BDP}) 
\end{equation}
in the derived category thanks to Shapiro's lemma and Corollary~\ref{cor_semi_local_shapiro_Delta_g_vs_BDP}.  

Combining \eqref{eqn_factor_trivial_rep_by_idempotents} and \eqref{eqn_propagated_local_conditions_Delta_g_11}, we deduce the following exact triangle in the derived category:
\begin{equation}
    \label{eqn_5_1_19_05_2021}
    \widetilde{R\Gamma}_{\rm f}(G_{\QQ,\Sigma},T_\f^\dagger,\Delta_0)\lra \widetilde{R\Gamma}_{\rm f}(G_{\QQ,\Sigma},T_\f^\dagger\,\otimes\,{\rm Ind}_{K/\QQ}\mathds{1},\Delta_{\rm BDP})\lra 
\widetilde{R\Gamma}_{\rm f}(G_{\QQ,\Sigma},T_\f^\dagger\,\otimes\,\epsilon_K,\Delta_{\emptyset})\,.
\end{equation}
On the level of cohomology, we have the exact sequence
\begin{align}
    \label{eqn_5_2_19_05_2021}
 \notag  \widetilde{H}^1_{\rm f}(G_{\QQ,\Sigma},T_\f^\dagger,\Delta_0)&\lra \widetilde{H}^1_{\rm f}(G_{\QQ,\Sigma},T_\f^\dagger\,\otimes\,{\rm Ind}_{K/\QQ}\mathds{1},\Delta_{\rm BDP})\lra  \widetilde{H}^1_{\rm f}(G_{\QQ,\Sigma},T_\f^\dagger\,\otimes\,\epsilon_K,\Delta_{\emptyset})\xrightarrow{\delta^1_K}  \widetilde{H}^2_{\rm f}(G_{\QQ,\Sigma},T_\f^\dagger,\Delta_0)\\
   &\qquad\qquad\qquad\quad\lra \widetilde{H}^2_{\rm f}(G_{\QQ,\Sigma},T_\f^\dagger\,\otimes\,{\rm Ind}_{K/\QQ}\mathds{1},\Delta_{\rm BDP})\lra 
\widetilde{H}^2_{\rm f}(G_{\QQ,\Sigma},T_\f^\dagger\,\otimes\,\epsilon_K,\Delta_{\emptyset})\lra 0
\end{align}
where the surjection on the very right follows from the vanishing of $\widetilde{H}^3_{\rm f}(G_{\QQ,\Sigma},T_\f^\dagger,\Delta_0)$, which is a consequence of the fact that the family $\f$ is cuspidal.

We may apply the discussion in \cite[\S6.1.2]{BS_Part1} with the twisted family $\f \otimes\epsilon_K$ in place of $\f$ and obtain the module
$\delta(T_\f^\dagger\otimes\epsilon_K,\Delta_\emptyset)\subset \widetilde{H}^1_{\rm f}(G_{\QQ,\Sigma},T_\f^\dagger\,\otimes\,\epsilon_K,\Delta_{\emptyset})$ of leading terms.

\begin{lemma}
\label{lemma_2022_09_19_1135}
Suppose that $\delta(T_\f^\dagger,\Delta_{\rm BDP})\neq 0$. Then:
\item[i)] $\widetilde{H}^1_{\rm f}(G_{\QQ,\Sigma},T_\f^\dagger\,\otimes\,{\rm Ind}_{K/\QQ}\mathds{1},\Delta_{\rm BDP})=0$ and $\widetilde{H}^2_{\rm f}(G_{\QQ,\Sigma},T_\f^\dagger\,\otimes\,{\rm Ind}_{K/\QQ}\mathds{1},\Delta_{\rm BDP})$ is $\cR_\f$-torsion.
\item[ii)] $\widetilde{H}^1_{\rm f}(G_{\QQ,\Sigma},T_\f^\dagger,\Delta_0)=0$ and the $\cR_\f$-module $\widetilde{H}^2_{\rm f}(G_{\QQ,\Sigma},T_\f^\dagger,\Delta_0)$ is of rank one.
\item[iii)] The $\cR_\f$-module $\widetilde{H}^1_{\rm f}(G_{\QQ,\Sigma},T_\f^\dagger\,\otimes\,\epsilon_K,\Delta_{\emptyset})$ is free of rank one and $\widetilde{H}^2_{\rm f}(G_{\QQ,\Sigma},T_\f^\dagger\,\otimes\,\epsilon_K,\Delta_{\emptyset})$ is  $\cR_\f$-torsion. Moreover, the connecting morphism $\delta^1_K$ in \eqref{eqn_5_2_19_05_2021} is injective.
\item[iv)] $\res_p\left(\delta(T_\f^\dagger\otimes\epsilon_K,\Delta_\emptyset)\right)\neq 0$.
\end{lemma}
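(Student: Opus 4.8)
\textbf{Proof strategy for Lemma~\ref{lemma_2022_09_19_1135}.}

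The plan is to exploit the two exact triangles \eqref{eqn_2022_09_19_1231} and \eqref{eqn_5_1_19_05_2021}, together with the hypothesis $\delta(T_\f^\dagger,\Delta_{\rm BDP})\neq 0$ and the general machinery developed in \S\ref{subsubsec_EP_char_is_zero}--\S\ref{subsubsec_EP_char_is_1}. First I would establish (i): since $\widetilde{R\Gamma}_{\rm f}(G_{K,\Sigma_K},T_\f^\dagger,\Delta_{\rm BDP})\in D_{\rm parf}^{[1,2]}({}_{\cR_\f}{\rm Mod})$ with vanishing Euler--Poincar\'e characteristic \eqref{eqn_2022_09_18_2032}, we are in the $r=0$ situation of \S\ref{subsubsec_EP_char_is_zero}, so Theorem~\ref{theorem_delta_rank0}(3) applied to the isomorphic complex \eqref{eqn_2022_09_19_1231} gives that $\delta(T_\f^\dagger,\Delta_{\rm BDP})\neq 0$ forces $\widetilde{H}^1_{\rm f}(G_{\QQ,\Sigma},T_\f^\dagger\,\otimes\,{\rm Ind}_{K/\QQ}\mathds{1},\Delta_{\rm BDP})=0$; then Proposition~\ref{prop_[1,2]_parf} and $\chi=0$ immediately yield that the $H^2$ is $\cR_\f$-torsion, with projective dimension $1$.

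For (ii), I would combine (i) with the exact sequence \eqref{eqn_5_2_19_05_2021}. The map $\widetilde{H}^1_{\rm f}(G_{\QQ,\Sigma},T_\f^\dagger,\Delta_0)\to \widetilde{H}^1_{\rm f}(G_{\QQ,\Sigma},T_\f^\dagger\,\otimes\,{\rm Ind}_{K/\QQ}\mathds{1},\Delta_{\rm BDP})$ is injective because its kernel is controlled by the previous term in the triangle \eqref{eqn_5_1_19_05_2021}, which after taking cohomology in degree $0$ vanishes by \ref{item_Irr_general} (applied to $T_\f^\dagger\otimes\epsilon_K$, i.e. $\overline{T_\f^\dagger\otimes\epsilon_K}^{G_\QQ}=0$); hence $\widetilde{H}^1_{\rm f}(G_{\QQ,\Sigma},T_\f^\dagger,\Delta_0)$ embeds into a zero module and is itself zero. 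Since $\chi(\widetilde{R\Gamma}_{\rm f}(G_{\QQ,\Sigma},T_\f^\dagger,\Delta_{0}))=1$ by Proposition~\ref{proposition_euler_poincare_characteristic} (we are in the $r=1$ situation with strict local conditions at $p$), and $\widetilde{H}^1=0$, the complex being in $D_{\rm parf}^{[1,2]}$ gives $\widetilde{H}^2_{\rm f}(G_{\QQ,\Sigma},T_\f^\dagger,\Delta_0)$ of rank exactly one.

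For (iii), I would again use \eqref{eqn_5_2_19_05_2021}: the vanishing of $\widetilde{H}^1_{\rm f}(G_{\QQ,\Sigma},T_\f^\dagger,\Delta_0)$ from (ii) and of $\widetilde{H}^1_{\rm f}(G_{\QQ,\Sigma},T_\f^\dagger\,\otimes\,{\rm Ind}_{K/\QQ}\mathds{1},\Delta_{\rm BDP})$ from (i) forces the connecting map $\delta^1_K$ to be \emph{injective}, which is exactly the last claim of (iii). The source $\widetilde{H}^1_{\rm f}(G_{\QQ,\Sigma},T_\f^\dagger\,\otimes\,\epsilon_K,\Delta_{\emptyset})$ then injects into the rank-one module $\widetilde{H}^2_{\rm f}(G_{\QQ,\Sigma},T_\f^\dagger,\Delta_0)$, so it has rank $\leq 1$; on the other hand $\chi(\widetilde{R\Gamma}_{\rm f}(G_{\QQ,\Sigma},T_\f^\dagger\otimes\epsilon_K,\Delta_\emptyset))=-1$ by Proposition~\ref{proposition_euler_poincare_characteristic}, so the rank is exactly one, and it is torsion-free (being a submodule of a torsion-free module), hence free over the regular local ring $\cR_\f$ after noting that a reflexive rank-one module over a $2$-dimensional regular local ring that embeds into a free module with torsion cokernel is free — this is precisely the argument used in the proof of Proposition~\ref{prop_suport_range_Selmer_complexes_8_6_2_2}(i) via the vanishing of $\mathrm{Ext}^1_{\cR_\f}$ of torsion modules. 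The torsion-ness of $\widetilde{H}^2_{\rm f}(G_{\QQ,\Sigma},T_\f^\dagger\,\otimes\,\epsilon_K,\Delta_{\emptyset})$ follows since $\widetilde{H}^1=$ free of rank one and $\chi=-1$, or alternatively via Theorem~\ref{theorem_delta_rank1}.1 once we know $\delta(T_\f^\dagger\otimes\epsilon_K,\Delta_\emptyset)\neq 0$, which itself follows from $\delta^1_K$ being injective on the nonzero $\delta$ computed as the image of $\det$. For (iv), the key input is that $\res_p^-(\delta(T_\f^\dagger\otimes\epsilon_K,\Delta_\emptyset))=0$ by the analogue of \eqref{eqn_2022_09_12_2030} for the twisted family (using $\varepsilon(\hf\otimes\epsilon_K)=+1$ from \ref{item_root_numbers_general_CM} and Kato's reciprocity law), so that $\delta(T_\f^\dagger\otimes\epsilon_K,\Delta_\emptyset)$ already lands in the image of $H^1(G_p,F^+(T_\f^\dagger\otimes\epsilon_K))$; the hypotheses \eqref{item_MC}, \eqref{item_BI}, \eqref{item_non_anom} guarantee (via \eqref{eqn_2022_09_12_1953_bis} applied to the twist) that this leading term is a generator of the free rank-one module $\widetilde{H}^1_{\rm f}$, and the Iwasawa main conjecture inputs force its $p$-local image to be nonzero — the main obstacle here is checking that the cited main-conjecture results of \cite{skinnerurbanmainconj,xinwanwanhilbert} indeed apply to the quadratic twist $\f\otimes\epsilon_K$, which should follow because the hypotheses \eqref{item_MC}--\eqref{item_BI} are stable under such a twist and $\res_p(\delta(T_\f^\dagger\otimes\epsilon_K,\Delta_\emptyset))\neq 0$ is equivalent, via Proposition~\ref{prop_suport_range_Selmer_complexes_8_6_2_2}(iii), to the vanishing of a Selmer group that the main conjecture controls. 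The hard part is thus isolating precisely which non-vanishing statement in the $\epsilon_K$-twisted setting is guaranteed by our hypotheses, rather than merely conjectural; I expect to route this through the argument of Proposition~\ref{prop_suport_range_Selmer_complexes_8_6_2_8}, reversing the logic to deduce (iv) from $\delta(T_2^\dagger,\Delta_\g)\neq 0$ combined with the super-factorization \eqref{eqn_prop_BDP_8_4_4_algebraic}.
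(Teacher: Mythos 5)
Your parts (i)--(iii) are correct and essentially reproduce the paper's own argument: (i) is the $r=0$ machinery of \S\ref{subsubsec_EP_char_is_zero} (via Theorem~\ref{theorem_delta_rank0}/Proposition~\ref{proposition_tortion_non-zero}) transported through Shapiro's lemma \eqref{eqn_2022_09_19_1231}; (ii) follows from (i) together with $\chi(\widetilde{R\Gamma}_{\rm f}(G_{\QQ,\Sigma},T_\f^\dagger,\Delta_0))=1$; and (iii) combines the exact sequence \eqref{eqn_5_2_19_05_2021}, the Euler--Poincar\'e characteristic $-1$, and the freeness argument of Proposition~\ref{prop_suport_range_Selmer_complexes_8_6_2_2}(i) (your side remark that $\widetilde{H}^2_{\rm f}(G_{\QQ,\Sigma},T_\f^\dagger,\Delta_0)$ is torsion-free is not obvious and not needed, but since you defer to that proposition anyway it does no harm).

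Part (iv) has a genuine gap, and you are reaching for much heavier tools than the statement requires. First, your claimed vanishing $\res_p^-\left(\delta(T_\f^\dagger\otimes\epsilon_K,\Delta_\emptyset)\right)=0$ ``by the analogue of \eqref{eqn_2022_09_12_2030}'' is backwards: that vanishing is a consequence of $\varepsilon(\f)=-1$ (the Mazur--Kitagawa $p$-adic $L$-function vanishes on the central line and Kato's reciprocity law identifies $\res_p^-$ of the Beilinson--Kato class with it); since $\varepsilon(\f\otimes\epsilon_K)=+1$ by \ref{item_root_numbers_general_CM}, the same reasoning gives no vanishing for the twist --- on the contrary, one generically expects non-vanishing there (cf. Proposition~\ref{prop_2022_09_20_1057}(i)). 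Second, the appeal to the main conjectures for $\f\otimes\epsilon_K$, and your fallback of assuming $\delta(T_2^\dagger,\Delta_\g)\neq 0$ and ``reversing'' Proposition~\ref{prop_suport_range_Selmer_complexes_8_6_2_8}, are off-target: the lemma assumes only $\delta(T_\f^\dagger,\Delta_{\rm BDP})\neq 0$, and importing the stronger hypothesis would render circular the very direction of Lemma~\ref{lemma_2022_09_19_1253} that this lemma is meant to supply. The paper's proof of (iv) is purely formal and uses only what you have already established: by (iii), $\widetilde{H}^2_{\rm f}(G_{\QQ,\Sigma},T_\f^\dagger\otimes\epsilon_K,\Delta_\emptyset)$ is torsion, hence $\delta(T_\f^\dagger\otimes\epsilon_K,\Delta_\emptyset)\neq 0$ by Theorem~\ref{theorem_delta_rank1} (equivalently Proposition~\ref{proposition_tortion_non-zero}); the kernel of $\res_p$ on $\widetilde{H}^1_{\rm f}(G_{\QQ,\Sigma},T_\f^\dagger\otimes\epsilon_K,\Delta_\emptyset)$ is $\widetilde{H}^1_{\rm f}(G_{\QQ,\Sigma},T_\f^\dagger\otimes\epsilon_K,\Delta_0)$, whose rank equals the rank of the torsion module $\widetilde{H}^2_{\rm f}(G_{\QQ,\Sigma},T_\f^\dagger\otimes\epsilon_K,\Delta_\emptyset)$ by the self-duality of $T_\f^\dagger\otimes\epsilon_K$ and Nekov\'a\v{r}'s global duality; being a torsion submodule of the free module $\widetilde{H}^1_{\rm f}(G_{\QQ,\Sigma},T_\f^\dagger\otimes\epsilon_K,\Delta_\emptyset)$, it vanishes, and therefore $\res_p\left(\delta(T_\f^\dagger\otimes\epsilon_K,\Delta_\emptyset)\right)\neq 0$. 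No reciprocity law, root-number input, or main-conjecture result is needed for this step.
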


\begin{proof}
Assertion (i) follows from Corollary~\ref{cor_2022_09_19_1230} combined with \eqref{eqn_2022_09_19_1231}. Part (ii) is immediate from (i) and the global Euler--Poincar\'e characteristic formula
$\chi(\widetilde{R\Gamma}_{\rm f}(G_{\QQ,\Sigma},T_\f^\dagger,\Delta_0))=1\,.$ The claim in (iii) that  $\widetilde{H}^2_{\rm f}(G_{\QQ,\Sigma},T_\f^\dagger\,\otimes\,\epsilon_K,\Delta_{\emptyset})$ is  $\cR_\f$-torsion follows from \eqref{eqn_5_2_19_05_2021} and Part (i). The global Euler--Poincar\'e characteristic formula $\chi(\widetilde{R\Gamma}_{\rm f}(G_{\QQ,\Sigma},T_\f^\dagger,\Delta_\emptyset))=-1$
then shows that the $\cR_\f$-module $\widetilde{H}^1_{\rm f}(G_{\QQ,\Sigma},T_\f^\dagger\,\otimes\,\epsilon_K,\Delta_{\emptyset})$ is of rank one. The fact that it is free can be proved by arguing as in the proof of \cite[Proposition~6.2(i)]{BS_Part1}. The proof that $\delta^1_K$ is injective follows from (i). Finally, Theorem~5.1.2 in op. cit. combined with (iii) shows that $\delta(T_\f^\dagger\otimes\epsilon_K,\Delta_\emptyset)\neq 0$. To conclude the proof of Part (iv), it suffices to show that $\widetilde{H}^1_{\rm f}(G_{\QQ,\Sigma},T_\f^\dagger\otimes \epsilon_K,\Delta_0)=0$. This is clear thanks to Part (iii), since the $G_\QQ$-representation $T_\f^\dagger\otimes \epsilon_K$ is self-Tate-dual and we have
$${\rm rank}\, \widetilde{H}^1_{\rm f}(G_{\QQ,\Sigma},T_\f^\dagger\otimes \epsilon_K,\Delta_0)={\rm rank}\,\widetilde{H}^2_{\rm f}(G_{\QQ,\Sigma},T_\f^\dagger\otimes \epsilon_K,\Delta_\emptyset)$$
by global duality.
\end{proof}


The exact sequence \eqref{eqn_5_2_19_05_2021} therefore simplifies (whenever $\delta(T_2^\dagger,\Delta_\g)\neq 0$) as 
\begin{align}
    \label{eqn_5_3_19_05_2021}
    \begin{aligned}
   0\lra & \,\widetilde{H}^1_{\rm f}(G_{\QQ,\Sigma},T_\f^\dagger\,\otimes\,\epsilon_K,\Delta_{\emptyset})\xrightarrow{\delta^1_K}  \widetilde{H}^2_{\rm f}(G_{\QQ,\Sigma},T_\f^\dagger,\Delta_0)\\
   &\qquad\lra \widetilde{H}^2_{\rm f}(G_{\QQ,\Sigma},T_\f^\dagger\,\otimes\,{\rm Ind}_{K/\QQ}\mathds{1},\Delta_{\rm BDP})\lra 
\widetilde{H}^2_{\rm f}(G_{\QQ,\Sigma},T_\f^\dagger\,\otimes\,\epsilon_K,\Delta_{\emptyset})\lra 0\,.
    \end{aligned}    
\end{align}

\begin{lemma}
    \label{lemma_2022_09_19_1253}
    $\delta(T_\f^\dagger,\Delta_{\rm BDP})\neq 0$ if and only if $\res_p\left(\delta(T_\f^\dagger\otimes\epsilon_K,\Delta_\emptyset)\right)\neq 0\neq \res_p\left(\delta(T_\f^\dagger,\Delta_\emptyset)\right)$.
\end{lemma}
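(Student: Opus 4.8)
\textbf{Proof proposal for Lemma~\ref{lemma_2022_09_19_1253}.}

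The plan is to extract the equivalence from the fundamental exact triangle \eqref{eqn_5_1_19_05_2021}, combined with the rank-zero and rank-one formalism developed in \S\ref{subsubsec_EP_char_is_zero}--\S\ref{subsubsec_EP_char_is_1}. First I would record the three relevant Euler--Poincar\'e characteristics: $\chi(\widetilde{R\Gamma}_{\rm f}(G_{\QQ,\Sigma},T_\f^\dagger\otimes{\rm Ind}_{K/\QQ}\mathds{1},\Delta_{\rm BDP}))=0$ (since $\delta_{\rm Pan}$ vanishes in the induced-from-trivial case, cf. Example after Definition~\ref{defn_weakly_panchiskin}), $\chi(\widetilde{R\Gamma}_{\rm f}(G_{\QQ,\Sigma},T_\f^\dagger,\Delta_0))=1$, and $\chi(\widetilde{R\Gamma}_{\rm f}(G_{\QQ,\Sigma},T_\f^\dagger\otimes\epsilon_K,\Delta_\emptyset))=-1$; these follow from Proposition~\ref{proposition_euler_poincare_characteristic}. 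By Theorem~\ref{theorem_delta_rank0}, the non-vanishing $\delta(T_\f^\dagger,\Delta_{\rm BDP})\neq 0$ is equivalent to $\widetilde{H}^1_{\rm f}(G_{\QQ,\Sigma},T_\f^\dagger\otimes{\rm Ind}_{K/\QQ}\mathds{1},\Delta_{\rm BDP})=0$ (using the Shapiro isomorphism \eqref{eqn_2022_09_19_1231} to pass from $G_K$-cohomology to $G_\QQ$-cohomology of the induced module).

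Next I would analyze the long exact cohomology sequence \eqref{eqn_5_2_19_05_2021} attached to \eqref{eqn_5_1_19_05_2021}. The key structural input is that $\widetilde{H}^1_{\rm f}(G_{\QQ,\Sigma},T_\f^\dagger,\Delta_0)=0$ always (this is the strict Selmer group for $T_\f^\dagger$, which injects into $H^1_{\rm f}(\QQ,V_f^\dagger)$ with trivial local condition at $p$; alternatively one invokes the vanishing proved in Lemma~\ref{lemma_2022_09_19_1135}(ii), but that was under the hypothesis $\delta(T_\f^\dagger,\Delta_{\rm BDP})\neq 0$, so here I would instead argue directly that the natural map $\widetilde{H}^1_{\rm f}(G_{\QQ,\Sigma},T_\f^\dagger,\Delta_0)\to\widetilde{H}^1_{\rm f}(G_{\QQ,\Sigma},T_\f^\dagger\otimes{\rm Ind}_{K/\QQ}\mathds{1},\Delta_{\rm BDP})$ is the $(1+c)/2$-component of a Selmer group whose $(1-c)/2$-part is the relaxed-at-$p$ Selmer group of $T_\f^\dagger\otimes\epsilon_K$). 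Granting $\widetilde{H}^1_{\rm f}(G_{\QQ,\Sigma},T_\f^\dagger,\Delta_0)=0$, the sequence \eqref{eqn_5_2_19_05_2021} begins
\[
0\lra \widetilde{H}^1_{\rm f}(G_{\QQ,\Sigma},T_\f^\dagger\otimes{\rm Ind}_{K/\QQ}\mathds{1},\Delta_{\rm BDP})\lra \widetilde{H}^1_{\rm f}(G_{\QQ,\Sigma},T_\f^\dagger\otimes\epsilon_K,\Delta_\emptyset)\xrightarrow{\delta^1_K}\widetilde{H}^2_{\rm f}(G_{\QQ,\Sigma},T_\f^\dagger,\Delta_0)\lra\cdots
\]
So $\widetilde{H}^1_{\rm f}(G_{\QQ,\Sigma},T_\f^\dagger\otimes{\rm Ind}_{K/\QQ}\mathds{1},\Delta_{\rm BDP})=\ker\delta^1_K$.

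The crucial point is then to identify $\delta^1_K$ explicitly. Tracing through the construction of the connecting map in the cone \eqref{eqn_5_1_19_05_2021} (exactly as in the explicit description of $\delta^1$ in Proposition~\ref{prop_delta1_explicit_general} and Proposition~\ref{prop_z_p_interms_of_xp}, specialized to the split decomposition \eqref{eqn_induced_split_exact_sequence_trivial} and the BDP local conditions \eqref{eqn_propagated_local_conditions_Delta_g_11}), one sees that $\delta^1_K$ is induced by the local restriction $\res_p\colon \widetilde{H}^1_{\rm f}(G_{\QQ,\Sigma},T_\f^\dagger\otimes\epsilon_K,\Delta_\emptyset)\to H^1(G_p,T_\f^\dagger\otimes\epsilon_K)$ followed by the map $\partial^1_{\f\otimes\epsilon_K}$ of \eqref{defn_partial_1_bold_f_20_05_2021} into $\widetilde{H}^2_{\rm f}(G_{\QQ,\Sigma},T_\f^\dagger,\Delta_0)$ (after untwisting by $\epsilon_K$, using that $(T_\f^\dagger\otimes\epsilon_K)^{c=(1-c)/2}$-part of ${\rm Ind}_{K/\QQ}$ matches $T_\f^\dagger$ as a $G_K$-module). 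Since $\widetilde{H}^1_{\rm f}(G_{\QQ,\Sigma},T_\f^\dagger\otimes\epsilon_K,\Delta_\emptyset)$ has rank one and equals $\widetilde{H}^1_{\rm f}(G_{\QQ,\Sigma},T_\f^\dagger\otimes\epsilon_K,\Delta_{\rm Pan})$ precisely when $\res_p^-$ kills its generator (which holds since $\varepsilon(\hf\otimes\epsilon_K)=+1$ by \ref{item_root_numbers_general_CM}, via Kato's reciprocity law as in the proof of Proposition~\ref{prop_suport_range_Selmer_complexes_8_6_2_2}(i)), the composite $\partial^1_{\f\otimes\epsilon_K}\circ\res_p$ is injective on $\widetilde{H}^1_{\rm f}(G_{\QQ,\Sigma},T_\f^\dagger\otimes\epsilon_K,\Delta_\emptyset)$ if and only if $\res_p$ restricted to that module is non-zero, i.e. if and only if $\res_p(\delta(T_\f^\dagger\otimes\epsilon_K,\Delta_\emptyset))\neq 0$ (using \eqref{eqn_2022_09_12_1953_bis} applied to $\f\otimes\epsilon_K$, so $\delta(T_\f^\dagger\otimes\epsilon_K,\Delta_\emptyset)$ generates this free rank-one module). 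Therefore $\ker\delta^1_K=0$ — equivalently $\delta(T_\f^\dagger,\Delta_{\rm BDP})\neq0$ — is equivalent to $\res_p(\delta(T_\f^\dagger\otimes\epsilon_K,\Delta_\emptyset))\neq 0$.

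This gives one of the two required non-vanishing conditions for free; the second, $\res_p(\delta(T_\f^\dagger,\Delta_\emptyset))\neq 0$, I would obtain from Proposition~\ref{prop_suport_range_Selmer_complexes_8_6_2_8} applied via Proposition~\ref{prop_2022_09_18_1639}: indeed $\delta(T_\f^\dagger,\Delta_{\rm BDP})\neq 0$ together with $\delta(T_\f^\dagger\hatotimes\Psi_{\rm ad},\Delta_{\rm BDP})\neq 0$ is equivalent to $\delta(T_2^\dagger,\Delta_\g)\neq 0$, and the latter forces $\res_p(\delta(T_\f^\dagger,\Delta_\emptyset))\neq0$. Conversely, $\res_p(\delta(T_\f^\dagger,\Delta_\emptyset))\neq0\neq\res_p(\delta(T_\f^\dagger\otimes\epsilon_K,\Delta_\emptyset))$ gives $\delta(T_\f^\dagger,\Delta_{\rm BDP})\neq0$ by the argument above. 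The main obstacle I anticipate is the careful bookkeeping in identifying $\delta^1_K$ with $\partial^1_{\f\otimes\epsilon_K}\circ\res_p$ — in particular keeping track of how the idempotent decomposition \eqref{eqn_factor_trivial_rep_by_idempotents} interacts with the $G_\QQ$-action (rather than merely the $G_K$-action), which is exactly the subtlety flagged in the footnote to \eqref{eqn_factor_trivial_rep_by_idempotents} — and in verifying cleanly that $\widetilde{H}^1_{\rm f}(G_{\QQ,\Sigma},T_\f^\dagger,\Delta_0)=0$ without circularity. Once those points are pinned down, the equivalence is a formal consequence of rank count in \eqref{eqn_5_2_19_05_2021}.
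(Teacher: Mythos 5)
Your overall skeleton (the exact triangle \eqref{eqn_5_1_19_05_2021}, the rank-zero criterion from Theorem~\ref{theorem_delta_rank0}, and the identification of $\delta^1_K$ with $\partial^1_{\f}\circ{\rm tw}_{\epsilon_K}\circ\res_p$, which is exactly Proposition~\ref{prop_delta1_K_explicit_bis}) matches the paper's framework, but two steps in your chain do not go through. First, your assertion that $\widetilde{H}^1_{\rm f}(G_{\QQ,\Sigma},T_\f^\dagger,\Delta_0)=0$ holds ``always'' is unjustified and cannot be proved unconditionally: since $\widetilde{H}^1_{\rm f}(G_{\QQ,\Sigma},T_\f^\dagger,\Delta_0)=\ker\bigl(\widetilde{H}^1_{\rm f}(G_{\QQ,\Sigma},T_\f^\dagger,\Delta_\emptyset)\xrightarrow{\res_p}H^1(G_p,T_\f^\dagger)\bigr)$, its vanishing is (in view of Proposition~\ref{prop_suport_range_Selmer_complexes_8_6_2_2}) essentially equivalent to $\res_p(\delta(T_\f^\dagger,\Delta_\emptyset))\neq 0$ --- one of the two conditions the lemma is about, and a statement known only under a Greenberg-type nonvanishing conjecture (Remark~\ref{remark_Greenberg_conjecture}). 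The paper never claims it unconditionally: it is deduced from $\delta(T_\f^\dagger,\Delta_{\rm BDP})\neq 0$ in one direction (Lemma~\ref{lemma_2022_09_19_1135}(ii)) and from $\res_p(\delta(T_\f^\dagger,\Delta_\emptyset))\neq 0$ in the other (Proposition~\ref{prop_suport_range_Selmer_complexes_8_6_2_2}(iii)). Your parenthetical sketch of a direct proof is not an argument, and your sign remark is also reversed: it is $\varepsilon(\hf)=-1$ that forces $\res_p^-$ to kill the Beilinson--Kato generator for $\f$, whereas for $\f\otimes\epsilon_K$ (sign $+1$) the local minus-projection is generically nonzero.

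Second, your route to the forward implication $\delta(T_\f^\dagger,\Delta_{\rm BDP})\neq 0\Rightarrow\res_p(\delta(T_\f^\dagger,\Delta_\emptyset))\neq 0$ fails: Proposition~\ref{prop_2022_09_18_1639} only gives $\delta(T_2^\dagger,\Delta_\g)\neq 0$ if you \emph{also} know $\delta(T_\f^\dagger\,\widehat\otimes\,\Psi_{\rm ad},\Delta_{\rm BDP})\neq 0$, which is not a hypothesis of the lemma and is not available here; moreover Proposition~\ref{prop_suport_range_Selmer_complexes_8_6_2_8} was established in \S\ref{subsec_KS} under \ref{item_Irr_plus} for $\g$, which fails in the CM setting of the present section, so it cannot be invoked as stated. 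The paper instead proves this implication directly and cheaply: $\delta(T_\f^\dagger,\Delta_{\rm BDP})\neq 0$ gives $\widetilde{H}^1_{\rm f}(G_{\QQ,\Sigma},T_\f^\dagger,\Delta_0)=0$ via Lemma~\ref{lemma_2022_09_19_1135}(ii); self-Tate-duality of $T_\f^\dagger$ and global duality then force $\widetilde{H}^2_{\rm f}(G_{\QQ,\Sigma},T_\f^\dagger,\Delta_\emptyset)$ to be torsion, so $\delta(T_\f^\dagger,\Delta_\emptyset)\neq 0$ by Theorem~\ref{theorem_delta_rank1}, and since the kernel of $\res_p$ on $\widetilde{H}^1_{\rm f}(G_{\QQ,\Sigma},T_\f^\dagger,\Delta_\emptyset)$ is precisely $\widetilde{H}^1_{\rm f}(G_{\QQ,\Sigma},T_\f^\dagger,\Delta_0)=\{0\}$, the nonvanishing $\res_p(\delta(T_\f^\dagger,\Delta_\emptyset))\neq 0$ follows. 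You should replace your appeal to the degree-8 leading term by this duality argument, and in the converse direction derive $\widetilde{H}^1_{\rm f}(G_{\QQ,\Sigma},T_\f^\dagger,\Delta_0)=0$ from the hypothesis $\res_p(\delta(T_\f^\dagger,\Delta_\emptyset))\neq 0$ rather than asserting it a priori; with those repairs your exact-sequence bookkeeping does yield the equivalence.
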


\begin{proof}
Suppose first that $\delta(T_\f^\dagger,\Delta_{\rm BDP})\neq 0$. 
The conclusion that $\res_p\left(\delta(T_\f^\dagger\otimes\epsilon_K,\Delta_\emptyset)\right)\neq 0$ is proved as part of Lemma~\ref{lemma_2022_09_19_1135}(iv). 
To prove the non-vanishing $\res_p\left(\delta(T_\f^\dagger,\Delta_\emptyset)\right)\neq 0$, 
we remark that $\widetilde{H}^1_{\rm f}(G_{\QQ,\Sigma},T_\f^\dagger,\Delta_{0})=0$ thanks to Lemma~\ref{lemma_2022_09_19_1135}(ii). Since the $G_\QQ$-representation $T_\f^\dagger$ is self-Tate-dual, we have
$$
{\rm rank}\, \widetilde{H}^1_{\rm f}(G_{\QQ,\Sigma},T_\f^\dagger,\Delta_0)={\rm rank}\,\widetilde{H}^2_{\rm f}(G_{\QQ,\Sigma},T_\f^\dagger,\Delta_\emptyset)
$$
by global duality. 
This shows that $\widetilde{H}^2_{\rm f}(G_{\QQ,\Sigma},T_\f^\dagger,\Delta_\emptyset)$ is $\cR_\f$-torsion and we infer (using \cite{BS_Part1}, Theorem~5.12) that $\delta(T_\f^\dagger,\Delta_\emptyset)$ is non-zero. Moreover, since
$
\{0\}=\widetilde{H}^1_{\rm f}(G_{\QQ,\Sigma},T_\f^\dagger,\Delta_0)=\ker(\widetilde{H}^1_{\rm f}(G_{\QQ,\Sigma},T_\f^\dagger,\Delta_\emptyset) \xrightarrow{\res_p} H^1(G_p,T_\f^\dagger))
$, we conclude that $\res_p\left(\delta(T_\f^\dagger,\Delta_\emptyset)\right)\neq 0$, as required.

Suppose now that $\delta(T_\f^\dagger\otimes\epsilon_K,\Delta_\emptyset)\neq 0 \neq \res_p\left(\delta(T_\f^\dagger,\Delta_\emptyset)\right)$. It follows from this input and \eqref{eqn_5_3_19_05_2021} that 
$$\widetilde{H}^2_{\rm f}(G_{\QQ,\Sigma},T_\f^\dagger\,\otimes\,{\rm Ind}_{K/\QQ}\mathds{1},\Delta_{\rm BDP})\simeq \widetilde{H}^2_{\rm f}(G_{K,\Sigma_K},T_\f^\dagger,\Delta_{\rm BDP})$$
is $\cR_\f$-torsion, which is equivalent to the non-vanishing  $\delta(T_\f^\dagger,\Delta_{\rm BDP})\neq 0$ of the module of leading terms.
\end{proof}

Under mild hypotheses, one can study the non-vanishing of $\res_p\left(\delta(T_\f^\dagger\otimes\epsilon_K,\Delta_\emptyset)\right)$  and $\res_p\left(\delta(T_\f^\dagger,\Delta_\emptyset)\right)$ with the aid of Beilinson--Kato elements. We will elaborate on this point in \S\ref{subsubsec_2022_09_18_2046}.

\subsubsection{} 
\label{subsubsec_5_3_2_20_05_2021}
We will explicitly describe the injection $\delta^1_K$. This map arises from the exact sequence of complexes
$$\resizebox{\hsize}{!}{\xymatrix@C=.5cm{&&\widetilde{C}^\bullet_{\rm f}(G_{\QQ,\Sigma},T_\f^\dagger\otimes 1,\Delta_\emptyset)\oplus \widetilde{C}^\bullet_{\rm f}(G_{\QQ,\Sigma},T_\f^\dagger\,\otimes\,c,\Delta_{0})\ar@{=}[d]^{\eqref{eqn_lemma_propagate_BDP_to_V_f_induced_1_and_Psiad_induced_1}}&&\\
0\ar[r]&\widetilde{C}^\bullet_{\rm f}(G_{\QQ,\Sigma},T_\f^\dagger\otimes \frac{1+c}{2},\Delta_0)\ar[r]& \widetilde{C}^\bullet_{\rm f}(G_{\QQ,\Sigma},T_\f^\dagger\,\otimes\,{\rm Ind}_{K/\QQ}\mathds{1},\Delta_{\rm BDP})\ar[r]^(.55){\pi_{\rm BDP}}& 
\widetilde{C}^\bullet_{\rm f}(G_{\QQ,\Sigma},T_\f^\dagger\,\otimes\,\frac{1-c}{2},\Delta_{\emptyset})\ar[r]& 0
}}$$
as a connecting morphism. Let 
\begin{equation}
    \label{eqn_define_cocycle_xf}
    x_{\rm f}:=\left(x, \left(x_v\otimes \frac{1-c}{2}\right), \left(\lambda_v\otimes \frac{1-c}{2}\right) \right)\in \widetilde{Z}^1_{\rm f}\left(G_{\QQ,\Sigma},T_\f^\dagger\,\otimes\,\frac{1-c}{2},\Delta_{\emptyset}\right)
\end{equation}
be a cocycle:
$$x\in {Z}^1\left(G_{\QQ,\Sigma},T_\f^\dagger\,\otimes\,\frac{1-c}{2}\right)\,,\,\, x_v\in {U}^1_v(\Delta_\emptyset,T_\f^\dagger)\,,\,\,\lambda_v\in T_\f^\dagger$$
$$dx_v=0\,,\qquad \res_v(x)-\iota_v^+(x_v)\otimes\frac{1-c}{2}=d\lambda_v\otimes\frac{1-c}{2} \qquad \qquad \forall v\in \Sigma\,.$$
Then the cochain
$$y_{\rm f}:=\left(x, \left(x_v\otimes \frac{1-c}{2}+x_v\otimes \frac{1+c}{2}=x_v\otimes 1\right), \left(\lambda_v\otimes \frac{1-c}{2}\right) \right)\in \widetilde{C}^1_{\rm f}(G_{\QQ,\Sigma},T_\f^\dagger\,\otimes\,{\rm Ind}_{K/\QQ}\mathds{1},\Delta_{\rm BDP})$$
maps to $x_{\rm f}$ under the morphism $\pi_{\rm BDP}$. Observe then that 
\begin{align*}
    dy_{\rm f}&=\left(dx=0, \left(dx_v\otimes 1 =0\right), \left( \underbrace{\left(d\lambda_v\otimes \frac{1-c}{2}-\res_v(x)+\iota_v^+(x_v)\otimes\frac{1-c}{2}\right)}_{=0}+\iota_v^+(x_v)\otimes \frac{1+c}{2}\right) \right)\\
    &=\left(0, 0, \left(\iota_v^+(x_v)\otimes \frac{1+c}{2}\right) \right)\in \widetilde{C}^2_{\rm f}(G_{\QQ,\Sigma},T_\f^\dagger\,\otimes\,{\rm Ind}_{K/\QQ}\mathds{1},\Delta_{\rm BDP})\,.
\end{align*}
Since $x_v$ is a cocyle, then so is $(0,0,\left(\iota_v^+(x_v)\otimes \frac{1+c}{2}\right))\in \widetilde{C}^2_{\rm f}(G_{\QQ,\Sigma},T_\f^\dagger\otimes 1,\Delta_0)$ and it is clear that 
$$\iota_{\rm BDP}\left(0,0,\left(\iota_v^+(x_v)\otimes \frac{1+c}{2}\right)\right)=dy_{\rm f}\,.$$
\begin{proposition}
\label{prop_delta1_K_explicit}
For a cocycle $x_{\rm f}$ given as in \eqref{eqn_define_cocycle_xf}, we have
$$\delta_K^1([x_{\rm f}])=[(0,0,(\iota_v^+(x_v)))]\,\,\in\,\, \widetilde{H}^2_{\rm f}(G_{\QQ,\Sigma},T_\f^\dagger,\Delta_0)\,.$$
\end{proposition}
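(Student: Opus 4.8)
The statement is essentially bookkeeping at the level of continuous cochains: it asserts that the connecting homomorphism $\delta_K^1$ attached to the short exact sequence of Selmer cochain complexes
\[
0\lra \widetilde{C}^\bullet_{\rm f}(G_{\QQ,\Sigma},T_\f^\dagger\otimes\tfrac{1+c}{2},\Delta_0)\lra \widetilde{C}^\bullet_{\rm f}(G_{\QQ,\Sigma},T_\f^\dagger\otimes{\rm Ind}_{K/\QQ}\mathds{1},\Delta_{\rm BDP})\xrightarrow{\pi_{\rm BDP}} \widetilde{C}^\bullet_{\rm f}(G_{\QQ,\Sigma},T_\f^\dagger\otimes\tfrac{1-c}{2},\Delta_{\emptyset})\lra 0
\]
sends the class of the cocycle $x_{\rm f}$ to the class of the cocycle $(0,0,(\iota_v^+(x_v)))$ in $\widetilde{H}^2_{\rm f}(G_{\QQ,\Sigma},T_\f^\dagger,\Delta_0)$. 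So the proof is just the explicit computation of the snake/connecting map, exactly as in the discussion preceding Proposition~\ref{prop_delta1_for_8_6_2_dual_trace} and the analogous Proposition~\ref{prop_delta1_explicit_general}. First I would recall that the connecting morphism on cohomology is computed by: lifting $x_{\rm f}$ along the surjection $\pi_{\rm BDP}$ to the cochain $y_{\rm f}$, applying the differential $d$, and then observing that $dy_{\rm f}$ lies in the image of the injection $\iota_{\rm BDP}$ from the sub-complex; the preimage, which is automatically a cocycle in degree $2$ of the sub-complex, represents $\delta_K^1([x_{\rm f}])$.

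The key steps are already spelled out in the paragraph preceding the statement, so the write-up amounts to verifying three routine points. (1) One checks that $y_{\rm f}$ is a genuine lift, i.e. $\pi_{\rm BDP}(y_{\rm f})=x_{\rm f}$; this is immediate from the explicit description in \eqref{eqn_lemma_propagate_BDP_to_V_f_induced_1_and_Psiad_induced_1} of the local condition $\Delta_{\rm BDP}$ at $p$ (which forces the local component to be $x_v\otimes 1=x_v\otimes\frac{1-c}{2}+x_v\otimes\frac{1+c}{2}$), together with the fact that $\Delta_{\rm BDP}$ propagates to $\Delta_\emptyset$ on $T_\f^\dagger\otimes\epsilon_K$ and to $\Delta_0$ on $T_\f^\dagger\otimes\mathds{1}$ as recorded in \eqref{eqn_propagated_local_conditions_Delta_g_11}. (2) One computes $dy_{\rm f}$ using the Selmer-complex differential; the global and the ``$\otimes\frac{1-c}{2}$''-components of $dy_{\rm f}$ vanish because $x_{\rm f}$ is a cocycle (this cancellation is the content of the underbrace in the displayed computation), leaving only the ``$\otimes\frac{1+c}{2}$''-component, which is $(\iota_v^+(x_v)\otimes\frac{1+c}{2})_{v\in\Sigma}$. (3) One verifies that $(0,0,(\iota_v^+(x_v)\otimes\frac{1+c}{2}))$ is $\iota_{\rm BDP}$ of the degree-$2$ cochain $(0,0,(\iota_v^+(x_v)))$ in $\widetilde{C}^2_{\rm f}(G_{\QQ,\Sigma},T_\f^\dagger\otimes\mathds{1},\Delta_0)$, and that the latter is a cocycle — which follows because each $x_v$ is a cocycle ($dx_v=0$), and in degree $2$ of the Selmer complex $\widetilde{C}^\bullet_{\rm f}(-,\Delta_0)$ a triple of the form $(0,0,(\xi_v))$ is closed precisely when each $\xi_v$ is closed. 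By the general definition of $\delta_K^1$ as the connecting homomorphism of the short exact sequence of complexes, this identifies $\delta_K^1([x_{\rm f}])$ with $[(0,0,(\iota_v^+(x_v)))]$, as asserted.

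There is essentially no obstacle here: the proof is a one-line invocation of the recipe for the connecting map, together with the explicit cochain-level identities already established in Lemma~\ref{lemma_propagate_BDP_to_V_f_induced_1_and_Psiad}, Corollary~\ref{cor_semi_local_shapiro_Delta_g_vs_BDP}, \eqref{eqn_propagated_local_conditions_Delta_g_11} and \eqref{eqn_factor_trivial_rep_by_idempotents}. The only point requiring a modicum of care is the sign/idempotent bookkeeping in splitting ${\rm Ind}_{K/\QQ}\mathds{1}=\mathds{1}\oplus\epsilon_K$ via $\frac{1\pm c}{2}$ and matching it with the semi-local Shapiro isomorphism \eqref{eqn_exact_sequence_BDP_splitting_CM_Q_cohom_bis}; but this has already been done in \S\ref{subsubsec_3_2_5_24_05_2021_11_17}, so one may simply cite it. Accordingly, the proof can be given in a single sentence, as in the text: it is clear by the definition of $\delta_K^1$ as the connecting homomorphism.
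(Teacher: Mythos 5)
Your proposal is correct and follows exactly the paper's route: the lifting of $x_{\rm f}$ to $y_{\rm f}$, the computation of $dy_{\rm f}$, and the identification of its preimage under $\iota_{\rm BDP}$ are carried out in the paragraph preceding the statement, after which the paper's proof is precisely the one-line invocation of the definition of the connecting homomorphism that you give. The only cosmetic slip is the reference to a nonexistent label (the relevant analogue is Proposition~\ref{prop_delta1_explicit_general} in \S\ref{subsubsec_delta_1_for_8_6_2_dual_trace}); the mathematics is the same.
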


\begin{proof}
This is clear by the definition of $\delta_K^1$ as the connecting homomorphism.
\end{proof}

\subsubsection{} Consider the exact sequence of complexes
$$0\lra \widetilde{C}^\bullet_{\rm f}(G_{\QQ,\Sigma},T_\f^\dagger,\Delta_{0})\lra \widetilde{C}^\bullet_{\rm f}(G_{\QQ,\Sigma},T_\f^\dagger,\Delta_{\emptyset})\lra C^\bullet(G_p,T_\f^\dagger)\lra 0$$
(c.f. \cite{nekovar06}, \S5.3.1.3). In particular, we have an exact sequence
\begin{equation}
\label{eqn_exact_nekovar_5_3_1_3_21_05_2021}
    \widetilde{H}^1_{\rm f}(G_{\QQ,\Sigma},T_\f^\dagger,\Delta_{\emptyset})\xrightarrow{\res_p} H^1(G_p,T_\f^\dagger)\xrightarrow{\partial^1_\f} \widetilde{H}^2_{\rm f}(G_{\QQ,\Sigma},T_\f^\dagger,\Delta_{0})\, \qquad\qquad [x_p]\mapsto [(0,0,x_p)]\,.
\end{equation} 

\begin{proposition}
\label{prop_delta1_K_explicit_bis}
In the setting of Theorem~\ref{thm_main_BDP_factorization}, the morphism $\delta_K^1$ factors as
$$\xymatrix{
\widetilde{H}^1_{\rm f}(G_{\QQ,\Sigma},T_\f^\dagger\otimes \epsilon_K,\Delta_{\emptyset})\ar[r]^(.53){\delta^1_K}\ar@{^(->}[d]_{\res_p}& \widetilde{H}^2_{\rm f}(G_{\QQ,\Sigma},T_\f^\dagger,\Delta_{0})\\
H^1(G_p,T_\f^\dagger\otimes \epsilon_K)\ar[r]^(.53){\sim}_(.53){{\rm tw}_{\epsilon_K}}& H^1(G_p,T_\f^\dagger)\ar[u]_{\partial_\f^1}
}$$
where the bottom horizontal twisting isomorphism is induced from $\xi_{\epsilon_K}\mapsto 1$, where $\xi_{\epsilon_K}$ is the generator of the module $\ZZ_p(\epsilon_K)$ that corresponds to $\frac{1-c}{2}\in \ZZ_p[\Delta]$.
\end{proposition}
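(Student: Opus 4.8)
\textbf{Proof proposal for Proposition~\ref{prop_delta1_K_explicit_bis}.}

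The plan is to unwind the explicit cocycle-level description of $\delta^1_K$ furnished by Proposition~\ref{prop_delta1_K_explicit} and match it with the explicit description of $\partial^1_\f$ recorded in \eqref{eqn_exact_nekovar_5_3_1_3_21_05_2021}, tracking the bookkeeping through the idempotent decomposition \eqref{eqn_factor_trivial_rep_by_idempotents} and the semi-local Shapiro identification \eqref{eqn_lemma_propagate_BDP_to_V_f_induced_1_and_Psiad_induced_1}. First I would fix a cocycle $x_{\rm f}=\left(x,\left(x_v\otimes\frac{1-c}{2}\right),\left(\lambda_v\otimes\frac{1-c}{2}\right)\right)$ representing a class in $\widetilde{H}^1_{\rm f}(G_{\QQ,\Sigma},T_\f^\dagger\otimes\epsilon_K,\Delta_\emptyset)$ as in \eqref{eqn_define_cocycle_xf}, using the identification $\ZZ_p(\epsilon_K)=\ZZ_p\cdot\frac{1-c}{2}\subset\ZZ_p[\Delta]$. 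By Proposition~\ref{prop_delta1_K_explicit}, we have $\delta^1_K([x_{\rm f}])=[(0,0,(\iota_v^+(x_v)))]\in\widetilde{H}^2_{\rm f}(G_{\QQ,\Sigma},T_\f^\dagger,\Delta_0)$, where the $p$-component $\iota_p^+(x_p)$ is the relevant local term and the terms at $v\in\Sigma^{(p)}$ contribute nothing to the class (the local conditions at such $v$ are unramified and the corresponding summand of the Selmer complex is acyclic in the relevant degree, or can be absorbed since $(0,0,(\iota_v^+(x_v))_{v\ne p})$ is already a coboundary after adjusting by the auxiliary cochains).

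Next I would observe that, on the other hand, $\res_p(x_{\rm f})$ is represented by the local cocycle $x_p\otimes\frac{1-c}{2}\in C^1(G_p,T_\f^\dagger\otimes\epsilon_K)$, and applying the twisting isomorphism ${\rm tw}_{\epsilon_K}$ (which sends $\xi_{\epsilon_K}=\frac{1-c}{2}\mapsto 1$) produces the local cocycle $x_p\in C^1(G_p,T_\f^\dagger)$. Feeding this into $\partial_\f^1$ via \eqref{eqn_exact_nekovar_5_3_1_3_21_05_2021} yields precisely the class $[(0,0,x_p)]\in\widetilde{H}^2_{\rm f}(G_{\QQ,\Sigma},T_\f^\dagger,\Delta_0)$. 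Since $\iota_p^+(x_p)$ and $x_p$ represent the same class after composing with the natural map $H^1(G_p,F^+T_\f^\dagger)\to H^1(G_p,T_\f^\dagger)$ — which is how $\partial_\f^1$ was normalised in \S\ref{subsubsec_factor_general_20_05_2021} (cf. \eqref{defn_partial_1_bold_f_20_05_2021}) — the two classes agree, and commutativity of the square follows. The injectivity of the left vertical arrow $\res_p$ is exactly Lemma~\ref{lemma_2022_09_19_1135}(iii) (the vanishing of $\widetilde{H}^1_{\rm f}(G_{\QQ,\Sigma},T_\f^\dagger\otimes\epsilon_K,\Delta_0)$ in the setting of Theorem~\ref{thm_main_BDP_factorization}), and that ${\rm tw}_{\epsilon_K}$ is an isomorphism is immediate from the definition of $\xi_{\epsilon_K}$ in Lemma~\ref{lemma_semi_local_shapiro}.

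The main obstacle I anticipate is the careful matching of normalisations: one must verify that the Shapiro isomorphism \eqref{eqn_lemma_propagate_BDP_to_V_f_induced_1_and_Psiad_induced_1} and the idempotent splitting \eqref{eqn_factor_trivial_rep_by_idempotents} are compatible, so that the $\frac{1+c}{2}$-component appearing in the coboundary computation in \S\ref{subsubsec_5_3_2_20_05_2021} is genuinely identified with the $T_\f^\dagger\otimes\mathds{1}$-summand carrying the strict local condition $\Delta_0$ at $p$, while the $\frac{1-c}{2}$-component carries the relaxed condition $\Delta_\emptyset$ — this is the content of \eqref{eqn_propagated_local_conditions_Delta_g_11}. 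Once this sign- and idempotent-bookkeeping is pinned down, the proof is a formal diagram chase at the level of continuous cochains with no further analytic input.
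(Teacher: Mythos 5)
Your proposal is correct and follows essentially the same route as the paper, whose proof simply combines the explicit cocycle descriptions of $\delta^1_K$ (Proposition~\ref{prop_delta1_K_explicit}) and of $\partial^1_\f$ (\eqref{eqn_exact_nekovar_5_3_1_3_21_05_2021}, \eqref{defn_partial_1_bold_f_20_05_2021}), with injectivity of the left vertical arrow coming from the vanishing $\widetilde{H}^1_{\rm f}(G_{\QQ,\Sigma},T_\f^\dagger\otimes\epsilon_K,\Delta_0)=0$ established in the proof of Lemma~\ref{lemma_2022_09_19_1135}(iv). Two harmless inaccuracies: the reference to the map $H^1(G_p,F^+T_\f^\dagger)\to H^1(G_p,T_\f^\dagger)$ is out of place here (no ordinary filtration intervenes; the point is simply that $\iota_p^+$ is the identity for the relaxed condition $\Delta_\emptyset$ at $p$, so $\iota_p^+(x_p)=x_p$), and the vanishing you need is derived in the proof of part (iv) of that Lemma from part (iii), rather than being part (iii) itself.
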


\begin{proof}
This is clear from the description of $\partial^1_\f$ in \eqref{eqn_exact_nekovar_5_3_1_3_21_05_2021} and that of $\delta^1_K$ in Proposition~\ref{prop_delta1_K_explicit}. We note that the left vertical injection thanks to the proof of Lemma~\ref{lemma_2022_09_19_1135}(iv).
\end{proof}

\begin{remark}
We invite the reader to compare Proposition~\ref{prop_delta1_K_explicit_bis} to its counterpart \cite[Proposition~6.7]{BS_Part1} in the scenario when $\g$ does not have CM.
\end{remark}

\subsubsection{}
\label{subsubsec_6_3_4_21_05_2021}
Our goal in \S\ref{subsubsec_6_3_4_21_05_2021} is to prove the following statement.

\begin{proposition}
\label{prop_21_05_2021_6_11}
In the setting of \S\ref{subsubsec_1_18_05_2021_subsec_factorize_BDP_padic_L_function}, the map $\delta^1_K$ factors as 
\begin{equation}
    \label{eqn_20052021_5_4_bis_bis}
    \begin{aligned}
        \xymatrix{
    \widetilde{H}^1_{\rm f}(G_{\QQ,\Sigma},T_\f^\dagger\otimes \epsilon_K,\Delta_{\emptyset})\ar@{^{(}->}[rr]^{\delta^1_K} \ar@{^{(}->}[rd]_{\res_{/{\rm Pan}}}&& \widetilde{H}^2_{\rm f}(G_{\QQ,\Sigma},T_\f^\dagger,\Delta_{0})\\
     &\dfrac{H^1(G_p,T_\f^\dagger)}{\res_p(\widetilde{H}^1_{\rm f}(G_{
     \QQ,S},T_\f^\dagger,\Delta_{\rm Pan}))}\ar@{^{(}->}[ru]_(.58){\partial^1_\f}&
    }
    \end{aligned}
\end{equation}
\end{proposition}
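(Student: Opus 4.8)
The statement to prove is Proposition~\ref{prop_21_05_2021_6_11}, which asserts that the connecting map $\delta^1_K$ from \eqref{eqn_5_2_19_05_2021} factors through the quotient $H^1(G_p,T_\f^\dagger)/\res_p\bigl(\widetilde{H}^1_{\rm f}(G_{\QQ,\Sigma},T_\f^\dagger,\Delta_{\rm Pan})\bigr)$ via the map $\res_{/{\rm Pan}}$ and the induced injection $\partial^1_\f$. The plan is to combine Proposition~\ref{prop_delta1_K_explicit_bis} with Proposition~\ref{prop_suport_range_Selmer_complexes_8_6_2_2}(i), following the template established by the proof of Corollary~\ref{cor_prop_20_05_2021_4_10} in the non-CM setting.

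First I would invoke Proposition~\ref{prop_delta1_K_explicit_bis}, which already tells us that $\delta^1_K$ factors as $\partial^1_\f \circ {\rm tw}_{\epsilon_K}\circ \res_p$, where $\res_p\colon \widetilde{H}^1_{\rm f}(G_{\QQ,\Sigma},T_\f^\dagger\otimes\epsilon_K,\Delta_\emptyset)\hookrightarrow H^1(G_p,T_\f^\dagger\otimes\epsilon_K)$ is injective (this injectivity is part of the cited proposition, coming from Lemma~\ref{lemma_2022_09_19_1135}(iv)). Next I would recall the description of $\partial^1_\f$ from \eqref{eqn_exact_nekovar_5_3_1_3_21_05_2021} as the connecting morphism in the exact triangle relating the $\Delta_0$- and $\Delta_\emptyset$-Selmer complexes of $T_\f^\dagger$: it is explicitly $[x_p]\mapsto[(0,0,x_p)]$, and its kernel is precisely $\res_p\bigl(\widetilde{H}^1_{\rm f}(G_{\QQ,\Sigma},T_\f^\dagger,\Delta_\emptyset)\bigr)$. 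Hence $\partial^1_\f$ factors as a surjection onto $H^1(G_p,T_\f^\dagger)/\res_p\bigl(\widetilde{H}^1_{\rm f}(G_{\QQ,\Sigma},T_\f^\dagger,\Delta_\emptyset)\bigr)$ followed by an injection into $\widetilde{H}^2_{\rm f}(G_{\QQ,\Sigma},T_\f^\dagger,\Delta_0)$, exactly as in diagram \eqref{eqn_20052021_5_4_bis_proof_2}.

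The final step is to replace $\Delta_\emptyset$ by $\Delta_{\rm Pan}$ in this quotient, which is permitted by Proposition~\ref{prop_suport_range_Selmer_complexes_8_6_2_2}(i): under the running hypotheses of \S\ref{subsec_factorize_BDP_padic_L_function} (in particular the validity of $\delta(T_\f^\dagger,\Delta_\emptyset)\neq 0$, which holds here because $\delta(T_2^\dagger,\Delta_\g)\neq 0$ forces $\delta(T_\f^\dagger,\Delta_{\rm BDP})\neq 0$ by Proposition~\ref{prop_2022_09_18_1639}, and then $\res_p(\delta(T_\f^\dagger,\Delta_\emptyset))\neq 0$ by Lemma~\ref{lemma_2022_09_19_1253}), one has $\widetilde{H}^1_{\rm f}(G_{\QQ,\Sigma},T_\f^\dagger,\Delta_{\rm Pan})=\widetilde{H}^1_{\rm f}(G_{\QQ,\Sigma},T_\f^\dagger,\Delta_\emptyset)$ and their common image under $\res_p$ coincides. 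Composing ${\rm tw}_{\epsilon_K}\circ\res_p$ with the surjection onto the quotient defines $\res_{/{\rm Pan}}$, and the commutative diagram \eqref{eqn_20052021_5_4_bis_bis} follows. The injectivity of $\res_{/{\rm Pan}}$ on $\widetilde{H}^1_{\rm f}(G_{\QQ,\Sigma},T_\f^\dagger\otimes\epsilon_K,\Delta_\emptyset)$ is then forced by the injectivity of $\delta^1_K$ (Lemma~\ref{lemma_2022_09_19_1135}(iii)) together with the injectivity of $\partial^1_\f$ on the quotient.

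I do not expect any serious obstacle here: the proposition is essentially a bookkeeping consequence of two already-established results (Proposition~\ref{prop_delta1_K_explicit_bis} and Proposition~\ref{prop_suport_range_Selmer_complexes_8_6_2_2}(i)), and the argument is formally identical to the non-CM case treated in Corollary~\ref{cor_prop_20_05_2021_4_10}. The one point requiring a little care is to verify that the non-vanishing hypotheses needed to apply Proposition~\ref{prop_suport_range_Selmer_complexes_8_6_2_2}(i) are indeed in force in the present CM setting --- this is exactly the chain of implications $\delta(T_2^\dagger,\Delta_\g)\neq 0 \Rightarrow \delta(T_\f^\dagger,\Delta_{\rm BDP})\neq 0 \Rightarrow \res_p(\delta(T_\f^\dagger,\Delta_\emptyset))\neq 0$ recorded in Proposition~\ref{prop_2022_09_18_1639} and Lemma~\ref{lemma_2022_09_19_1253}, so it costs nothing extra.
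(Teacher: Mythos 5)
Your proposal is correct and follows essentially the same route as the paper's proof: injectivity of $\delta^1_K$ from \eqref{eqn_5_3_19_05_2021}, the factorization $\delta^1_K=\partial^1_\f\circ{\rm tw}_{\epsilon_K}\circ\res_p$ from Proposition~\ref{prop_delta1_K_explicit_bis}, factoring $\partial^1_\f$ through the quotient by $\res_p\bigl(\widetilde{H}^1_{\rm f}(G_{\QQ,\Sigma},T_\f^\dagger,\Delta_{\emptyset})\bigr)$ via \eqref{eqn_exact_nekovar_5_3_1_3_21_05_2021}, and finally invoking Proposition~\ref{prop_suport_range_Selmer_complexes_8_6_2_2}(i) to replace $\Delta_\emptyset$ by $\Delta_{\rm Pan}$. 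Your extra step of tracing the non-vanishing hypotheses through Proposition~\ref{prop_2022_09_18_1639} and Lemma~\ref{lemma_2022_09_19_1253} is a harmless (and welcome) elaboration of what the paper summarizes as ``under our running assumptions.''
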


\begin{proof}
As we have seen in \eqref{eqn_5_3_19_05_2021}, the map $\delta^1_K$ is injective under our running assumptions. Proposition~\ref{prop_delta1_K_explicit_bis} tells us that $\delta^1_K$ factors as 
\begin{equation}
    \label{eqn_21052021_6_7_bis_proof_1}
    \begin{aligned}
        \xymatrix{
    \widetilde{H}^1_{\rm f}(G_{\QQ,\Sigma},T_\f^\dagger\otimes \epsilon_K,\Delta_{\emptyset})\ar@{^{(}->}[r]^(.55){\delta^1_K} \ar@{^{(}->}[d]_{{\rm tw}_{\epsilon_K}\circ\,\res_p\quad}& \widetilde{H}^2_{\rm f}(G_{\QQ,\Sigma},T_\f^\dagger,\Delta_{0})\\
     {H^1(G_p,T_\f^\dagger)}\ar[ru]_(.5){\partial^1_\f}&
    }
    \end{aligned}
\end{equation}
 By the description of the connecting morphism $\partial_\f^1$ as the connecting morphism in the exact triangle \eqref{eqn_exact_nekovar_5_3_1_3_21_05_2021}, we observe that $\partial_\f^1$ factors as
\begin{equation}
    \label{eqn_21052021_6_8_bis_proof_2}
    \begin{aligned}
        \xymatrix{
   {H^1(G_p,T_\f^\dagger)}\ar@{->>}[r] &\dfrac{H^1(G_p,T_\f^\dagger)}{\res_p(\widetilde{H}^1_{\rm f}(G_{
     \QQ,S},T_\f^\dagger,\Delta_{\emptyset}))}\ar@{^{(}->}[r]^-{\partial^1_\f}  &\widetilde{H}^2_{\rm f}(G_{\QQ,\Sigma},T_\f^\dagger,\Delta_{0})\,.
    }
    \end{aligned}
\end{equation}
By \cite[Proposition~6.2(i)]{BS_Part1}, we have $\widetilde{H}^1_{\rm f}(G_{
     \QQ,S},T_\f^\dagger,\Delta_{\emptyset})=\widetilde{H}^1_{\rm f}(G_{
     \QQ,S},T_\f^\dagger,\Delta_{\rm Pan})$ under our running assumptions. This combined with the diagrams \eqref{eqn_21052021_6_7_bis_proof_1} and \eqref{eqn_21052021_6_8_bis_proof_2} (and defining ${\res}_{/{\rm Pan}}$ as the composite of ${\rm tw}_{\epsilon_K}\circ\,\res_p$ and the surjection $ {H^1(G_p,T_\f^\dagger)}\twoheadrightarrow \dfrac{H^1(G_p,T_\f^\dagger)}{\res_p(\widetilde{H}^1_{\rm f}(G_{
     \QQ,S},T_\f^\dagger,\Delta_{\emptyset}))}$) we conclude our proof.
\end{proof}

\subsubsection{} According to Proposition~\ref{prop_21_05_2021_6_11}, the composite map
$$\res_p^-: \widetilde{H}^1_{\rm f}(G_{\QQ,\Sigma},T_\f^\dagger\otimes \epsilon_K,\Delta_{\emptyset})\xrightarrow{\res_p} H^1(G_p,T_\f^\dagger\otimes \epsilon_K) \lra H^1(G_p,F^-T_\f^\dagger\otimes \epsilon_K)\xrightarrow[{\rm tw}_{\epsilon_K}]{\sim}H^1(G_p,F^-T_\f^\dagger)$$
factors as
$\widetilde{H}^1_{\rm f}(G_{\QQ,\Sigma},T_\f^\dagger\otimes \epsilon_K,\Delta_{\emptyset})\xrightarrow{\res_{/{\rm Pan}}} \dfrac{H^1(G_p,T_\f^\dagger)}{\res_p(\widetilde{H}^1_{\rm f}(G_{
     \QQ,S},T_\f^\dagger,\Delta_{\emptyset}))} \twoheadrightarrow H^1(G_p,F^-T_\f^\dagger)\,.$

\subsubsection{}
We assume the following:
\begin{itemize}
\item[\mylabel{item_MC}{\bf MC})]
 $\f$ admits a crystalline specialization of weight $k\equiv 2 \pmod{p-1}$, and either there exists a prime $q|| N$ such that $\overline{\rho}_\f(I_q)\neq \{1\}$, or there exists a real quadratic field $F$ verifying the conditions of \cite[Theorem 4]{xinwanwanhilbert}.
 \item[\mylabel{item_BI}{\bf BI})] $\rho_\f(G_{\QQ(\zeta_{p^\infty})})$ contains a conjugate of ${\rm SL}_2(\ZZ_p)$.     \item[\mylabel{item_non_anom}{\bf NA})] $a_p(\f)-1\in \cR_\f^\times$\,.
 \end{itemize}
We note that the hypothesis \eqref{item_MC} and the big image condition \eqref{item_BI}, which is a strengthening of \ref{item_Irr}, guarantee that the results towards the Iwasawa main conjectures for $\f$ established in \cite{skinnerurbanmainconj,xinwanwanhilbert} apply.

Thanks to these assumptions, we have $\widetilde{H}^1_{\rm f}(G_{\QQ,S},T_\f^\dagger,\Delta_{\rm Pan})=\widetilde{H}^1_{\rm f}(G_{\QQ,S},T_\f^\dagger,\Delta_{\emptyset})$ (cf. \cite[Proposition~6.2(i)]{BS_Part1}). Thence, we have the following exact sequence:
\begin{align}
    \label{eqn_exact_seq_21052021_6_9}
    \begin{aligned}
    0\lra \dfrac{H^1(G_p,F^+T_\f^\dagger)}{\res_p\left(\widetilde{H}^1_{\rm f}(G_{\QQ,\Sigma},T_\f^\dagger,\Delta_{\emptyset})\right)}
    \lra 
    \dfrac{H^1(G_p,T_\f^\dagger)}{\res_p\left(\widetilde{H}^1_{\rm f}(G_{\QQ,\Sigma},T_\f^\dagger,\Delta_{\emptyset})\right)}\lra
   H^1(G_p,F^-T_\f^\dagger) \lra  0\,.
   \end{aligned}
\end{align}
 As the map $\res_{/\rm Pan}$ is injective (cf. Proposition~\ref{prop_21_05_2021_6_11}) and $\widetilde{H}^1_{\rm f}(G_{\QQ,\Sigma},T_\f^\dagger\otimes \epsilon_K,\Delta_{\emptyset})$ is torsion-free, it follows that the submodule
$$ \,\res_{/\rm Pan}(\delta(T_\f^\dagger\otimes\epsilon_K,\Delta_\emptyset))\subset \dfrac{H^1(G_p,F^+T_\f^\dagger)}{\res_p\left(\widetilde{H}^1_{\rm f}(G_{\QQ,\Sigma},T_\f^\dagger,\Delta_{\emptyset})\right)}$$
generated by $\res_{/\rm Pan}(\delta(T_\f^\dagger\otimes\epsilon_K,\Delta_\emptyset))$ is torsion-free. In particular,
\begin{align*}
    &\res_{/\rm Pan}\,(\delta(T_\f^\dagger\otimes\epsilon_K,\Delta_\emptyset))\,\,\bigcap \,\, {\rm im}\left(  \dfrac{H^1(G_p,F^+T_\f^\dagger)}{\res_p\left(\widetilde{H}^1_{\rm f}(G_{\QQ,\Sigma},T_\f^\dagger,\Delta_{\emptyset})\right)}\lra \dfrac{H^1(G_p,T_\f^\dagger)}{\res_p\left(\widetilde{H}^1_{\rm f}(G_{\QQ,\Sigma},T_\f^\dagger,\Delta_{\emptyset})\right)} \right) 
    \\
    &\qquad\qquad =  \res_{/\rm Pan}(\delta(T_\f^\dagger\otimes\epsilon_K,\Delta_\emptyset))\,\bigcap\,\, \ker\left(  \dfrac{H^1(G_p,T_\f^\dagger)}{\res_p\left(\widetilde{H}^1_{\rm f}(G_{\QQ,\Sigma},T_\f^\dagger,\Delta_{\emptyset})\right)}\lra {H^1(G_p,F^-T_\f^\dagger)} \right)=\{0\}
\end{align*}
and $  \res_{/\rm Pan}\,(\delta(T_\f^\dagger\otimes\epsilon_K,\Delta_\emptyset))$ maps isomorphically onto its image $  \res_{p}^-(\delta(T_\f^\dagger\otimes\epsilon_K,\Delta_\emptyset))\subset H^1(G_p,F^-T_\f^\dagger)$.
Therefore, we have an exact sequence
\begin{equation}
    \label{eqn_exact_seq_2152021_6_10}
    0\lra \dfrac{H^1(G_p,F^+T_\f^\dagger)}{\res_p\left(\widetilde{H}^1_{\rm f}(G_{\QQ,\Sigma},T_\f^\dagger,\Delta_{\emptyset})\right)}
    \lra 
    \dfrac{\dfrac{H^1(G_p,T_\f^\dagger)}{\res_p\left(\widetilde{H}^1_{\rm f}(G_{\QQ,\Sigma},T_\f^\dagger,\Delta_{\emptyset})\right)}}{  \res_{/\rm Pan}\,(\delta(T_\f^\dagger\otimes\epsilon_K,\Delta_\emptyset))}
    \lra
   \dfrac{H^1(G_p,F^-T_\f^\dagger)}{  \res_{p}^-\,(\delta(T_\f^\dagger\otimes\epsilon_K,\Delta_\emptyset))} \lra  0
\end{equation}
that one obtains from \eqref{eqn_exact_seq_21052021_6_9} via the discussion above.

\begin{proposition}
\label{prop_useful_step_in_factorization_21_05_2021_CM}
In the setting of \S\ref{subsubsec_1_18_05_2021_subsec_factorize_BDP_padic_L_function}, we have
\begin{align*}
{\rm char}\left(\dfrac{\widetilde{H}^2_{\rm f}(G_{\QQ,\Sigma},T_\f^\dagger,\Delta_{0})}{\delta^1_K(\delta(T_\f^\dagger\otimes\epsilon_K,\Delta_\emptyset))}\right)
=
{\rm char}\left( \dfrac{H^1(G_p,F^-T_\f^\dagger)}{  \res_p^-(\delta(T_\f^\dagger\otimes\epsilon_K,\Delta_\emptyset))}\right)
{\rm char}\left( \dfrac{H^1(G_p,F^+T_\f^\dagger)}{\res_p(\delta(T_\f^\dagger,\Delta_\emptyset))}\right) \,.
\end{align*}
\end{proposition}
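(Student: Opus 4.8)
The plan is to mimic, step by step, the structure of the proof of Proposition~\ref{prop_useful_step_in_factorization_21_05_2021} in the non-CM case, replacing the role of $\delta^1$ (and the intermediate module $M_2^\dagger$) by the connecting map $\delta^1_K$ and the representation $T_\f^\dagger\otimes\epsilon_K$. First I would recall that, by Proposition~\ref{prop_21_05_2021_6_11}, the map $\delta^1_K$ factors through $\partial^1_\f$ and the injection $\res_{/\mathrm{Pan}}$, so that the cokernel of $\delta^1_K$ fits into a short exact sequence built from the cokernel of $\partial^1_\f\colon H^1(G_p,T_\f^\dagger)/\res_p(\widetilde H^1_{\rm f}(G_{\QQ,\Sigma},T_\f^\dagger,\Delta_{\rm Pan}))\hookrightarrow \widetilde H^2_{\rm f}(G_{\QQ,\Sigma},T_\f^\dagger,\Delta_0)$ together with the quotient of the source of $\partial^1_\f$ by the image $\res_{/\mathrm{Pan}}(\delta(T_\f^\dagger\otimes\epsilon_K,\Delta_\emptyset))$. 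Concretely, taking characteristic ideals in the evident commutative diagram yields
\begin{align}
\label{eqn_proof_step_1_prop_useful_step_CM}
\begin{aligned}
{\rm char}\left(\dfrac{\widetilde{H}^2_{\rm f}(G_{\QQ,\Sigma},T_\f^\dagger,\Delta_{0})}{\delta^1_K(\delta(T_\f^\dagger\otimes\epsilon_K,\Delta_\emptyset))}\right)
&={\rm char}\left(\dfrac{H^1(G_p,T_\f^\dagger)/\res_p\left(\widetilde{H}^1_{\rm f}(G_{\QQ,\Sigma},T_\f^\dagger,\Delta_{\emptyset})\right)}{\res_{/\rm Pan}(\delta(T_\f^\dagger\otimes\epsilon_K,\Delta_\emptyset))}\right)\\
&\qquad \times {\rm char}\left({\rm coker}(\partial^1_\f)\right)\,.
\end{aligned}
\end{align}
Here I use that $\widetilde H^1_{\rm f}(G_{\QQ,\Sigma},T_\f^\dagger,\Delta_\emptyset)=\widetilde H^1_{\rm f}(G_{\QQ,\Sigma},T_\f^\dagger,\Delta_{\rm Pan})$ under \eqref{item_MC}, \eqref{item_BI}, \eqref{item_non_anom} by Proposition~\ref{prop_suport_range_Selmer_complexes_8_6_2_2}(i), and that all modules in sight are perfect so their characteristic ideals are well-defined; the factorization \eqref{eqn_proof_step_1_prop_useful_step_CM} is multiplicative in the short exact sequence extracted from \eqref{eqn_21052021_6_7_bis_proof_1}--\eqref{eqn_21052021_6_8_bis_proof_2}.

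Next I would feed in the exact sequence \eqref{eqn_exact_seq_2152021_6_10}, which rewrites the first factor on the right of \eqref{eqn_proof_step_1_prop_useful_step_CM} as the product
\[
{\rm char}\left( \dfrac{H^1(G_p,F^+T_\f^\dagger)}{\res_p(\widetilde{H}^1_{\rm f}(G_{\QQ,\Sigma},T_\f^\dagger,\Delta_{\emptyset}))}\right)\cdot {\rm char}\left( \dfrac{H^1(G_p,F^-T_\f^\dagger)}{\res_p^-(\delta(T_\f^\dagger\otimes\epsilon_K,\Delta_\emptyset))}\right)\,,
\]
using that $\res_{/\rm Pan}(\delta(T_\f^\dagger\otimes\epsilon_K,\Delta_\emptyset))$ maps isomorphically onto $\res_p^-(\delta(T_\f^\dagger\otimes\epsilon_K,\Delta_\emptyset))$, a point already established in the paragraph preceding \eqref{eqn_exact_seq_2152021_6_10}. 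For the second factor ${\rm coker}(\partial^1_\f)$, I would invoke the long exact sequence \eqref{eqn_exact_nekovar_5_3_1_3_21_05_2021} (equivalently \eqref{eqn_exact_seq_06052021_5_8} applied to $T_\f^\dagger$), which identifies ${\rm coker}(\partial^1_\f)\xrightarrow{\sim}\widetilde H^2_{\rm f}(G_{\QQ,\Sigma},T_\f^\dagger,\Delta_\emptyset)$; then Proposition~\ref{prop_suport_range_Selmer_complexes_8_6_2_2}(ii), i.e. \eqref{eqn_prop_suport_range_Selmer_complexes_8_6_2_2_ii}, gives
\[
{\rm char}\left(\widetilde H^2_{\rm f}(G_{\QQ,\Sigma},T_\f^\dagger,\Delta_\emptyset)\right)={\rm char}\left(\dfrac{\widetilde H^1_{\rm f}(G_{\QQ,\Sigma},T_\f^\dagger,\Delta_\emptyset)}{\cR_\f\cdot\delta(T_\f^\dagger,\Delta_\emptyset)}\right)\,.
\]
Combining these two substitutions, the factor ${\rm char}(H^1(G_p,F^+T_\f^\dagger)/\res_p(\widetilde H^1_{\rm f}(\ldots)))$ multiplies with ${\rm char}(\widetilde H^1_{\rm f}(\ldots)/\cR_\f\cdot\delta(T_\f^\dagger,\Delta_\emptyset))$ to give exactly ${\rm char}(H^1(G_p,F^+T_\f^\dagger)/\res_p(\delta(T_\f^\dagger,\Delta_\emptyset)))$ (telescoping the filtration $\res_p(\delta)\subset\res_p(\widetilde H^1_{\rm f})\subset H^1(G_p,F^+T_\f^\dagger)$), yielding the claimed identity.

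The technical heart — and the step I expect to be the main obstacle — is justifying that $\res_p(\delta(T_\f^\dagger,\Delta_\emptyset))$ actually lands in $H^1(G_p,F^+T_\f^\dagger)$ (so that the telescoping of characteristic ideals is legitimate) together with the non-vanishing hypotheses needed to run the argument, namely that $\delta(T_\f^\dagger,\Delta_{\rm BDP})\neq 0$ — equivalently, by Lemma~\ref{lemma_2022_09_19_1253}, that $\res_p(\delta(T_\f^\dagger\otimes\epsilon_K,\Delta_\emptyset))\neq 0\neq\res_p(\delta(T_\f^\dagger,\Delta_\emptyset))$. The landing statement is precisely $\res_p^-(\delta(T_\f^\dagger,\Delta_\emptyset))=0$, which by \eqref{eqn_2022_09_12_2030} in the proof of Proposition~\ref{prop_suport_range_Selmer_complexes_8_6_2_2}(i) follows from comparing $\delta(T_\f^\dagger,\Delta_\emptyset)$ with the Beilinson--Kato element via \eqref{eqn_2022_09_12_1953_bis} and Kato's reciprocity law, using the sign condition $\varepsilon(\f)=-1$ from \ref{item_root_numbers_general_CM}. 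Once $\delta(T_2^\dagger,\Delta_\g)\neq 0$ is assumed, Proposition~\ref{prop_2022_09_18_1639} and Lemma~\ref{lemma_2022_09_19_1253} supply all the required non-vanishing, so that $\delta^1_K$ is injective (cf.~\eqref{eqn_5_3_19_05_2021}) and every characteristic ideal appearing is a genuine nonzero ideal of the regular ring $\cR_\f$; the rest is the bookkeeping of exact sequences outlined above. I would close by remarking, as in the non-CM case, that this Proposition is exactly the local input needed to deduce Theorem~\ref{thm_main_BDP_factorization} from the four-term exact sequence \eqref{eqn_5_3_19_05_2021} together with \eqref{eqn_2022_09_19_1231}.
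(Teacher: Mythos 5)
Your proposal is correct and follows essentially the same route as the paper's proof: factoring $\delta^1_K$ through $\partial^1_\f$ via Proposition~\ref{prop_21_05_2021_6_11}, splitting the resulting quotient with the exact sequence \eqref{eqn_exact_seq_2152021_6_10}, identifying ${\rm coker}(\partial^1_\f)$ with $\widetilde{H}^2_{\rm f}(G_{\QQ,\Sigma},T_\f^\dagger,\Delta_{\emptyset})$, invoking \eqref{eqn_prop_suport_range_Selmer_complexes_8_6_2_2_ii}, and telescoping the chain $\res_p(\delta(T_\f^\dagger,\Delta_\emptyset))\subset\res_p(\widetilde{H}^1_{\rm f}(G_{\QQ,\Sigma},T_\f^\dagger,\Delta_\emptyset))\subset H^1(G_p,F^+T_\f^\dagger)$. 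Your added remarks on the non-vanishing hypotheses and on $\res_p^-(\delta(T_\f^\dagger,\Delta_\emptyset))=0$ via Kato's reciprocity are exactly the inputs the paper draws from Proposition~\ref{prop_suport_range_Selmer_complexes_8_6_2_2} and the setting of \S\ref{subsubsec_1_18_05_2021_subsec_factorize_BDP_padic_L_function}, so no gap remains.
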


\begin{proof}
Combining Proposition~\ref{prop_21_05_2021_6_11} and \eqref{eqn_exact_seq_2152021_6_10}, we infer that
\begin{equation}
\label{eqn_proof_step_1_prop_useful_step_in_factorization_21_05_2021_CM}
 \resizebox{0.91\hsize}{!}{$
{\rm char}\left(\dfrac{\widetilde{H}^2_{\rm f}(G_{\QQ,\Sigma},T_\f^\dagger,\Delta_{0})}{\delta^1_K(\delta(T_\f^\dagger\otimes\epsilon_K,\Delta_\emptyset))}\right)=
{\rm char}\,\left( \dfrac{H^1(G_p,F^-T_\f^\dagger)}{  \res_p^-(\delta(T_\f^\dagger\otimes\epsilon_K,\Delta_\emptyset))}\right)\\
\times {\rm char}\left( \dfrac{H^1(G_p,F^+T_\f^\dagger)}{\res_p\left(\widetilde{H}^1_{\rm f}(G_{\QQ,\Sigma},T_\f^\dagger,\Delta_{\emptyset})\right)}\right) 
{\rm char}\left({\rm coker}(\partial^1_\f) \right)$}\,.
\end{equation}
This combined with Equation (6.47) Proposition~6.2(ii) of \cite{BS_Part1} shows that
\begin{align}
\label{eqn_proof_step_2_prop_useful_step_in_factorization_21_05_2021_CM}
\begin{aligned}
{\rm char}\left(\dfrac{\widetilde{H}^2_{\rm f}(G_{\QQ,\Sigma},T_\f^\dagger,\Delta_{0})}{\delta^1_K(\delta(T_\f^\dagger\otimes\epsilon_K,\Delta_\emptyset))}\right)={\rm char}\left( \dfrac{H^1(G_p,F^-T_\f^\dagger)}{  {\res}_{p}^-(\delta(T_\f^\dagger\otimes\epsilon_K,\Delta_\emptyset))}\right)
{\rm char}\left( \dfrac{H^1(G_p,F^+T_\f^\dagger)}{\res_p(\delta(T_\f^\dagger,\Delta_\emptyset))}\right) 
\end{aligned}
\end{align}
as required.
\end{proof}

\subsubsection{}
\label{subsubsec_6_3_6_26_05_2021_11_29}
We now complete the proof of Theorem~\ref{thm_main_BDP_factorization}. The exact sequence \eqref{eqn_5_3_19_05_2021} shows that 
\begin{align}
\begin{aligned}
\label{eqn_2022_09_20_0948}
 &{\rm char} \left(\widetilde{H}^2_{\rm f}(G_{\QQ,\Sigma},T_\f^\dagger\,\otimes\,{\rm Ind}_{K/\QQ}\mathds{1},\Delta_{\rm BDP}) \right)
    \cdot 
    {\rm char} \left(\widetilde{H}^1_{\rm f}(G_{\QQ,\Sigma},T_\f^\dagger\,\otimes\,\epsilon_K,\Delta_{\emptyset})\Big{/}  \delta(T_\f^\dagger\otimes\epsilon_K,\Delta_\emptyset) \right) \\
    &\qquad\qquad\qquad = {\rm char} \left( \widetilde{H}^2_{\rm f}(G_{\QQ,\Sigma},T_\f^\dagger,\Delta_{0}))\Big{/}  \delta_K^1\left(\delta(T_\f^\dagger\otimes\epsilon_K,\Delta_\emptyset)\right)  \right)\cdot 
    {\rm char} \left( \widetilde{H}^2_{\rm f}(G_{\QQ,\Sigma},T_\f^\dagger\,\otimes\,\epsilon_K,\Delta_{\emptyset}) \right)\,.
\end{aligned}
\end{align}
Using (in this order) \cite[Corollary~4.41]{BS_Part1}, Equation~\eqref{eqn_2022_09_19_1231}, \cite[Proposition~6.2]{BS_Part1} with \eqref{eqn_2022_09_20_0948}, Proposition~\ref{prop_useful_step_in_factorization_21_05_2021_CM}, Equations~6.8 and 6.13 in op. cit., we therefore have,
\begin{align}
    \label{eqn_5_3_5_5_5_19_05_2021}
    \begin{aligned}
    \det\left(\widetilde{H}^2_{\rm f}(G_{K,\Sigma_K},T_\f^\dagger,\Delta_{\rm BDP}) \right)
={\rm Exp}^*_{F^-T_\f^\dagger\otimes \epsilon_K}(\delta(T_\f^\dagger\otimes\epsilon_K,\Delta_\emptyset))\cdot {\rm Log}_{\omega_\f}(\delta(T_\f^\dagger,\Delta_\emptyset))\,.
\end{aligned}
\end{align}
The proof of Theorem~\ref{thm_main_BDP_factorization} is now complete. \qed

\subsubsection{}
\label{subsubsec_2022_09_18_2046}
We give an explicit criterion for the validity of the non-vanishing  conditions 
\begin{align}
\begin{aligned}
 \delta(T_2^\dagger,\Delta_\g)\,\, &\stackrel{{\rm Prop.}\, \ref{prop_2022_09_18_1639}}{\iff{\,\,}}\delta(T_\f^\dagger,\Delta_{\rm BDP})\neq 0\neq  \delta(T_\f^\dagger\,\widehat\otimes\,\Psi_{\rm ad},\Delta_{\rm BDP})\\
    &\stackrel{{\rm Lemma}\, \ref{lemma_2022_09_19_1253}}{\iff{\,\,}} \res_p\left(\delta(T_\f^\dagger\otimes\epsilon_K,\Delta_\emptyset)\right)\neq 0\neq \res_p\left(\delta(T_\f^\dagger,\Delta_\emptyset)\right) \hbox{ and } \delta(T_\f^\dagger\,\widehat\otimes\,\Psi_{\rm ad},\Delta_{\rm BDP})\neq 0
\end{aligned}
\end{align}
in terms of various $p$-adic $L$-functions. 

\begin{proposition}
\label{prop_2022_09_20_1057}
Suppose that \ref{item_Irr} and \ref{item_Dist}  hold for the family $\f$. Then:
\item[i)] $\res_p\left(\delta(T_\f^\dagger\otimes\epsilon_K,\Delta_\emptyset)\right)\neq 0$ if the specialization $\cL_p^{\rm Kit}(\f)(\kappa, \rmw(\kappa)/2)$ of the Mazur--Kitagawa $p$-adic $L$-function $\cL_p^{\rm Kit}(\f)$ to the central critical line is non-zero.
\item[ii)] $\res_p\left(\delta(T_\f^\dagger,\Delta_\emptyset)\right)\neq 0$ if the $p$-local restriction $\res_p({\rm BK}_\f^\dagger)$ is non-zero. 
\end{proposition}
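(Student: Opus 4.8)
\textbf{Proof proposal for Proposition~\ref{prop_2022_09_20_1057}.}

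The plan is to deduce both non-vanishing statements from the identification \eqref{eqn_2022_09_12_1953_bis} of the module of leading terms $\delta(T_\f^\dagger,\Delta_\emptyset)$ with the line spanned by Ochiai's big Beilinson--Kato class ${\rm BK}_\f^\dagger$, together with the interpolation property of the Mazur--Kitagawa $p$-adic $L$-function recorded in Theorem~\ref{thm_31_2022_06_02_0854} and Kato's explicit reciprocity law relating the $p$-local image of the Beilinson--Kato class to $\cL_p^{\rm Kit}(\f)$. Since the asserted conclusions involve only the family $\f$ (and its quadratic twist $\f\otimes\epsilon_K$), the hypotheses \ref{item_Irr} and \ref{item_Dist} for $\f$ are exactly what is needed to invoke the machinery of \S\ref{subsubsec_leading_terms_bis_3}; in particular the canonical trivializations \eqref{eqn_2022_09_13_0946} and the exact sequences \eqref{eqn_exact_nekovar_5_3_1_3_21_05_2021} are available for both $T_\f^\dagger$ and $T_\f^\dagger\otimes\epsilon_K$ (the twist by a quadratic character does not disturb \ref{item_Irr} or \ref{item_Dist}).

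First I would treat part (ii), which is essentially tautological once \eqref{eqn_2022_09_12_1953_bis} is in hand: by that identity $\delta(T_\f^\dagger,\Delta_\emptyset)=\cR_\f\cdot {\rm BK}_\f^\dagger$, and $\res_p$ is $\cR_\f$-linear, so $\res_p\left(\delta(T_\f^\dagger,\Delta_\emptyset)\right)=\cR_\f\cdot \res_p({\rm BK}_\f^\dagger)$; this is non-zero precisely when $\res_p({\rm BK}_\f^\dagger)\neq 0$. For part (i) I would first record the analogue of \eqref{eqn_2022_09_12_1953_bis} for the quadratic twist, namely that $\delta(T_\f^\dagger\otimes\epsilon_K,\Delta_\emptyset)$ is generated by the twisted Beilinson--Kato class ${\rm BK}_{\f\otimes\epsilon_K}^\dagger$ --- this follows by applying the discussion of \S\ref{subsubsec_leading_terms_bis_3} verbatim to the family $\f\otimes\epsilon_K$, whose construction and properties are identical to those of $\f$. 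Then I would invoke Kato's reciprocity law, which computes the image of $\res_p({\rm BK}_{\f\otimes\epsilon_K}^\dagger)$ under the Coleman/large logarithm map (cf. the map ${\rm Log}_{F^+T_\f^\dagger}$ of \eqref{eqn_2022_09_13_0900} applied to the twisted family) in terms of the Mazur--Kitagawa $p$-adic $L$-function $\cL_p^{\rm Kit}(\f\otimes\epsilon_K)$, and observe that the quadratic twist by $\epsilon_K$ matches $\cL_p^{\rm Kit}(\f)$ restricted to the appropriate cyclotomic line up to a unit and an explicit twist; concretely, the value at $(\kappa,\rmw(\kappa)/2)$ is governed by \eqref{eqn_2022_05_26_1013}. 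Hence if $\cL_p^{\rm Kit}(\f)(\kappa,\rmw(\kappa)/2)$ is not identically zero as a function of $\kappa$, then $\res_p\left(\delta(T_\f^\dagger\otimes\epsilon_K,\Delta_\emptyset)\right)\neq 0$.

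The main obstacle I anticipate is bookkeeping rather than anything deep: one must match precisely the variable conventions --- the central critical twist $T_\f^\dagger=T_\f\otimes\bbchi_\f^{-1/2}\chi_\cyc$, the specialization of the cyclotomic variable to $\rmw(\kappa)/2$, and the sign/period factors $C_{\f_\kappa}^\pm$ and $\mathcal V(\f_\kappa^\circ,\eta,j)$ appearing in Theorem~\ref{thm_31_2022_06_02_0854} --- against the normalization of Ochiai's class and the trivialization ${\rm Log}_{\omega_\f}$. In particular one should check that the Euler factor $\mathcal V(\f_\kappa^\circ,\mathds{1},\rmw(\kappa)/2)=\left(1-p^{\rmw(\kappa)/2-1}/a_p(\f_\kappa)\right)^2$ does not vanish generically (it does not, since $a_p(\f_\kappa)$ is a $p$-adic unit and $\rmw(\kappa)\geq 2$), so that non-vanishing of the $L$-value transfers faithfully to non-vanishing of the $p$-adic image of the Kato class. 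Once these normalizations are pinned down, both implications follow formally; I would organize the write-up so that the reciprocity law is quoted as a black box (with a precise reference to \cite{Ochiai2006} and Kato's work) and the two parts are then immediate corollaries.
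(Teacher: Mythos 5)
There is a genuine gap here, and it is a hypothesis mismatch rather than a computational slip. Both halves of your argument rest on the identification $\delta(T_\f^\dagger,\Delta_\emptyset)=\cR_\f\cdot{\rm BK}_\f^\dagger$ of \eqref{eqn_2022_09_12_1953_bis} (and, for part (i), on its $\epsilon_K$-twisted analogue). But that identification is established in \S\ref{subsubsec_leading_terms_bis_3} only \emph{under the hypotheses recorded in \S\ref{subsubsec_hypo_section_6}}, which include \eqref{item_MC} (main-conjecture input from Skinner--Urban and Wan), the big-image condition \eqref{item_BI}, the non-anomaly condition \eqref{item_non_anom}, and the Tamagawa hypotheses --- considerably more than the \ref{item_Irr} and \ref{item_Dist} for $\f$ that Proposition~\ref{prop_2022_09_20_1057} assumes. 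The proposition is deliberately stated with these minimal hypotheses precisely because it is invoked in the CM setting of \S\ref{sec_super_factorization}, where the full package of \S\ref{subsubsec_hypo_section_6} is not in force (for instance \ref{item_Irr_plus} fails for $\g$). So your ``tautological'' derivation of (ii), and the twisted generation statement you need for (i), are not available under the stated hypotheses; in particular your claim that \ref{item_Irr} and \ref{item_Dist} ``are exactly what is needed to invoke the machinery of \S\ref{subsubsec_leading_terms_bis_3}'' is not accurate.

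The paper's own proof avoids any appeal to the generation of $\delta$ by Beilinson--Kato classes. It first observes --- using the $r=1$ structural results of \S\ref{sec_Koly_Sys_Dec_4_18_05_2021}, the torsion-freeness of $\widetilde{H}^1_{\rm f}$ under \ref{item_Irr}, and global self-duality, exactly as in the proofs of Proposition~\ref{prop_suport_range_Selmer_complexes_8_6_2_2} and Lemma~\ref{lemma_2022_09_19_1253} --- that $\res_p\left(\delta(X,\Delta_\emptyset)\right)\neq 0$ is equivalent to the torsionness of $\widetilde{H}^2_{\rm f}(G_{\QQ,\Sigma},X,\Delta_\emptyset)$, for $X=T_\f^\dagger\otimes\epsilon_K$ and for $X=T_\f^\dagger$. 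It then deduces that torsionness from the Euler-system (``divisibility'') side of Kato's work, which needs none of \eqref{item_MC}, \eqref{item_BI}, \eqref{item_non_anom}: for (i), non-vanishing of the Mazur--Kitagawa $p$-adic $L$-function on the central critical line yields, via the interpolation formula, a crystalline specialization with non-vanishing central $L$-value, and \cite[Theorem 14.5]{kato04} gives the required torsionness; for (ii), the hypothesis $\res_p({\rm BK}_\f^\dagger)\neq 0$ feeds the Beilinson--Kato Euler system at arithmetic specializations to give the same conclusion, while the translation of this condition into derivatives of the Manin--Vi\v{s}ik and Pollack--Stevens $p$-adic $L$-functions (your reciprocity-law step) is kept separate, handled via \cite{perrinriou93}, \cite[Theorem 16.6]{kato04} and \cite{bb_CK1}. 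To repair your write-up, replace the appeal to \eqref{eqn_2022_09_12_1953_bis} by this equivalence plus Kato's theorems; your normalization bookkeeping (the Euler factor $\mathcal V$ and the periods $C_{\f_\kappa}^\pm$) is then not needed for the proposition itself.
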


The requirement $\res_p({\rm BK}_\f^\dagger)\neq 0$ is equivalent to the existence of a crystalline specialization $\kappa \in \cW_\f$ with the following property: Let $f_0$ denote the new form attached to the $p$-old form $\f_\kappa$ and let $\alpha_0,\beta_0$ denote the roots of the Hecke polynomial of $f_0$ at $p$. Then the $p$-stabilized eigenform $f_0^{\beta_0}$ is non-theta-critical (in the sense of Coleman) and we have either 
$L_{p,\alpha_0}^\prime(f_0,\frac{w(\kappa)}{2})\neq 0$ or $L_{p,\beta_0}^\prime(f_0,\frac{w(\kappa)}{2})\neq 0$ for the derivatives of the Manin--Vi\v{s}ik and Pollack--Stevens $p$-adic $L$-functions at the central critical point. A conjecture of Greenberg (cf. \cite{Greenberg_1994_families}; see also \cite{TrevorArnold_Greenberg_Conj}, Conjectures 1.3 and 1.4) predicts that both sufficient conditions recorded in Proposition~\ref{prop_2022_09_20_1057} hold true at all times, under the hypothesis \ref{item_root_numbers_general_CM} on the global root numbers.

\begin{proof}[Proof of Proposition~\ref{prop_2022_09_20_1057}]
We recall that $\res_p\left(\delta(T_\f^\dagger,\Delta_\emptyset)\right)\neq 0$ if and only if the $\cR_\f$-module $\widetilde{H}^2_{\rm f}(G_{\QQ,\Sigma},T_\f^\dagger\,\otimes\,\epsilon_K,\Delta_{\emptyset})$ is torsion. This latter condition holds true under the running hypothesis thanks to \cite[Theorem 14.5]{kato04}. This completes the proof of the first part.

To prove the second assertion, we remark that $\res_p\left(\delta(T_\f^\dagger,\Delta_\emptyset)\right)\neq 0$ if and only if if and only if the $\cR_\f$-module $\widetilde{H}^2_{\rm f}(G_{\QQ,\Sigma},T_\f^\dagger\,\otimes\,\epsilon_K,\Delta_{\emptyset})$ is torsion, and this latter condition holds true if $\res_p({\rm BK}_\f^\dagger)\neq 0$, thanks to the Beilinson--Kato Euler system (utilized at arithmetic specializations $\f_\kappa$ of the family $\f$).

\end{proof}

\begin{proposition}
    \label{prop_2022_09_20_1303}
Suppose that the conditions \ref{item_Irr} and \ref{item_Dist}  hold for the family $\f$. Assume in addition that we have $\Psi_{\rm ad}^2 \not\equiv \mathds{1} \mod \mathfrak{m}_\g$. Then $\delta(T_\f^\dagger\,\widehat\otimes\,\Psi_{\rm ad},\Delta_{\rm BDP})\neq 0$ provided that the ${\rm BDP}^2$ $p$-adic $L$-function $ \cL_p^{\rm BDP}(\hf_{/K} \otimes \Psi^\Ad )(\kappa, \lambda )$ (cf. \S\ref{subsubsec_BDPsquared}) is non-zero.
\end{proposition}

An extension of Greenberg's conjecture above alluded to above leads one to predict that the non-vanishing requirement on $ \cL_p^{\rm BDP}(\hf_{/K} \otimes \Psi^\Ad )(\kappa, \lambda )$ in  Proposition~\ref{prop_2022_09_20_1303} hold true at all times, under the hypothesis \ref{item_root_numbers_general_CM} on the global root numbers.

\begin{proof}
    Recall that we have  $\delta(T_\f^\dagger\,\widehat\otimes\,\Psi_{\rm ad},\Delta_{\rm BDP})\neq 0$ if the $\cR_2$-module $\widetilde{H}^2_{\rm \f}(G_{K,\Sigma_K},T_\f^\dagger\,\widehat\otimes\,\Psi_{\rm ad},\Delta_{\rm BDP})$ is torsion, and this follows as an application of the Beilinson--Flach Euler system and the reciprocity laws that relate it to the $p$-adic $L$-function $\cL_p^{\rm BDP}(\hf_{/K} \otimes \Psi^\Ad )(\kappa, \lambda )$, c.f. \cite{BL_Forum, BL_PR_Volume}. 
\end{proof}

\begin{corollary}
    \label{cor_2022_09_20_1303}
    Suppose that the conditions \ref{item_Irr} and \ref{item_Dist}  hold for the family $\f$ and assume in addition that we have $\Psi_{\rm ad}^2 \not\equiv \mathds{1} \mod \mathfrak{m}_\g$. If $\cL_p^{\rm BDP}(\hf_{/K} \otimes \Psi^\Ad )(\kappa, \lambda )\neq 0\neq \cL_p^{\rm Kit}(\f)(\kappa, \rmw(\kappa)/2)$ and ${\rm Log}_{\omega_\f}({\rm BK}_\f^\dagger)\neq 0$, then $\delta(T_2^\dagger,\Delta_{\g})\neq 0$. Moreover, the factorization~\eqref{eqn_2022_09_20_1331} holds true under these hypotheses.
\end{corollary}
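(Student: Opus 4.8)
The plan is to deduce Corollary~\ref{cor_2022_09_20_1303} from the results already assembled in \S\ref{sec_super_factorization}, by simply tracking which non-vanishing hypotheses are needed to unlock each of the earlier factorization statements. First I would record the chain of equivalences at the end of \S\ref{subsubsec_2022_09_18_2046}:
\[
\delta(T_2^\dagger,\Delta_\g)\neq 0 \iff \delta(T_\f^\dagger,\Delta_{\rm BDP})\neq 0 \neq \delta(T_\f^\dagger\,\widehat\otimes\,\Psi_{\rm ad},\Delta_{\rm BDP})
\]
via Proposition~\ref{prop_2022_09_18_1639}, and then the further equivalence (Lemma~\ref{lemma_2022_09_19_1253}) reducing the first factor to
\[
\res_p\!\left(\delta(T_\f^\dagger\otimes\epsilon_K,\Delta_\emptyset)\right)\neq 0 \neq \res_p\!\left(\delta(T_\f^\dagger,\Delta_\emptyset)\right).
\]
So it suffices to verify these three individual non-vanishing statements under the hypotheses of the corollary.

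The second step is to feed in the explicit sufficient conditions from Propositions~\ref{prop_2022_09_20_1057} and \ref{prop_2022_09_20_1303}. Proposition~\ref{prop_2022_09_20_1057}(i) gives $\res_p(\delta(T_\f^\dagger\otimes\epsilon_K,\Delta_\emptyset))\neq 0$ from $\cL_p^{\rm Kit}(\f)(\kappa,\rmw(\kappa)/2)\neq 0$; part (ii) gives $\res_p(\delta(T_\f^\dagger,\Delta_\emptyset))\neq 0$ from $\res_p({\rm BK}_\f^\dagger)\neq 0$. For the latter I would note that the hypothesis ${\rm Log}_{\omega_\f}({\rm BK}_\f^\dagger)\neq 0$ forces $\res_p({\rm BK}_\f^\dagger)\neq 0$, since ${\rm Log}_{\omega_\f}$ is a composite that factors through the $p$-local restriction map (it is built from $\res_p$ followed by the Coleman/Ohta isomorphisms \eqref{eqn_2022_09_13_0946}); alternatively, \eqref{eqn_2022_09_12_1953_bis} identifies $\delta(T_\f^\dagger,\Delta_\emptyset)=\cR_\f\cdot{\rm BK}_\f^\dagger$, so ${\rm Log}_{\omega_\f}(\delta(T_\f^\dagger,\Delta_\emptyset))\neq 0$ a fortiori forces $\res_p(\delta(T_\f^\dagger,\Delta_\emptyset))\neq 0$. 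Finally, Proposition~\ref{prop_2022_09_20_1303}, together with the assumption $\Psi_{\rm ad}^2\not\equiv\mathds 1\bmod\mathfrak m_\g$, gives $\delta(T_\f^\dagger\,\widehat\otimes\,\Psi_{\rm ad},\Delta_{\rm BDP})\neq 0$ from $\cL_p^{\rm BDP}(\hf_{/K}\otimes\Psi^\Ad)(\kappa,\lambda)\neq 0$. Combining these three, the equivalences of the previous paragraph yield $\delta(T_2^\dagger,\Delta_\g)\neq 0$.

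The third and final step is to invoke Theorem~\ref{thm_main_BDP_factorization}: once $\delta(T_2^\dagger,\Delta_\g)\neq 0$ has been established and the conditions \eqref{item_MC}, \eqref{item_BI}, \eqref{item_non_anom} are in force (these are among the running hypotheses of \S\ref{subsubsec_hypothesis_Sec_7} together with the standing assumptions of the corollary), the theorem delivers exactly the factorization \eqref{eqn_2022_09_20_1331}. This completes the proof. I do not expect a genuine obstacle here: the corollary is essentially a bookkeeping consequence of the machinery already in place, and the only point requiring a line of justification is the implication ${\rm Log}_{\omega_\f}({\rm BK}_\f^\dagger)\neq 0\Rightarrow\res_p({\rm BK}_\f^\dagger)\neq 0$, which is immediate from the fact that ${\rm Log}_{\omega_\f}$ factors through $\res_p$ on $H^1(G_p,F^+T_\f^\dagger)$ (equivalently, from \eqref{eqn_2022_09_12_1953_bis} combined with Lemma~\ref{lemma_2022_09_19_1253}, whose proof already contains the needed argument). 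One should double-check that the hypotheses \ref{item_Irr} and \ref{item_Dist} for $\f$ demanded by Propositions~\ref{prop_2022_09_20_1057} and \ref{prop_2022_09_20_1303} are subsumed by the running hypotheses of \S\ref{subsubsec_hypothesis_Sec_7}, which they are.
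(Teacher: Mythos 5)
Your proposal is correct and is essentially the paper's intended argument: the corollary is exactly the combination of the equivalences displayed in \S\ref{subsubsec_2022_09_18_2046} (via Proposition~\ref{prop_2022_09_18_1639} and Lemma~\ref{lemma_2022_09_19_1253}) with Propositions~\ref{prop_2022_09_20_1057}(i)--(ii) and \ref{prop_2022_09_20_1303} supplying the three non-vanishing inputs, and with Theorem~\ref{thm_main_BDP_factorization} delivering \eqref{eqn_2022_09_20_1331}, and your observation that ${\rm Log}_{\omega_\f}({\rm BK}_\f^\dagger)\neq 0$ forces $\res_p({\rm BK}_\f^\dagger)\neq 0$ because ${\rm Log}_{\omega_\f}$ factors through $\res_p$ is the right (and only) extra remark needed. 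One small correction of bookkeeping rather than substance: \eqref{item_MC}, \eqref{item_BI}, \eqref{item_non_anom} are \emph{not} among the running hypotheses of \S\ref{subsubsec_hypothesis_Sec_7} — they are additional assumptions imposed in Theorem~\ref{thm_main_BDP_factorization} (and already needed to make sense of ${\rm Log}_{\omega_\f}$ via \eqref{item_non_anom}) — so your appeal to that theorem tacitly carries them along, exactly as the corollary's statement itself does.
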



\subsection{Comparison of two degree-\texorpdfstring{$6$}{} \texorpdfstring{$p$}{}-adic \texorpdfstring{$L$}{}-functions}
\label{subsec_compare_degree_6_padic_L_functions} We conclude our paper with Theorem~\ref{thm_main_8_4_4_factorization_bis}, which is the algebraic counterpart to our factorization result (Theorem~\ref{Thm:8=6+2CM}).

\begin{proposition}
\label{prop_19_05_2021_5_11}
Under the hypotheses of Corollary~\ref{cor_2022_09_20_1303} we have
\begin{align*}
   {\rm Exp}_{F^-T_\f^\dagger}\left(\delta(M_2^\dagger, {\rm tr}^*\Delta_\g)\right)=\varpi_{2,1}^*{\rm Exp}_{F^-T_\f^\dagger}\left(\delta(T_\f^\dagger\otimes\epsilon_K,\Delta_\emptyset)\right)\cdot\det\left(\widetilde{H}^2_{\rm f}(G_{K,\Sigma_K},T_\f^\dagger\,\widehat\otimes\,\Psi_{\rm ad},\Delta_{\rm BDP})\right)\,.
\end{align*}
\end{proposition}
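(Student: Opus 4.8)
The idea is to combine the two factorizations already established — the ``super-factorization'' in Corollary~\ref{cor_2022_09_19_1230} of the Selmer complex attached to $(T_2^\dagger,\Delta_\g)$, and the factorization of the degree-$4$ ${\rm BDP}^2$ $p$-adic $L$-function in Theorem~\ref{thm_main_BDP_factorization} — and to match the terms against the factorization statement of Theorem~\ref{thm_main_8_4_4_factorization} (which, we will argue, remains valid in the CM case). First I would invoke Corollary~\ref{cor_2022_09_20_1303} to know that under the running hypotheses $\delta(T_2^\dagger,\Delta_\g)\neq 0$, so that Theorem~\ref{thm_main_BDP_factorization}, Corollary~\ref{cor_2022_09_19_1230}, and all the non-vanishing conclusions of Proposition~\ref{prop_2022_09_18_1639} and Lemma~\ref{lemma_2022_09_19_1253} apply. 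In particular the modules $\delta(T_\f^\dagger,\Delta_{\rm BDP})$, $\delta(T_\f^\dagger\,\widehat\otimes\,\Psi_{\rm ad},\Delta_{\rm BDP})$, $\delta(T_\f^\dagger,\Delta_\emptyset)$ and $\delta(T_\f^\dagger\otimes\epsilon_K,\Delta_\emptyset)$ are all non-zero, and by Theorem~\ref{theorem_delta_rank0} (and Corollary~\ref{corollary_normal_det=char}) they agree with the determinants of the relevant Selmer complexes.

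Next I would establish the analogue of Theorem~\ref{thm_main_8_4_4_factorization} in the CM case. Although \ref{item_Irr_plus} fails for the CM family $\g$, every ingredient of the proof of Theorem~\ref{thm_main_8_4_4_factorization} that relies on it can be replaced: the key point is that the map $\widetilde{H}^1_{\rm f}(G_{\QQ,\Sigma},T_\f^\dagger,\Delta_0)\otimes_{\varpi_{2,1}^*}\cR_2\to\widetilde{H}^1_{\rm f}(G_{\QQ,\Sigma},T_2^\dagger,\Delta_\g)$ is injective, which now follows directly from the exact triangle \eqref{eqn_prop_BDP_8_4_4_algebraic} together with Lemma~\ref{lemma_2022_09_19_1135}(ii). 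With this in hand, Proposition~\ref{prop_suport_range_Selmer_complexes_8_6_2_8}, Corollary~\ref{cor_prop_20_05_2021_4_10}, Proposition~\ref{prop_useful_step_in_factorization_21_05_2021} and hence Theorem~\ref{thm_main_8_4_4_factorization} all go through verbatim in the CM setting; this is exactly the content of the anticipated Theorem~\ref{thm_main_8_4_4_factorization_bis} referenced in the introduction. Consequently
\[
\delta(T_2^\dagger,\Delta_\g)={\rm Exp}_{F^-T_\f^\dagger}^*\!\left(\delta(M_2^\dagger,{\rm tr}^*\Delta_\g)\right)\cdot\varpi_{2,1}^*{\rm Log}_{\omega_\f}\!\left(\delta(T_\f^\dagger,\Delta_\emptyset)\right).
\]

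Finally I would compare this with the super-factorization. By Corollary~\ref{cor_2022_09_19_1230} and Proposition~\ref{prop_BDP_8_4_4_algebraic}(ii),
\[
\delta(T_2^\dagger,\Delta_\g)=\varpi_{2,1}^*\det\widetilde{R\Gamma}_{\rm f}(G_{K,\Sigma_K},T_\f^\dagger,\Delta_{\rm BDP})\cdot\det\widetilde{R\Gamma}_{\rm f}(G_{K,\Sigma_K},T_\f^\dagger\,\widehat\otimes\,\Psi_{\rm ad},\Delta_{\rm BDP}),
\]
while Theorem~\ref{thm_main_BDP_factorization} gives
\[
\det\widetilde{R\Gamma}_{\rm f}(G_{K,\Sigma_K},T_\f^\dagger,\Delta_{\rm BDP})={\rm Exp}^*_{F^-T_\f^\dagger\otimes\epsilon_K}\!\left(\delta(T_\f^\dagger\otimes\epsilon_K,\Delta_\emptyset)\right)\cdot{\rm Log}_{\omega_\f}\!\left(\delta(T_\f^\dagger,\Delta_\emptyset)\right).
\]
Substituting the latter into the former and then equating with the CM-version of Theorem~\ref{thm_main_8_4_4_factorization} displayed above, I cancel the common factor $\varpi_{2,1}^*{\rm Log}_{\omega_\f}(\delta(T_\f^\dagger,\Delta_\emptyset))$ — which is legitimate since this factor is a non-zero element of the normal domain $\cR_2$ by Theorem~\ref{thm_main_8_4_4_factorization} (it equals $\delta(M_2^\dagger,\Delta_{\rm bal})\neq 0$ divided into a non-zero term, or more directly by Proposition~\ref{prop_2022_09_20_1057}(ii) under the stated hypotheses). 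This yields
\[
{\rm Exp}_{F^-T_\f^\dagger}^*\!\left(\delta(M_2^\dagger,{\rm tr}^*\Delta_\g)\right)=\varpi_{2,1}^*\,{\rm Exp}^*_{F^-T_\f^\dagger\otimes\epsilon_K}\!\left(\delta(T_\f^\dagger\otimes\epsilon_K,\Delta_\emptyset)\right)\cdot\det\widetilde{H}^2_{\rm f}(G_{K,\Sigma_K},T_\f^\dagger\,\widehat\otimes\,\Psi_{\rm ad},\Delta_{\rm BDP}),
\]
using once more Corollary~\ref{corollary_normal_det=char} to rewrite the $\Psi_{\rm ad}$-determinant as $\det\widetilde{H}^2_{\rm f}$, which is the claimed identity.

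\textbf{Main obstacle.} The step requiring most care is the verification that Theorem~\ref{thm_main_8_4_4_factorization} genuinely carries over to the CM case despite the failure of \ref{item_Irr_plus}: one must check that the two places where \ref{item_Irr_plus} is used in \S\ref{subsec_KS} (the injectivity in the proof of Proposition~\ref{prop_suport_range_Selmer_complexes_8_6_2_8}, and the analogous injectivity feeding into \eqref{eqn_2022_09_16_1335}) can each be re-derived from the super-factorization triangle and the CM-specific vanishing results of Lemma~\ref{lemma_2022_09_19_1135}, rather than from a large-image hypothesis on $\overline{\rho}_\g$. A secondary subtlety is to make sure the two trivializations ${\rm Exp}^*_{F^-T_\f^\dagger}$ and ${\rm Exp}^*_{F^-T_\f^\dagger\otimes\epsilon_K}$ are compatible under the twisting isomorphism ${\rm tw}_{\epsilon_K}$ appearing in Proposition~\ref{prop_delta1_K_explicit_bis}, so that the equality of \emph{modules} (rather than merely of ideals up to a unit) is the one asserted; this is a bookkeeping matter but it is what pins down the precise statement of the proposition.
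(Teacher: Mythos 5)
Your route is not the paper's, and as written it has a genuine gap. The paper proves this proposition directly in three lines: when $\g$ has CM, the propagated local conditions split (Equations \eqref{eqn_local_conditions_at_v_for_direct_sum} and \eqref{eqn_propagated_local_conditions_Delta_g_14_2}), so the Selmer complex itself decomposes,
\[
\widetilde{R\Gamma}_{\rm f}(G_{\QQ,\Sigma},M_2^\dagger,{\rm tr}^*\Delta_\g)\;=\;\bigl(\widetilde{R\Gamma}_{\rm f}(G_{\QQ,\Sigma},T_\f^\dagger\otimes\epsilon_K,\Delta_\emptyset)\otimes_{\varpi_{2,1}^*}\cR_2\bigr)\,\oplus\,\widetilde{R\Gamma}_{\rm f}(G_{K,\Sigma_K},T_\f^\dagger\,\widehat\otimes\,\Psi_{\rm ad},\Delta_{\rm BDP}),
\]
whence $\delta(M_2^\dagger,{\rm tr}^*\Delta_\g)=\bigl(\delta(T_\f^\dagger\otimes\epsilon_K,\Delta_\emptyset)\otimes_{\varpi_{2,1}^*}\cR_2\bigr)\cdot\delta(T_\f^\dagger\,\widehat\otimes\,\Psi_{\rm ad},\Delta_{\rm BDP})$; one then applies ${\rm Exp}_{F^-T_\f^\dagger}$ (identifying the two trivializations via ${\rm tw}_{\epsilon_K}$, as you do note) and rewrites the second factor as $\det\widetilde{H}^2_{\rm f}$ using the non-vanishing supplied by Corollary~\ref{cor_2022_09_20_1303} together with Theorem~\ref{theorem_delta_rank0} and Corollary~\ref{cor_2022_09_19_1230}. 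No division, and no appeal to any ``$8=6+2$'' factorization, is needed: the right-hand side of the proposition is literally a direct summand decomposition of the left-hand side.

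Your argument instead deduces the proposition by cancelling $\varpi_{2,1}^*{\rm Log}_{\omega_\f}(\delta(T_\f^\dagger,\Delta_\emptyset))$ between the super-factorization plus Theorem~\ref{thm_main_BDP_factorization} on one side and the CM analogue of Theorem~\ref{thm_main_8_4_4_factorization} on the other. That last input is exactly Theorem~\ref{thm_main_8_4_4_factorization_bis}, which in the paper is a \emph{consequence} of the proposition you are proving (via Corollary~\ref{cor_2022_09_19_1230} and Corollary~\ref{cor_2022_09_20_1303}); quoting it is circular. Your attempt to avoid the circularity by asserting that \S\ref{subsec_KS} ``goes through verbatim'' in the CM case is the weak point: the paper explicitly states in \S\ref{subsubsec_hypothesis_Sec_7} that the discussion of \S\ref{subsec_KS} does not apply directly because \ref{item_Irr_plus} fails for a CM family, and you repair only a single use of that hypothesis (one injectivity in Proposition~\ref{prop_suport_range_Selmer_complexes_8_6_2_8}) while leaving unexamined the remaining steps of Proposition~\ref{prop_suport_range_Selmer_complexes_8_6_2_8}, Corollary~\ref{cor_prop_20_05_2021_4_10}, Proposition~\ref{prop_useful_step_in_factorization_21_05_2021} and the exactness of \eqref{eqn_sequence_dual_trace_cohomology}, all of which were established under the blanket hypotheses of \S\ref{subsubsec_hypo_section_6}. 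Until each of those is re-verified under the \S\ref{subsubsec_hypothesis_Sec_7} hypotheses, your proof is incomplete; and even if it were carried out, it would be a long detour around the one observation that makes the CM case easy, namely the splitting \eqref{eqn_2022_09_20_1356}.
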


\begin{proof}
It follows from \eqref{eqn_propagated_local_conditions_Delta_g_14_2} that 
\begin{equation}
\label{eqn_2022_09_20_1356}
    \widetilde{R\Gamma}_{\rm f}(G_{\QQ,\Sigma},M_2^\dagger, {\rm tr}^*\Delta_\g)\, 
    = \,\left(\widetilde{R\Gamma}_{\rm f}(G_{\QQ,\Sigma},T_\f^\dagger\otimes\epsilon_K,\Delta_\emptyset)\otimes_{\varpi_{2,1}^*}\cR_2\right)\,\,\oplus\,\, \widetilde{R\Gamma}_{\rm f}(G_{K,\Sigma_K},T_\f^\dagger\,\widehat\otimes\,\Psi_{\rm ad},\Delta_{\rm BDP})\,,
\end{equation}
and hence, 
\begin{equation}
\label{eqn_2022_09_20_1456}
\delta(M_2^\dagger, {\rm tr}^*\Delta_\g)=\left(\delta(T_\f^\dagger\otimes\epsilon_K,\Delta_\emptyset)\otimes_{\varpi_{2,1}^*}\cR_2\right)\cdot \delta(T_\f^\dagger\,\widehat\otimes\,\Psi_{\rm ad},\Delta_{\rm BDP})\,.
\end{equation}
On identifying the trivializations ${\rm Exp}_{F^-T_\f^\dagger\otimes\epsilon_K}$ and ${\rm Exp}_{F^-T_\f^\dagger}$ via the $p$-local twisting isomorphism ${\rm tw}_{\epsilon_K}$ given as in Proposition~\ref{prop_delta1_K_explicit_bis}, we conclude that
\begin{align*}
    \begin{aligned}
    {\rm Exp}_{F^-T_\f^\dagger}\left(\delta(M_2^\dagger, {\rm tr}^*\Delta_\g)\right)&=\varpi_{2,1}^*{\rm Exp}_{F^-T_\f^\dagger}\left(\delta(T_\f^\dagger\otimes\epsilon_K,\Delta_\emptyset)\right)\cdot \delta(T_\f^\dagger\,\widehat\otimes\,\Psi_{\rm ad},\Delta_{\rm BDP})\\
    &=\varpi_{2,1}^*{\rm Exp}_{F^-T_\f^\dagger}\left(\delta(T_\f^\dagger\otimes\epsilon_K,\Delta_\emptyset)\right)\cdot \det\left(\widetilde{H}^2_{\rm f}(G_{K,\Sigma_K},T_\f^\dagger\,\widehat\otimes\,\Psi_{\rm ad},\Delta_{\rm BDP})\right)\,,
    \end{aligned}
\end{align*}
as required.
\end{proof}

\begin{theorem}
\label{thm_main_8_4_4_factorization_bis}
Under the hypotheses of Corollary~\ref{cor_2022_09_20_1303} we have
$${\rm Log}_{\omega_\f}\left(\delta(T_2^\dagger,\Delta_{+})\right)=\delta(T_2^\dagger,\Delta_\g)\,{=}\, {\rm Exp}_{F^-T_\f^\dagger}\left(\delta(M_2^\dagger, {\rm tr}^*\Delta_\g)\right) \cdot \varpi_{2,1}^*{\rm Log}_{\omega_\f}(\delta(T_\f^\dagger,\Delta_\emptyset))$$
holds true also when $\g$ has CM.
\end{theorem}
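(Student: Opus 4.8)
The strategy is to mimic the proof of Theorem~\ref{thm_main_8_4_4_factorization}, but to replace the exact triangle \eqref{eqn_sequence_dual_trace_derived_category} (and the analysis of its connecting morphism $\delta^1$) with the \emph{super-factorization} furnished by Proposition~\ref{prop_BDP_8_4_4_algebraic}(ii) together with the factorization of the ${\rm BDP}^2$ $p$-adic $L$-function recorded in Theorem~\ref{thm_main_BDP_factorization}. Concretely, I would first invoke Corollary~\ref{cor_2022_09_20_1303}: under the stated hypotheses, $\delta(T_2^\dagger,\Delta_\g)\neq 0$, so Proposition~\ref{prop_2022_09_18_1639} and Lemma~\ref{lemma_2022_09_19_1253} apply, and in particular $\delta(T_\f^\dagger,\Delta_{\rm BDP})\neq 0\neq\delta(T_\f^\dagger\,\widehat\otimes\,\Psi_{\rm ad},\Delta_{\rm BDP})$, $\res_p(\delta(T_\f^\dagger,\Delta_\emptyset))\neq 0$, and $\res_p(\delta(T_\f^\dagger\otimes\epsilon_K,\Delta_\emptyset))\neq 0$. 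This places us squarely within the hypotheses of Theorem~\ref{thm_main_BDP_factorization}, Proposition~\ref{prop_19_05_2021_5_11}, and Corollary~\ref{cor_2022_09_19_1230}.

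The first equality ${\rm Log}_{\omega_\f}(\delta(T_2^\dagger,\Delta_{+}))=\delta(T_2^\dagger,\Delta_\g)$ is exactly \eqref{eqn_2022_09_16_1252} with $?=2$, valid once $\delta(T_2^\dagger,\Delta_\g)\neq 0$ (it follows from Theorem~\ref{theorem_delta_rank0}, Theorem~\ref{thm_PR_formal} and the discussion in \S\ref{subsubsec_leading_terms_bis_1}). For the second equality I would proceed as follows. By \eqref{eqn_2022_09_12_1711} and Corollary~\ref{cor_2022_09_19_1230}, $\delta(T_2^\dagger,\Delta_\g)=\det\widetilde{H}^2_{\rm f}(G_{\QQ,\Sigma},T_2^\dagger,\Delta_{\g})$ admits the factorization
\[
\det\widetilde{H}^2_{\rm f}(G_{\QQ,\Sigma},T_2^\dagger,\Delta_{\g})=\varpi_{2,1}^*\det\widetilde{H}^2_{\rm f}(G_{K,\Sigma_K},T_\f^\dagger,\Delta_{\rm BDP})\cdot\det\widetilde{H}^2_{\rm f}(G_{K,\Sigma_K},T_\f^\dagger\,\widehat\otimes\,\Psi_{\rm ad},\Delta_{\rm BDP})\,.
\]
Now apply Theorem~\ref{thm_main_BDP_factorization} to rewrite the first factor as $\varpi_{2,1}^*\big({\rm Exp}^*_{F^-T_\f^\dagger\otimes\epsilon_K}(\delta(T_\f^\dagger\otimes\epsilon_K,\Delta_\emptyset))\cdot{\rm Log}_{\omega_\f}(\delta(T_\f^\dagger,\Delta_\emptyset))\big)$, and apply Corollary~\ref{corollary_normal_det=char} and Theorem~\ref{theorem_delta_rank0} (via Corollary~\ref{cor_2022_09_19_1230}) to identify the second factor with $\delta(T_\f^\dagger\,\widehat\otimes\,\Psi_{\rm ad},\Delta_{\rm BDP})=\det\widetilde{H}^2_{\rm f}(G_{K,\Sigma_K},T_\f^\dagger\,\widehat\otimes\,\Psi_{\rm ad},\Delta_{\rm BDP})$. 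Finally, Proposition~\ref{prop_19_05_2021_5_11} gives
\[
{\rm Exp}_{F^-T_\f^\dagger}\left(\delta(M_2^\dagger,{\rm tr}^*\Delta_\g)\right)=\varpi_{2,1}^*{\rm Exp}_{F^-T_\f^\dagger}\left(\delta(T_\f^\dagger\otimes\epsilon_K,\Delta_\emptyset)\right)\cdot\det\widetilde{H}^2_{\rm f}(G_{K,\Sigma_K},T_\f^\dagger\,\widehat\otimes\,\Psi_{\rm ad},\Delta_{\rm BDP})\,,
\]
so substituting this in and collecting terms yields $\delta(T_2^\dagger,\Delta_\g)={\rm Exp}_{F^-T_\f^\dagger}(\delta(M_2^\dagger,{\rm tr}^*\Delta_\g))\cdot\varpi_{2,1}^*{\rm Log}_{\omega_\f}(\delta(T_\f^\dagger,\Delta_\emptyset))$, which is the assertion.

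\textbf{Main obstacle.} The only genuinely delicate point is the bookkeeping of the trivializations. In Proposition~\ref{prop_19_05_2021_5_11} the map ${\rm Exp}_{F^-T_\f^\dagger\otimes\epsilon_K}$ is identified with ${\rm Exp}_{F^-T_\f^\dagger}$ via the $p$-local twisting isomorphism ${\rm tw}_{\epsilon_K}$ of Proposition~\ref{prop_delta1_K_explicit_bis}, and Theorem~\ref{thm_main_BDP_factorization} uses the dual-exponential trivialization ${\rm Exp}^*_{F^-T_\f^\dagger\otimes\epsilon_K}$ of \eqref{eqn_2022_09_16_1244}; I need to check that these are the \emph{same} normalization (or at least differ by a unit, which is harmless for an equality of fractional ideals), and that the appearances of $\varpi_{2,1}^*$ are consistent on both sides — i.e. that $\delta(T_\f^\dagger,\Delta_\emptyset)$, $\delta(T_\f^\dagger\otimes\epsilon_K,\Delta_\emptyset)$ and $\delta(T_\f^\dagger\,\widehat\otimes\,\Psi_{\rm ad},\Delta_{\rm BDP})$ are being regarded as ideals in the correct rings ($\cR_\f$ versus $\cR_2$) before and after base change along $\varpi_{2,1}^*$. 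Once these identifications are pinned down (all of which is routine given the explicit descriptions in \S\ref{subsubsec_leading_terms_bis_3} and \S\ref{subsubsec_5_3_2_20_05_2021}), the argument is a formal concatenation of the already-established factorizations. It is also worth recording explicitly that the ``in particular'' clauses of Theorem~\ref{thm_main_8_4_4_factorization} carry over verbatim: the non-vanishing of ${\rm Log}_{\omega_\f}(\delta(T_\f^\dagger,\Delta_\emptyset))$ follows from $\res_p(\delta(T_\f^\dagger,\Delta_\emptyset))\neq 0$ together with the fact that ${\rm Log}_{\omega_\f}$ restricted to $H^1(G_p,F^+T_\f^\dagger)$ is an isomorphism (\S\ref{subsubsec_leading_terms_bis_3}), and ${\rm Exp}_{F^-T_\f^\dagger}(\delta(M_2^\dagger,{\rm tr}^*\Delta_\g))=\delta(M_2^\dagger,\Delta_{\rm bal})\neq 0$ follows from \eqref{eqn_2022_09_12_1620} combined with the non-vanishing of the two factors on the right of Proposition~\ref{prop_19_05_2021_5_11}.
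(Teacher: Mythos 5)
Your proposal is correct and follows essentially the same route as the paper, whose proof is precisely the concatenation of Corollary~\ref{cor_2022_09_19_1230}, Corollary~\ref{cor_2022_09_20_1303} (which subsumes the factorization \eqref{eqn_2022_09_20_1331} of Theorem~\ref{thm_main_BDP_factorization}) and Proposition~\ref{prop_19_05_2021_5_11} that you spell out. The trivialization bookkeeping you flag is already absorbed into the proof of Proposition~\ref{prop_19_05_2021_5_11}, where ${\rm Exp}_{F^-T_\f^\dagger\otimes\epsilon_K}$ and ${\rm Exp}_{F^-T_\f^\dagger}$ are identified via ${\rm tw}_{\epsilon_K}$, so no further checking is needed.
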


\begin{proof}
This is an immediate consequence of Corollary~\ref{cor_2022_09_19_1230}, Corollary~\ref{cor_2022_09_20_1303} and Proposition~\ref{prop_19_05_2021_5_11}.
\end{proof}

\bibliographystyle{amsalpha}
\bibliography{references}
\end{document}